\newcommand\redout{\bgroup\markoverwith
{\textcolor{red}{\rule[.5ex]{2pt}{0.4pt}}}\ULon}
\newcommand\blueout{\bgroup\markoverwith
{\textcolor{blue}{\rule[.5ex]{2pt}{0.4pt}}}\ULon}
\subjclass[2010]{Primary 14J45; Secondary 14E30}
\keywords{rational points, Witt vectors, Nadel vanishing theorem}
\newtheorem{thm}{Theorem}[section]
\newtheorem{lem}[thm]{Lemma}
\newtheorem{cor}[thm]{Corollary}
\newtheorem{prop}[thm]{Proposition}
\newtheorem{claim}[thm]{Claim}
\newtheorem{step}{Step}
\theoremstyle{definition}
\newtheorem{dfn}[thm]{Definition}
\theoremstyle{remark}
\newtheorem{rem}[thm]{Remark}
\newtheorem*{ackn}{Acknowledgements}
\newcommand{\MO}{\mathcal{O}}
\newcommand{\F}{\mathbb{F}}
\newcommand{\Q}{\mathbb{Q}}
\newcommand{\R}{\mathbb{R}}
\newcommand{\Z}{\mathbb{Z}}
\newcommand{\mfm}{\mathfrak{m}}
\newcommand{\mfn}{\mathfrak{n}}
\newcommand{\red}[0]{{\operatorname{red}}}
\newcommand{\Ex}[0]{{\operatorname{Ex}}}
\DeclareMathOperator{\Supp}{Supp}
\DeclareMathOperator{\Spec}{Spec}
\DeclareMathOperator{\Nklt}{Nklt}
\title[A Witt Nadel vanishing theorem for threefolds]
{A Witt Nadel vanishing theorem for threefolds}
\author{Yusuke Nakamura}
\address{Graduate School of Mathematical Sciences, 
the University of Tokyo, 3-8-1 Komaba, Meguro-ku, Tokyo 153-8914, Japan.}
\email{nakamura@ms.u-tokyo.ac.jp}
\author{Hiromu Tanaka}\address{Graduate School of Mathematical Sciences, 
the University of Tokyo, 3-8-1 Komaba, Meguro-ku, Tokyo 153-8914, Japan.}
\email{tanaka@ms.u-tokyo.ac.jp}
\begin{document}

\maketitle

\begin{abstract}
In this paper, we establish a vanishing theorem of Nadel type 
for the Witt multiplier ideals on threefolds 
over perfect fields of characteristic larger than five. 
As an application, if a projective normal threefold over $\mathbb F_q$ is not klt and 
its canonical divisor is anti-ample, 
then the number of the rational points on the klt-locus is divisible by $q$. 
\end{abstract}

\tableofcontents

\section{Introduction}
One of useful tools in complex algebraic geometry 
is the Kodaira vanishing theorem, 
which is generalised to the Kawamata--Viehweg vanishing theorem 
and the Nadel vanishing theorem. 
For instance, these vanishing theorems yield the following consequences: 
\begin{enumerate}
\item[$(1)_0$] 
If $X$ is a smooth projective variety over $\mathbb C$ such that $-K_X$ is ample, 
then $H^i(X, \MO_X)=0$ for $i>0$. 
\item[$(2)_0$]  
If $(X, \Delta)$ is a projective klt pair over $\mathbb C$ such that $-(K_X+\Delta)$ is ample, then $H^i(X, \MO_X)=0$ for $i>0$. 
\item[$(3)_0$]  
If $(X, \Delta)$ is a projective log pair over $\mathbb C$ such that $-(K_X+\Delta)$ is ample, then $H^i \left( X, \mathcal J \left( X, \Delta \right) \right)=0$ for $i>0$, 
where $\mathcal J(X, \Delta)$ denotes the multiplier ideal sheaf of $(X, \Delta)$. 
\end{enumerate}
Indeed, $(1)_0$, $(2)_0$ and $(3)_0$ follow from the Kodaira, Kawamata--Viehweg, and Nadel vanishing theorem respectively 
(cf.\ \cite[Theorem 1-2-5]{KMM87}, \cite[Corollary 2.68]{KM98}, 
\cite[Corollary 9.4.15]{Laz2}).

Although the Kodaira vanishing is known to fail in positive characteristic (cf.\ \cite{Ray78}), 
similar vanishing still holds in positive characteristic 
in terms of Witt vectors.  
The first result in this direction is given by Esnault in \cite{Esn03}. 
\begin{enumerate}
\item[$(1)_p$] 
If $X$ is a smooth projective variety over a perfect field of characteristic $p>0$ such that $-K_X$ is ample, then $H^i \left( X, W\MO_{X, \Q} \right)=0$ for $i>0$. 
\end{enumerate}
Then it is natural to seek a positive-characteristic analogue of $(2)_0$. 
Indeed, this is partially established by Gongyo and the authors \cite{GNT}. 
\begin{enumerate}
\item[$(2)_p$]  
If $(X, \Delta)$ is a projective klt pair over a perfect field of characteristic $p>5$ such that $-(K_X+\Delta)$ is ample and $\dim X \leq 3$, then 
$H^i \left( X, W\MO_{X, \Q} \right)=0$ for $i>0$. 
\end{enumerate}

The main theorem of this paper is a positive-characteristic analogue of $(3)_0$
for the three-dimensional case of characteristic $p>5$. 
Furthermore, we treat a relative setting as follows. 

\begin{thm}[{$=$\ Theorem \ref{theorem:NWV}}]\label{intro-NWV}
Let $k$ be a perfect field of characteristic $p > 5$. 
Let $(X, \Delta)$ be a three-dimensional log pair over $k$ and 
let $f:X \to Z$ be a projective $k$-morphism to a quasi-projective $k$-scheme $Z$. 
Assume that $-(K_X + \Delta)$ is $f$-nef and $f$-big. 
Then, the equation 
\[
R^if_* \left( WI_{\Nklt (X, \Delta), \Q} \right) = 0
\]
 holds for $i > 0$, 
where $\Nklt (X, \Delta)$ denotes the reduced closed subscheme of $X$ 
consisting of the non-klt points of $(X, \Delta)$ and 
$I_{\Nklt (X, \Delta)}$ is the coherent ideal sheaf on $X$ corresponding to $\Nklt (X, \Delta)$ (cf.\ Remark \ref{r-ideal-radical}). 
\end{thm}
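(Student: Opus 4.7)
The plan is to reduce the vanishing to the smooth setting via a log resolution and then exploit the short exact sequence associated with a reduced SNC divisor, combining the Witt Kawamata--Viehweg vanishing $(2)_p$ of \cite{GNT} with lower-dimensional Witt Nadel vanishing.

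First I would take a log resolution $\pi \colon Y \to X$ of $(X, \Delta)$ and write $K_Y + \Delta_Y = \pi^*(K_X + \Delta)$. Let $N$ denote the reduced SNC divisor on $Y$ whose irreducible components are the prime divisors appearing in $\Delta_Y$ with coefficient at least one, so that $\pi(N) = \Nklt(X, \Delta)$ set-theoretically. The first reduction is the Witt-theoretic identification
\[
R\pi_*(W I_{N, \Q}) \simeq W I_{\Nklt(X, \Delta), \Q},
\]
which amounts to a Witt upgrade of the coherent identity $\pi_* \MO_Y(-N) = I_{\Nklt(X, \Delta)}$ together with the vanishing $R^i \pi_*(W I_{N, \Q}) = 0$ for $i > 0$. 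The coherent-level equality is obtained via a Koll\'ar-type torsion-freeness applied to a klt perturbation of $(Y, \Delta_Y)$, and the Witt-level statement then follows by inverting $p$ on the inverse system $\{W_n I_N\}_{n \geq 1}$ once the relevant $p$-torsion of the pushforwards is bounded uniformly in $n$.

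Granted the identification, it suffices to show $R^i(f\pi)_*(W I_{N, \Q}) = 0$ for $i > 0$. From the short exact sequence
\[
0 \to W I_{N, \Q} \to W \MO_{Y, \Q} \to W \MO_{N, \Q} \to 0,
\]
this reduces to two vanishings: $(A)$ $R^i(f\pi)_* W\MO_{Y, \Q} = 0$ and $(B)$ $R^i(f\pi)_* W\MO_{N, \Q} = 0$, both for $i > 0$. For $(A)$, I would construct a $\Q$-divisor $\Gamma$ on $Y$ with $(Y, \Gamma)$ klt and $-(K_Y + \Gamma)$ both $(f\pi)$-nef and $(f\pi)$-big, by slightly decreasing the coefficients of $N$ inside $\Delta_Y$ and absorbing the resulting defect through an ample-plus-effective decomposition of $-\pi^*(K_X + \Delta)$ afforded by the $f$-bigness assumption; then $(2)_p$ applied to the three-dimensional klt pair $(Y, \Gamma)$ yields $(A)$. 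For $(B)$, I would induct on the number of irreducible components of $N$ via the Mayer--Vietoris sequence
\[
0 \to W\MO_{N, \Q} \to W\MO_{N_1, \Q} \oplus W\MO_{N', \Q} \to W\MO_{N_1 \cap N', \Q} \to 0
\]
for $N = N_1 + N'$ with $N_1$ irreducible. This reduces $(B)$ to vanishings on the smooth surface $N_1$ and on the SNC curve $N_1 \cap N'$, handled by the lower-dimensional Witt Nadel vanishing with boundaries provided by adjunction.

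The main obstacle will be the Witt identification $R\pi_*(W I_{N, \Q}) \simeq W I_{\Nklt(X, \Delta), \Q}$, since the Witt functor is not right-exact and does not commute transparently with proper pushforward; its proof demands uniform control, up to bounded $p$-torsion, of the cohomology of $W_n I_N$ as $n \to \infty$, for which the Witt Kawamata--Viehweg vanishings on $Y$ are the crucial input. A secondary delicate point is the klt perturbation in $(A)$: keeping $-(K_Y + \Gamma)$ simultaneously $(f\pi)$-nef and $(f\pi)$-big after perturbation is where the $f$-bigness hypothesis is fully exploited.
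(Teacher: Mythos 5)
Your reduction breaks down at two essential points, and the second one is fatal to the whole strategy. First, even granting the identification $R\pi_*\left( WI_{N, \Q} \right) \simeq WI_{\Nklt(X,\Delta), \Q}$, the short exact sequence $0 \to WI_{N,\Q} \to W\MO_{Y,\Q} \to W\MO_{N,\Q} \to 0$ does \emph{not} reduce the problem to the two vanishings (A) and (B): for $i=1$ the long exact sequence gives $R^1(f\pi)_*\left( WI_{N,\Q} \right) \simeq \mathrm{coker}\left( (f\pi)_*W\MO_{Y,\Q} \to (f\pi)_*W\MO_{N,\Q} \right)$, so you also need surjectivity at the $R^0$ level, which is precisely the Koll\'ar--Shokurov connectedness of the non-klt locus over $Z$ --- in this paper that is a hard statement proved separately in the birational case (via Birkar's theorem, plt adjunction and an MMP argument, Theorem \ref{t-birat-connected2}) and deduced from the main theorem in general (Theorem \ref{theorem:conn}); it cannot be waved away. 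Second, and worse, (A) and (B) are individually \emph{false}. Take $X$ the cone over an elliptic curve $E$ embedded as a plane cubic (a cubic surface in $\mathbb P^3$, so $-K_X$ ample, $(X,0)$ lc but not klt), or its product with $\mathbb P^1$ to be in dimension three: on the log resolution $Y$ obtained by blowing up the vertex, $N$ is (a $\mathbb P^1$-bundle over) $E$ and $H^1\left( Y, W\MO_{Y,\Q} \right) \simeq H^1\left( E, W\MO_{E,\Q} \right) \neq 0$, and likewise $H^1\left( N, W\MO_{N,\Q} \right) \neq 0$; the theorem holds only because these two nonvanishing groups cancel in the long exact sequence for the ideal. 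In particular no klt perturbation $\Gamma$ as in your step (A) can exist (it would force $Y$ to be of Fano type over $Z$ and hence $H^1\left( Y, W\MO_{Y,\Q} \right)=0$), so the vanishing for the structure sheaf and for $W\MO_N$ cannot be separated on a log resolution.

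The remaining pillar of your plan, the identification $R\pi_*\left( WI_{N,\Q} \right) \simeq WI_{\Nklt(X,\Delta),\Q}$ via ``Koll\'ar-type torsion-freeness'' plus a klt perturbation, is also not available: Koll\'ar's injectivity/torsion-freeness theorems rest on characteristic-zero vanishing and fail in characteristic $p$, which is the very reason the paper works with $W\MO_{\Q}$ in the first place. The paper's route is genuinely different: it passes to a dlt modification (Proposition \ref{p-dlt-modif}, Lemma \ref{l-dlt-modif2}), constructs an auxiliary big divisor $\Omega$ with $K_X+\Omega \sim_{f,\R} 0$ and $\Supp \Omega^{\geq 1}=\Supp \Omega^{>1}$, runs a $\left( K_X+\Omega^{\wedge 1} \right)$-MMP while proving invariance of the relevant Witt cohomologies under each step (Proposition \ref{prop:inv_mmp}), and settles the resulting dlt Mori fibre spaces (Lemma \ref{lem:MFS}) using the $W\MO$-vanishing for klt log Fano contractions (Theorem \ref{t-rel-van}), which itself requires the del Pezzo fibration and conic bundle analysis (geometric rationality over imperfect fields, point counting over finite fields). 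None of this machinery is replaceable by the resolution-plus-perturbation argument you propose, so the proposal has a genuine gap rather than being an alternative proof.
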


As a consequence of Theorem \ref{intro-NWV}, 
we obtain the Koll\'ar--Shokurov connectedness theorem. 

\begin{thm}[$=$\ Theorem \ref{theorem:conn}]\label{intro-conn}
Let $k$ be a perfect field of characteristic $p > 5$. 
Let $(X, \Delta)$ be a three-dimensional log pair over $k$ and 
let $f:X \to Z$ be a projective $k$-morphism to a quasi-projective scheme $Z$ over $k$ 
such that $f_*\MO_X=\MO_Z$. 
Assume that $-(K_X + \Delta)$ is $f$-nef and $f$-big. 
If $\Nklt (X, \Delta)$ denotes the reduced closed subscheme of $X$ 
consisting of the non-klt points of $(X, \Delta)$ and let $g:\Nklt (X, \Delta) \to f \left( \Nklt (X, \Delta) \right)$ be the induced morphism, 
then the fibre $g^{-1}(z)$ over an arbitrary point $z$ of $f \left( \Nklt \left( X, \Delta \right) \right)$ 
is geometrically connected over the residue field $k(z)$ at $z$. 
\end{thm}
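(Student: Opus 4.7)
The plan is to mimic Koll\'ar's classical derivation of the connectedness theorem from Nadel vanishing, with Theorem~\ref{intro-NWV} playing the role of Nadel vanishing and Witt-rational sheaves taking the place of ordinary structure sheaves. Set $N := \Nklt(X, \Delta)$ and let $Z' \subseteq Z$ be the closed subscheme $f(N)$ with its reduced induced structure, so that $g = f|_N \colon N \to Z'$.

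First I would apply $Rf_*$ to the short exact sequence
\[
0 \longrightarrow WI_{N,\Q} \longrightarrow W\MO_{X,\Q} \longrightarrow W\MO_{N,\Q} \longrightarrow 0
\]
and use the vanishing $R^{1} f_{*} WI_{N,\Q} = 0$ from Theorem~\ref{intro-NWV} to obtain a surjection $f_{*} W\MO_{X,\Q} \twoheadrightarrow f_{*} W\MO_{N,\Q}$. The hypothesis $f_{*}\MO_X = \MO_Z$ together with the set-level description of each $W_n\MO$ as an $n$-fold product of $\MO$ (with Witt addition and multiplication) yields $f_{*} W_n\MO_X = W_n\MO_Z$ at every finite length; properness of $f$ lets one commute $f_{*}$ with the inverse limit defining $W\MO$, and inverting $p$ then gives $f_{*} W\MO_{X,\Q} = W\MO_{Z,\Q}$. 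Factoring through the quotient map attached to the closed immersion $Z' \hookrightarrow Z$ produces the Witt-rational surjection
\[
W\MO_{Z',\Q} \twoheadrightarrow g_{*} W\MO_{N,\Q}.
\]

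To extract geometric connectedness from this, fix $z \in Z'$ and suppose to the contrary that $g^{-1}(z) \times_{k(z)} \overline{k(z)}$ admits a nontrivial decomposition into clopen pieces, giving a nontrivial idempotent $e$ in its structure sheaf. Its Teichm\"uller representative $[e]$ furnishes an idempotent in a suitable geometric stalk of $g_{*} W\MO_{N,\Q}$ which cannot originate from $W\MO_{Z',\Q}$, since the relevant stalks of the latter ought to contain only the trivial idempotents $0$ and $1$; this would contradict the surjectivity established above, forcing $g^{-1}(z)$ to be geometrically connected. The main obstacle is precisely this final translation: inverting $p$ destroys the naive reduction map $W\MO \to \MO$, so one must argue first with the finite truncations $W_n$, where Teichm\"uller lifts of idempotents behave well and reduction to the closed fibre is transparent, and then transport the idempotent argument compatibly to the rational limit where the vanishing is available, all while tracking base change to $\overline{k(z)}$ inside the Witt formalism. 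Once these technical compatibilities are in place, the remainder of the deduction is essentially the characteristic-zero Koll\'ar--Shokurov argument.
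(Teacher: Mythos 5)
Your first half coincides with the paper's own proof: Theorem \ref{intro-NWV} applied to the sequence $0 \to WI_{\Nklt(X,\Delta),\Q} \to W\MO_{X,\Q} \to W\MO_{\Nklt(X,\Delta),\Q} \to 0$ gives the surjection $f_*W\MO_{X,\Q} \twoheadrightarrow f_*W\MO_{\Nklt(X,\Delta),\Q}$, the identification $f_*W\MO_{X,\Q}=W\MO_{Z,\Q}$ comes from $f_*\MO_X=\MO_Z$ (this is the easy direction of Lemma \ref{l-GNT2.22}), and factoring through $W\MO_{f(\Nklt(X,\Delta)),\Q}$ yields the surjection onto $g_*W\MO_{\Nklt(X,\Delta),\Q}$. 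The final step, however, has a genuine gap, in two respects. First, the contradiction you propose is not logically valid as stated: a surjective ring homomorphism whose source has only the idempotents $0$ and $1$ can perfectly well have a target with nontrivial idempotents (e.g.\ $\Z \to \Z/6\Z$), so surjectivity alone cannot rule out a disconnection. What saves the day is that the induced map $\theta:W\MO_{f(\Nklt(X,\Delta)),\Q}\to g_*W\MO_{\Nklt(X,\Delta),\Q}$ is automatically injective (because $g$ is surjective and the schemes are reduced), hence an isomorphism; this upgrade from surjectivity to bijectivity is an essential move in the paper that your argument omits. Second, and more seriously, even granting the isomorphism, your plan to feed a Teichm\"uller lift of an idempotent of the disconnected geometric fibre $g^{-1}(z)\times_{k(z)}\overline{k(z)}$ into a (geometric) stalk of $g_*W\MO_{\Nklt(X,\Delta),\Q}$ requires a base-change or formal-functions statement relating that stalk to $H^0$ of the fibre. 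The paper explicitly warns that it is not known whether $R^if_*W\MO_{X,\Q}$ commutes with base change, precisely because inverse limits do not commute with tensor products; so the ``technical compatibilities'' you defer at the end are not routine bookkeeping --- they are the actual mathematical content of the step, and your sketch gives no way to carry them out.

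The paper avoids this entirely via Lemma \ref{l-GNT2.22}: for a proper surjective morphism, having (geometrically) connected fibres is \emph{equivalent} to $W\MO_{Y,\Q}\to f_*W\MO_{X,\Q}$ being an isomorphism, and the hard direction is proved not by lifting idempotents from fibres but by taking the Stein factorisation to reduce to a finite morphism, where one compares $W(A/\mfm)_\Q \to W(B/\mfm B)_\Q$ and uses that $W(k')_\Q$ is a field for any residue field $k'$ (automatically perfect, being algebraic over a perfect field); no base change of Witt cohomology along positive-dimensional fibres is ever needed. If you wish to keep your structure, the repair is to establish (or cite) this equivalence and apply its nontrivial direction to the isomorphism $\theta$, rather than attempting the fibrewise idempotent argument.
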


\noindent
We note that Birkar proved a weaker version of this theorem in the case when $f$ is birational and the coefficients of 
$\Delta$ are at most one (\cite[Theorem 1.8]{Bir16}).

Also, we have applications of Theorem \ref{intro-NWV} 
to rational points on varieties over finite fields. 
The starting-point is the following theorem by Esnault \cite{Esn03}, 
which is a consequence of $(1)_p$ and a Lefschetz trace formula 
for $W\MO_{X, \Q}$.

\begin{enumerate}
\item[$(1)'_p$] 
If $X$ is a geometrically connected smooth projective variety 
over a finite field $k$ such that $-K_X$ is ample, 
then $\# X(k) \equiv 1 \pmod {\# k}$. 
\end{enumerate}

\noindent
In \cite{GNT}, Gongyo and the authors prove that the same formula still holds for Fano threefolds with klt singularities.

\begin{enumerate}
\item[$(2)'_p$] 
If $(X, \Delta)$ is a three-dimensional geometrically connected projective klt pair over a finite field 
$k$ of characteristic $p>5$ such that $-(K_X+\Delta)$ is ample, 
then $\# X(k) \equiv 1 \pmod {\# k}$. 
\end{enumerate}

Then it is natural to seek an application of Theorem \ref{intro-NWV} 
to the number of the rational points on a non-klt Fano threefold. 
In this direction, we show that the number of the rational points on the klt-locus is divisible by $\# k$. 

\begin{thm}[$=$\ Corollary \ref{cor:klt_locus}]\label{intro-rat0}
Let $(X, \Delta)$ be a three-dimensional geometrically connected projective log pair over a finite field $k$ of characteristic $p>5$. 
Assume that $-(K_X + \Delta)$ is nef and big and that $(X, \Delta)$ is not klt. 
Then, the congruence 
\[
\# X (k) \equiv \# V (k)  \mod \# k
\]
holds, where $V$ denotes the closed subset of $X$ consisting of the non-klt points of $(X, \Delta)$. 
\end{thm}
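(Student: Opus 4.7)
The plan is to combine Theorem \ref{intro-NWV} with a Lefschetz-type trace formula for Witt vector cohomology. The external ingredient is the Berthelot--Bloch--Esnault congruence: for any proper scheme $Y$ over the finite field $k$, one has
\[
\#Y(k) \equiv \sum_{i\geq 0}(-1)^i\,\operatorname{Tr}\!\bigl(F\mid H^i(Y, W\MO_{Y,\Q})\bigr)\pmod{\#k},
\]
where $F$ is induced by the Frobenius. First I would apply Theorem \ref{intro-NWV} to $(X,\Delta)$ in the absolute case $Z = \Spec k$; since $-(K_X+\Delta)$ is nef and big, this yields $H^i(X, WI_{V,\Q}) = 0$ for all $i > 0$, where $V = \Nklt(X,\Delta)$.

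Next I would exploit the short exact sequence
\[
0 \to WI_{V,\Q} \to W\MO_{X,\Q} \to W\MO_{V,\Q} \to 0
\]
on $X$. The associated long exact sequence, combined with the vanishing above, yields isomorphisms $H^i(X, W\MO_{X,\Q}) \cong H^i(V, W\MO_{V,\Q})$ for every $i \geq 1$ and an exact sequence
\[
0 \to H^0(X, WI_{V,\Q}) \to H^0(X, W\MO_{X,\Q}) \to H^0(V, W\MO_{V,\Q}) \to 0
\]
in degree zero. Applying the trace formula to both $X$ and the proper subscheme $V$, and using the additivity of Frobenius traces along this long exact sequence, I obtain
\[
\#X(k) - \#V(k) \equiv \operatorname{Tr}\bigl(F \mid H^0(X, WI_{V,\Q})\bigr) \pmod{\#k}.
\]

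It then remains to verify that $H^0(X, WI_{V,\Q}) = 0$. Since $X$ is geometrically connected, normal (hence reduced) and proper over $k$, the identification $H^0(X, W\MO_{X,\Q}) = W(k)_{\Q}$ holds, and $H^0(X, WI_{V,\Q})$ sits inside it as the submodule of global Witt sections vanishing on $V$. The hypothesis that $(X,\Delta)$ is not klt guarantees that $V$ is non-empty, so the restriction map $W(k)_{\Q} \to H^0(V, W\MO_{V,\Q})$ is injective and the submodule is zero. The congruence of the theorem follows.

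The step I expect to require the most care is the invocation of the Berthelot--Bloch--Esnault trace formula for the possibly singular and non-equidimensional scheme $V$; fortunately that formula is established for arbitrary proper $k$-schemes, so no additional work is needed beyond citing it alongside Theorem \ref{intro-NWV} and performing the cohomological bookkeeping sketched above.
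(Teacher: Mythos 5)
Your proposal is correct and takes essentially the same route as the paper: the paper deduces the statement from the relative Theorem \ref{t-rat-pt1} with $Y=\Spec k$, whose proof is precisely your argument — the Witt--Nadel vanishing giving $H^i\left(X, WI_{V,\Q}\right)=0$ for $i>0$, the vanishing of $H^0\left(X, WI_{V,\Q}\right)$ from $V\neq\emptyset$ and connectedness via the sequence $0 \to WI_{V,\Q} \to W\MO_{X,\Q} \to W\MO_{V,\Q} \to 0$, and then the Berthelot--Bloch--Esnault congruence (cited there as \cite[Proposition 6.9 (i)]{BBE07}, which packages the trace-formula bookkeeping you spell out).
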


\noindent
An interesting point is that this theorem is not true 
if we drop the assumption that $(X, \Delta)$ is not klt (cf.\ $(2)'_p$). 
On the other hand, the following theorem gives a common generalisation of $(2)'_p$ and Theorem \ref{intro-rat0}.  
Moreover, we treat a relative setting.

\begin{thm}[$=$\ Theorem \ref{t-rat-pt1}]\label{intro-rat-pt1}
Let $(X, \Delta)$ be a three-dimensional log pair over 
a finite field $k$ of characteristic $p>5$. 
Let $f:X \to Y$ be a projective $k$-morphism to a quasi-projective $k$-scheme $Y$ 
such that $f_*\MO_X=\MO_Y$. 
Assume that $-(K_X+\Delta)$ is $f$-nef and $f$-big. 
Then, the congruence 
\[
\# X(k) -  \# V (k) \equiv \# Y(k) - \# f(V) (k)  \mod \# k
\]
holds, where $V$ denotes the closed subset of $X$ consisting of the non-klt points of $(X, \Delta)$. 
\end{thm}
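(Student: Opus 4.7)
The plan is to combine the Witt Nadel vanishing of Theorem \ref{intro-NWV} with a Lefschetz trace formula for Witt vector cohomology, extending Esnault's strategy in \cite{Esn03} and the proof of $(2)'_p$ in \cite{GNT}.

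First, I would consider the two short exact sequences
\[
0 \to I_V \to \MO_X \to \MO_V \to 0, \qquad 0 \to I_{f(V)} \to \MO_Y \to \MO_{f(V)} \to 0,
\]
pass to Witt vectors modulo torsion, and apply the Lefschetz trace formula for $W\MO_{-,\Q}$ to each term. Using additivity of the alternating sum of Frobenius traces, the desired congruence reduces to
\[
\chi(X, WI_{V,\Q}) \equiv \chi(Y, WI_{f(V),\Q}) \pmod{\# k},
\]
where $\chi$ denotes the alternating sum of Frobenius traces on cohomology.

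Second, since $V = \Nklt(X, \Delta)$, Theorem \ref{intro-NWV} gives $R^i f_*(WI_{V,\Q}) = 0$ for $i > 0$, so the Leray spectral sequence yields $\chi(X, WI_{V,\Q}) = \chi(Y, f_*(WI_{V,\Q}))$. It then remains to identify $f_*(WI_{V,\Q})$ with $WI_{f(V),\Q}$ on $Y$. Since $V$ is reduced, the scheme-theoretic image of $V$ is $f(V)$ with its reduced structure, so $\MO_Y \to f_*\MO_V$ factors as $\MO_Y \twoheadrightarrow \MO_{f(V)} \hookrightarrow f_*\MO_V$. The Witt functor preserves this factorization with the second map still injective, and combined with the compatibility of the Witt functor with proper pushforward modulo $p$-torsion (using $f_*\MO_X = \MO_Y$) one obtains
\[
f_*(WI_V)_\Q = \ker \bigl( W\MO_Y \to W f_*\MO_V \bigr)_\Q = \ker \bigl( W\MO_Y \to W\MO_{f(V)} \bigr)_\Q = WI_{f(V),\Q}.
\]

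The main obstacle is that the Lefschetz trace formula of \cite{Esn03} is stated for projective varieties, whereas here $Y$ (and hence $X$) is only quasi-projective and the sheaves $WI_{V,\Q}$, $WI_{f(V),\Q}$ are not locally free. Establishing a trace formula in this generality is the principal technical step. One approach is a fibrewise reduction exploiting the properness of $f$: write
\[
\# X(k) - \# V(k) = \sum_{y \in Y(k)} \bigl( \# X_y(k) - \# (X_y \cap V)(k) \bigr),
\]
apply the absolute trace formula to each projective fibre $X_y$, and use proper base change for coherent sheaves to match $\Tr \bigl( F \,\big|\, R\Gamma(X_y, WI_{V \cap X_y, \Q}) \bigr)$ with the trace of Frobenius on the derived fibre of $Rf_*(WI_{V,\Q})$ at $y$. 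Summing over $y \in Y(k)$ and carrying out the analogous computation on $Y$ then yields the claim.
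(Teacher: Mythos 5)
Your fibrewise reduction over the points of $Y(k)$ is exactly the right skeleton, and it is in essence how the paper argues (the paper localises at one rational point $y$ at a time, splitting into the cases $y \in f(V)$ and $y \notin f(V)$, and applies \cite[Proposition 6.9 (i)]{BBE07} to the projective fibre). But the step you use to feed the vanishing into each fibre is a genuine gap: you invoke ``proper base change for coherent sheaves'' to identify $R\Gamma\left( X_y, WI_{V \cap X_y, \Q} \right)$ with the derived fibre of $Rf_* \left( WI_{V,\Q} \right)$ at $y$. For $W\MO_{\Q}$-modules no such base-change theorem is available; the paper states explicitly that it is an open problem whether $R^if_*\left(W\MO_{X,\Q}\right)$ commutes with base change, precisely because inverse limits do not commute with tensor products. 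What is actually needed is the vanishing $H^i\left(X_y, WI_{V\cap X_y,\Q}\right)=0$ for $i>0$, with the ideal sheaf formed on the fibre itself, and this is not a formal consequence of $R^if_*\left(WI_{V,\Q}\right)=0$. The paper proves it as a separate statement, Theorem \ref{theorem:NWV2} (2): for a closed subscheme $Z' \subset Z$ with preimage $X'$ and closed immersion $j:X' \to X$, one has $R^if'_*\left(WI_{j^{-1}(\Nklt(X,\Delta)),\Q}\right)=0$. Its proof is not base change but an induction: for $Z'$ cut out by one equation, $X'$ is an effective Cartier divisor pulled back from $Y$, so $-(K_X+\Delta+X')$ is still $f$-nef and $f$-big and Theorem \ref{theorem:NWV} applies to the pair $(X,\Delta+X')$; the general case follows by Mayer--Vietoris (Lemma \ref{l-MV}) on the generators of the ideal, and part (2) is then extracted by a snake-lemma comparison. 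Some such argument (or another substitute for base change) is indispensable; without it your matching of traces on the fibres does not go through.

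The same issue appears for the fibres over $y \in Y(k) \setminus f(V)(k)$: there you need $\# X_y(k) \equiv 1 \pmod{\#k}$, which again cannot be read off from $Rf_*W\MO_{X,\Q} \simeq W\MO_{Y,\Q}$ by passing to a derived fibre. The paper handles this case by shrinking $Y$ around $y$ so that $X$ is of Fano type over the base and quoting the relative point-count congruence \cite[Theorem 5.4]{GNT}. Two smaller remarks: your first, global reduction to $\chi(X, WI_{V,\Q}) \equiv \chi(Y, WI_{f(V),\Q})$ is moot, since no trace formula is available on the quasi-projective $Y$ (as you note), and in the fibrewise argument the identification $f_*WI_{V,\Q} \simeq WI_{f(V),\Q}$ is never needed; also, in the case $y \in f(V)$ one must additionally observe $H^0\left(X_y, WI_{V_y,\Q}\right)=0$ (from $V_y \neq \emptyset$ and connectedness of $X_y$) before applying \cite[Proposition 6.9 (i)]{BBE07} to conclude $\# X_y(k) \equiv \# V_y(k) \pmod{\#k}$.
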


Furthermore, we shall show that some hypersurfaces $D$ on smooth Fano threefolds 
contain rational points even if $D$ is not klt. 
It can be seen as a variation of the Ax--Katz theorem 
(cf.\ \cite{Ax64}, \cite{Kat71}).

\begin{thm}[cf.\ Theorem \ref{t-rat-pt2}]\label{intro-rat-pt2}
Let $X$ be a three-dimensional projective geometrically connected variety with klt singularities 
over a finite field $k$ of characteristic $p>5$. 
Let $D$ be a nonzero effective $\mathbb{Q}$-Cartier Weil divisor on $X$. 
Assume that 
\begin{enumerate}
\item 
$-K_X$ is ample, and 
\item 
$-(K_X+D)$ is ample. 
\end{enumerate}
Then, the congruence 
\[
\# D(k) \equiv 1 \mod \# k
\]
holds. In particular, there exists a $k$-rational point on $D$. 
\end{thm}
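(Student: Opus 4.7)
The plan is to apply Theorem~\ref{intro-rat-pt1} to the pair $(X, D)$ with the structure morphism $f \colon X \to \Spec k$, and then combine the resulting congruence with the Fano congruence $(2)'_p$ applied to $X$ itself.

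First, I would verify the hypotheses of Theorem~\ref{intro-rat-pt1}. Since $X$ is klt it is normal, and because $k$ is perfect and $X$ is geometrically connected, $X$ is geometrically integral; hence $H^0(X, \MO_X) = k$, giving $f_*\MO_X = \MO_{\Spec k}$. Since $K_X$ is $\mathbb{Q}$-Cartier (as $X$ is klt) and $D$ is $\mathbb{Q}$-Cartier by hypothesis, $(X, D)$ is a three-dimensional log pair, and $-(K_X + D)$ being ample is in particular $f$-nef and $f$-big.

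Next I would identify the non-klt locus $V$ of $(X, D)$ with the support of $D$. On $X \setminus \Supp(D)$ the pair $(X, D)$ coincides with $(X, 0)$, which is klt, so $V \subseteq \Supp(D)$. Conversely, $D$ is a nonzero effective Weil divisor, hence every component of $\Supp(D)$ appears in $D$ with integer coefficient $\geq 1$; such a component forces non-kltness along itself (its strict transform on any log resolution contributes a discrepancy $\leq -1$), so $\Supp(D) \subseteq V$. Therefore $V = \Supp(D)$ as subsets of $X$, and in particular $\# V(k) = \# D(k)$.

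Finally, Theorem~\ref{intro-rat-pt1} yields
\[
\# X(k) - \# D(k) \;\equiv\; \#(\Spec k)(k) - \# f(V)(k) \pmod{\# k},
\]
and since $V$ is nonempty both terms on the right equal $1$; thus $\# X(k) \equiv \# D(k) \pmod{\# k}$. Applying $(2)'_p$ to the klt Fano variety $X$ gives $\# X(k) \equiv 1 \pmod{\# k}$, and combining the two congruences yields the desired $\# D(k) \equiv 1 \pmod{\# k}$. All of the real work is hidden inside Theorem~\ref{intro-rat-pt1} (which in turn rests on Theorem~\ref{intro-NWV}); the only step requiring any genuine argument here is the identification $V = \Supp(D)$, and this is precisely where the klt hypothesis on $X$ is used in an essential way.
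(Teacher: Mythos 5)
Your argument is correct: the hypotheses of Theorem \ref{t-rat-pt1} (with $Y=\Spec\,k$, i.e.\ Corollary \ref{cor:klt_locus}) are indeed satisfied for the pair $(X,D)$, the identification $\Nklt(X,D)=\Supp D$ holds exactly as you say (klt-ness of $X$ off $\Supp D$, coefficients $\geq 1$ on $\Supp D$), and combining the resulting congruence $\# X(k)\equiv \# D(k)$ with the klt Fano congruence $(2)'_p$ gives the claim. This differs from the paper only in packaging: the paper proves the more general Theorem \ref{t-rat-pt2} directly, using the exact sequence $0 \to WI_{D, \Q} \to W\MO_{X, \Q} \to W\MO_{D, \Q} \to 0$, the vanishing $H^i \left( X, W\MO_{X,\Q} \right)=0$ for $i>0$ (Theorem \ref{t-gnt}(2)), and the Nadel-type vanishing of Theorem \ref{theorem:NWV} applied to $(X,\Delta+D)$ with the same observation $\Nklt(X,\Delta+D)=\Supp D$, and only then invokes \cite[Proposition 6.9(i)]{BBE07} together with \cite[Theorem 5.4]{GNT}; your route delegates that cohomological work to Theorem \ref{t-rat-pt1}, whose proof runs through Theorem \ref{theorem:NWV2}(2) and the same trace-formula input, so the underlying machinery is identical. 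What the paper's direct proof buys is the extra statement Theorem \ref{t-rat-pt2}(1) — the vanishing of $H^i \left( D, W\MO_{D,\Q} \right)$ for $i>0$ and the bijectivity on $H^0$, valid over any perfect field and with an auxiliary klt boundary $\Delta$ — which your shortcut does not recover; what your route buys is brevity, since for the introduction's statement (where $\Delta=0$) everything follows formally from results already established in the paper.
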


\noindent 
In Theorem \ref{t-rat-pt2}, we work on a more general setting. 

\subsection{Description of the proof}\label{ss1-intro}
We now overview some of the ideas of the proof of Theorem \ref{intro-NWV}. 
In the following, we work over a perfect field $k$ of characteristic $p>5$. 
Roughly speaking, the argument consists of two steps: 
\begin{enumerate}
\item[(A)]  
We prove the $W\MO$-vanishing for log Fano contractions, i.e.\ Theorem \ref{t-rel-van} (Section \ref{s-WO-MFS}). 
\item[(B)]  
Using Theorem \ref{t-rel-van}, we prove Theorem \ref{intro-NWV} 
(Section \ref{s-nadel}). 
\end{enumerate}

(A) 
Now, let us give an overview of how to prove Theorem \ref{t-rel-van}. 
Given a three-dimensional klt pair $(X, \Delta)$ and a projective morphism 
$f:X \to Z$ such that $-(K_X+\Delta)$ is $f$-ample and $f_*\MO_X=\MO_Z$, 
we want to prove that $R^if_* \left( W\MO_{X, \Q} \right)=0$ for any $i>0$. 
We further divide the proof of Theorem \ref{t-rel-van} into the four cases depending 
on the dimension of $Z$.  
Since the cases $\dim Z=0$ and $\dim Z=3$ have been settled already 
in \cite{GNT}, 
it is enough to assume either 
\begin{enumerate}
\item[(A1)]  $\dim Z=1$ (Subsection \ref{ss-dP-fib}), or
\item[(A2)]   $\dim Z=2$ (Subsection \ref{ss-conic-bdl}). 
\end{enumerate}

(A1) 
We first treat the case when $\dim Z=1$. 
In this case, the generic fibre $X_{K(Z)}$ of $f$ is a log del Pezzo surface over an imperfect field. 
One of significant steps is to show that $\big( X_{K(Z)} \times_{K(Z)} \overline{K(Z)} \big)_{\red}$ 
is a rational surface (Proposition \ref{p-klt-dP}). 
Indeed, this result enables us to use 
a result by Chatzistamatiou--R{\"u}lling (Theorem \ref{thm:CR481}) 
after taking suitable purely inseparable covers of $X$ and $Z$ (cf.\ Proposition \ref{p-dP-fib}), which in turn implies what we want. 

(A2) 
We now treat the case when $\dim Z=2$. 
The crucial part of this case is to prove that 
$Z$ has $W\MO$-rational singularities (Theorem \ref{t-conic-base}). 
To this end, we first reduce the problem to the case when $k=\overline{\F}_p$. 
Assume $k=\overline{\F}_p$.  
In order to prove that $Z$ has $W\MO$-rational singularities, 
we compute, for sufficiently divisible $e \in \Z_{>0}$, 
the numbers of $\F_{p^e}$-rational points on the models $X'$ and $Z'$ over $\F_{p^e}$ of $X$ and $Z$ respectively.  
(cf.\ Step \ref{step-fp-bar} in the proof of Theorem \ref{t-conic-base}).

(B) 
We now overview how to prove Theorem \ref{intro-NWV}. 
For simplicity, we treat only the case when $Z=\Spec\,k$ 
and $k$ is an algebraically closed field. 
Taking a dlt modification of $(X, \Delta)$ (Proposition \ref{p-dlt-modif}), 
we may assume that $X$ is $\Q$-factorial, $\left( X, \Delta^{\wedge 1} \right)$ is dlt  (for definition of $(-)^{\wedge 1}$, 
see Subsection \ref{ss-notation}), and $-(K_X+\Delta)$ is ample (cf.\ Lemma \ref{l-dlt-modif2}). 
By the ampleness of $-(K_X+\Delta)$, we can find an effective $\mathbb{R}$-divisor $\Omega$ on $X$ such that 
\begin{enumerate}
\item $\left( X, \Omega^{\wedge 1} \right)$ is dlt, 
\item $K_X+\Omega \sim_{\mathbb{R}} 0$, 
\item $\Omega$ is big, 
\item $\Supp \Omega^{\geq 1}=\Supp \Omega^{>1}$, and  
\item $\Supp \Nklt(X, \Omega)=\Supp \Nklt(X, \Delta)$. 
\end{enumerate}
Then it suffices to prove the vanishing $H^i \left( X, WI_{\Nklt \left( X, \Omega \right)} \right)=0$ for $i>0$. 
Furthermore, we may assume that $\Supp\,\Omega^{>1} \neq \emptyset$ since the assertion is nothing but \cite[Theorem 1.3]{GNT} 
when $(X, \Delta)$ is klt. 

The first step is to run a $\left( K_X+\Omega^{\wedge 1} \right)$-MMP 
in order to reduce the problem to the end result (cf.\ Proposition \ref{prop:inv_mmp}). 
In Proposition \ref{prop:inv_mmp}, it is proved that the cohomologies are preserved under this MMP. 

Replacing $X$ by the end result, let us assume that $X$ itself is the end result 
of this MMP. 
By (2) and $\Supp\,\Omega^{>1} \neq \emptyset$, $X$ has a $\left( K_X+\Omega^{\wedge 1} \right)$-Mori fibre space structure $g:X \to W$. 
Then the problem is reduced to vanishing of cohomologies for dlt Mori fibre spaces 
(Lemma \ref{lem:MFS}). 
By induction on the number of the irreducible components of 
$\llcorner \Xi\lrcorner$ for $\Xi:=\Omega^{\wedge 1}$, Lemma \ref{lem:MFS} is proved by using 
the $W\MO$-vanishing for klt Mori fibre spaces (Theorem \ref{t-rel-van}).

\begin{ackn} 
We would like to thank Professors 
H{\'e}l{\`e}ne Esnault, Yoshinori Gongyo, Shunsuke Takagi and Chenyang Xu for many useful discussions. 
The authors also thank the referees for reading the manuscript carefully, 
suggesting several improvements, and pointing out mistakes. 
The first author is partially supported by JSPS KAKENHI Grant Number 18K13384. 
The second author is partially supported by EPSRC and JSPS KAKENHI Grant Number 18K13386. 
\end{ackn}

\section{Preliminaries}

\subsection{Notation}\label{ss-notation}

In this subsection, we summarise notation used in this paper. 

\begin{itemize}
\item We will freely use the notation and terminology in \cite{Har77} 
and \cite{Kol13}. 

\item For a scheme $X$, its {\em reduced structure} $X_{\red}$ 
is the reduced closed subscheme of $X$ such that the induced closed immersion $X_{\red} \to X$ is surjective. 

\item 
A morphism $f:X \to Y$ of schemes {\em has connected fibres} 
if $X \times_Y \Spec\,L$ is either empty or connected for any field $L$ and any morphism 
$\Spec\,L \to Y$. 

\item For an integral scheme $X$, 
we define the {\em function field} $K(X)$ of $X$ 
as $\MO_{X, \xi}$ for the generic point $\xi$ of $X$. 

\item For a field $k$, 
we say that $X$ is a {\em variety over} $k$ or a $k$-{\em variety} if 
$X$ is an integral scheme that is separated and of finite type over $k$. 
We say that $X$ is a {\em curve} over $k$ or a $k$-{\em curve} 
(resp.\ a {\em surface} over $k$ or a $k$-{\em surface}, 
resp.\ a {\em threefold} over $k$) 
if $X$ is a $k$-variety of dimension one (resp.\ two, resp.\ three). 

\item For a field $k$, let $\overline k$ be an algebraic closure of $k$. 
If $k$ is of characteristic $p>0$, 
then we set $k^{1/p^{\infty}}:=\bigcup_{e=0}^{\infty} k^{1/p^e}
=\bigcup_{e=0}^{\infty} \left\{ x \in \overline k\, \big|\, x^{p^e} \in k \right\}$. 

\item Let $f:X \to Y$ be a projective morphism of noetherian schemes. 
Let $M$ be an $\R$-Cartier $\R$-divisor $M$ on $X$. 
We say that $M$ is $f$-{\em ample}
if we can write $M=\sum_{i=1}^r a_iM_i$ 
for some $r \geq 1$, positive real numbers $a_i$ and 
$f$-ample Cartier divisors $M_i$. 
We say that $M$ is $f$-{\em big} if we can write $M=A+E$ 
for some $f$-ample $\R$-Cartier $\R$-divisor $A$ and effective $\R$-divisor $E$. 
We can define $f$-nef $\R$-divisors in the same way as in \cite[Definition 1.4]{Kol13}. 
We say that $M$ is \textit{$f$-numerically-trivial}, denoted by $M \equiv_f 0$, 
if both $M$ and $-M$ are $f$-nef. 

\item Let $\Delta = \sum r_i D_i$ be an $\mathbb{R}$-divisor where $D_i$ are distinct prime divisors. 
We define $\Delta ^{\ge 1} := \sum_{r_i \ge 1} r_i  D_i$ and $\Delta ^{\wedge 1} := \sum r_i ' D_i$ where $r_i ' := \min \{ r_i, 1  \}$. 
We also define $\Delta ^{> 1}$ and $\Delta ^{< 1}$ similarly. 
Moreover, we denote $\{ \Delta \} = \Delta - \lfloor \Delta \rfloor$. 

\item A \textit{sub-log pair} $(X, \Delta)$ over a filed $k$ consists of a normal variety $X$ over $k$ and an 
$\mathbb{R}$-divisor $\Delta$ such that 
$K_X + \Delta$ is $\mathbb{R}$-Cartier. 
A \textit{log pair} $(X, \Delta)$ is a sub-log pair such that 
$\Delta \geq 0$.

\item 
For a closed subscheme $V$ of a scheme $X$, 
we denote by $I_V$ the quasi-coherent ideal sheaf corresponding to $V$. 
For an effective $\mathbb{R}$-divisor $D$ on a normal variety $X$ over a field, 
we denote by $I_D:=I_{\mathcal D}$, 
where $\mathcal D$ denotes the closed subscheme of $X$ 
corresponding to the coherent ideal sheaf $\MO_X \left( -\ulcorner D\urcorner \right)$ on $X$. 

\item 
For terminology on derived category, 
we refer to \cite{Wei94}. 
Especially, for morphisms $X \xrightarrow{f} Y \xrightarrow{g} Z$ of $\Z/p\Z$-schemes 
and a $W\MO_X$-module $M$, 
we shall frequently use the isomorphism: $Rg_* \circ Rf_* (M) \simeq R(g \circ f)_* (M)$ 
(cf. \cite[Corollary 10.8.10]{Wei94}). 
\end{itemize}

\subsection{Results on minimal model program}\label{s-mmp-sing}

\begin{dfn}\label{d-non-klt}
Let $k$ be a field. 
\begin{enumerate}
\item 
We say that $(X, \Delta)$ is a {\em sub-klt} pair over $k$ if 
$(X, \Delta)$ is a sub-log pair over $k$ such that $(X, \Delta)$ is klt in the sense of \cite[Definition 2.8]{Kol13}. 
Given a point $x$ of $X$, 
we say that $(X, \Delta)$ is {\em sub-klt around} $x$ if there exists an open neighbourhood $X'$ of $x \in X$ such that $\left( X', \Delta|_{X'} \right)$ is sub-klt. 
\item 
We say that $(X, \Delta)$ is {\em klt} (resp.\ {\em log canonical}) if 
$(X, \Delta)$ is a log pair such that $(X, \Delta)$ is klt (resp.\ log canonical) in the sense of \cite[Definition 2.8]{Kol13}. 
\item 
Given a point $x$ of $X$, 
we say that $x$ is a {\em non-klt point} of $(X, \Delta)$ if $(X, \Delta)$ is not sub-klt around $x$. 
We define $\Nklt(X, \Delta)$, called the {\em non-klt locus of $(X, \Delta)$}, 
as the subset of $X$ that consists of all the non-klt points. 
Note that $\Nklt(X, \Delta)$ is a closed subset of $X$, as its complement is an open subset of $X$ by definition. 
We equip $\Nklt(X, \Delta)$ with the reduced scheme structure. 
\end{enumerate}
\end{dfn}

\begin{rem}\label{r-ideal-radical}
For coherent ideal sheaves $I, J \subset \mathcal{O}_X$ with $\sqrt{I} = \sqrt{J}$, 
it follows that $WI_{\mathbb{Q}} = WJ_{\mathbb{Q}}$ (cf.\ \cite[Proposition 2.1]{BBE07}). 
Hence we need not to care about the scheme structure of $\Nklt (X, \Delta)$ when we consider $WI_{\Nklt (X, \Delta), \mathbb{Q}}$. 
By the same reason, 
if $X$ is a non-reduced noetherian scheme and $j:X_{\red} \to X$ 
denotes the closed immersion from its reduced structure $X_{\red}$, 
then the induced homomorphism $W\MO_{X, \Q} \to j_*W\MO_{X_{\red}, \Q}$ 
is an isomorphism. See Lemma \ref{l-GNT2.22} for a generalisation. 
\end{rem}

\begin{dfn}
A log pair $(X, \Delta)$ is called \textit{dlt} if the coefficients of $\Delta$ are at most one and 
there exists a log resolution $g: Y \to X$ of $(X, \Delta)$ with the condition that 
$a_E (X, \Delta) > 0$ holds for any $g$-exceptional prime divisor $E$ on $Y$. 
\end{dfn}

\begin{dfn}\label{d-Fano-type}
Given a field $k$ and a projective $k$-morphism $f:X \to Z$ 
from a normal $k$-variety $X$ to a quasi-projective $k$-scheme $Z$, 
we say that $X$ is {\em of Fano type over $Z$} 
if there exists an effective $\mathbb{R}$-divisor $\Delta$ on $X$ such that 
 $(X, \Delta)$ is klt and $-(K_X+\Delta)$ is $f$-nef and $f$-big. 
\end{dfn}

\begin{dfn}
Given a field $k$, a log pair $(X, \Delta)$ over $k$, 
and projective $k$-morphisms $X \xrightarrow{f_1} Z_1 \to Z_2$ 
of quasi-projective $k$-schemes, 
we say that $f_1:X \to Z_1$ is a $(K_X+\Delta)$-{\em Mori fibre space over} $Z_2$ 
if $\dim X>\dim Z_1$, $\left( f_1 \right)_*\MO_X=\MO_{Z_1}$, $\rho(X/Z_1)=1$ and $-(K_X+\Delta)$ is $f_1$-ample. 
If $Z_2=\Spec\,k$, then $f_1:X \to Z_1$ is simply called a $(K_X+\Delta)$-{\em Mori fibre space}. 
\end{dfn}

\begin{lem}\label{l-nklt-crep}
Let $k$ be a field and let $f:X \to Y$ be a projective birational $k$-morphism of normal varieties over $k$. 
Let $(Y, \Delta_Y)$ be a sub-log pair and 
let $\Delta$ be the $\R$-divisor defined by 
$K_X+\Delta = f^*(K_Y+\Delta_Y)$. 
Then the following hold. 
\begin{enumerate}
\item Let $x$ be a closed point of $X$. 
If $x$ is a non-klt point of $(X, \Delta)$, then $f(x)$ is a non-klt point of $(Y, \Delta_Y)$. 
\item Let $y$ be a closed point of $Y$. 
If $y$ is a non-klt point of $\left( Y, \Delta_Y \right)$, then there exists a closed point $x$ of $X$ 
such that $f(x)=y$ and $x$ is a non-klt point of $(X, \Delta)$. 
\end{enumerate}
In particular, there exists a commutative diagram consisting of projective morphisms: 
$$\begin{CD}
\Nklt(X, \Delta) @>>> X\\
@VVf'V @VVfV\\
\Nklt(Y, \Delta_Y) @>>> Y,\\
\end{CD}$$
where the horizontal arrows are the induced closed immersions and 
$f'$ is a projective surjective morphism. 
In particular,  $f \left( \Nklt \left ( X, \Delta \right) \right)=\Nklt(Y, \Delta_Y)$. 
\end{lem}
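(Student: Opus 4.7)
The plan is to exploit the crepancy of $f$ encoded in the equation $K_X + \Delta = f^*(K_Y + \Delta_Y)$: for every prime divisor $E$ over $X$ (equivalently over $Y$, since $f$ is birational), the log discrepancies agree, $a_E(X, \Delta) = a_E(Y, \Delta_Y)$. From this I would first extract the following local equivalence: for any open subset $V \subseteq Y$ with preimage $U := f^{-1}(V)$, the sub-log pair $(V, \Delta_Y|_V)$ is sub-klt if and only if $(U, \Delta|_U)$ is sub-klt. One direction is immediate because every divisor over $U$ is a divisor over $V$. For the converse, any divisor $E$ over $V$ is realized on some projective birational model $V' \to V$; taking the normalization of the main component of $U \times_V V'$ produces a projective birational model over $U$ on which $E$ appears as a divisor, with log discrepancy matching that over $V$ by the crepancy of $U \to V$.

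With this equivalence in hand, both parts are clean contrapositives. For (1), if $x \in \Nklt(X, \Delta)$ but $f(x)$ were klt on $Y$, there would exist an open neighborhood $V$ of $f(x)$ with $(V, \Delta_Y|_V)$ sub-klt; then $U := f^{-1}(V)$ is an open neighborhood of $x$ which is sub-klt by the equivalence, contradicting $x \in \Nklt(X, \Delta)$. For (2), suppose the fiber $f^{-1}(y)$ were disjoint from $\Nklt(X, \Delta)$. Since $f$ is projective, $f(\Nklt(X, \Delta))$ is closed in $Y$ and misses $y$, so some open neighborhood $V$ of $y$ satisfies $f^{-1}(V) \cap \Nklt(X, \Delta) = \emptyset$. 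Thus $(f^{-1}(V), \Delta|_{f^{-1}(V)})$ is sub-klt, which by the equivalence forces $(V, \Delta_Y|_V)$ to be sub-klt, contradicting $y \in \Nklt(Y, \Delta_Y)$. Finally, the nonempty closed subset $f^{-1}(y) \cap \Nklt(X, \Delta)$ of the projective $k(y)$-scheme $f^{-1}(y)$ contains a point closed in $f^{-1}(y)$, which is closed in $X$ because $y$ is closed in $Y$ and $f$ is of finite type.

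The commutative diagram is produced from (1) together with the convention that $\Nklt$ carries the reduced structure, and $f'$ is projective as a restriction of the projective morphism $f$. The image $f(\Nklt(X, \Delta))$ is closed, and by (2) it contains every closed point of $\Nklt(Y, \Delta_Y)$; the Jacobson property of $k$-schemes of finite type then gives the set-theoretic equality $f(\Nklt(X, \Delta)) = \Nklt(Y, \Delta_Y)$, i.e.\ the surjectivity of $f'$. The only substantive point is the local crepancy equivalence at the beginning; once that is clearly formulated, everything else is straightforward bookkeeping, so I do not anticipate a genuine obstacle.
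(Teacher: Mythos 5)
Your proposal is correct and follows essentially the same route as the paper: the paper's entire proof consists of invoking the open-local crepancy equivalence (that $(V,\Delta_Y|_V)$ is sub-klt if and only if $\left( f^{-1}(V), \Delta|_{f^{-1}(V)} \right)$ is, via \cite[Lemma 2.30]{KM98}), which is exactly your key step, with the remaining deductions left implicit. You merely spell out the contrapositive arguments and the closedness/Jacobson bookkeeping that the paper omits.
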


\begin{proof}
Both of the assertions follow from the fact that $\left( U, \Delta |_U \right)$ is sub-klt if and only if $\left( f^{-1}(U), \Delta_Y |_{f^{-1}(U)} \right)$ is sub-klt 
for any open subset $U \subset X$ (cf.\ \cite[Lemma 2.30]{KM98}). 
\end{proof}

\begin{prop}\label{p-nklt-non-negative}
Let $(X, \Delta)$ be a $\Q$-factorial sub-log pair over a field such that 
$\bigl( X,  ( \Delta^{> 0} ) ^{< 1} \bigr)$ is klt. 
Then it holds that 
$$\Nklt(X, \Delta)=\Nklt \left( X, \Delta^{> 0} \right)=\Supp \Delta^{\geq 1}.$$
\end{prop}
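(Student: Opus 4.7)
The plan is to prove the three-term cyclic inclusion
\[
\Supp \Delta^{\geq 1} \subseteq \Nklt(X, \Delta) \subseteq \Nklt(X, \Delta^{>0}) \subseteq \Supp \Delta^{\geq 1},
\]
which at once forces both equalities. Each of these three inclusions admits a short local argument, and I expect the middle one to be the only place where the $\Q$-factoriality hypothesis is genuinely needed.

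For the first inclusion, I would argue that if $D$ is a prime divisor with coefficient $r_D := \coeff_D \Delta \geq 1$, then $D$ itself gives a divisorial valuation with center $D \subset X$ and discrepancy $a_D(X, \Delta) = -r_D \leq -1$. Hence no point of $D$ can admit a sub-klt neighbourhood in $(X, \Delta)$, which gives $\Supp \Delta^{\geq 1} \subseteq \Nklt(X, \Delta)$.

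For the inclusion $\Nklt(X, \Delta^{>0}) \subseteq \Supp \Delta^{\geq 1}$, I would take a point $x \notin \Supp \Delta^{\geq 1}$ and note that on some open neighbourhood $U$ of $x$ the two divisors $\Delta^{>0}|_U$ and $(\Delta^{>0})^{<1}|_U$ agree, since no component of $\Delta$ of coefficient $\geq 1$ meets $U$. The hypothesis that $\bigl(X, (\Delta^{>0})^{<1}\bigr)$ is klt then ensures it is sub-klt at $x$, and the same is thus true for $(X, \Delta^{>0})$.

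Finally, to obtain $\Nklt(X, \Delta) \subseteq \Nklt(X, \Delta^{>0})$, I would set $N := \Delta^{>0} - \Delta \geq 0$ and invoke $\Q$-factoriality so that $N$ is $\R$-Cartier with $f^*N \geq 0$ on any log resolution $f : Y \to X$ of $(X, \Supp \Delta)$. From the identity $f^*(K_X + \Delta^{>0}) = f^*(K_X + \Delta) + f^* N$ I would deduce that $a_E(X, \Delta^{>0}) \leq a_E(X, \Delta)$ for every prime divisor $E$ on $Y$, so that if $(X, \Delta^{>0})$ is sub-klt at a point $x$, then so is $(X, \Delta)$. The hard part, if any, is merely to keep straight the sign conventions for discrepancies of $\R$-divisors that may have negative coefficients; once this is in place, all three inclusions are essentially automatic.
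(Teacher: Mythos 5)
Your proposal is correct and follows essentially the same route as the paper: the paper likewise establishes the cyclic chain $\Supp \Delta^{\geq 1} \subset \Nklt(X,\Delta) \subset \Nklt\left( X, \Delta^{>0} \right) \subset \Supp \Delta^{\geq 1}$, obtaining the last inclusion from the klt hypothesis on $\bigl( X, (\Delta^{>0})^{<1} \bigr)$, dismissing the middle one as ``clear'' (it is exactly your $\Q$-factoriality argument with $N = \Delta^{>0}-\Delta \geq 0$), and proving the first via general points of each component $D$ of $\Supp\Delta^{\geq 1}$, which is equivalent to your direct observation that every neighbourhood of a point of $D$ meets $D$. The only cosmetic discrepancy is notational: the paper's $a_E$ denotes log discrepancy, so your condition $a_D(X,\Delta)=-r_D\leq -1$ should read $a_D(X,\Delta)=1-r_D\leq 0$ in the paper's convention.
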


\begin{proof}
The second equality follows from the fact that 
$\bigl( X, ( \Delta^{> 0} ) ^{< 1} \bigr)$ is klt. 
Clearly the inclusion $\Nklt(X, \Delta) \subset \Nklt \left( X, \Delta^{> 0} \right)$ holds. 
It suffices to prove the opposite one. 
Let $D$ be a prime divisor contained in $\Supp \Delta^{\geq 1}$. 
Since $\Nklt(X, \Delta)$ is a closed subset containing general closed points of $D$, 
we have that $D \subset \Nklt(X, \Delta)$. 
In particular, $\Supp \Delta^{\geq 1} \subset \Nklt(X, \Delta)$. 
\end{proof}

\begin{lem}\label{l-perturb-dlt}
Let $\mathbb{K} \in \{ \mathbb{Q}, \mathbb{R} \}$. Let $k$ be a perfect field of characteristic $p>0$. 
Let $(X, \Delta)$ be a quasi-projective dlt pair over $k$ with $\dim X \leq 3$. 
Let $A$ be an ample $\mathbb{K}$-Cartier $\mathbb{K}$-divisor on $X$. 
Then there exists an effective $\mathbb{K}$-Cartier $\mathbb{K}$-divisor $A'$ on $X$ 
such that $A \sim_{\mathbb{K}} A'$ and $\left( X, \Delta+A' \right)$ is dlt. 
\end{lem}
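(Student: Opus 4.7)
The plan is to exhibit $A'$ as a small, very general effective representative of $A$ in its $\mathbb{K}$-linear equivalence class, and then verify dltness on the very same log resolution that witnesses dltness of $(X, \Delta)$.

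First, by the definition of dlt, fix a log resolution $g: Y \to X$ of $(X, \Delta)$ such that $a_E(X, \Delta) > 0$ for every $g$-exceptional prime divisor $E$ on $Y$; existence of such $g$ in positive characteristic with $\dim X \leq 3$ relies on the Cossart--Piltant resolution of singularities for threefolds over perfect fields. Next, write $A = \sum_{i=1}^r a_i A_i$ as a positive $\mathbb{K}$-linear combination of ample Cartier divisors $A_i$ on $X$. For a positive integer $m$ with $m A_i$ very ample for all $i$, the projection formula (together with $g_*\mcO_Y = \mcO_X$) gives $H^0(Y, m g^*A_i) = H^0(X, m A_i)$, so $|m g^* A_i|$ coincides with $g^*|m A_i|$ and is basepoint-free on $Y$.

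Now apply Bertini on the smooth scheme $Y$ to choose general members $D_i \in |m A_i|$ such that: (a) each $D_i$ is a prime divisor which is not a component of $\Supp(\Delta)$ and does not contain the image $g(E)$ of any $g$-exceptional prime $E$; (b) $\sum_i \widetilde{D_i} + g^{-1}\Supp(\Delta) + \Ex(g)$ has simple normal crossings support, where $\widetilde{D_i}$ denotes the strict transform. Condition (a) forces $g^*D_i = \widetilde{D_i}$, so $g^*D_i$ has no component supported on $\Ex(g)$. Set $A' := \sum_{i=1}^r (a_i/m) D_i$; then $A' \sim_{\mathbb{K}} A$ and $A' \geq 0$. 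Taking $m$ sufficiently large makes the coefficients $a_i/m$ arbitrarily small, so together with (a) the coefficients of $\Delta + A'$ remain bounded by $1$.

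Finally, the same $g$ is a log resolution of $(X, \Delta + A')$ by (b), and for every $g$-exceptional prime $E$ one computes
\[
a_E(X, \Delta + A') = a_E(X, \Delta) - \sum_i (a_i/m)\,\ord_E(g^*D_i) = a_E(X, \Delta) > 0,
\]
since $\ord_E(g^*D_i) = 0$ by (a). Hence $(X, \Delta + A')$ is dlt.

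The main obstacle is the Bertini step over a possibly finite perfect field $k$: classical Bertini for smoothness and transversality applies cleanly only when $k$ is infinite. If $k$ is finite one could pass to $\bar{k}$, produce $A'_{\bar{k}}$ there, and then descend to $k$ by summing Galois orbits (or by invoking a Bertini-type statement for basepoint-free linear systems over finite fields). A secondary subtlety is checking that sufficiently general $D_i$ genuinely avoid the finitely many images $g(E)$, which is automatic for $m \gg 0$ since $|m A_i|$ gives a projective embedding of $X$ and the $g(E)$ are proper closed subsets.
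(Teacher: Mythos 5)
Your overall plan (perturb by general members of very ample systems, keep the same log resolution, observe that the exceptional log discrepancies do not change) is the route the paper itself takes, since its proof simply invokes the argument of \cite[Lemma 9.2]{Bir16} together with \cite[Theorem 1.1]{Poo04} for finite base fields. However, your write-up has a genuine gap at step (b). On $Y$ the linear system $g^*|mA_i|$ is only base point free, not very ample, and in characteristic $p$ Bertini for smoothness and transversality of \emph{general members of base-point-free systems} is false; the sentence ``apply Bertini on the smooth scheme $Y$'' is exactly the unavailable statement, and the problem has nothing to do with finiteness of $k$ (the only Bertini issue you flag). Worse, (b) can be genuinely unachievable for a legitimate choice of $g$. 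Take $X$ smooth containing $\mathbb{A}^3$, $\Delta=0$, and let $g$ first blow up the line $C=\{x=y=0\}$ and then blow up the curve $Z=\{u=0,\ z=v^p\}$ inside the first exceptional divisor; $Z$ is smooth but purely inseparable of degree $p$ over $C$. In the relevant chart the composite morphism is $(u,t,v)\mapsto (x,y,z)=(u,\,uv,\,v^p+ut)$, and the pullback of any hyperplane $\alpha x+\beta y+\gamma z=\delta$ with $\gamma\neq 0$ has vanishing differential (in characteristic $p$) at the point $u=0$, $v^p=\delta/\gamma$, $t=-(\alpha+\beta v)/\gamma$, which lies on the second exceptional divisor; the same computation works for general members of $|mA_i|$ for every $m$. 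So on this $Y$ --- which is a perfectly valid log resolution witnessing dltness of $(X,\Delta)$ --- no member of $|mA_i|$ has the transversality you require, even over an algebraically closed field. Your discrepancy computation and condition (a) are fine, but the same $g$ simply need not be a log resolution of $(X,\Delta+A')$, so the proof as written does not close.

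A correct argument has to deal with the exceptional locus separately: for instance, verify snc-ness of $\Delta+A'$ only over the snc locus of $(X,\Delta)$ (where $X$ is smooth and classical hyperplane-section Bertini applies in any characteristic, using $|mA_i|$ \emph{very ample} on $X$), and then control the log discrepancies of the finitely many divisors one is forced to extract over the remaining locus, using that general members contain no $g(E)$ and that the coefficients $a_i/m$ can be made small compared with fixed multiplicity bounds and with $\min_E a_E(X,\Delta)$ --- or choose the resolution with much more care. This is precisely the content the paper outsources to the proof of \cite[Lemma 9.2]{Bir16}. Separately, your descent from $\overline{k}$ by ``summing Galois orbits'' is also shaky (the conjugate members are not independently general, so after rescaling their union need not be transversal to the fixed configuration); the paper instead handles finite $k$ by Poonen's Bertini theorem, which you mention only as an aside.
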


\begin{proof}
If $k$ is an infinite field, 
then the proof of \cite[Lemma 9.2]{Bir16} works without any changes. 
Assume that $k$ is a finite field. 
Thanks to \cite[Theorem 1.1]{Poo04}, we can still make use of Bertini's theorem. 
Hence, we can apply the same argument as in \cite[Lemma 9.2]{Bir16}. 
\end{proof}

The existence of a minimal model program is known for log canonical threefolds. 
For terminology appearing in the following theorem, 
we refer to \cite[Subsection 2.4]{HNT}. 

\begin{thm}\label{t-lc-MMP}
Let $k$ be a perfect field of characteristic $p>5$. 
Let $(X, \Delta)$ be a three-dimensional $\Q$-factorial log canonical pair over $k$, 
where $\Delta$ is an $\R$-divisor. 
Let $f:X \to Z$ be a projective $k$-morphism to a quasi-projective $k$-scheme $Z$. 
Then there exists a $(K_X+\Delta)$-MMP over $Z$ that terminates. 
In other words, 
there is a sequence of birational maps of three-dimensional normal varieties:  
\[
X=:X_0 \overset{\varphi_0}{\dashrightarrow} X_1 \overset{\varphi_1}{\dashrightarrow} \cdots \overset{\varphi_{\ell-1}}{\dashrightarrow} X_{\ell}
\]
such that if $\Delta_i$ denotes the proper transform of $\Delta$ on $X_i$, then 
the following properties hold:  
\begin{enumerate}
\item 
For any $i \in \{0, \ldots, \ell\}$, 
$\left( X_i, \Delta_i \right)$ is a $\Q$-factorial log canonical pair which is projective over $Z$.
\item 
For any $i \in \{0, \ldots, \ell-1\}$, 
$\varphi_i:X_i \dashrightarrow X_{i+1}$ is either a $\left( K_{X_i}+\Delta_i \right)$-divisorial contraction over $Z$ or a $\left( K_{X_i}+\Delta_i \right)$-flip over $Z$. 
\item 
If $K_X+\Delta$ is pseudo-effective over $Z$, then $K_{X_{\ell}}+\Delta_{\ell}$ is nef over $Z$. 
\item 
If $K_X+\Delta$ is not pseudo-effective over $Z$, then 
there exists a $\left( K_{X_{\ell}}+\Delta_{\ell} \right)$-Mori fibre space $X_{\ell} \to Y$ over $Z$. 
\end{enumerate}
\end{thm}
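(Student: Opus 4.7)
The plan is to run a $(K_X+\Delta)$-MMP over $Z$ with scaling of an ample $\mathbb{Q}$-Cartier $\mathbb{Q}$-divisor, reducing each individual step to the corresponding result for three-dimensional $\mathbb{Q}$-factorial klt pairs in characteristic $p>5$, which is collected in \cite[Subsection~2.4]{HNT} from the work of Hacon--Xu, Birkar, Cascini--Tanaka--Xu, and Birkar--Waldron.

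First I would establish the cone and contraction theorems for the LC pair $(X,\Delta)$ over $Z$ by passing to a $\mathbb{Q}$-factorial dlt modification $\pi\colon(Y,\Delta_Y)\to(X,\Delta)$ with $K_Y+\Delta_Y=\pi^{\ast}(K_X+\Delta)$, and then running an MMP on $Y$ over $Z$ via a klt perturbation of $\Delta_Y$. Using Lemma~\ref{l-perturb-dlt}, one decreases the coefficients of $\lfloor\Delta_Y\rfloor$ slightly and adds a small multiple of an ample divisor to produce a klt boundary $\Delta_Y^{\ast}\le\Delta_Y$ that maintains negativity along a chosen extremal ray. A $(K_X+\Delta)$-negative extremal ray $R/Z$ then lifts to a $(K_Y+\Delta_Y^{\ast})$-negative extremal ray, whose extremal contraction --- produced by the klt cone and contraction theorems --- descends along $\pi$ to the required contraction of $R$ on $X$. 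If this contraction is small, the flip is produced analogously via the existence of klt flips in dimension three.

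Termination is handled by running the MMP with scaling $\lambda_i:=\inf\{\,t\ge 0 : K_{X_i}+\Delta_i+tA_i \text{ is nef over } Z\,\}$ and combining \emph{special termination} along the support of $\lfloor\Delta\rfloor$ --- which reduces, by divisorial adjunction, to termination of klt MMP with scaling on the components of $\lfloor\Delta\rfloor$, i.e.\ on surfaces, where termination is classical --- with termination of the klt MMP with scaling on the complement. The main obstacle will be ensuring that $\mathbb{Q}$-factoriality and log canonicity are preserved along the sequence and that the induction driving special termination is set up correctly in this LC setting; both points are the content of \cite[Subsection~2.4]{HNT}.
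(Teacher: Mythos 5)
The first thing to note is that the paper does not prove this statement: its entire proof is ``See \cite[Theorem 1.1]{HNT}'', i.e.\ the theorem is quoted verbatim from Hashizume--Nakamura--Tanaka, where the log canonical MMP with $\R$-coefficients in characteristic $p>5$ is the main result. Your proposal is therefore an attempt to sketch a proof of that cited theorem from the klt ingredients, which is legitimate in spirit, but as written it has genuine gaps precisely at the points that make \cite{HNT} a paper rather than a remark.

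Concretely: (i) The termination scheme ``special termination along $\Supp\lfloor\Delta\rfloor$ plus termination of the klt MMP on the complement'' does not apply to a pair that is merely log canonical. The non-klt locus of $(X,\Delta)$ need not be contained in $\Supp\lfloor\Delta\rfloor$ (lc centres can arise from components of coefficient $<1$ meeting badly, e.g.\ four lines through a point with coefficient $\tfrac12$), and since $X$ itself may be strictly lc one cannot make the pair klt by shrinking $\Delta$; so after the flipping loci have left $\lfloor\Delta\rfloor$ the remaining sequence is still not a klt MMP and klt termination cannot be invoked. The known arguments handle this by lifting the entire MMP to a dlt modification and descending it, which is a substantial construction, not a one-line reduction. (ii) The existence of the contraction and of the flip of a $(K_X+\Delta)$-negative extremal ray is not ``analogous'' to the klt case: to descend semiampleness from the dlt modification $Y$ you already need a nef supporting divisor for the ray on $X$, i.e.\ the cone/rationality theorem for the lc pair itself (which, in characteristic $p$, is not obtained by bend-and-break on the singular pair and is itself part of the content of Birkar--Waldron and \cite{HNT}); and the existence of lc flips with $\R$-coefficients is one of the main theorems there, not a routine consequence of klt flips. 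There is also a small internal inconsistency: a boundary obtained by adding a multiple of an ample divisor cannot satisfy $\Delta_Y^{*}\le\Delta_Y$. So either cite \cite[Theorem 1.1]{HNT} as the paper does, or be prepared to reproduce its proof; the outline as it stands does not close these gaps.
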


\begin{proof}
See \cite[Theorem 1.1]{HNT}. 
\end{proof}

\begin{prop}\label{p-dlt-modif} 
Let $(X, \Delta)$ be a three-dimensional quasi-projective log pair over a perfect field $k$ of characteristic $p > 5$. 
Then there exists a projective birational morphism $f:Y \to X$ that satisfies the following conditions: 
\begin{enumerate}
\item $a_F (X, \Delta) \le 0$ holds for any $f$-exceptional prime divisor $F$. 
\item $\left( Y, \Delta _Y ^{\wedge 1} \right)$ is a $\Q$-factorial dlt pair,  
where $\Delta _Y$ is the $\mathbb{R}$-divisor defined  by $K_Y + \Delta _Y = f^* (K_X + \Delta)$ 
(see Subsection \ref{ss-notation} for the definition of $\Delta _Y ^{\wedge 1}$). 
\item 
$\Nklt \left( Y, \Delta _{Y} \right) = f ^{-1} \left( \Nklt \left( X, \Delta \right) \right)$ holds. 
\end{enumerate} 
\end{prop}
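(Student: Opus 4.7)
The plan is to construct $Y$ as the output of a relative MMP on a log resolution. First, I would take a log resolution $g : W \to X$ of $(X, \Supp\,\Delta)$; let $\tilde{\Delta}$ be the strict transform of $\Delta$ and let $E_1, \ldots, E_r$ be the $g$-exceptional prime divisors, with $E := \sum_i E_i$. Set $B := \tilde{\Delta}^{\wedge 1} + E$. By construction the coefficients of $B$ lie in $[0,1]$ and $(W, B)$ is log smooth, so in particular $\mathbb{Q}$-factorial dlt.

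Next, I would run a $(K_W + B)$-MMP over $X$. Since $(W, B)$ is a three-dimensional $\mathbb{Q}$-factorial log canonical pair, Theorem \ref{t-lc-MMP} guarantees such an MMP exists and terminates. Each intermediate model is birational over $X$: a fibre-type contraction over $X$ would factor some $X_i \to X$ through a scheme of strictly smaller dimension, contradicting the surjectivity of $X_i \to X$. Hence the MMP terminates at a $\mathbb{Q}$-factorial dlt model $f : Y \to X$ with $K_Y + B_Y$ being $f$-nef, where $B_Y$ denotes the proper transform of $B$.

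For (1), let $\Delta_Y$ be defined by $K_Y + \Delta_Y = f^*(K_X + \Delta)$, write $\Delta = \sum_D r_D D$ with $D$ prime, and let $I \subseteq \{1, \ldots, r\}$ index the $E_i$ that survive to $Y$. A direct computation of coefficients shows
\[
B_Y - \Delta_Y = M - N, \qquad M := \sum_{i \in I} a_{E_i}(X, \Delta)\, E_{i,Y}, \qquad N := \sum_D \max(r_D - 1, 0)\, f_*^{-1}D,
\]
so $M$ is $f$-exceptional while $N \geq 0$ is non-exceptional. Since $K_Y + B_Y$ is $f$-nef and $K_Y + \Delta_Y$ is $f$-numerically trivial, the $\mathbb{R}$-Cartier divisor $N - M$ is $f$-anti-nef, and its pushforward to $X$ is $f_* N = \sum_D \max(r_D - 1, 0)\, D \geq 0$. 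The negativity lemma then gives $N - M \geq 0$; restricting to each $E_{i,Y}$ with $i \in I$, where $N$ vanishes, yields $a_{E_i}(X, \Delta) \leq 0$. For (2), by (1) the coefficient of $\Delta_Y$ at each $E_{i,Y}$ is at least $1$, hence $\Delta_Y^{\wedge 1} = B_Y$, which is $\mathbb{Q}$-factorial dlt as already observed.

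Finally for (3), Lemma \ref{l-nklt-crep}(1) gives $\Nklt(Y, \Delta_Y) \subseteq f^{-1}(\Nklt(X, \Delta))$. Conversely, if $y \in f^{-1}(\Nklt(X, \Delta))$ and $y \notin \Ex(f)$, then $f$ is a local isomorphism at $y$ and so $y \in \Nklt(Y, \Delta_Y)$; otherwise $y$ lies on some $f$-exceptional $E_{i,Y}$ whose coefficient in $\Delta_Y$ is at least $1$ by (1), so $y \in \Supp\,\Delta_Y^{\geq 1} \subseteq \Nklt(Y, \Delta_Y)$. The main technical obstacle I anticipate is the negativity-lemma step, where one must decompose $B_Y - \Delta_Y$ cleanly into its $f$-exceptional and non-exceptional parts and track the signs; a secondary concern is to confirm that $\mathbb{Q}$-factoriality and the dlt property genuinely propagate through the characteristic-$p$ three-dimensional MMP provided by Theorem \ref{t-lc-MMP}.
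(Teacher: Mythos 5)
Your negativity-lemma computation is correct and gives (1) and (2): this is the standard dlt-modification construction (the paper itself delegates exactly this to \cite[Proposition 3.5]{HNT}), and your worry about dlt and $\Q$-factoriality persisting along the characteristic-$p$ MMP is indeed only a standard point. The genuine gap is in the reverse inclusion of (3). Your dichotomy rests on the claim that every point of $\Ex(f)$ lies on some $f$-exceptional prime divisor, i.e.\ on purity of the exceptional locus; but purity requires the \emph{target} $X$ to be $\Q$-factorial (cf.\ \cite[Corollary 2.63]{KM98}), and $\Q$-factoriality of $Y$ gives nothing (a small modification has smooth source and one-dimensional exceptional locus). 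Here $(X,\Delta)$ is an arbitrary log pair, and by your own (1) every $f$-exceptional divisor has $a\le 0$ and hence maps into $\Nklt(X,\Delta)$; so over a non-$\Q$-factorial point of $\Nklt(X,\Delta)$ the morphism $f$ can contract curves contained in no $f$-exceptional divisor, and for such points neither branch of your argument says anything.

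This is not merely a missing reference: the MMP you run can terminate at a model violating (3). Take $X'$, $S$ smooth with $(X',S)$ plt, a curve $C\cong \mathbb{P}^1$ with $C\not\subset S$, $S\cdot C>0$, $(K_{X'}+S)\cdot C=0$, and a projective birational $f'\colon X'\to X$ contracting exactly $C$ with $K_{X'}+S=f'^*(K_X+\Delta)$ for $\Delta:=f'_*S$ (standard flipping-contraction geometry, with $X$ non-$\Q$-factorial at $x:=f'(C)$). Then $x\in \Nklt(X,\Delta)=f'(S)$ by Lemma \ref{l-nklt-crep}, since $C$ meets $S$. Blowing up $C$ gives a log resolution $W\to X$ with a single exceptional divisor $E$, of positive log discrepancy, and with $B=S_W+E$ one computes $K_W+B\equiv_X 2E$; contracting the fibre ray of $E$ is a $(K_W+B)$-negative divisorial contraction landing on $X'$, where $K_{X'}+S$ is nef over $X$, so this is a complete run of your MMP with output $Y=X'$. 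Conditions (1) and (2) hold there, but $\Nklt(Y,\Delta_Y)=S$ while $f^{-1}(x)=C\not\subset S$, so (3) fails; the flip of $C$, on which the flipped curve does lie in the boundary, is the model one actually wants. So (3) requires an additional argument that steers the choice of minimal model, for instance further crepant modifications over $X$ forcing contracted curves that meet $\llcorner \Delta_Y^{\wedge 1}\lrcorner$ without being contained in it into the boundary, which is precisely the extra content of \cite[Proposition 3.5]{HNT}.
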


\begin{proof}
See \cite[Proposition 3.5]{HNT}. 
\end{proof}

For later use, we establish the following result on plt centres.

\begin{prop}\label{p-plt-centre} 
Let $(X, \Delta)$ be a three-dimensional plt pair over a perfect field $k$ of characteristic $p > 5$. 
Set $S:=\llcorner \Delta \lrcorner$. 
Then the normalisation $\nu:S^N \to S$ of $S$ is a universal homeomorphism. 
\end{prop}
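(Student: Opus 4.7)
The plan is to apply plt adjunction along $S$, reducing the question to a statement about the conductor of $\nu$ on a klt surface, and then use a connectedness theorem together with a characteristic-$p$ analysis of the different. First some reductions: being a universal homeomorphism descends along faithfully flat base change, and the plt condition is preserved by base change to the algebraic closure of a perfect field, so we may assume $k = \overline{k}$. Since $(X,\Delta)$ is plt, the irreducible components of $S = \llcorner \Delta \lrcorner$ are pairwise disjoint in a neighbourhood of each closed point (two distinct components meeting would yield a codimension-two non-klt centre, contradicting plt). Shrinking $X$, we may therefore assume $S$ is a prime divisor.

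By plt adjunction for three-dimensional pairs in characteristic $p>5$ (as available in the framework of \cite{HNT}), there is an effective divisor $\Theta$ on the normal surface $S^N$ satisfying $K_{S^N}+\Theta = \nu^*((K_X+\Delta)|_{S})$ with $(S^N,\Theta)$ klt; in particular every coefficient of $\Theta$ lies in $[0,1)$. The conductor Weil divisor $C$ of $\nu$ is supported inside $\Supp\Theta$, and the klt inequality constrains how $C$ can appear.

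To control the topology of $\nu$, take a log resolution $g:Y\to X$ of $(X,\Delta)$ and write $K_Y+\Delta_Y = g^*(K_X+\Delta)$. By the plt hypothesis, the strict transform $\widetilde{S}$ of $S$ is the unique prime divisor on $Y$ with coefficient $\geq 1$ in $\Delta_Y$, so $\Nklt(Y,\Delta_Y)=\widetilde{S}$. Applying the Koll\'ar--Shokurov connectedness theorem for threefolds in characteristic $p>5$ (compare \cite{Bir16} and the arguments feeding into \cite{HNT}), the fibres of $g|_{\widetilde{S}} \colon \widetilde{S}\to S$ are geometrically connected; since this map factors as $\widetilde{S}\to S^N \to S$ and $\nu$ is finite, every fibre of $\nu$ must be a single point.

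It remains to verify that $\nu$ induces purely inseparable residue field extensions, which combined with the previous step yields the universal homeomorphism. For this one invokes the precise positive-characteristic form of Koll\'ar's conductor-in-different formula: any separable part of $C$ would contribute to $\Theta$ with coefficient at least $1$, contradicting klt-ness, so $C$ can only reflect purely inseparable phenomena. I expect this last step to be the main obstacle. In characteristic zero, klt-ness plus the standard conductor formula immediately gives $S = S^N$; in characteristic $p$ one must explicitly track the inseparable contribution, and it is here that the hypothesis $p>5$ enters, ensuring the availability of the plt adjunction formula and the MMP machinery for threefolds on which the argument rests.
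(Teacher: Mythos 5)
The decisive step of your argument --- that the fibres of $g|_{\widetilde{S}}\colon \widetilde{S}\to S$ (for a log resolution $g\colon Y\to X$) are geometrically connected ``by the Koll\'ar--Shokurov connectedness theorem in characteristic $p>5$'' --- is not available at this point, and in this paper it is in fact downstream of the very statement you are proving. What is known from \cite[Theorem 1.8]{Bir16} is only that, for a birational contraction, the non-klt locus is connected in sufficiently small \emph{neighbourhoods} of a fibre; the fibrewise statement you need for a log resolution is exactly Proposition \ref{p-birat-connected} (and Theorem \ref{t-birat-connected2}), whose proof proceeds through Lemma \ref{l-strong-Birkar}, and the plt case of that lemma is settled precisely by invoking Proposition \ref{p-plt-centre}. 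So your proposal is circular relative to the paper's (and the existing literature's) logical order: you would be assuming a connectedness theorem that is proved \emph{using} the normalisation statement in question. In addition, your last step --- ruling out separable residue-field extensions via a positive-characteristic conductor-in-different analysis --- is left unproved, and you yourself flag it as the main obstacle; note that if you really had connectedness of fibres in the universal sense (over arbitrary field-valued points, which is the paper's definition of ``connected fibres''), no separate radiciality argument would be needed, since a finite surjective morphism with geometrically connected fibres is automatically a universal homeomorphism.

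For contrast, the paper's proof avoids both issues: it takes a dlt modification $f\colon Y\to X$ as in Proposition \ref{p-dlt-modif}, so that $(Y,\Delta_Y)$ is plt, $S_Y:=\llcorner \Delta_Y\lrcorner$ satisfies $S_Y=\Nklt(Y,\Delta_Y)=f^{-1}(S)$, and $S_Y$ is \emph{normal} by \cite[Theorem 2.11]{GNT} (normality of plt centres on $\Q$-factorial threefold plt pairs, $p>5$). The only connectedness used is the elementary fact that the fibres of the birational contraction $Y\to X$ are geometrically connected, which gives the same for $S_Y=f^{-1}(S)\to S$; factoring $S_Y\to S$ through $\nu\colon S^N\to S$ then shows $\nu$ is finite with geometrically connected fibres, hence a universal homeomorphism. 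Your idea of working with the smooth strict transform on a log resolution trades the known hard input (normality of plt centres) for an unavailable one (fibrewise connectedness for log resolutions); if you replace the log resolution by a dlt modification and use \cite[Theorem 2.11]{GNT}, your argument collapses to the paper's.
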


\begin{proof}
Let $f:Y \to X$ and  $\Delta _Y$ be as in Proposition \ref{p-dlt-modif}. 
Since $(X, \Delta)$ is plt, it follows from Proposition \ref{p-dlt-modif}(2) that $(Y, \Delta_Y)$ is also plt. 
For $S_Y:=\llcorner \Delta_Y \lrcorner$, 
we have that $f^{-1}(S)=f ^{-1} \left( \Nklt (X, \Delta) \right) =\Nklt \left( Y, \Delta _{Y} \right) = S_Y$, 
where the second equality holds by Proposition \ref{p-dlt-modif}(3). 
Hence, the induced morphism $S_Y \to S$ has connected fibres. 
Since $Y$ is $\Q$-factorial and $\left( Y, \Delta_Y \right)$ is plt, 
$S_Y$ is normal (cf. \cite[Theorem 2.11]{GNT}). 
Therefore, $S_Y \to S$ factors through the normalisation $\nu:S^N \to S$: 
\[
S_Y \to S^N \xrightarrow{\nu} S.
\] 
Then, $\nu$ is a finite morphism and has connected fibres, hence $\nu$ is a universal homeomorphism. 
\end{proof}

\subsection{Connectedness theorem for the birational case}

The purpose of this subsection is to establish 
the three-dimensional Koll\'ar--Shokurov connectedness theorem 
for the birational case (Theorem \ref{t-birat-connected2}). 
A key result is Proposition \ref{p-birat-connected}. 
To prove this proposition, we show Lemma \ref{l-strong-Birkar} (cf. \cite[Theorem 1.8]{Bir16}). 
We start with the following auxiliary result.

\begin{lem}\label{l-topology}
Let $X$ be a noetherian topological space (for definition, see \cite[page 5]{Har77}).  
Let $F$ be a closed subset of $X$ 
and let $\{N_i\}_{i \in I}$ be a set of closed subsets of $X$, 
where $I$ is a finite set. 
Set $N:=\bigcup_{i \in I} N_i$. 
Assume that the following hold. 
\begin{enumerate}
\item $N_i \cap F$ is connected for any $i \in I$ ($N_i \cap F$ is possibly empty). 
\item $N \cap U$ is connected for any sufficiently small open neighbourhood $U$ 
in $X$ of $F$. 
In other words, there exists an open subset $U_0$ of $X$ such that 
$F \subset U_0$ and if $U$ is an open subset of $X$ satisfying $F \subset U \subset U_0$, 
then $N \cap U$ is connected. 
\end{enumerate}
Then $N \cap F$ is connected. 
\end{lem}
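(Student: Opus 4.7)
\medskip

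My plan is to argue by contradiction. Suppose $N \cap F$ is disconnected, so we can write $N \cap F = A \sqcup B$ as a disjoint union of two non-empty subsets that are closed in $N \cap F$; since $N$ and $F$ are closed in $X$, both $A$ and $B$ are in fact closed in $X$. The goal will then be to exhibit an arbitrarily small open neighbourhood $U$ of $F$ for which $N \cap U$ is disconnected, contradicting hypothesis (2).

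The key combinatorial step uses hypothesis (1). Since each $N_i \cap F$ is connected (or empty), it must lie entirely in $A$, entirely in $B$, or be empty. This partitions the finite set $I$ into three pieces $I_A, I_B, I_0$ depending on which alternative holds. Set $N_A := \bigcup_{i \in I_A} N_i$, $N_B := \bigcup_{i \in I_B} N_i$ and $N_0 := \bigcup_{i \in I_0} N_i$; these are each closed subsets of $X$, and $N = N_A \cup N_B \cup N_0$, with $A = N_A \cap F$ and $B = N_B \cap F$.

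The two crucial disjointness facts are then $N_0 \cap F = \emptyset$ (by definition of $I_0$) and $(N_A \cap N_B) \cap F \subset A \cap B = \emptyset$ (note that the $N_i$ themselves can overlap freely outside $F$, so this is where hypothesis (1) really pays off). Consequently, $F$ is disjoint from the closed set $N_0 \cup (N_A \cap N_B)$, so its complement is an open set containing $F$. Intersecting with the neighbourhood $U_0$ furnished by hypothesis (2), I obtain an open neighbourhood $U$ of $F$ with $U \subset U_0$, $U \cap N_0 = \emptyset$, and $U \cap N_A \cap N_B = \emptyset$.

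On such a $U$ one has $N \cap U = (N_A \cap U) \sqcup (N_B \cap U)$, a disjoint union of two closed subsets of $N \cap U$, each non-empty because it contains $A$, respectively $B$ (both of which lie in $F \subset U$). This contradicts the connectedness of $N \cap U$ guaranteed by (2), completing the proof. The only real subtlety, and the step I would watch most carefully, is ensuring that hypothesis (1) correctly propagates to the separation statement $N_A \cap N_B \cap F = \emptyset$ above — everything else is essentially formal topology on a noetherian space (where finite unions and intersections of closed sets behave well, and shrinking $U$ creates no issues).
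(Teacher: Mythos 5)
Your proof is correct and uses essentially the same idea as the paper: partition $I$ according to which component of $N\cap F$ each $N_i\cap F$ sits in, observe that the corresponding unions of $N_i$'s are pairwise disjoint over $F$, and shrink $U_0$ to a neighbourhood $U$ on which that disjointness propagates to a disconnection of $N\cap U$. The only cosmetic differences are that you argue by contradiction with a two-block split of $N\cap F$ (and absorb the $i$ with $N_i\cap F=\emptyset$ into a third block $N_0$ that you remove by further shrinking $U$), whereas the paper first reduces to $N_i\cap F\neq\emptyset$ and then works directly with the full component decomposition $N\cap F=\coprod_j\Gamma_j$; both routes come to the same thing.
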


\begin{proof}
We first reduce the problem to the case when $N_i \cap F \neq \emptyset$ for any $i \in I$. 
Set $I':=\{i \in I\,|\, N_i \cap F \neq \emptyset\}$ and 
\[
U'_0 := U_0 \setminus \Biggl(\bigcup_{i \in I \setminus I'} N_i \Biggr)
= U_0 \cap \Biggl(\bigcap_{i \in I \setminus I'} (X \setminus N_i) \Biggr),
\]
where $U_0$ is as in (2). 
Then we have $F \subset U'_0$. 
If $U$ is an open subset of $X$ such that $F \subset U \subset U'_0$, 
then (2) implies that $N \cap U$ is connected. 
Therefore, the problem can be reduced to the case when $N_i \cap F \neq \emptyset$ for any $i \in I$. 
In what follows, we assume that $N_i \cap F \neq \emptyset$ for any $i \in I$.

Take the decomposition into connected components of $N \cap F$: 
\[
N \cap F= \coprod_{j \in J} \Gamma_j.
\]
We also have $N \cap F= \bigcup_{i \in I} (N_i \cap F)$. We first show that 
\begin{enumerate}
\setcounter{enumi}{2}
\item for any $i \in I$, there exists an index $j_i \in J$ such that $N_i \cap \Gamma_{j_i} \neq \emptyset$ and $N_i \cap \Gamma_j = \emptyset$ for any $j \in J \setminus \{j_i\}$. 
In particular, it holds that $N_i \cap F= N_i \cap \Gamma_{j_i}$ for any $i \in I$. 
\end{enumerate}
Fix $i \in I$. 
We have 
\[
N_i \cap F= \coprod_{j \in J} \left( N_i \cap \Gamma_j \right). 
\]
Since $N_i \cap F$ is non-empty and connected, (3) holds.

For $j \in J$, set $I_j:=\{i \in I\,|\, N_i \cap \Gamma_j \neq \emptyset\}$ 
and $N_{I_j}:=\bigcup_{i \in I_j} N_i$. 
Then (3) implies that $I = \coprod_{j \in J} I_j$. 
In particular, we obtain $N= \bigcup_{j \in J} N_{I_j}$. 
Set 
\begin{equation}\label{e1-topology}
U_1 := \bigcap_{j, j' \in J, j\neq j'} X \setminus \bigl( N_{I_j} \cap N_{I_{j'}} \bigr), 
\end{equation}
which is an open subset of $X$. 

We now show that $F \subset U_1$. 
Pick $j, j' \in J$ such that $j \neq j'$. 
For $i \in I_j$ and $i' \in I_{j'}$, we have 
\[
N_{i} \cap N_{i'} \cap F = \left( N_{i} \cap F \right) \cap \left( N_{i'} \cap F \right) 
\subset \Gamma_{j_{i}} \cap \Gamma_{j_{i'}} = \Gamma_{j} \cap \Gamma_{j'}=\emptyset,
\] 
where the inclusion follows from (3). 
Hence, we have 
\[
N_{I_{j}} \cap N_{I_{j'}} \cap F=
\bigcup_{i \in I_{j}, i' \in I_{j'}} \left( N_{i} \cap N_{i'} \cap F \right) 
= \emptyset, 
\]
i.e. $F \subset X \setminus \bigl( N_{I_{j}} \cap N_{I_{j'}} \bigr)$. 
Hence, (\ref{e1-topology}) implies $F \subset U_1$.

Set $U:=U_0 \cap U_1$, where $U_0$ is as in (2). 
Since $F \subset U_0 \cap U_1 = U$, (2) implies that $N \cap U$ is connected. 
We have 
\[
N \cap U = \bigcup_{j \in J} \left( N_{I_j} \cap U \right) = \coprod_{j \in J} \left( N_{I_j} \cap U \right), 
\]
where the last equality follows from (\ref{e1-topology}). 
For $j \in J$, we have that 
\[
N_{I_j} \cap U 
\supset N_{I_j} \cap F \neq \emptyset. 
\] 
Since $N \cap U$ is connected, we obtain $|J|=1$, i.e. $N \cap F$ is connected. 
\end{proof}

\begin{lem}\label{l-strong-Birkar}
Let $k$ be an algebraically closed field of characteristic $p>5$. 
Let $(X, D)$ be a three-dimensional $\Q$-factorial dlt pair over $k$ 
and let $f:X \to Y$ be a projective birational $k$-morphism 
to a normal threefold $Y$ over $k$. 
If $f$ is either a $(K_X+D)$-divisorial contraction or 
a $(K_X+D)$-flipping contraction, then the induced morphism 
$\Nklt(X, D) \to Y$ has connected fibres.  
\end{lem}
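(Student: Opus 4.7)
The strategy is to reduce the connectedness statement to surjectivity of a pushforward via the standard short exact sequence, obtain that surjectivity from a relative Kawamata--Viehweg vanishing available for three-dimensional birational MMP contractions in characteristic $p > 5$, and conclude via the Stein factorization.

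Set $S:=\lfloor D\rfloor$ and $\{D\}:=D-S$. Because $(X,D)$ is dlt, Proposition~\ref{p-nklt-non-negative} identifies $\Nklt(X,D)$ (as reduced subscheme) with $\Supp S$, and the residual pair $(X,\{D\})$ is klt. Since $X$ is normal and $S$ is a reduced effective Weil divisor, the reflexive rank-one subsheaf $\MO_X(-S)\subset\MO_X$ coincides with the ideal sheaf of $S$, yielding
\[
0 \to \MO_X(-S) \to \MO_X \to \MO_S \to 0.
\]
As $f$ is birational with $Y$ normal, Zariski's main theorem gives $f_*\MO_X=\MO_Y$. Once we establish $R^1 f_*\MO_X(-S)=0$, the long exact sequence produces a surjection $\MO_Y\twoheadrightarrow f_*\MO_S$, so $f_*\MO_S=\MO_T$ for some closed subscheme $T\hookrightarrow Y$. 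The factorization $S\xrightarrow{g}T\hookrightarrow Y$ with $g_*\MO_S=\MO_T$ is then the Stein factorization of $\Nklt(X,D)\to Y$; hence $g$ has geometrically connected fibres, and since $T\hookrightarrow Y$ is a closed immersion, every fibre of $\Nklt(X,D)\to Y$ is either empty or connected, which is the desired statement.

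What remains is the vanishing $R^1 f_*\MO_X(-S)=0$. The identity
\[
-S - (K_X+\{D\}) = -(K_X+D)
\]
shows that $-S$ is linearly equivalent over $\R$ to $K_X+\{D\}+A$ for the $f$-ample $\R$-Cartier divisor $A:=-(K_X+D)$ given by the hypothesis that $f$ is a $(K_X+D)$-divisorial or $(K_X+D)$-flipping contraction. Since $(X,\{D\})$ is klt, $X$ is $\Q$-factorial, and we are in dimension three over a perfect field of characteristic $p>5$, the required vanishing is a case of the relative Kawamata--Viehweg vanishing for extremal birational contractions of $\Q$-factorial klt threefold pairs in this characteristic range, which is part of the three-dimensional log MMP package established in \cite{HNT} (ultimately resting on the work of Hacon--Xu).

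\textbf{Main obstacle.} The principal hurdle is the vanishing $R^1 f_*\MO_X(-S)=0$: full Kawamata--Viehweg vanishing fails in positive characteristic, so we must invoke the restricted form available for extremal birational contractions between $\Q$-factorial klt threefold pairs when $p>5$. All other ingredients -- the identification $\Nklt(X,D)=\Supp\lfloor D\rfloor$, the identification $I_S=\MO_X(-S)$, Zariski's main theorem, and the Stein-factorization deduction -- are standard.
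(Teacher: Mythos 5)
Your reduction of connectedness to the surjectivity of $\MO_Y \to f_*\MO_S$ via the sequence $0 \to \MO_X(-S) \to \MO_X \to \MO_S \to 0$ is the classical characteristic-zero proof of the connectedness lemma, and every step of it is fine \emph{except} the one you single out as the "main obstacle": the vanishing $R^1 f_*\MO_X(-S)=0$. That step is a genuine gap. Relative Kawamata--Viehweg vanishing is \emph{not} part of the three-dimensional MMP package in positive characteristic: \cite{HNT} (and, behind it, Hacon--Xu) establish existence and termination of the lc threefold MMP for $p>5$ by F-singularity and semiampleness techniques precisely \emph{because} Kodaira/Kawamata--Viehweg vanishing fails in characteristic $p$, and neither reference contains a statement of the form $R^1f_*\MO_X(L)=0$ for $L-(K_X+\{D\})$ $f$-ample on a klt threefold contraction. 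Note in particular that for a divisorial contraction whose exceptional divisor maps to a point the fibre is a surface, so your claim amounts to a Kawamata--Viehweg-type vanishing on (a thickening of) a log del Pezzo-like surface, which is exactly the kind of statement this paper is designed to circumvent by working with $W\MO_{X,\Q}$ instead of $\MO_X$; indeed, if such a coherent vanishing were freely available, much of the Witt-vector machinery here would be unnecessary. (Vanishing results of roughly this shape for birational threefold contractions in $p>5$ were only obtained later, by substantially harder arguments, and certainly cannot be cited from \cite{HNT}.)

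For contrast, the paper's proof avoids any coherent vanishing. It writes $\Nklt(X,D)=\bigcup_i S_i$ with $S_i$ the components of $\llcorner D \lrcorner$, invokes Birkar's connectedness statement \cite[Theorem 1.8]{Bir16}, which gives connectedness of $\Nklt(X,D)\cap U$ only on small neighbourhoods $U$ of a fibre, and upgrades this to connectedness of $\Nklt(X,D)\cap f^{-1}(y)$ by a purely topological lemma (Lemma \ref{l-topology}) once each $S_i\cap f^{-1}(y)$ is known to be connected; perturbing coefficients reduces that to the plt case, where connectedness of $S\to f(S)$ follows from normality of plt centres and Proposition \ref{p-plt-centre} (the normalisation of the plt centre downstairs is a universal homeomorphism), i.e.\ from adjunction-theoretic results in \cite{GNT} rather than from any vanishing theorem. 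If you want to keep your Stein-factorisation strategy, you would have to prove the vanishing $R^1f_*\MO_X(-S)=0$ for these contractions in characteristic $p>5$, which is a serious theorem in its own right and not something you can outsource to the MMP references you cite.
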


\begin{proof}
Set $S:=\llcorner D\lrcorner$ and 
let $S=\sum_{i \in I} S_i$ be the irreducible decomposition. 
We have $\Nklt(X, D) = \bigcup_{i \in I} S_i$.

For any sufficiently small open neighbourhood $U$ in $X$  of $f^{-1}(y)$,  
it follows from \cite[Theorem 1.8]{Bir16} that $\Nklt(X, D) \cap U$ is connected. 
We apply Lemma \ref{l-topology} to $N_i := S_i$ and $F:=f^{-1}(y)$. 
Then it is enough to prove that $S_i \cap F$ is connected. 
Therefore, after perturbing coefficients of $D$, we may assume  that $S=S_i$, 
i.e. $(X, D)$ is plt.

From now on, we treat the case when $\llcorner D \lrcorner =S$ is a prime divisor. 
In this case, we have $\Nklt(X, D) =S$. 
If $S \subset \Ex(f)$, then $f$ is a $(K_X+D)$-divisorial contraction 
such that $S=\Ex(f)$. 
Then the assertion is clear because $f$ has connected fibres. 
Thus, we may assume that $S \not\subset \Ex(f)$. 
Since $-(K_X+D)$ is $f$-ample, 
there exists an effective $\R$-divisor $A$ on $X$ such that 
$A \sim_{\R, f} -(K_X+D)$ and $(X, D+A)$ is plt. 
Since $K_X+D+A \sim_{\R, f} 0$, we have $K_X+D+A=f^*(K_Y+D_Y+A_Y)$ 
for $D_Y:=f_*D$ and $A_Y:=f_*A$. 
In particular, $(Y, D_Y+A_Y)$ is plt. 
Then the induced morphism $g:S \to S_Y := f(S)$ has connected fibres by Proposition \ref{p-plt-centre}. 
Since $\Nklt(X, D) \cap f^{-1}(y) = S \cap f^{-1}(y) = g^{-1}(y)$ for any $y \in f(S) = S_Y$, 
$\Nklt(X, D) \to Y$ has connected fibres.  
\end{proof}

\begin{prop}\label{p-birat-connected}
Let $k$ be an algebraically closed field of characteristic $p>5$. 
Let $(V, \Delta)$ be a three-dimensional quasi-projective $\Q$-factorial log pair over $k$. 
Let $\varphi:U \to V$ be a log resolution of $(V, \Delta)$.  
Let $\Delta _U$ be the $\R$-divisor defined by $K_U+\Delta_U=\varphi^*(K_V+\Delta)$. 
Then the induced morphism $\Nklt(U, \Delta_U) \to V$ has connected fibres. 
\end{prop}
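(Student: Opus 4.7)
The plan is to reduce the statement to Lemma \ref{l-strong-Birkar} step-by-step by running a relative minimal model program over $V$, and to glue the resulting fibre-connectedness together via Lemma \ref{l-topology}. First I would reduce to checking connectedness over a single point. Fix a closed point $v \in V$ and set $F := \varphi^{-1}(v)$; it suffices to show $\Nklt(U, \Delta_U) \cap F$ is connected when non-empty. Since $U$ is smooth and $\Supp \Delta_U$ has simple normal crossings, the sub-log pair $\bigl(U, (\Delta_U^{>0})^{<1}\bigr)$ is sub-klt, so Proposition \ref{p-nklt-non-negative} gives $\Nklt(U, \Delta_U) = \Supp \Delta_U^{\geq 1}$; write its irreducible decomposition as $\bigcup_{j \in J} T_j$. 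By Lemma \ref{l-nklt-crep}, we may assume $v \in \Nklt(V, \Delta)$, else the fibre is empty.

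Next I would set up and run an MMP over $V$. The sub-pair $(U, \Delta_U^{\wedge 1})$ has coefficients at most one but possibly negative ones on $\varphi$-exceptional divisors; let $B \geq 0$ be its effective part, so $B = \Delta_U^{\wedge 1} + G$ for some effective $\varphi$-exceptional $\R$-divisor $G$. Then $(U, B)$ is a $\Q$-factorial snc dlt pair satisfying
\[
K_U + B \sim_{\R, \varphi} G - (\Delta_U - \Delta_U^{\wedge 1}),
\]
where $\Delta_U - \Delta_U^{\wedge 1} \geq 0$ is supported on $\Nklt(U, \Delta_U)$. Theorem \ref{t-lc-MMP} then yields a terminating $(K_U + B)$-MMP over $V$,
\[
U = U_0 \dashrightarrow U_1 \dashrightarrow \cdots \dashrightarrow U_\ell,
\]
each step being a $\Q$-factorial dlt divisorial contraction or flip over $V$.

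Finally, I would chain the connectedness. For each step, let $f_i : U_i \to W_i$ denote the associated divisorial or flipping contraction; Lemma \ref{l-strong-Birkar} applied to $(U_i, B_i)$ gives that $\Nklt(U_i, B_i) \to W_i$ has connected fibres. Crepancy of the MMP (together with Proposition \ref{p-nklt-non-negative}) identifies $\Nklt(U_i, B_i)$ with $\Supp \Delta_{U_i}^{\geq 1}$, whose components are precisely the strict transforms of those $T_j$'s that survive stage $i$. Applying Lemma \ref{l-topology} inductively along the MMP, with the $N_j$'s taken to be these strict transforms and $F$ the fibre over $v$ in each intermediate model, connectedness of $\Nklt \cap F$ propagates backwards from $U_\ell$ to $U_0 = U$. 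The hardest step will be verifying hypothesis (2) of Lemma \ref{l-topology} at each stage — the connectedness of $\Nklt$ in sufficiently small open neighbourhoods of the fibre — which forces one to shrink $V$ around $v$, to exploit the projectivity and bounded exceptional locus of each MMP step, and to handle flips by factoring through the common base $W_i$ of the flipping diagram. The base case at $U_\ell$ is then dealt with separately according to whether the MMP ends in a relatively nef model or in a Mori fibre space over $V$.
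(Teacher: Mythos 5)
Your overall strategy (run a relative MMP over $V$ for an auxiliary dlt boundary, propagate fibre-connectedness backwards through the steps via Lemma \ref{l-strong-Birkar}, treating flips through the flipping base) is exactly the paper's, but your choice of auxiliary boundary creates a genuine gap at the terminal model. You take $B$ to be the effective part of $\Delta_U^{\wedge 1}$, which gives a $\varphi$-exceptional prime divisor $E$ with $a_E(V,\Delta)>0$ the coefficient $\max\{0,\,1-a_E(V,\Delta)\}$. The paper instead uses $D_U=\varphi_*^{-1}\Delta^{\wedge 1}+(1-\epsilon)F+G$, i.e.\ coefficient $1-\epsilon$ on \emph{every} exceptional divisor with positive log discrepancy. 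This difference is what makes the base case work: at the end of the MMP one applies the negativity lemma to $K_{X_\ell}+D_{X_\ell}\sim_{V,\R} D_{X_\ell}-\Delta_{X_\ell}$, and with the $(1-\epsilon)$ coefficients the coefficient of $\Delta_{X_\ell}-D_{X_\ell}$ along such an $E$ is $\epsilon-a_E<0$, forcing all these divisors to be contracted on $X_\ell$; then $\Q$-factoriality of $V$ gives $\Ex(\varphi_\ell)\subset\Nklt(X_\ell,\Delta_{X_\ell})$, so the fibre of $\Nklt$ over $v$ is the whole fibre $\varphi_\ell^{-1}(v)$, which is connected by Zariski's main theorem. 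With your $B$, the corresponding coefficient along an exceptional $E$ with $0<a_E\le 1$ is $(1-a_E)-(1-a_E)=0$, so the negativity lemma gives no information and such divisors can survive on $X_\ell$ without lying in $\Nklt(X_\ell,\Delta_{X_\ell})$. Your terminal model is then again a birational morphism to $V$ whose exceptional locus is not contained in the non-klt locus — essentially the original problem — and you offer no argument there ("dealt with separately"). Note also that since $U\to V$ is birational, every divisor is pseudo-effective over $V$, so the Mori-fibre-space alternative in your base case never occurs; all the content sits in the nef branch, which is precisely where the argument is missing.

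Two smaller points. First, you do not need Lemma \ref{l-topology} at the level of the composite morphisms $U_i\to V$ (and its hypothesis (2) is not available there); in the paper it is used only inside Lemma \ref{l-strong-Birkar}, and the backward propagation along the MMP is just composition of proper surjections with connected fibres, with the flip step factored through the flipping base as you indicate. Second, the identification $\Nklt(U_i,B_i)=\Supp\Delta_{U_i}^{\geq 1}$ after the MMP is not purely formal: since $\Delta_{U_i}$ can have coefficients $>1$ and the pair is only a sub-pair, one needs the dominating-divisor argument of the paper's Step \ref{s1-birat-connected} (via Proposition \ref{p-nklt-non-negative}); this does go through for your $B$ as well, but it should be spelled out.
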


\begin{proof}
Let $F$ be the sum of the $\varphi$-exceptional prime divisors $F'$ on $U$
whose log discrepancies are positive: $a_{F'}(V, \Delta)>0$. 
Let $G$ be the sum of the $\varphi$-exceptional prime divisors $G'$ on $U$
whose log discrepancies are non-positive: $a_{G'}(V, \Delta) \le 0$. 
We set 
\[
D_U:= \varphi ^{-1} _* \Delta ^{\wedge 1} + (1 - \epsilon) F + G
\]
for a sufficiently small positive real number $\epsilon$. 
Then, it holds that 
\begin{enumerate}
\item $(U, D_U)$ is dlt, and 
\item $\Supp \Delta_U^{\geq 1} =\Supp D_U^{= 1}$. 
\end{enumerate}
By Theorem \ref{t-lc-MMP}, 
there is a $\left( K_U+ D_U \right)$-MMP over $V$ that terminates: 
\begin{equation}\label{e1-birat-connected}
U=:X_0 \dashrightarrow \cdots \dashrightarrow X_{\ell}.
\end{equation}
For any $i \in \{0, \ldots, \ell\}$, 
we define $\Delta_{X_i}$, $F_{X_i}$, $G_{X_i},$ and $D_{X_i}$ as the push-forwards of $\Delta_U$, $F$, $G,$ and $D_U$ on $X_i$, 
respectively.  
Then it holds that 
$K_{X_i}+\Delta_{X_i}=\psi_i^*(K_V+\Delta)$, 
where $\psi_i:X_i \to V$ denotes the induced morphism. 
Moreover, for any $i \in \{0, \cdots, \ell\}$, we get 
\begin{enumerate}
\item[(1)'] $(X_i, D_{X_i})$ is dlt, and 
\item[(2)'] $\Supp \Delta_{X_i}^{\geq 1} = \Supp D_{X_i}^{= 1}$. 
\end{enumerate}

\setcounter{step}{0}

\begin{step}\label{s1-birat-connected}
For any $i \in \{0, \ldots, \ell\}$, it holds that  
$$\Nklt \left( X_i, \Delta_{X_i} \right) =\Nklt \left( X_i, D_{X_i} \right) =
\Supp \Delta_{X_i}^{\geq 1} = \Supp D_{X_i}^{= 1}.$$
\end{step}

\begin{proof}[Proof of Step \ref{s1-birat-connected}]
Fix $i \in \{0, \ldots, \ell\}$. 
We obtain 
\[
\Nklt \left( X_i, D_{X_i} \right) 
=\Supp D_{X_i}^{= 1}=\Supp \Delta_{X_i}^{\geq 1} \subset \Nklt \left( X_i, \Delta_{X_i} \right), 
\]
where the first equality holds by (1)' and the second one follows from (2)'. 
Hence, it is sufficient to show that 
$\Nklt \left( X_i, \Delta_{X_i} \right) \subset \Supp D_{X_i}^{= 1}$. 
For sufficiently large $b > 0$, the $\R$-divisor 
\[
A_{X_i} := (\psi _i ^{-1}) _* \Delta + (1 - \epsilon) F_{X_i} + b G_{X_i}
\]
satisfies $\Delta_{X_i} \le A_{X_i}$. 
Therefore, we get 
\[
\Nklt \left( X_i, \Delta_{X_i} \right) \subset \Nklt \left( X_i, A_{X_i} \right). 
\]
Since $A_{X_i}^{\wedge 1} = D_{X_i}$, we have $A_{X_i}^{<1}=D_{X_i}^{<1}$, hence $(X, A_{X_i}^{<1})$ is klt by (1)'. 
Thus,  it follows from Proposition \ref{p-nklt-non-negative} that 
\[
\Nklt \left( X_i, A_{X_i} \right) = \Supp A_{X_i} ^{\ge 1} = \Supp D_{X_i} ^{= 1}. 
\]
Thus, we obtain the desired inclusion $\Nklt \left( X_i, \Delta_{X_i} \right) \subset \Supp D_{X_i}^{= 1}$. 
This completes the proof of Step \ref{s1-birat-connected}. 
\end{proof}

\begin{step}\label{s2-birat-connected}
Let $g:X_i \to X_{i+1}$ be a divisorial contraction appearing in the MMP (\ref{e1-birat-connected}). 
If $\Nklt \left( X_{i+1}, D_{X_{i+1}} \right) \to V$ has connected fibres, 
then so does $\Nklt \left( X_i, D_{X_i} \right) \to V$. 
\end{step}

\begin{proof}[Proof of Step \ref{s2-birat-connected}] 
It follows from Lemma \ref{l-strong-Birkar} 
that $\Nklt \left( X_i, D_{X_i} \right) \to \Nklt \left( X_{i+1}, D_{X_{i+1}} \right)$ has connected fibres. 
This completes the proof of Step \ref{s2-birat-connected}. 
\end{proof}

\begin{step}\label{s3-birat-connected}
Let $h:X_i \dashrightarrow X_{i+1}$ be a flip appearing in the MMP (\ref{e1-birat-connected}).  
If $\Nklt \left( X_{i+1}, D_{X_{i+1}} \right) \to V$ has connected fibres, 
then so does $\Nklt \left( X_i, D_{X_i} \right) \to V$. 
\end{step}

\begin{proof}[Proof of Step \ref{s3-birat-connected}] 
Assume that $\Nklt \left( X_{i+1}, D_{X_{i+1}} \right) \to V$ has connected fibres. 
Let $\varphi_i:X_i \to Y$ be the flipping contraction and 
let $\varphi_{i+1}:X_{i+1} \to Y$ and $\psi_Y:Y \to V$ be the induced morphisms. 
Set $N_Y:=\varphi_i \left( \Nklt \left( X_i, D_{X_i} \right) \right)$ and $N_V:=\psi_Y(N_Y)$. 
It follows from Step \ref{s1-birat-connected} that 
$N_Y=\varphi_{i+1} \left( \Nklt \left( X_{i+1}, D_{X_{i+1}} \right) \right)$. 
By assumption, it holds that the composite morphism 
$$\Nklt \left( X_{i+1}, D_{X_{i+1}} \right) \to N_Y \to N_V$$ 
is a surjective morphism with connected fibres. 
In particular, $N_Y \to N_V$ has connected fibres. 
Since $\Nklt \left( X_i, D_{X_i} \right) \to N_Y$ has connected fibres by Lemma \ref{l-strong-Birkar}, 
their composition
\[
\Nklt \left( X_i, D_{X_i} \right) \to N_Y \to N_V
\]
also has connected fibres. This completes the proof of Step \ref{s3-birat-connected}. 
\end{proof}

\begin{step}\label{s4-birat-connected}
The induced morphism $\Nklt \left( X_{\ell}, D_{\ell} \right) \to V$ has connected fibres. 
\end{step}

\begin{proof}[Proof of Step \ref{s4-birat-connected}] 
We have 
\[
B_{\ell} := \left( \psi _{\ell} ^{-1} \right) _* \Delta  ^{\wedge 1} + 
(1 - \epsilon) F_{X_{\ell}} + G _{X_{\ell}} - \Delta _{X_{\ell}}
\sim_{V, \R} 
K_{X_{\ell}} + D_{X_{\ell}}. 
\]
Then $B_{\ell}$ is nef over $V$. 
The push-forward of $-B_{\ell}$ on $V$, 
which is nothing but the push-forward of 
$\Delta _{X_{\ell}} - \left( \psi _{\ell} ^{-1} \right) _* \Delta ^{\wedge 1}$, is effective. 
Hence, it turns out by the negativity lemma that $-B_{\ell}$ itself is effective. 
Since $\epsilon$ is sufficiently small, it follows that $F_{X_{\ell}} = 0$, that is, 
any $\varphi_{\ell}$-exceptional prime divisor $E$ 
satisfies $a_E(V, \Delta) \leq 0$. 
Since $V$ is $\Q$-factorial, it holds that 
$$\Ex \left( \varphi_{\ell} \right) \subset \Nklt \left( X_{\ell}, \Delta_{X_{\ell}} \right).$$
In particular, 
$\Nklt \left( X_{\ell}, \Delta_{X_{\ell}} \right) \cap \varphi_{\ell} ^{-1} (v) = \varphi_{\ell} ^{-1}(v)$ 
holds and this is connected for any closed point $v$ of $V$. 
This completes the proof of Step \ref{s4-birat-connected}. 
\end{proof}
Step \ref{s2-birat-connected}, Step \ref{s3-birat-connected}, and 
Step \ref{s4-birat-connected} complete the proof of Proposition \ref{p-birat-connected}. 
\end{proof}

\begin{thm}\label{t-birat-connected2}
Let $k$ be a perfect field of characteristic $p>5$. 
Let $f:X \to V$ be a projective birational $k$-morphism 
of normal quasi-projective threefolds over $k$. 
Let $(X, \Delta)$ be a sub-log pair over $k$ such that $-(K_X+\Delta)$ is $f$-nef and $f_*\Delta$ is effective. 
Then the induced morphism $\Nklt(X, \Delta) \to V$ has connected fibres. 
\end{thm}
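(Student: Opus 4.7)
The plan is to deduce Theorem \ref{t-birat-connected2} from Proposition \ref{p-birat-connected} by first base-changing to an algebraic closure, then replacing $X$ by a log resolution, and finally adapting the MMP argument of the proposition to the setting where $V$ is only required to be normal rather than $\Q$-factorial. Since ``connected fibres'' means geometrically connected (Subsection \ref{ss-notation}) and normality is preserved by base change along a perfect field, we first reduce to the case $k=\bar{k}$. Next, take a projective birational morphism $\pi:U\to X$ from a smooth threefold $U$ such that both $\pi$ is a log resolution of $(X,\Delta)$ and $\varphi:=f\circ\pi$ is a log resolution of $(V,f_*\Delta)$ (available by three-dimensional resolution of singularities in characteristic $p>5$). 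Define $\Delta_U$ by $K_U+\Delta_U=\pi^*(K_X+\Delta)$; then $(U,\Delta_U)$ is a sub-log pair with $U$ smooth, $-(K_U+\Delta_U)=\pi^*(-(K_X+\Delta))$ is $\varphi$-nef, and $\varphi_*\Delta_U=f_*\Delta$ is effective. By Lemma \ref{l-nklt-crep}, $\pi$ induces a projective surjection $\Nklt(U,\Delta_U)\twoheadrightarrow\Nklt(X,\Delta)$, so it suffices to prove that $\Nklt(U,\Delta_U)\to V$ has connected fibres.

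To prove this, I would adapt the MMP argument of Proposition \ref{p-birat-connected}, exploiting the fact that $K_X+\Delta$ is $\R$-Cartier (so log discrepancies $a_{F'}(X,\Delta)$ are defined for any divisor $F'$ over $X$, and in particular for every $\varphi$-exceptional prime divisor on $U$, since each such divisor is either $\pi$-exceptional or a strict transform of an $f$-exceptional divisor on $X$). Let $F$ (resp.\ $G$) denote the sum of the $\varphi$-exceptional prime divisors $F'$ on $U$ with $a_{F'}(X,\Delta)>0$ (resp.\ $a_{F'}(X,\Delta)\leq 0$), set $D_U$ to be the strict transform to $U$ of $(f_*\Delta)^{\wedge 1}$ plus $(1-\epsilon)F+G$, and verify that $(U,D_U)$ is a $\Q$-factorial dlt pair with $\Supp D_U^{=1}=\Supp\Delta_U^{\geq 1}$ (so that Step 1 of the proposition transcribes). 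Run a $(K_U+D_U)$-MMP over $V$, which terminates at some $X_\ell$ by Theorem \ref{t-lc-MMP}; Steps 2 and 3 of the proposition's proof carry over. For Step 4, set $B_\ell:=D_{X_\ell}-\Delta_{X_\ell}$. Then $B_\ell$ is $\psi_\ell$-nef, since $K_{X_\ell}+D_{X_\ell}$ is $\psi_\ell$-nef by the end of the MMP and the difference $B_\ell-(K_{X_\ell}+D_{X_\ell})=-\tau_\ell^*(K_X+\Delta)$ (along the induced birational map $\tau_\ell:X_\ell\dashrightarrow X$) is $\psi_\ell$-nef because $-(K_X+\Delta)$ is $f$-nef; moreover $(\psi_\ell)_*B_\ell=(f_*\Delta)^{\wedge 1}-f_*\Delta\leq 0$. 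The negativity lemma therefore gives $B_\ell\leq 0$, which forces $F_{X_\ell}=0$ and places every $\psi_\ell$-exceptional divisor into $G_{X_\ell}\subseteq\Nklt(X_\ell,\Delta_{X_\ell})$. The fibres of $\Nklt(X_\ell,\Delta_{X_\ell})\to V$ over the image of the exceptional locus then coincide with the connected fibres of $\psi_\ell$, and are singletons (or empty) elsewhere.

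The main obstacle is the careful bookkeeping in the construction of $D_U$: one must avoid double-counting prime divisors on $U$ that are simultaneously $\varphi$-exceptional and strict transforms of $f$-exceptional components of $\Supp\Delta$ on $X$, while ensuring both the identity $\Supp D_U^{=1}=\Supp\Delta_U^{\geq 1}$ (needed for the adapted Step 1) and the pushforward computation $(\psi_\ell)_*B_\ell\leq 0$ (needed for the adapted Step 4). Once this is handled, the argument is essentially a transcription of Proposition \ref{p-birat-connected}'s proof with $a_\cdot(X,\Delta)$ replacing $a_\cdot(V,\Delta_V)$ throughout, thereby sidestepping the hypothesis that $V$ be $\Q$-factorial.
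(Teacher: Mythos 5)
Your first reduction (base change to $\overline{k}$, pass to a common log resolution $U$ of $(X,\Delta)$ and $(V,f_*\Delta)$, and use Lemma \ref{l-nklt-crep} to replace $\Nklt(X,\Delta)$ by $\Nklt(U,\Delta_U)$) is fine, and your bookkeeping for $D_U$ can indeed be made to work. The proof breaks down, however, in your adaptation of Step \ref{s4-birat-connected}. First, the nefness of $B_\ell$ over $V$ is not justified: you write $B_\ell-(K_{X_\ell}+D_{X_\ell})=-(K_{X_\ell}+\Delta_{X_\ell})$ and call it ``$-\tau_\ell^*(K_X+\Delta)$'', but $\tau_\ell$ is only a birational map, and $-(K_{X_\ell}+\Delta_{X_\ell})$ is merely the push-forward of the $\varphi$-nef divisor $-\pi^*(K_X+\Delta)$ through the divisorial contractions and flips of the MMP; nefness is not preserved under such push-forwards. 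In Proposition \ref{p-birat-connected} this issue does not arise because $K_{X_i}+\Delta_{X_i}=\psi_i^*(K_V+\Delta)$ at every stage, i.e.\ the pairs stay crepant (numerically trivial) over the base, which is exactly why $B_\ell\sim_{V,\R}K_{X_\ell}+D_{X_\ell}$ is nef over $V$ there. The paper's Theorem \ref{t-birat-connected2} creates this situation by first using $f$-bigness of $-(K_X+\Delta)$ (automatic since $f$ is birational) to add an effective $A\sim_{\R,f}-(K_X+\Delta)$ with $\Nklt(X,\Delta)=\Nklt(X,\Delta+A)$, so that one may assume $K_X+\Delta\sim_{\R,f}0$ and descend crepantly to $(V,\Delta_V)$; your proposal skips this step and the resulting nefness claim is a genuine gap.

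Second, you have not actually sidestepped the $\Q$-factoriality of the base: it is used in Step \ref{s4-birat-connected} precisely to guarantee that $\Ex(\psi_\ell)$ is purely divisorial, so that $F_{X_\ell}=0$ yields $\Ex(\psi_\ell)\subset\Nklt(X_\ell,\Delta_{X_\ell})$ and hence $\Nklt(X_\ell,\Delta_{X_\ell})\cap\psi_\ell^{-1}(v)=\psi_\ell^{-1}(v)$. Over a non-$\Q$-factorial $V$ the end result $\psi_\ell$ may have small exceptional locus, i.e.\ contract curves not contained in the non-klt locus, and then the intersection of such a fibre with $\Nklt(X_\ell,\Delta_{X_\ell})$ could a priori be disconnected; your final sentence (``fibres over the image of the exceptional locus coincide with the fibres of $\psi_\ell$, and are singletons elsewhere'') assumes exactly what needs to be proved. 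The paper resolves this by taking a $\Q$-factorial dlt modification $V_1\to V$ of $(V,\Delta_V)$ with $\Nklt(V_1,\Delta_{V_1})=\varphi^{-1}(\Nklt(V,\Delta_V))$ (Proposition \ref{p-dlt-modif}), applying Proposition \ref{p-birat-connected} to a log resolution factoring through $X$ over the $\Q$-factorial base $V_1$, and then composing morphisms with connected fibres. To repair your argument you would need both the trivialisation $K_X+\Delta\sim_{\R,f}0$ and an analogue of the dlt-modification step; as written, the proposal does not prove the theorem.
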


\begin{proof}
Taking the base change to the algebraic closure of $k$, 
we may assume that $k$ is an algebraically closed field. 
We now reduce the problem to the case when $K_X+\Delta \sim_{\mathbb{R},f} 0$. 
Since $f$ is birational, $-(K_X+\Delta)$ is $f$-nef and $f$-big. 
After replacing $\Delta$, we may assume that $-(K_X+\Delta)$ is $f$-ample. 
Then there exists an effective $\R$-Cartier $\R$-divisor $A$ 
such that $A \sim_{\R, f} -(K_X+\Delta)$ 
and $\Nklt(X, \Delta) = \Nklt(X, \Delta+A)$. 
Thus, we may assume that $K_X+\Delta \sim_{\mathbb{R}, f} 0$. 
In particular, for $\Delta_V:=f_*\Delta$, 
it holds that $(V, \Delta_V)$ is a log pair and $K_X+\Delta=f^*(K_V+\Delta_V)$. 

Let $\varphi:V_1 \to V$ be a dlt modification of $(V, \Delta_V)$ 
such that $\Nklt \left( V_1, \Delta_{V_1} \right)=\varphi^{-1} \left( \Nklt \left( V, \Delta_V \right) \right)$ (Proposition \ref{p-dlt-modif}). 
In particular, $\Nklt \left( V_1, \Delta_{V_1} \right) \to \Nklt(V, \Delta_V)$ 
has connected fibres. 
Let $f_1:X_1 \to V_1$ be a log resolution of $(V_1, \Delta_{V_1})$ 
that factors through $X$. 
By Proposition \ref{p-birat-connected}, 
$\Nklt \left( X_1, \Delta_{X_1} \right) \to \Nklt \left( V_1, \Delta_{V_1} \right)$ 
has connected fibres. 
Thus, the composite morphism 
$$\Nklt \left( X_1, \Delta_{X_1} \right) \to \Nklt \left( V_1, \Delta_{V_1} \right) \to \Nklt(V, \Delta_V)$$
has connected fibres and factors through $\Nklt(X, \Delta)$. 
In particular, also $\Nklt(X, \Delta) \to \Nklt(V, \Delta_V)$ 
has connected fibres. 
\end{proof}

\begin{rem}
When we apply Proposition \ref{p-birat-connected} in the above proof, 
we only use the properties that $V_1$ is $\Q$-factorial 
and $\Nklt \left( V_1, \Delta_{V_1} \right) =\varphi^{-1} \left( \Nklt \left(V, \Delta_V \right) \right)$, 
whilst we do not use the fact that $\left( V_1, \Delta_{V_1}^{\wedge 1} \right)$ is dlt. 
\end{rem}

\subsection{Results on the Witt vector cohomologies}

For the definition of the Witt vector cohomology and its properties, 
we refer to \cite{GNT} and \cite{CR12}. 
Our goal of this subsection is to show Proposition \ref{p-WO-bc} and Proposition \ref{p-rat-curve2}. 
As far as the authors know, 
it is an open problem whether $R^if_*(W\MO_{X, \Q})$ commute with base changes. 
Such a problem occur because inverse limits do not commute with tensor products. 
We start by showing some auxiliary results.

\begin{lem}\label{l-GNT2.22} 
Let $k$ be a perfect field of characteristic $p>0$. 
Let $f:X \to Y$ be a proper surjective $k$-morphism of separated
schemes of finite type over $k$. 
Then the following conditions are equivalent. 
\begin{enumerate}
\item $f$ has connected fibres. 
\item The induced homomorphism $W\MO_{Y, \Q} \to f_*W\MO_{X, \Q}$ is an isomorphism. 
\end{enumerate}
\end{lem}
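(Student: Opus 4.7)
My approach is to reduce to the reduced case, apply Stein factorization to separate the morphism into a finite part and a part with connected fibres, and handle each by a Frobenius-invertibility argument on the $\mathbb{Q}$-localised Witt sheaf. By Remark \ref{r-ideal-radical}, both $W\MO_{Y,\Q}$ and $W\MO_{X,\Q}$ depend only on the reduced structures of $Y$ and $X$, and the property of having geometrically connected fibres is purely topological, so I may assume both schemes are reduced. Since $f$ is then surjective onto a reduced scheme, $\MO_Y \hookrightarrow f_*\MO_X$ is injective, and hence so is $W\MO_{Y,\Q} \to f_*W\MO_{X,\Q}$; the content of (2) is its surjectivity.

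Form the Stein factorization $f : X \xrightarrow{h} Y' \xrightarrow{g} Y$, where $Y'$ is the relative $\Spec$ of $f_*\MO_X$ over $Y$, so that $h_*\MO_X = \MO_{Y'}$ and $g$ is finite. The morphism $h$ then has geometrically connected fibres, and consequently $f$ has geometrically connected fibres if and only if $g$ does, which for the finite morphism $g$ is equivalent to $g$ being a universal homeomorphism. The lemma thus reduces to the two assertions: (A) $h_*W\MO_{X,\Q} \cong W\MO_{Y',\Q}$ holds unconditionally; and (B) the natural map $W\MO_{Y,\Q} \to g_*W\MO_{Y',\Q}$ is an isomorphism if and only if $g$ is a universal homeomorphism.

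For (B), suppose $g$ is a universal homeomorphism. Since $Y$ and $Y'$ are reduced and $g$ is finite, on any affine patch there is a uniform $n \ge 0$ with $b^{p^n} \in \MO_Y$ for every local section $b$ of $\MO_{Y'}$; applying the Witt functor, the $n$-th Frobenius power factors as $F^n : g_*W\MO_{Y',\Q} \to W\MO_{Y,\Q} \hookrightarrow g_*W\MO_{Y',\Q}$. Because $FV = VF = p$, Frobenius becomes invertible after inverting $p$, so $g_*W\MO_{Y',\Q} = F^n(g_*W\MO_{Y',\Q}) \subseteq W\MO_{Y,\Q}$, giving the reverse inclusion and hence equality. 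Conversely, if $g$ is not a universal homeomorphism, then some geometric fibre is either disconnected or has a non-purely-inseparable residue field extension; in either case extra idempotents or separable generators persist in $g_*\MO_{Y'}$ after base change to the algebraic closure of the corresponding residue field, and these lift to classes in the Witt sheaf that obstruct surjectivity of the comparison map at that stalk.

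Assertion (A) is the main difficulty, since although $h_*\MO_X = \MO_{Y'}$ by construction, the higher direct images $R^i h_*\MO_X$ need not vanish, and Witt vectors of rings do not in general commute with proper pushforward. The remedy is to exploit $F$-invertibility after $\otimes\Q$: working with the short exact sequences $0 \to \MO_X \xrightarrow{V^n} W_{n+1}\MO_X \to W_n\MO_X \to 0$, taking $h_*$, and passing to the inverse limit, one argues that any obstruction coming from $R^1 h_*$ must be both $F$-bijective and $V$-divisible, and hence vanishes after inverting $p$. This is the substance of \cite[Lemma 2.22]{GNT}, on which the present lemma is modelled. Combining (A) with (B) and the already noted injectivity of $W\MO_{Y',\Q} \hookrightarrow h_*W\MO_{X,\Q}$ then yields the equivalence.
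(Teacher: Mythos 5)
Your overall strategy -- reduce to the reduced case, apply Stein factorization, treat $h$ via \cite[Lemma 2.22]{GNT} (your assertion (A)), and treat the finite part $g$ separately (your assertion (B)) -- is essentially the same reduction the paper uses: the paper cites \cite[Lemma 2.22]{GNT} for $(1)\Rightarrow(2)$ and, for $(2)\Rightarrow(1)$, also reduces via Stein factorization to a finite morphism $\Spec B \to \Spec A$. Your forward half of (B) is a pleasant independent argument: factoring the $n$-th Witt Frobenius through $W\MO_{Y,\Q}$ using $B^{p^n}\subseteq A$ and then invoking invertibility of $F$ on $W(-)_\Q$ is correct and gives $(1)\Rightarrow(2)$ for the finite part without re-citing the GNT lemma.

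The reverse half of (B) -- which is exactly the new content of the lemma, i.e.\ $(2)\Rightarrow(1)$ -- is where there is a genuine gap. You pass to the algebraic closure of a residue field $\kappa(\mathfrak m)$, observe that idempotents or separable elements appear in $B\otimes_A\overline{\kappa(\mathfrak m)}$, and then claim these ``lift to classes in the Witt sheaf''. But Witt vectors do not commute with base change, so an idempotent of $B\otimes_A\overline{\kappa(\mathfrak m)}$ need not come from any class of $W(B)_\Q$ (for instance $\mathbb F_{p^2}\otimes_{\mathbb F_p}\overline{\mathbb F}_p$ has idempotents while $W(\mathbb F_{p^2})_\Q=\Q_{p^2}$ is a field); the base-change step cannot be undone at the Witt level. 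The paper avoids this entirely: it forms the commutative square comparing $W(A)_\Q\to W(B)_\Q$ with $W(A/\mathfrak m)_\Q\to W(B/\mathfrak m B)_\Q$, deduces by a diagram chase that the latter map is surjective, and then uses that $W(A/\mathfrak m)_\Q$ is a \emph{field} (since $A/\mathfrak m$ is a finite extension of the perfect field $k$, hence perfect), so the surjection is an isomorphism. From this one reads off that the fibre is a single point $\mathfrak n$ and, by an integral-closure/DVR argument for the finite extension $W(A/\mathfrak m)\hookrightarrow W(B/\mathfrak n)$ inside a common fraction field, that $A/\mathfrak m=B/\mathfrak n$. Your sketch could be repaired by replacing the base-change step with this direct residue-field argument, but as written it does not constitute a proof.
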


\begin{proof}
It follows from \cite[Lemma 2.22]{GNT} that (1) implies (2).

It is enough to show that (2) implies (1). 
Assume (2). 
Taking the Stein factorisation, the problem is reduced to the case when 
$f$ is a finite surjective morphism. 
Since the problem is local on $Y$, we may assume that $X=\Spec\,B$ and $Y=\Spec\,A$. 
For the induced ring homomorphism $A \to B$, 
we have that $W(A)_{\Q} \to W(B)_{\Q}$ is an isomorphism. 
Fix a maximal ideal $\mfm$ of $A$. 
We have the following commutative diagram of ring homomorphisms: 
\[
\begin{CD}
W(A)_{\Q} @>\simeq >> W(B)_{\Q}\\
@VVV @VVV\\
W(A/\mfm)_{\Q} @>\psi>> W(B/\mfm B)_{\Q}.\\
\end{CD}
\]
By a diagram chase, 
$\psi:W (A/\mfm)_{\Q} \to W(B/\mfm B)_{\Q}$ 
is surjective. 
On the other hand, $W(A/\mfm )_{\Q}$ is a field, hence 
$\psi$ is an isomorphism. 
In particular, $f^{-1}(\mfm)$ consists of one point $\mfn$. 
By $W(A/\mfm)_{\Q} \simeq W(B/\mfm B)_{\Q}$ 
and $W(B/\mfn)_{\Q} \simeq W(B/\mfm B)_{\Q}$, 
we have $W(A/\mfm)_{\Q} \simeq W(B/\mfn)_{\Q}$. 
Hence, the finite extension $W(A/\mfm) \hookrightarrow W(B/\mfn)$ of discrete valuation rings  
is also an isomorphism. 
Taking modulo $p$ reduction, we have that $A/\mfm \to B/\mfn$ is an isomorphism. 
Thus (1) holds. 
\end{proof}

We often use the following exact sequences, 
which we call the Mayer--Vietoris exact sequences. 
\begin{lem}\label{l-MV} 
Let $k$ be a perfect field of characteristic $p>0$. 
Let $V$ be a scheme of finite type over $k$. 
Let $X, X_1$, and $X_2$ be closed subschemes of $V$ 
such that the set-theoretic equation $X=X_1 \cup X_2$ holds. 
Let $X_1 \cap X_2$ be the scheme-theoretic intersection. 
Let $I_X, I_{X_1}, I_{X_2},$ and $I_{X_1 \cap X_2}$ be the corresponding coherent ideal sheaves on $V$. 
Then there exist the exact sequences  
\begin{enumerate}
\item 
$0 \to W\MO_{X, \Q} \to W\MO_{X_1, \Q} \oplus W\MO_{X_2, \Q} \to W\MO_{X_1 \cap X_2, \Q} \to 0$, 
and 
\item 
$0 \to WI_{X, \Q} \to WI_{X_1, \Q} \oplus WI_{X_2, \Q} \to WI_{X_1 \cap X_2, \Q} \to 0$.
\end{enumerate}
\end{lem}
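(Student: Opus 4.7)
The plan is to prove (1) first and then to deduce (2) from (1) via a $3\times 3$ diagram chase.

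For (1), I would first reduce to the case in which $X$ is the scheme-theoretic union of $X_1$ and $X_2$, i.e.\ $I_X=I_{X_1}\cap I_{X_2}$. This is harmless because $\sqrt{I_X}=\sqrt{I_{X_1}\cap I_{X_2}}$, and by Remark \ref{r-ideal-radical} together with the defining short exact sequence $0\to WI_{\bullet,\Q}\to W\MO_{V,\Q}\to W\MO_{V/I_{\bullet},\Q}\to 0$, the term $W\MO_{X,\Q}$ is insensitive to replacing $I_X$ by any ideal with the same radical; the analogous remark applies to $X_1\cap X_2$. In this scheme-theoretic setting, the classical Mayer--Vietoris sequence
\[
0\to \MO_X \xrightarrow{a\mapsto (a,a)} \MO_{X_1}\oplus \MO_{X_2} \xrightarrow{(a,b)\mapsto a-b} \MO_{X_1\cap X_2}\to 0
\]
is exact, expressing $\MO_X$ as the pullback of $\MO_{X_1}$ and $\MO_{X_2}$ over $\MO_{X_1\cap X_2}$ in the category of commutative rings. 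Since the truncated Witt-vector functor $W_n$, viewed as a functor from commutative rings to commutative rings, preserves finite limits (ring pullbacks are computed componentwise in $R^n=W_n(R)$) and takes surjections of rings to surjections, the induced sequence
\[
0\to W_n\MO_X \to W_n\MO_{X_1}\oplus W_n\MO_{X_2} \to W_n\MO_{X_1\cap X_2}\to 0
\]
is short exact for every $n$. The transition maps in the inverse system over $n$ are surjective, so Mittag--Leffler gives exactness of the limit; finally, tensoring with $\Q$ (which is flat) yields (1).

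For (2), I would form the commutative $3\times 3$ diagram whose rows are (in order) the conjectured sequence (2), the split exact sequence $0\to W\MO_{V,\Q}\xrightarrow{a\mapsto(a,a)} W\MO_{V,\Q}\oplus W\MO_{V,\Q}\xrightarrow{(a,b)\mapsto a-b} W\MO_{V,\Q}\to 0$, and the Mayer--Vietoris sequence for $W\MO_{\Q}$ from (1); its columns are the defining short exact sequences $0\to WI_{\bullet,\Q}\to W\MO_{V,\Q}\to W\MO_{\bullet,\Q}\to 0$ for $\bullet=X$, for the direct sum of the sequences for $X_1$ and for $X_2$ in the middle column, and for $\bullet=X_1\cap X_2$. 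The middle row is split exact, the bottom row is exact by (1), and all columns are exact by construction; the $3\times 3$ lemma (or equivalently a short snake-lemma chase) then forces the top row to be short exact, which is precisely (2).

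The main subtle point will be the level-$n$ exactness in (1): one must exploit that Witt vectors form a \emph{ring}-valued functor on the category of rings, so that $W_n$ preserves ring pullbacks, rather than trying to use any exactness property of $W_n$ on abelian groups or modules (which fails in general); the surjectivity of the right-most map is then transparent either from the classical surjection $R/I_{X_1}\twoheadrightarrow R/(I_{X_1}+I_{X_2})$ or directly from $W_n$ respecting ring surjections.
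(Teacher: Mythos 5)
Your argument is correct and follows essentially the same route as the paper, which disposes of (1) by invoking the argument of \cite[Proposition~2.2]{BBE07} together with Remark~\ref{r-ideal-radical} and the exactness of $(-)_{\Q}$, and then deduces (2) from (1) via the snake lemma. You supply the details that the paper leaves to the citation: the reduction to the scheme-theoretic union (legitimate by Remark~\ref{r-ideal-radical}), the finite-level exactness via the observation that $W_n$ is a ring-valued functor commuting with fibre products of rings and preserving surjections (correct, since $W_n$ is $(-)^n$ on underlying sets and $U\colon\mathrm{Ring}\to\mathrm{Set}$ creates limits), passage to the limit by Mittag--Leffler using the surjective truncations $W_{n+1}\twoheadrightarrow W_n$, and finally rationalization. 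Your $3\times3$ diagram for (2) is precisely the snake-lemma argument the paper has in mind, with the one point worth noting explicitly being that the inclusions $I_X\subset I_{X_i}$ needed to define the differentials on the kernel row hold after replacing all ideals by their radicals, which is harmless by Remark~\ref{r-ideal-radical}.
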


\begin{proof}
By using Remark \ref{r-ideal-radical} and the fact that the functor $(-)_{\Q}$ is exact, 
we obtain the exact sequence (1) by the same argument as in \cite[Proposition 2.2]{BBE07}.
The exact sequence (2) is obtained by (1) and the snake lemma. 
\end{proof}

\begin{lem}\label{l-H0-bc}
Let $k \subset k'$ be an extension of 
perfect fields of characteristic $p>0$. 
Let $X$ be a proper scheme over $k$ and we set $X':=X \times_k k'$. 
Then the induced $W\left( k' \right)_{\Q}$-linear map 
$$H^0 \left( X, W\MO_{X, \Q} \right) \otimes_{W(k)_{\Q}} W(k')_{\Q} \to 
H^0 \left( X', W\MO_{X', \Q} \right)$$
is bijective. 
\end{lem}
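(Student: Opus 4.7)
The plan is to reduce, via Stein factorization and Lemma \ref{l-GNT2.22}, to a purely algebraic statement about finite étale Witt-vector extensions.

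First, by Remark \ref{r-ideal-radical} both $H^0$'s are unchanged when we replace $X$ by its reduced structure $X_{\red}$. Since $k$ is perfect, $X_{\red}$ is geometrically reduced, so $X_{\red} \times_k k'$ is reduced as well; hence we may assume both $X$ and $X'$ are reduced. Set $R := H^0(X, \MO_X)$, a finite reduced $k$-algebra, and let $h: X \to \Spec R$ be the Stein factorization of the structure morphism. Then $h_*\MO_X = \MO_{\Spec R}$, so $h$ has connected fibres in the sense of Subsection \ref{ss-notation}, and Lemma \ref{l-GNT2.22} gives an isomorphism $W\MO_{\Spec R, \Q} \xrightarrow{\sim} h_* W\MO_{X, \Q}$. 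As $\Spec R$ is affine (even Artinian), this yields $H^0(X, W\MO_{X, \Q}) = W(R)_\Q$. By flat base change for the coherent sheaf $\MO_X$ under the proper morphism $X \to \Spec k$, we obtain $R' := H^0(X', \MO_{X'}) = R \otimes_k k'$, and the same argument applied to $X' \to \Spec R' \to \Spec k'$ gives $H^0(X', W\MO_{X', \Q}) = W(R \otimes_k k')_\Q$.

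It therefore suffices to show that the natural map
\[
W(R)_\Q \otimes_{W(k)_\Q} W(k')_\Q \longrightarrow W(R \otimes_k k')_\Q
\]
is an isomorphism. Since $R$ is reduced and finite over the perfect field $k$, it is a finite product of finite separable field extensions $K_i/k$; as $W(-)$ commutes with finite products, we may assume $R = K$ is a single finite separable extension of $k$. Then $W(K)$ is the unique finite étale extension of $W(k)$ lifting $K$, so $W(K) \otimes_{W(k)} W(k')$ is a finite étale $W(k')$-algebra whose reduction modulo $p$ is $K \otimes_k k'$. On the other hand, $K \otimes_k k'$ is étale over $k'$, and $W(K \otimes_k k')$ is also a finite étale lift of it to $W(k')$. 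Uniqueness of étale lifts over the $p$-adically complete base $W(k')$ (or, equivalently, Hensel lifting the minimal polynomial of a primitive element of $K/k$) identifies the two, and inverting $p$ gives the desired isomorphism.

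The main conceptual step is the identification $H^0(X, W\MO_{X, \Q}) = W(R)_\Q$ through Stein factorization combined with Lemma \ref{l-GNT2.22}; once this is in place, the remaining task is an elementary comparison of Witt vectors of finite étale $k$-algebras under base change.
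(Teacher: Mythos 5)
Your proposal is correct and takes essentially the same approach as the paper, which reduces by Stein factorization to $X=\Spec L$ for $L$ a finite extension of $k$ and then declares "the assertion is clear." You simply flesh out the details the paper leaves implicit: the passage to reduced structure via Remark \ref{r-ideal-radical}, the appeal to Lemma \ref{l-GNT2.22} to identify $H^0(X, W\MO_{X,\Q})$ with $W(R)_\Q$, flat base change to identify $R'$ with $R\otimes_k k'$, and the final comparison of Witt vectors of finite \'etale extensions via uniqueness of \'etale lifts over the complete local ring $W(k')$.
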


\begin{proof}
Taking the Stein factorisation of the structure morphism 
$X \to \Spec\,k$, 
we may assume that $X$ is of dimension zero. 
Replacing $X$ by a connected component, 
we may assume that $X=\Spec\,L$, where $L$ is a finite extension of $k$. 
Then the assertion is clear. 
\end{proof}

To prove Proposition \ref{p-rat-curve2}, we first show the following weaker statement. 

\begin{lem}\label{l-rat-curve}
Let $k$ be a perfect field of characteristic $p>0$ and 
let $X$ be a one-dimensional smooth projective scheme over $k$. 
Then the following are equivalent. 
\begin{enumerate}
\item $H^1 \left( X, \MO_X \right) = 0$. 
\item $H^1 \left( X, W_n\MO_X \right) =0$ for some $n \in \Z_{>0}$.  
\item $H^1 \left( X, W_n\MO_X \right) =0$ for any $n \in \Z_{>0}$. 
\item $H^1 \left( X, W\MO_X \right)=0$. 
\item $H^1 \left( X, W\MO_{X, \Q} \right)=0$. 
\end{enumerate}
\end{lem}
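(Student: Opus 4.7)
I would prove the lemma by establishing the chain
$(1)\Rightarrow(3)\Rightarrow(2)\Rightarrow(1)$, then $(3)\Rightarrow(4)\Rightarrow(1)$ (so that the first four are mutually equivalent), then the trivial $(4)\Rightarrow(5)$, and finally the only genuinely nontrivial implication $(5)\Rightarrow(1)$.

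\emph{Equivalence of $(1), (2), (3)$.} The workhorse is the standard short exact sequence of sheaves on $X$,
\[
0 \to \MO_X \xrightarrow{V^n} W_{n+1}\MO_X \xrightarrow{R} W_n\MO_X \to 0.
\]
Since $\dim X = 1$, $H^2(X,\MO_X)=0$. Moreover, the restriction map $H^0(X,W_{n+1}\MO_X)\twoheadrightarrow H^0(X,W_n\MO_X)$ is surjective (both sides identify with Witt vectors of the finite extension $H^0(X,\MO_X)/k$), so the connecting homomorphism vanishes and the long exact sequence collapses to
\[
0 \to H^1(X,\MO_X) \xrightarrow{V^n} H^1(X,W_{n+1}\MO_X) \to H^1(X,W_n\MO_X) \to 0.
\]
An ascending induction in $n$ yields $(1)\Rightarrow(3)$, a descending induction using the surjectivity on the right yields $(2)\Rightarrow(1)$, and $(3)\Rightarrow(2)$ is vacuous.

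\emph{From $(3)$ to $(4)$ and from $(4)$ to $(1)$.} For $(3)\Rightarrow(4)$ I would use the Milnor short exact sequence
\[
0 \to {\textstyle R^1\varprojlim_n}\, H^0(X,W_n\MO_X) \to H^1(X,W\MO_X) \to \varprojlim_n H^1(X,W_n\MO_X) \to 0.
\]
The right term vanishes by $(3)$, and the left vanishes because $\{W_n(H^0(X,\MO_X))\}_n$ has surjective transitions, hence is Mittag--Leffler. For $(4)\Rightarrow(1)$, the ``Verschiebung'' sequence $0 \to W\MO_X \xrightarrow{V} W\MO_X \to \MO_X \to 0$ yields a long exact sequence ending $\cdots \to H^1(X,W\MO_X) \to H^1(X,\MO_X) \to H^2(X,W\MO_X)=0$, so $H^1(X,\MO_X)$ is a quotient of $H^1(X,W\MO_X)$. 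The implication $(4)\Rightarrow(5)$ is immediate since $(-)_{\Q}$ is exact and commutes with cohomology on the noetherian space $X$.

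\emph{The main obstacle: $(5)\Rightarrow(1)$.} This is the only delicate implication, and I do not see how to obtain it from the Witt-sheaf exact sequences alone, because $H^1(X,W\MO_X)$ may be infinite-length $p$-power torsion over $W(k)$: from the first exact sequence one computes $\dim_k H^1(X,W_n\MO_X) = n\cdot g$, where $g:=\dim_k H^1(X,\MO_X)$, so for supersingular curves the group $H^1(X,W\MO_X) = \varprojlim_n H^1(X,W_n\MO_X)$ is very far from finitely generated. The natural route is to reduce to $k=\overline{k}$ by flat base change (using that base change is compatible with $H^i$ on a smooth projective curve; cf.\ Lemma~\ref{l-H0-bc} for the $H^0$ case), and then to invoke Illusie's slope spectral sequence: for smooth projective $X$ over a perfect field the dimension $\dim_{W(k)_{\Q}} H^1(X,W\MO_{X,\Q})$ equals the multiplicity of Frobenius slopes in $[0,1)$ on $H^1_{\mathrm{cris}}(X/W(k))_{\Q}$. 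For a curve of genus $g$, the slope-$1$ part has multiplicity equal to the $p$-rank $f\leq g$, hence the slopes in $[0,1)$ have multiplicity $2g-f \geq g$. Thus $(5)$ forces $g=0$, i.e.\ $(1)$.
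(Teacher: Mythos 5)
Your handling of (1)--(4) and of (4)$\Rightarrow$(5) is essentially the paper's: the same Verschiebung sequences, Grothendieck vanishing in degree $2$, and the Mittag--Leffler/Milnor argument for (3)$\Rightarrow$(4) (which the paper outsources to \cite[Lemma 2.19]{GNT}). The genuine divergence is the last implication. The paper does not go through crystalline cohomology at all: it quotes \cite[Ch.\ II, Proposition 2.19]{Ill79}, which says that $H^1\left( X, W\MO_X \right)$ is a \emph{free} $W(k)$-module for a smooth proper curve, so (5)$\Rightarrow$(4) is immediate from torsion-freeness, and then (4)$\Rightarrow$(1) as you already proved. Your route via the slope spectral sequence and the $p$-rank bound $\dim_{W(k)_{\Q}} H^1\left( X, W\MO_{X, \Q} \right) = 2g - f \geq g$ is correct, but it is heavier machinery invoked to replace a one-line citation. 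Two blemishes are worth correcting. First, your motivating claim that for supersingular curves $H^1\left( X, W\MO_X \right) = \varprojlim_n H^1\left( X, W_n\MO_X \right)$ is ``very far from finitely generated'' is false, and in fact contradicts the very result the paper uses: this module is the Dieudonn\'e module of the formal group of the Jacobian, free of rank $2g - f$ over $W(k)$; the growth $\dim_k H^1\left( X, W_n\MO_X \right) = ng$ is no obstruction to finite generation of the limit (compare $\varprojlim_n W(k)/p^n W(k) = W(k)$). This is precisely why the elementary route you abandoned does work. Second, your reduction to $k = \overline{k}$ ``by flat base change'' glosses over exactly the point this paper treats as delicate -- base change for $W\MO_{\Q}$-cohomology involves an inverse limit and a tensor product that do not commute in general (cf.\ Lemmas \ref{l-H0-bc} and \ref{l-tree-bc}, Proposition \ref{p-WO-bc}); fortunately the step is unnecessary, since the identification of $H^1\left( X, W\MO_X \right) \otimes \Q$ with the slope $[0,1)$ part of $H^1_{\mathrm{cris}}$ and the inequality $f \leq g$ are available over any perfect field.
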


\begin{proof}
Clearly we may assume that $X$ is connected. 
By the exact sequence 
\[
0 \to W_n\MO_X \xrightarrow{V} W_{n+1}\MO_X \to \MO_X \to 0, 
\]
and the fact that $X$ is one-dimensional, 
it holds that (1), (2) and (3) are equivalent. 
By \cite[Lemma 2.19]{GNT}, (3) implies (4). 
Moreover, by the exact sequence 
$$0 \to W\MO_X \xrightarrow{V} W\MO_X \to \MO_X \to 0$$
and the fact that $X$ is one-dimensional, 
(4) implies (1). 

The equivalence between (4) and (5) follows from the fact that $H^1 \left ( X, W\MO_X \right )$ is a free $W(k)$-module (\cite[Ch. II, Proposition 2.19]{Ill79}). 
\end{proof}

\begin{lem}\label{l-tree-bc}
Let $k \subset k'$ be an extension of 
perfect fields of characteristic $p>0$. 
Let $X$ be a proper one-dimensional scheme over $k$. 
Then the following are equivalent. 
\begin{enumerate}
\item $H^1 \left( X, W\MO_{X, \Q} \right)=0$. 
\item $H^1 \left( X \times_k k', W\MO_{X \times_k k', \Q} \right)=0$. 
\end{enumerate}
\end{lem}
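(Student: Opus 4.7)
\emph{Plan.} The strategy is to reduce to the smooth case via normalization and then invoke Lemma \ref{l-rat-curve} together with flat base change for coherent cohomology. First, by Remark \ref{r-ideal-radical} (applied on both sides), I may assume that $X$ is reduced; since $k$ is perfect, $X \times_k k'$ is then automatically reduced as well.

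Let $\nu \colon \widetilde X \to X$ denote the normalization. Since $k$ is perfect and $\widetilde X$ is one-dimensional and normal, $\widetilde X$ is smooth over $k$, and the non-normal locus $Z \subset X$ is zero-dimensional. Applying the Witt functor to $0 \to \MO_X \to \nu_* \MO_{\widetilde X} \to \mathcal Q \to 0$ (with $\mathcal Q$ supported on $Z$) should produce an exact sequence
\[
0 \to W\MO_{X, \Q} \to \nu_* W\MO_{\widetilde X, \Q} \to \mathcal F_\Q \to 0
\]
with $\mathcal F_\Q$ supported on $Z$: away from $Z$ the map $\MO_X \to \nu_* \MO_{\widetilde X}$ is an isomorphism, hence so is the induced map on Witt sheaves. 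Since $\dim Z = 0$, we have $H^i(X, \mathcal F_\Q) = 0$ for $i > 0$; since $\nu$ is finite, $H^i(X, \nu_* W\MO_{\widetilde X, \Q}) = H^i(\widetilde X, W\MO_{\widetilde X, \Q})$. The long exact cohomology sequence then shows that $H^1(X, W\MO_{X, \Q}) = 0$ if and only if both $H^1(\widetilde X, W\MO_{\widetilde X, \Q}) = 0$ and the connecting map $\phi \colon H^0(\widetilde X, W\MO_{\widetilde X, \Q}) \to H^0(X, \mathcal F_\Q)$ is surjective.

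It remains to check that each of these two conditions is preserved under base change from $k$ to $k'$. Lemma \ref{l-rat-curve} gives $H^1(\widetilde X, W\MO_{\widetilde X, \Q}) = 0 \iff H^1(\widetilde X, \MO_{\widetilde X}) = 0$, and the latter commutes with base change by flat base change for coherent cohomology. For the surjectivity of $\phi$: since $k$ is perfect, $\widetilde X \times_k k'$ is normal and the induced morphism to $X \times_k k'$ is its normalization, so the exact sequence base-changes to its analogue on the $k'$-side. By Lemma \ref{l-H0-bc}, tensoring with $W(k')_\Q$ over $W(k)_\Q$ identifies the $H^0$ terms for $X$ and $\widetilde X$ with their $k'$-counterparts; the analogous statement for $H^0(X, \mathcal F_\Q)$ follows from the zero-dimensional support of $\mathcal F_\Q$ and a local version of Lemma \ref{l-H0-bc}. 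Since $W(k)_\Q \hookrightarrow W(k')_\Q$ is a faithfully flat field extension and all the $H^0$ spaces are finite-dimensional, surjectivity of $\phi$ is equivalent to surjectivity of its base change. The main obstacle I anticipate is making the exact sequence of Witt sheaves rigorous --- in particular verifying that $\mathcal F_\Q$ really is supported on $Z$ and that $H^0(X, \mathcal F_\Q)$ commutes with base change in the sense required; both should reduce to local statements using, respectively, that the Witt functor sends ring isomorphisms to isomorphisms, and a zero-dimensional analogue of Lemma \ref{l-H0-bc}.
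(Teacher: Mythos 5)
Your overall strategy is the same as the paper's (reduce to $X$ reduced, pass to the normalisation, handle $H^1$ of the normalisation via Lemma \ref{l-rat-curve} and flat base change for coherent cohomology, and transfer the remaining $H^0$-level condition across $k \subset k'$ via Lemma \ref{l-H0-bc} and faithful flatness of $W(k)_{\Q} \to W(k')_{\Q}$). However, the step you yourself flag as the main obstacle is a genuine gap, not a routine verification. Your sequence $0 \to W\MO_{X, \Q} \to \nu_* W\MO_{\widetilde X, \Q} \to \mathcal F_\Q \to 0$ defines $\mathcal F_\Q$ only as an abstract cokernel of a map of Witt sheaves; it is not the Witt sheaf $W\MO_{T,\Q}$ of any scheme $T$, so Lemma \ref{l-H0-bc} does not apply to it, and there is no ``local version'' available off the shelf. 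The needed statement is precisely a base-change property for Witt-vector constructions, and this is exactly the kind of assertion the paper warns is delicate: $W(-)$ does not commute with tensor products, so one cannot argue that $\mathcal F'_\Q$ (the cokernel on the $k'$-side) is the base change of $\mathcal F_\Q$, nor that $H^0(X, \mathcal F_\Q) \otimes_{W(k)_\Q} W(k')_\Q \to H^0(X', \mathcal F'_\Q)$ is bijective, by pointwise support considerations alone. Without this, the identification $\phi' \simeq \phi \otimes_{W(k)_\Q} W(k')_\Q$, and hence the equivalence of the two surjectivity conditions, is unproved.

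The paper's device for exactly this point is to never work with the bare cokernel: using the conductor subschemes $C \subset X$ and $D \subset X^N$ one has the exact sequence
\[
0 \to W\MO_{X, \Q} \to \nu_* W\MO_{X^N, \Q} \oplus W\MO_{C, \Q} \to \nu_* W\MO_{D, \Q} \to 0
\]
(in the spirit of Lemma \ref{l-MV}), in which every term is the Witt sheaf of a proper $k$-scheme, with $C$ and $D$ zero-dimensional. Taking cohomology (and using the vanishing of $H^1$ on $X^N$, resp.\ $X'^N$, obtained from the hypothesis via Lemma \ref{l-rat-curve}) gives four-term exact rows over $k$ and $k'$ whose $H^0$-terms all satisfy base change by Lemma \ref{l-H0-bc}; the five-lemma then yields $H^1(X, W\MO_{X,\Q}) \otimes_{W(k)_\Q} W(k')_\Q \simeq H^1(X', W\MO_{X',\Q})$, and faithful flatness concludes. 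If you want to keep your formulation, the fix is to identify $\mathcal F_\Q$, via the snake lemma applied to this conductor-square sequence, with the cokernel of $W\MO_{C,\Q} \to \nu_* W\MO_{D, \Q}$, check that the formation of $C$ and $D$ commutes with the extension $k \subset k'$ (using that $k$ is perfect, so normalisation commutes with this base change), and then apply Lemma \ref{l-H0-bc} to $C$ and $D$; as written, your appeal to a ``local version of Lemma \ref{l-H0-bc}'' for $\mathcal F_\Q$ does not substitute for this argument.
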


\begin{proof}
For simplicity, we denote $K =W(k)_{\Q}$, $K' = W \left( k' \right)_{\Q}$, and $Y' = Y \times _k k'$ for a $k$-scheme $Y$. 
We may assume that $X$ is reduced. 
Let 
$$X^N \to X$$
be the normalisation of $X$. 
Thanks to Lemma~\ref{l-rat-curve}, 
if one of (1) and (2) holds, then it follows that 
\[
H^1 \left( X^N, W\MO_{X, \Q} \right ) =H^1 \left( X'^N, W\MO_{X'^N, \Q} \right)=0.
\]
For the conductor subschemes $C$ and $D$ of $X$ and $X^N$ respectively, 
we have a commutative diagram with exact horizontal sequences: 
$$\begin{CD}
\begin{matrix}
H^0 ( W\MO_{X^N, \Q})_{K'} \\
\ \ \oplus \ 
H^0 ( W\MO_{C, \Q})_{K'}
\end{matrix}
 @>>> H^0(W\MO_{D, \Q})_{K'} @>>> 
H^1(W\MO_{X, \Q})_{K'} @>>> 0\\
@VV\alpha V @VV\beta V @VV\gamma V\\
\begin{matrix}
H^0(W\MO_{X'^N, \Q}) \\ 
\ \ \oplus \ 
H^0(W\MO_{C', \Q})
\end{matrix}
@>>> H^0(W\MO_{D', \Q}) @>>> H^1(W\MO_{X', \Q}) @>>> 0,\\
\end{CD}$$
where $(-)_{K'}$ denotes the tensor product $(-) \otimes_K K'$. 
As both $\alpha$ and $\beta$ are isomorphisms by Lemma \ref{l-H0-bc}, 
so is $\gamma$ by the 5-lemma, as desired.  
\end{proof}

\begin{prop}\label{p-WO-bc}
Let $k \subset k'$ be an extension of 
perfect fields of characteristic $p>0$. 
Let $X$ be a normal surface over $k$. 
Then the following are equivalent. 
\begin{enumerate}
\item $X$ has $W\MO$-rational singularities.  
\item $X \times_k k'$ has $W\MO$-rational singularities.
\end{enumerate}
\end{prop}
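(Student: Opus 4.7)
The plan is to reduce the question to the one-dimensional base-change statement of Lemma \ref{l-tree-bc} via a fibre-wise criterion for $W\MO$-rationality on surfaces. Pick a resolution $f : Y \to X$; since $k'/k$ is an extension of perfect fields it is separable, so the base change $f' : Y' \to X'$, with $Y' := Y \times_k k'$ and $X' := X \times_k k'$, is again a resolution ($Y'$ is smooth over $k'$ and $X'$ is normal). By the definition of $W\MO$-rational singularities, combined with Lemma \ref{l-GNT2.22} to handle the degree-zero term and with $\dim X = 2$ to force $R^{\geq 2}f_* = 0$, $X$ has $W\MO$-rational singularities if and only if $R^1 f_* W\MO_{Y,\Q} = 0$. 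This sheaf is supported on the finite set $\Sing(X)$ of closed points, whose residue fields are finite (hence perfect) extensions of $k$; an analogous reduction applies to $X'$.

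The key step is to prove, for each closed singular point $x \in X$ with exceptional fibre $E_x := f^{-1}(x)_{\red}$, the fibre-wise equivalence
\[
\bigl(R^1 f_* W\MO_{Y,\Q}\bigr)_x = 0 \iff H^1\bigl(E_x, W\MO_{E_x,\Q}\bigr) = 0.
\]
I would obtain this by applying the theorem on formal functions to each truncated sheaf $W_n\MO_Y$, using the short exact sequences
\[
0 \to \MO_Y \xrightarrow{V^n} W_{n+1}\MO_Y \to W_n\MO_Y \to 0
\]
to induct on $n$, and exploiting that the exceptional fibres have dimension one (which, after rationalisation, produces a Grauert--Riemenschneider-type vanishing for $R^1 f_* \MO_Y$ and keeps the thickenings of $E_x$ in $Y$ manageable). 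A Mittag-Leffler argument together with Lemma \ref{l-rat-curve} then transports the statement from $W_n\MO_Y$ to $W\MO_{Y,\Q}$. Granted this criterion, the proposition follows: for each $x' \in \Sing(X')$ lying over $x \in \Sing(X)$, the exceptional fibre of $f'$ above $x'$ is, up to taking reduced structure, obtained from $E_x$ by base change along the finite extension of perfect fields $k(x) \hookrightarrow k(x')$, and Lemma \ref{l-tree-bc} supplies the equivalence between the two vanishings of $H^1$.

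The main obstacle lies in establishing the fibre-wise criterion. Since the functor $(-) \otimes \Q$ does not commute with the inverse limit defining $W\MO_Y = \varprojlim_n W_n\MO_Y$ --- precisely the difficulty flagged in the paragraph preceding Lemma \ref{l-GNT2.22} --- one cannot naively pass from $W_n$-level formal-functions statements to a $W\MO_{\Q}$-level one. The argument must therefore carefully control the pro-systems $\{R^1 f_* W_n\MO_Y\}_n$ and $\{H^1(E_x, W_n\MO_{E_x})\}_n$, using the one-dimensionality of $E_x$ to secure the necessary Mittag-Leffler property before rationalising; everything else in the proof is formal.
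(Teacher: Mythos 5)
Your overall reduction is the same as the paper's: resolve, observe that everything is concentrated over the finitely many singular closed points (whose residue fields are perfect), reduce to comparing $H^1$ of the one-dimensional reduced exceptional fibres, and conclude with Lemma \ref{l-tree-bc}. The gap is exactly in the step you single out, the fibre-wise criterion $\bigl(R^1f_*W\MO_{Y,\Q}\bigr)_x=0 \iff H^1\bigl(E_x,W\MO_{E_x,\Q}\bigr)=0$: the route you sketch for it does not go through. Formal functions applied to $W_n\MO_Y$ computes $R^1f_*W_n\MO_Y$ near $x$ in terms of the cohomology of the \emph{infinitesimal thickenings} of the fibre, not of the reduced curve $E_x$; the discrepancy between $H^1$ of a thickening and $H^1(E_x,W_n\MO_{E_x})$ is $p$-power torsion whose exponent is not a priori bounded independently of $n$ and of the thickening. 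Since every $W_n\MO$-module is killed by $p^n$, you cannot invert $p$ at a finite level, and $(-)\otimes\Q$ does not commute with the limit over $n$ --- which is precisely the obstruction you acknowledge. Mittag--Leffler controls surjectivity in the pro-systems but gives no control on this torsion, so ``induct on $n$, then rationalise'' restates the difficulty rather than resolving it. Worse, the finite-level analogue of your criterion is simply false: vanishing of $R^1f_*\MO_Y$ is not equivalent to $H^1(E_x,\MO_{E_x})=0$ (there are non-rational normal surface singularities, e.g.\ minimally elliptic ones, whose reduced exceptional divisor is a tree of smooth rational curves), so no level-by-level argument of this shape can produce the statement; the criterion is genuinely a $p$-inverted phenomenon.

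The missing input is the uniform torsion bound of Berthelot--Bloch--Esnault. The paper's proof is one line: with $E:=\Ex(f)$ mapping to the singular point, apply \cite[Theorem 2.4]{BBE07}, which gives $R^if_*\left(WI_{E,\Q}\right)=0$ for $i>0$, to the exact sequence $0 \to WI_{E,\Q} \to W\MO_{Y,\Q} \to W\MO_{E,\Q} \to 0$; this yields $R^if_*\left(W\MO_{Y,\Q}\right)\simeq H^i\left(E,W\MO_{E,\Q}\right)$ directly, and then Lemma \ref{l-tree-bc} finishes the argument exactly as you intend (the base change of the resolution and of $E$ along the separable extension $k\subset k'$ behaves as you describe). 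If you import that vanishing (or prove an equivalent uniform bound on the torsion of the comparison maps between thickened and reduced fibres), your proof closes; as written, the pivotal identification is unproven.
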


\begin{proof}
We may assume that $Q$ is a unique non-regular point of $X$. 
Let $f:Y \to X$ be a resolution of singularities such that $f\left( \Ex \left( f \right) \right)=Q$. 
For $E:=\Ex(f)$, we have the exact sequence: 
\[
0 \to WI_{E, \Q} \to W\MO_{Y, \Q} \to W\MO_{E, \Q} \to 0.
\]
Thanks to the vanishing $R^if_* \left( WI_{E, \Q} \right)=0$ for $i>0$ \cite[Theorem 2.4]{BBE07}, 
it holds that 
\[R^if_* \left( W\MO_{Y, \Q} \right) \simeq H^i \left( E, W\MO_{E, \Q} \right).\]
Therefore, it follows from Lemma~\ref{l-tree-bc} that 
(1) and (2) are equivalent. 
\end{proof}

\begin{prop}\label{p-rat-curve2}
Let $k$ be a perfect field of characteristic $p>0$ and 
let $X$ be a reduced projective scheme over $k$ such that 
\begin{enumerate}
\item[(a)] any irreducible component of $X$ is one-dimensional, and 
\item[(b)] any non-regular point $x$ of $X$ is an ordinary double point. 
\end{enumerate}
Then the following are equivalent. 
\begin{enumerate}
\item $H^1 \left( X, \MO_X \right)=0.$  
\item $H^1 \left( X, W_n\MO_X \right )=0$ for some $n \in \Z_{>0}$.  
\item $H^1(X, W_n\MO_X)=0$ for any $n \in \Z_{>0}$. 
\item $H^1(X, W\MO_X)=0$. 
\item $H^1(X, W\MO_{X, \Q})=0$. 
\item Any connected component of $X \times_k \overline{k}$ is a tree of smooth rational curves. 
\end{enumerate}
\end{prop}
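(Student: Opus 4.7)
The plan is to combine the standard Verschiebung argument with a normalisation computation and a Mayer--Vietoris comparison after reducing to $\overline{k}$.

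The chain (1) $\Leftrightarrow$ (2) $\Leftrightarrow$ (3) $\Leftrightarrow$ (4) follows \textit{verbatim} from the proof of Lemma \ref{l-rat-curve}: only $\dim X \leq 1$ is used there (giving $H^i(X, \MO_X) = 0$ for $i \geq 2$ and Mittag--Leffler for the pro-system $\{H^1(X, W_n\MO_X)\}_n$), together with the short exact sequences $0 \to W_n\MO_X \xrightarrow{V} W_{n+1}\MO_X \to \MO_X \to 0$ and $0 \to W\MO_X \xrightarrow{V} W\MO_X \to \MO_X \to 0$. Smoothness plays no role, so the argument transfers directly. The implication (4) $\Rightarrow$ (5) is immediate by tensoring with $\Q$.

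For (1) $\Leftrightarrow$ (6), I would reduce to $k = \overline{k}$ by flat base change for coherent cohomology and then analyse the normalisation $\nu : X^N \to X$. The ordinary-double-point assumption yields a short exact sequence
\[
0 \to \MO_X \to \nu_*\MO_{X^N} \to Q \to 0,
\]
with $Q$ a skyscraper on the nodes whose stalks are one-dimensional over $\overline{k}$. The associated long exact sequence expresses $h^1(X, \MO_X)$ in terms of $h^1(X^N, \MO_{X^N})$, the number of nodes, and the numbers of connected components of $X$ and of $X^N$. Since each component of $X^N$ is a smooth proper curve over $\overline{k}$, the vanishing $h^1(\MO_{X^N}) = 0$ forces each one to be $\mathbb{P}^1_{\overline{k}}$, and the residual numerical identity translates into the assertion that the dual graph of each connected component of $X_{\overline{k}}$ is a tree.

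For (5) $\Leftrightarrow$ (1), I would reduce to $k = \overline{k}$ via Lemma \ref{l-tree-bc} and flat base change, and then induct on the number of irreducible components of $X$. Splitting off one component $C$ from $Y := \overline{X \setminus C}$ via Lemma \ref{l-MV}(1), one obtains two parallel long exact sequences -- one for $\MO$ and one for $W\MO_{\Q}$ -- whose degree-zero maps are governed by the same incidence matrix of nodes in connected components; hence the cokernel dimensions coincide, and the two $H^1$'s vanish together. The main obstacle is the base case of an irreducible component carrying a self-node, which Lemma \ref{l-MV}(1) does not reach since $X$ cannot be decomposed into two proper closed subschemes. I would handle this via a pushout-type Mayer--Vietoris exact sequence for $W\MO_{\Q}$ along the partial normalisation at a single self-node, reducing the number of such nodes by one; this induction then terminates at a smooth irreducible curve, where Lemma \ref{l-rat-curve} applies. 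Equivalently, one may argue directly from the normalisation sequence and verify that the cokernel $\nu_*W\MO_{X^N, \Q}/W\MO_{X, \Q}$ is concentrated at the nodes with one-dimensional $W(\overline{k})_{\Q}$-stalks, which yields the same dimension formula as in the $\MO$-case.
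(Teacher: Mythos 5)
Your proof is correct but takes a more self-contained route than the paper's. The equivalences (1)--(4) and the implication (4) $\Rightarrow$ (5) match the paper exactly. Where you diverge is that the paper simply cites \cite[Ch.\ II, Lemma 7.5]{Kol96} for (6) $\Rightarrow$ (1) and the discussion in \cite[Section 4.6]{CR12} for (5) $\Rightarrow$ (6), while you instead carry out the underlying arguments directly: the Euler-characteristic computation through the conductor sequence $0 \to \MO_X \to \nu_*\MO_{X^N} \to Q \to 0$ gives (1) $\Leftrightarrow$ (6), and a comparison of the $\MO$- and $W\MO_\Q$-normalisation sequences gives (5) $\Leftrightarrow$ (1). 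Of your two suggested routes for the latter --- Mayer--Vietoris induction on components via Lemma \ref{l-MV} versus direct use of the conductor sequence --- the second is cleaner, avoids the self-node base-case issue you flagged, and is precisely the machinery the paper already sets up in Lemma \ref{l-tree-bc}, so you would be well served to present only that version. One small imprecision: you say the cokernel dimensions of the degree-zero maps in the two parallel sequences "coincide"; what one actually needs, and what is true (the smooth-component piece by Lemma \ref{l-rat-curve}, the combinatorial piece by counting one-dimensional stalks at the nodes), is that the two $H^1$'s vanish simultaneously --- their dimensions need not agree when nonzero (a supersingular elliptic component would be a counterexample). The net trade-off is that the paper's proof is shorter because it outsources two implications to references, whereas yours is longer but self-contained and makes the arithmetic of the dual graph explicit.
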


\begin{proof}
We may assume that $X$ is connected. 
Moreover, replacing $k$ by $k'$ for the Stein factorisation 
$X \to \Spec\, k' \to \Spec\,k$, we may assume that $X$ is geometrically connected. 

We now show that the assertions (1), (2), (3) and (4) are equivalent. 
By the exact sequence 
$$0 \to W_n\MO_X \xrightarrow{V} W_{n+1}\MO_X \to \MO_X \to 0$$
and the fact that $X$ is one-dimensional, 
it holds that (1), (2) and (3) are equivalent. 
We have that (3) implies (4) by \cite[Lemma 2.19]{GNT}. 
Moreover, by the exact sequence 
$$0 \to W\MO_X \xrightarrow{V} W\MO_X \to \MO_X \to 0$$
and the fact that $X$ is one-dimensional, 
(4) implies (1). 
Thus (1), (2), (3) and (4) are equivalent.

Thanks to \cite[Ch II, Lemma 7.5]{Kol96}, it holds that (6) implies (1). 
Further, (4) clearly implies (5). 
Thus it suffices to show that (5) implies (6). 
Lemma \ref{l-tree-bc} allows us to replace $X \to \Spec\,k$ by the base change 
$X \times_k \overline k \to \Spec\,\overline k$.  
Then it follows from 
\cite[the second last paragraph of Section 4.6]{CR12}
that (5) implies (6), as desired. 
\end{proof}

\subsection{Geometric rationality of del Pezzo surfaces over imperfect fields}

In this subsection, 
we prove Proposition \ref{p-klt-dP}. 
To this end, we start with the following lemma.

\begin{lem}\label{l-rationality}
Let $k$ be a separably closed field of characteristic $p>0$ 
which is not algebraic over a finite field. 
Let $X$ be a projective normal $\Q$-factorial surface over $k$ 
with $k=H^0(X, \MO_X)$. 
If there is an $\R$-divisor $\Delta$ 
such that $0 \leq \Delta <1$ and $-(K_X+\Delta)$ is nef and big,  
then $\left( X \times_k \overline{k} \right )_{\red}$ is a rational surface. 
\end{lem}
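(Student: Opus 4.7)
The plan is to pass from $X$ over $k$ to the normalization of $(X\times_k\overline{k})_{\red}$, establish a conductor-type canonical bundle formula there, and then invoke a rationality criterion for a normal projective surface over $\overline{k}$ with big anticanonical divisor.

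Since $X$ is normal with $H^0(X,\MO_X)=k$ and $k$ is separably closed, $X$ is geometrically irreducible, so $(X\times_k\overline{k})_{\red}$ is an integral projective scheme over $\overline{k}$. Let $Y$ denote its normalization, and let $\pi:Y\to X$ be the induced finite surjective morphism; then $Y$ is an integral normal projective surface over $\overline{k}$, and it suffices to show that $Y$ is rational, since $Y$ is birational to $(X\times_k\overline{k})_{\red}$.

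The first substantive step is a canonical bundle formula
\[
K_Y+C=\pi^*K_X
\]
for some effective $\R$-divisor $C$ on $Y$, obtained by combining the standard normalization conductor formula with an analogous formula (as in Tanaka's work on purely inseparable base changes) for the nil-immersion $(X\times_k\overline{k})_{\red}\hookrightarrow X\times_k\overline{k}$. Since $X$ is $\Q$-factorial, $\Delta$ is $\R$-Cartier, so $\pi^*\Delta$ is well-defined and effective. Because $\pi$ is finite surjective,
\[
-(K_Y+C+\pi^*\Delta)=\pi^*\bigl(-(K_X+\Delta)\bigr)
\]
is nef and big on $Y$. In particular, $-K_Y=\pi^*\bigl(-(K_X+\Delta)\bigr)+\pi^*\Delta+C$ is the sum of a big divisor and two effective divisors, hence big on $Y$.

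Next I would deduce rationality of $Y$. Take a resolution $g:\widetilde Y\to Y$. Since $-K_Y$ is big on $Y$, the class $K_Y$ is not pseudoeffective; consequently $K_{\widetilde Y}$ is not pseudoeffective either (as $g_*K_{\widetilde Y}=K_Y$ and pushforward preserves pseudoeffectivity). Thus $\kappa(\widetilde Y)=-\infty$, and by the Enriques classification over $\overline{k}$, $\widetilde Y$ is birational to a minimal ruled surface over some smooth projective curve $B/\overline{k}$. The remaining task, which is the main obstacle, is to show $B\cong\mathbb P^1$, i.e.\ to rule out a base of positive genus.

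The strategy for this obstacle is to exploit the full strength of $-(K_Y+C+\pi^*\Delta)$ being big and nef, not merely the bigness of $-K_Y$. A careful analysis of the coefficients of the conductor $C$ and $\pi^*\Delta$ — using that $\Delta<1$, together with a small perturbation to absorb terms of coefficient $1$ — yields a log del Pezzo-type pair $(Y,C+\pi^*\Delta)$ over $\overline{k}$ whose rationality is classical by surface Mori theory for log pairs (any non-rational output of the MMP would be a $\mathbb P^1$-bundle over a curve of positive genus, contradicting the numerical constraints imposed by the bigness of $-(K_Y+C+\pi^*\Delta)$). The hypothesis that $k$ is not algebraic over a finite field is used precisely at this stage: it excludes the arithmetic pathologies available only over algebraic extensions of $\F_p$ (for instance, certain quasi-elliptic fibrations, or the existence of geometrically non-rational curves forced by subtle behavior of $\Delta$ under the purely inseparable base change), and thereby guarantees that the base curve of the ruling must be rational.
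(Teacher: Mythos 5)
Your first half — passing to the normalisation $Y$ of $\left( X \times_k \overline{k} \right)_{\red}$, the conductor-type formula $K_Y + C = \pi^*K_X$ with $C$ effective, and the conclusion that $-K_Y$ is big so that a resolution of $Y$ is ruled — coincides with the paper's set-up and is fine, but that was never the hard part. The step you yourself flag as the main obstacle is where the argument breaks: the pair $\left( Y, C + \pi^*\Delta \right)$ is not a log del Pezzo pair in any usable sense, because the conductor $C$ of a purely inseparable base change is an integral effective divisor, so its nonzero coefficients are at least $1$, and you cannot ``perturb'' them away: the only positivity you have is nefness and bigness of $-(K_Y + C + \pi^*\Delta)$ in full, while $-(K_Y + \epsilon C + \pi^*\Delta)$ is not known to be nef and $-K_Y$ is only big. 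Worse, the ``classical'' rationality statement you invoke is false outside the klt range, over every algebraically closed field: let $E$ be an elliptic curve, $L$ ample of degree $d$ on $E$, $W = \mathbb{P}_E\left( \MO_E \oplus L^{-1} \right)$ with negative section $E_0$, and $\mu : W \to X$ the contraction of $E_0$ to the cone $X$ over $(E, L)$. Then $K_W + E_0 = \mu^*K_X$ with $-K_X$ ample, so $-(K_W + E_0)$ is nef and big, $(W, E_0)$ is lc, and $W \to E$ is a $(K_W + E_0)$-Mori fibre space over an irrational base, yet $W$ is irrational. So bigness and nefness of $-(K_Y + C + \pi^*\Delta)$ alone imposes no ``numerical contradiction'' with an irrational ruling, and since this counterexample exists over any $\overline{k}$, the hypothesis that $k$ is not algebraic over a finite field cannot enter at the stage where you place it; your suggested mechanisms (quasi-elliptic fibrations, non-rational curves) are not what that hypothesis controls.

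The paper closes exactly this gap by working over $k$ \emph{before} base changing: it runs the $(K_X+\Delta)$-MMP on $X$ over $k$, so that either $\rho(X)=1$ or $X$ admits a $(K_X+\Delta)$-Mori fibre space onto a curve, and these structures descend to $Y$ because $Y \to X$ is a finite universal homeomorphism; in particular, in the first case $Y$ is $\Q$-factorial with $\rho(Y)=1$. If such a $Y$ were irrational, the ruling $Z \to B$ of its minimal resolution $Z$ would factor through $Z \to Y$ by \cite[Theorem 3.20]{Tan14} — this is precisely where $\overline{k} \neq \overline{\F}_p$ is used, the cone over an elliptic curve over $\overline{\F}_p$ (which is $\Q$-factorial there) being the counterexample recorded in the paper's remark after the lemma — and the resulting fibration $Y \to B$ contradicts $\rho(Y)=1$. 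In the Mori-fibre-space case the paper produces a second fibration of $X$ onto a curve, and bigness of $-K_Y$ forces the general fibres of both induced fibrations of $Y$ to be $\mathbb{P}^1$, so the base of one of them is dominated by a rational curve and $Y$ is rational. Without an argument of this kind, which exploits the $\Q$-factoriality and Picard-rank data over $k$ rather than numerical positivity over $\overline{k}$, your proof does not go through.
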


\begin{proof}
Replacing $\Delta$, we may assume that $-(K_X+\Delta)$ is ample. 
If $X \to X'$ is a birational $k$-morphism of projective normal varieties 
with $k=H^0(X, \MO_X)=H^0 \left( X', \MO_{X'} \right)$, 
then also $\left( X \times_k \overline k \right) _{\red} \to \left( X' \times_k \overline k \right)_{\red}$ 
is birational. 
Thus we may replace $(X, \Delta)$ by the end result of 
a $(K_X+\Delta)$-MMP (\cite[Theorem 1.1]{Tanc}). 
Hence we may assume that one of the following condition holds. 
\begin{enumerate}
\item[(a)] $\rho(X)=1$. 
\item[(b)] There is a $(K_X+\Delta)$-Mori fibre space 
$\pi_1:X \to B_1$ onto a regular projective curve $B_1$ with $(\pi_1)_*\MO_X=\MO_{B_1}$. 
\end{enumerate}

In what follows, we denote by $Y$ the normalisation of $\left( X \times_k \overline k \right) _{\red}$, and denote by $f:Y \to X$ the composite morphism. 
By applying \cite[Theorem 1.1]{Tana} to the regular locus of $X$, we can write 
$$K_Y+D=f^*K_X$$
for some effective $\Z$-divisor $D$.

Suppose (a). Then $Y$ is a projective normal $\Q$-factorial surface such that 
$\rho(Y)=1$ (\cite[Proposition 2.4 (2)]{Tana}). 
Since $-K_Y$ is ample, $Y$ is a ruled surface. 
Assume that $Y$ is not rational and let us derive a contradiction. 
Let $\mu:Z \to Y$ be the minimal resolution of $Y$. 
Since $Z$ is an irrational ruled surface, 
there is a projective morphism $\pi:Z \to B$ 
onto a smooth projective irrational curve 
whose general fibres are $\mathbb P^1$. 
Since $\overline k \neq \overline{\mathbb F}_p$, 
it follows from \cite[Theorem 3.20]{Tan14} that $\pi$ factors through $\mu$: 
$$\pi:Z \xrightarrow{\mu} Y \to B.$$
This is a contradiction to $\rho(Y)=1$. 
Thus we are done for the case (a).

Suppose (b). 
Since $-(K_X+\Delta)$ is ample, 
there exists an extremal ray $R$ of $\overline{{\rm NE}}(X)$ 
not corresponding to $\pi_1$. 
By \cite[Theorem 4.4]{Tanc}, 
the extremal ray $R$ induces either a birational morphism or 
another $(K_X+\Delta)$-Mori fibre space $X \to B_2$ onto a curve $B_2$. 
If the former case occurs, then the problem is reduced to the case (a). 
Therefore, we may assume that there exist two 
Mori fibre space structures $\pi_1:X \to B_1$ and $\pi_2:X \to B_2$ 
onto curves $B_1$ and $B_2$. 
In particular, any fibre of $\pi_i$ dominates $B_{3-i}$. 
Let $\pi'_i:Y \to B'_i$ be the Stein factorisation of the composite morphism: 
$$Y \to X \times_k \overline k \xrightarrow{\pi_i \times_k \overline k} 
B_i \times_k \overline k.$$
Then any fibre of $\pi'_i$ dominates $B'_{3-i}$. 
Since $-K_Y$ is big, a general fibre of each $\pi'_i$ 
is isomorphic to $\mathbb P^1$. 
In particular, $B'_1 \simeq \mathbb P^1$ and $Y$ is rational. 
\end{proof}

\begin{rem}
The statement of Lemma \ref{l-rationality} does not hold 
if we drop the assumption on the base field $k$. 
Indeed if $k=\overline{\mathbb F}_p$, then 
any normal surface is $\Q$-factorial (e.g.\ see \cite[Theorem 4.5]{Tan14}). 
Thus the cone $X$ over an elliptic curve over $\overline{\mathbb F}_p$ 
is $\Q$-factorial and $-K_X$ is ample. 
\end{rem}

\begin{prop}\label{p-klt-dP}
Let $(X, \Delta)$ be a projective two-dimensional klt pair over a field $k$ 
of characteristic $p>0$ such that $-(K_X+\Delta)$ is nef and big. 
Assume that $k=H^0(X, \MO_X)$.  
Then, $\left( X \times_k \overline{k} \right)_{\red}$ is a rational surface. 
In particular $X$ is rationally connected over $k$. 
\end{prop}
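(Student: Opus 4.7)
The plan is to reduce to Lemma \ref{l-rationality}. That lemma imposes two hypotheses not assumed in the proposition: the base field must be separably closed and not algebraic over a finite field.

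To remove the ``not algebraic over a finite field'' obstruction, I replace $k$ by the purely transcendental extension $k_1:=k(t)$ when necessary, and replace $(X,\Delta)$ by the base change $(X_1,\Delta_1):=(X\times_k k_1,\Delta\times_k k_1)$. Flat base change preserves the klt condition, the nefness and bigness of $-(K_{X_1}+\Delta_1)$, and the equality $H^0(X_1,\MO_{X_1})=k_1$; meanwhile $k_1$ is no longer algebraic over $\F_p$. The descent of the conclusion back to $k$ uses the identification $(X\times_k\overline{k_1})_{\red}\cong (X\times_k\overline{k})_{\red}\times_{\overline{k}}\overline{k_1}$ (the right-hand side is reduced since $\overline{k_1}/\overline{k}$ is a separable extension) together with the fact that rationality of a normal projective surface over an algebraically closed field is preserved under algebraically closed field extensions in both directions, as can be checked via Castelnuovo's criterion $H^1(\MO)=H^2(\MO)=0$ on a smooth projective birational model.

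To arrange that the base field is separably closed, I base change from $k$ to its separable closure $k^s$. Since $k=H^0(X,\MO_X)$ and $X$ is integral, $X$ is geometrically connected over $k$, so $X\times_k k^s$ is connected; separable algebraic base change preserves normality as well as the klt, constant-field, and nef-and-big conditions, so the hypotheses of the proposition are preserved. The descent of the conclusion in this step is automatic because $\overline{k^s}=\overline{k}$.

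After these two reductions, the only hypothesis of Lemma \ref{l-rationality} still to check is the $\Q$-factoriality of $X$, which holds for klt surfaces in positive characteristic; alternatively, one may first pass to a small $\Q$-factorialization, which preserves the klt, nef, and big conditions. Lemma \ref{l-rationality} then yields that $(X\times_k\overline{k})_{\red}$ is a rational surface. The ``in particular'' part is immediate since rational surfaces over an algebraically closed field are rationally connected and this is a geometric property. The main obstacle in executing this plan is the careful book-keeping through the base changes, especially to ensure that normality of $X$, reducedness of the geometric fibre, the $H^0=k$ condition, and the klt property all behave correctly under both $k\subset k(t)$ and $k\subset k^s$, and that the descent of rationality along $\overline{k}\subset\overline{k_1}$ is justified.
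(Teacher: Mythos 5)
Your overall strategy---enlarge the base field until Lemma \ref{l-rationality} applies---is viable, but it diverges from the paper at the decisive point. The paper passes to the separable closure and then uses a dichotomy: a separably closed field is either algebraically closed, in which case the statement is already known over algebraically closed fields (\cite{Tan15}), or it is imperfect, hence automatically not algebraic over a finite field, so Lemma \ref{l-rationality} applies directly and no descent is needed. You instead adjoin a transcendental element to rule out algebraicity over $\F_p$ and then descend rationality of $\left( X \times_k \overline{k} \right)_{\red}$ along $\overline{k} \subset \overline{k(t)}$. That descent step is where your write-up contains a genuine error: the criterion you invoke, $H^1(\MO)=H^2(\MO)=0$ on a smooth projective model, is not Castelnuovo's criterion and is not sufficient for rationality (a classical Enriques surface has $h^1(\MO)=h^2(\MO)=0$ but is irrational). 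The correct criterion, valid in characteristic $p$ by Castelnuovo--Zariski, is $q=P_2=0$; since $q=h^1(\MO)$ and $P_2=h^0(2K)$ are computed on a smooth model defined over $\overline{k}$ and are invariant under extension of algebraically closed base fields by flat base change, the descent does go through once the criterion is corrected, so your route can be repaired.

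Two smaller points. First, what preserves normality, the klt condition, $K_X$, and hence the $\Q$-factoriality input of \cite{Tanc} under the base changes $k\subset k(t)$ and passage to a separable closure is not flatness but the separability of these extensions (for purely inseparable extensions these properties genuinely fail, which is why the paper needs \cite{Tana} elsewhere); your appeal to ``flat base change'' should be replaced by this separability argument, e.g.\ by base-changing a log resolution. Second, the two reductions must be ordered so that the final field is the separable closure of $k(t)$ (or of $k^{s}(t)$), which is separably closed \emph{and} contains a transcendental element; as written, ``base change from $k$ to $k^{s}$'' leaves this slightly ambiguous. With these repairs your argument is correct, but note that the paper's dichotomy is shorter precisely because it never leaves the original geometric fibre and so needs no descent of rationality at all.
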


\begin{proof} 
We may assume that $k$ is separably closed. 
Since the assertion is well-known if $k$ is an algebraically closed field 
(cf.\ \cite[Fact 3.4 and Theorem 3.5]{Tan15}), 
the problem is reduced to the case when $k$ is an imperfect field. 
In particular, $k$ is not algebraic over a finite field. 
As $X$ is $\Q$-factorial \cite[Corollary 4.11]{Tanc}, the assertion follows from Lemma~\ref{l-rationality}. 
\end{proof}

\section{$W\MO$-vanishing for log Fano contractions}\label{s-WO-MFS}

In this section, we prove a vanishing theorem for log Fano contractions (Theorem \ref{t-rel-van}). 
We shall divide the proof into the cases depending on the dimension of the base scheme $Z$. 
The cases $\dim Z=1$ and $\dim Z=2$ 
are treated in Subsection \ref{ss-dP-fib} and Subsection \ref{ss-conic-bdl} respectively. 
The remaining cases $\dim Z=0$ and $\dim Z=3$ 
have been already settled in \cite{GNT} (cf.\ the proof of Theorem \ref{t-rel-van}).

Before starting the case study, 
we summarise some results used repeatedly in the proof of Theorem \ref{t-rel-van}.

\begin{thm}\label{t-gnt}
Let $k$ be a perfect field of characteristic $p>5$. Then the following hold. 
\begin{enumerate}
\item[(1)] If $(X, \Delta)$ is a three-dimensional klt pair over $k$, then $X$ has $W \mathcal{O}$-rational singularities. 
\item[(2)] If $X$ is a three-dimensional projective variety of Fano type over $k$, 
then $H^i \left( X, W \mathcal{O}_{X, \mathbb{Q}} \right) = 0$ for $i > 0$. 
\end{enumerate}
\end{thm}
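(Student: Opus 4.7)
The plan is to directly invoke the companion paper \cite{GNT}, where both assertions were established; nevertheless, let me outline the underlying strategies.

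For (1), the plan is to reduce to showing that for some log resolution $f: Y \to X$, one has $R^i f_* W\mathcal{O}_{Y, \mathbb{Q}} = 0$ for $i > 0$. First I would take a $\mathbb{Q}$-factorial terminal (or dlt) model of $(X, \Delta)$ via the three-dimensional MMP in characteristic $p > 5$ (available by Theorem \ref{t-lc-MMP} and Proposition \ref{p-dlt-modif}), which cuts down the possible singularities that one must analyse. Then I would appeal to a Chatzistamatiou--R\"ulling-type result controlling higher direct images of $W\mathcal{O}_{\mathbb{Q}}$ under proper birational morphisms of regular schemes (invoked already in the proof of Proposition \ref{p-WO-bc}), and combine it with a local computation on the exceptional locus, which by classification of three-dimensional terminal singularities is sufficiently well-behaved.

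For (2), the plan is to replace $X$ by the end product of an MMP on some auxiliary divisor. Since $X$ is of Fano type, there exists $\Delta'$ such that $(X, \Delta')$ is klt with $-(K_X + \Delta')$ ample, and a $K_X$-MMP terminates by Theorem \ref{t-lc-MMP} with either a $\mathbb{Q}$-Fano variety (when the MMP goes all the way down) or a lower-dimensional output reached via Mori fibrations. One must check that each divisorial contraction and flip preserves $H^i(X, W\mathcal{O}_{X, \mathbb{Q}})$, which ultimately reduces to (1) applied locally at the contracted locus, so the problem is reduced to the end result. For the end product, one combines Esnault's vanishing $(1)_p$ for the smooth case with part (1) along a resolution, or one inducts on $\dim Y'$ using relative vanishing for Mori fibrations whose generic fibre is a log Fano of smaller dimension.

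The main obstacle will be the preservation of Witt vector cohomology under the steps of the MMP: unlike the usual Kawamata--Viehweg picture, one cannot simply extract this from the vanishing of $R^if_*$ for divisorial contractions because inverse limits do not behave well with tensor products (cf.\ the discussion preceding Lemma \ref{l-GNT2.22}). One must therefore chase the cohomology through exact sequences coming from the Witt vector formalism, controlling the contribution of the exceptional divisor in each step. It is precisely here that the restriction $p > 5$ is essential, both because the three-dimensional MMP is available only in this range and because the necessary structural results on klt threefold singularities are known only for such $p$.
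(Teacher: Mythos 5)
Your proposal takes the same route as the paper: the paper's entire proof consists of citing \cite{GNT} (Theorem 1.4 for (1) and Theorem 1.3 for (2)), so the long sketches of how those results are proved internally are not needed here. The one point you omit is that the statement in this paper allows $\Delta$ (and the auxiliary boundary in the definition of Fano type) to be an $\mathbb{R}$-divisor, whereas the cited theorems of \cite{GNT} are stated for $\mathbb{Q}$-boundaries; the paper therefore adds a reduction to the rational-coefficient case via \cite[Lemma 4.6.1]{Fuj17}, a standard perturbation step you should include for completeness.
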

\begin{proof}
When $\Delta$ is $\mathbb{Q}$-divisor, both (1) and (2) follow from \cite[Theorem 1.4]{GNT} and \cite[Theorem 1.3]{GNT}, respectively. 
Thanks to \cite[Lemma 4.6.1]{Fuj17}, the general case is reduced to this case. 
\end{proof}

\begin{thm}\label{thm:CR481}
Let $f: X \to Y$ be a projective morphism between integral schemes with $W\mathcal{O}$-rational singularities. 
Suppose that $Y$ is normal and that the generic fibre $X_{K(Y)}$ of $f$ is smooth and rationally chain connected. 
Then $R^if_*W\mathcal{O}_{X, \mathbb{Q}} = 0$ holds for $i > 0$. 
\end{thm}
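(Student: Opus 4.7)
The plan is to reduce the statement to the case where both source and target are smooth, and then to invoke the portion of the Chatzistamatiou--R\"ulling theory of Witt vector cohomology which treats rationally chain connected fibrations between smooth schemes. First I would construct a commutative diagram
\[
\begin{CD}
\widetilde X @>\widetilde f>> \widetilde Y\\
@V\tau VV @VV\sigma V\\
X @>f>> Y,
\end{CD}
\]
where $\sigma \colon \widetilde Y \to Y$ is a resolution of singularities (or, failing that, a suitable alteration) and $\tau \colon \widetilde X \to X$ is a projective birational morphism from a smooth scheme which factors through $X \times_Y \widetilde Y$, so that the induced morphism $\widetilde f$ is projective. Because rational chain connectedness of a geometric generic fibre is preserved under birational modification, the generic fibre of $\widetilde f$ remains smooth and rationally chain connected.

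Next I would invoke the smooth case of the vanishing: for a projective morphism between smooth integral schemes whose generic fibre is smooth and rationally chain connected, one has $R^i \widetilde f_* W\MO_{\widetilde X, \Q} = 0$ for $i > 0$ together with $\widetilde f_* W\MO_{\widetilde X, \Q} \simeq W\MO_{\widetilde Y, \Q}$. This is the heart of the Chatzistamatiou--R\"ulling analysis, and it uses their cohomological trace formalism for Witt vector cohomology combined with the vanishing of $H^{>0}(\,\cdot\,, W\MO_\Q)$ on smooth rationally chain connected projective varieties over any field.

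Finally I would descend to the singular diagram via the $W\MO$-rational singularity hypotheses, which by definition give
\[
R\tau_* W\MO_{\widetilde X, \Q} \simeq W\MO_{X, \Q}, \qquad R\sigma_* W\MO_{\widetilde Y, \Q} \simeq W\MO_{Y, \Q}.
\]
Combining these with the composition isomorphism $Rf_* \circ R\tau_* \simeq R\sigma_* \circ R\widetilde f_*$ recalled in Subsection~\ref{ss-notation} and the commutativity of the square, one obtains
\[
Rf_* W\MO_{X, \Q} \simeq Rf_* R\tau_* W\MO_{\widetilde X, \Q} \simeq R\sigma_* R\widetilde f_* W\MO_{\widetilde X, \Q} \simeq R\sigma_* W\MO_{\widetilde Y, \Q} \simeq W\MO_{Y, \Q},
\]
which is concentrated in degree zero. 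Taking cohomology sheaves yields the desired vanishing $R^i f_* W\MO_{X, \Q} = 0$ for $i > 0$ (and incidentally identifies $f_* W\MO_{X, \Q}$ with $W\MO_{Y, \Q}$).

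The main obstacle is the first step: in positive characteristic, resolution of singularities is not in general available beyond low dimensions, so outside the three-dimensional range one must replace resolutions by de Jong alterations. This forces one to control the generic degree and to use the trace on Witt vector cohomology to split off the $W\MO_{Y,\Q}$-summand, which is precisely where the full Chatzistamatiou--R\"ulling trace formalism is essential. All subsequent steps are formal manipulations with the derived pushforward once this geometric input has been secured.
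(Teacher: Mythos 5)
The paper does not reprove this statement at all: it is quoted verbatim as a special case of \cite[Theorem 4.8.1]{CR12}, which is stated there directly for projective morphisms between schemes with $W\mathcal{O}$-rational (Witt-rational) singularities. Your proposal instead tries to reduce to a ``smooth case'' by resolving $X$ and $Y$, and this is where the genuine gaps lie. First, resolution of singularities is not available in positive characteristic in the generality of the statement (no dimension bound is assumed), and your fallback --- de Jong alterations plus ``the trace on Witt vector cohomology'' --- is exactly the content of the Chatzistamatiou--R\"ulling machinery that underlies \cite[Theorem 4.8.1]{CR12}; at that point your argument is no longer a reduction but a deferral to the very theorem being cited, so nothing has been gained over the paper's one-line citation. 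Second, the assertion that $W\mathcal{O}$-rationality gives ``by definition'' the isomorphisms $R\tau_*W\MO_{\widetilde X,\Q}\simeq W\MO_{X,\Q}$ and $R\sigma_*W\MO_{\widetilde Y,\Q}\simeq W\MO_{Y,\Q}$ for the particular modifications you construct is not automatic: in characteristic $p$ the notion is defined via quasi-resolutions (generically finite, purely inseparable alterations), and the independence of the chosen (quasi-)resolution is itself a nontrivial result of \cite{CR12}, not a definition.

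Third, the claim that ``rational chain connectedness of a geometric generic fibre is preserved under birational modification'' is false as a general statement (RCC is not a birational invariant; cones over elliptic curves already give counterexamples). In your situation one can salvage it only by noting that the generic fibre $X_{K(Y)}$ is already smooth, so a resolution of $X\times_Y\widetilde Y$ can be chosen to be an isomorphism near the generic fibre --- but this again presupposes the existence and flexibility of resolutions that are unavailable here. Finally, your intermediate claim $\widetilde f_*W\MO_{\widetilde X,\Q}\simeq W\MO_{\widetilde Y,\Q}$ needs an argument for connectedness of fibres (via the Stein factorisation and normality of $Y$), which you do not supply, although the theorem as stated only requires the vanishing of the higher direct images. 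In short, the correct route is the paper's: the statement is literally a special case of \cite[Theorem 4.8.1]{CR12}, and any attempt to re-derive it by passing to smooth models runs into resolution issues that the cited theorem was designed to circumvent.
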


\begin{proof}
This is a special case of \cite[Theorem 4.8.1]{CR12}. 
\end{proof}

\begin{lem}\label{l-geom-normalise}
Let $k$ be a field of characteristic $p>0$. 
Let $f:X \to Y$ be a projective $k$-morphism 
of normal $k$-varieties such that $f_*\MO_X=\MO_Y$. 
Then there exists a commutative diagram 
$$\begin{CD}
X' @>\alpha >> X\\
@VVf'V @VVf V\\
Y' @>\beta >> Y
\end{CD}$$
of projective $k$-morphisms of normal $k$-varieties 
that satisfies the following properties. 
\begin{enumerate}
\item Both $\alpha$ and $\beta$ are finite universal homeomorphisms. 
\item $f'_*\MO_{X'}=\MO_{Y'}$. 
\item The generic fibre $X'_{K(Y')}$ of $f'$ is geometrically normal over $K(Y')$. 
\item The induced morphism 
$X'_{K(Y')} \to \left( X \times_Y K \left( Y' \right) \right)_{\red}$ 
is a finite birational morphism. 
In particular, this morphism coincides with the normalisation of $\left( X \times_Y K \left( Y' \right) \right)_{\red}$. 
\end{enumerate}
\end{lem}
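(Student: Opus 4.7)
The plan is to take $Y'$ as the normalisation of $Y$ inside a finite purely inseparable extension $L$ of $K(Y)$, and $X'$ as the normalisation of $X$ inside the compositum $K(X)\cdot L \subset \overline{K(X)}$. Concretely, I would choose $L = K(Y)^{1/p^e}$ for $e$ large enough that the normalisation of $\bigl(X_{K(Y)}\otimes_{K(Y)} L\bigr)_{\red}$ is geometrically normal over $L$, where $X_{K(Y)}$ denotes the generic fibre of $f$. Note that $\bigl(X_{K(Y)}\otimes_{K(Y)} L\bigr)_{\red}$ is automatically integral because $X_{K(Y)}$ is integral and $L/K(Y)$ is purely inseparable.

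Next, $\beta\colon Y'\to Y$ is a finite universal homeomorphism between normal $k$-varieties because $K(Y')=L$ is purely inseparable over $K(Y)$ and $Y$ is normal. Since $K(Y)\subseteq K(X)$, the compositum $K(X)\cdot L$ is purely inseparable over $K(X)$, so $\alpha\colon X'\to X$ is likewise a finite universal homeomorphism between normal $k$-varieties. Locally, $X'$ is the integral closure of $X$ in $K(X)\cdot L$, which is readily seen to coincide with the normalisation of $(X\times_Y Y')_{\red}$. Passing to the generic fibre over $Y'$ then identifies $X'_{K(Y')} \to \bigl(X\times_Y K(Y')\bigr)_{\red}$ with the normalisation morphism, which is finite birational since finite-type reduced schemes over a field are Nagata. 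This gives (1) and (4).

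For (2), the hypothesis $f_*\MO_X=\MO_Y$ is equivalent to $K(Y)$ being algebraically closed in $K(X)$. I would use this to show that $L$ is algebraically closed in $K(X')=K(X)\cdot L$: by induction one reduces to $[L:K(Y)]=p$, and then the fact that $K(Y)$ is algebraically closed in $K(X)$ forces the minimal polynomial of a generator of $L/K(Y)$ to remain irreducible over $K(X)$, so that $K(X)\cdot L = K(X)\otimes_{K(Y)}L$ is a field containing $L$ with $L$ algebraically closed in it. Consequently the Stein factorisation $X' \to Y'' \to Y'$ of $f'$ has $Y''\to Y'$ finite and birational; since $Y'$ is normal, this forces $Y''=Y'$ and hence $f'_*\MO_{X'}=\MO_{Y'}$. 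Property (3) is immediate from the choice of $e$ in the first step.

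The main obstacle will be justifying the existence of such an integer $e$. I would prove this by passing to the perfect closure $K(Y)^{1/p^\infty} = \bigcup_{e\geq 0} K(Y)^{1/p^e}$: since any normal variety over a perfect field is automatically geometrically normal, the normalisation of $\bigl(X_{K(Y)}\otimes_{K(Y)} K(Y)^{1/p^\infty}\bigr)_{\red}$ is geometrically normal over $K(Y)^{1/p^\infty}$. A standard noetherian approximation / spreading-out argument, using that geometric normality is a constructible property along filtered colimits of fields, then descends geometric normality to $L = K(Y)^{1/p^e}$ for some sufficiently large finite $e$, completing the construction.
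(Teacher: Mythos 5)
Your construction of $X'$ and $Y'$ (normalising $X$ and $Y$ in $K(X)\cdot L$ and $L$ for a finite purely inseparable $L/K(Y)$ obtained by descending from the perfect closure) and your verification of (1), (3), (4) follow essentially the same route as the paper. The genuine gap is in your argument for (2). The key field-theoretic claim you rely on --- that if $K(Y)$ is algebraically closed in $K(X)$ and $L/K(Y)$ is purely inseparable of degree $p$, then irreducibility of the minimal polynomial over $K(X)$ makes $K(X)\otimes_{K(Y)}L$ a field \emph{in which $L$ is algebraically closed} --- is false. Counterexample: let $E=\mathbb{F}_2(a,u,v)$ be a rational function field, set $c:=u^2+av^2$ and $K:=\mathbb{F}_2(a,c)\subset E$. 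Then $K$ is algebraically closed in $E$: any element of $E$ algebraic over $K$ lies in $E\cap K^{1/2}$ (since $E\subset K^{1/2}(v)$ and $K^{1/2}$ is algebraically closed in $K^{1/2}(v)$), and writing it as $f+gu$ with $f,g\in K(v)$, so that it equals $f+gc^{1/2}+gv\,a^{1/2}$, and comparing coefficients in the basis $1,a^{1/2},c^{1/2},(ac)^{1/2}$ of $K(v)(a^{1/2},c^{1/2})$ over $K(v)$ forces $g\in K$ and $gv\in K$, hence $g=0$ and the element lies in $K$. Moreover $a\notin E^2$, so $E\otimes_K K(a^{1/2})=E(a^{1/2})$ is indeed a field; yet $c^{1/2}=u+a^{1/2}v$ lies in $E(a^{1/2})$, is algebraic over $K(a^{1/2})$, and is not in $K(a^{1/2})$ (as $c\notin\mathbb{F}_2(a,c^2)$). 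For the same reason your induction on $[L:K(Y)]$ collapses: $K(Y)^{1/p^e}/K(Y)$ is in general not simple, and after one degree-$p$ step the hypothesis ``base field algebraically closed in the function field'' is exactly what may fail. This failure is precisely the geometric phenomenon the lemma is built to absorb: new inseparable ``constants'' can appear after a partial base change, so the Stein factorisation of the normalised pullback can be strictly larger than $Y'$.

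Fortunately, for your specific choice $L=K(Y)^{1/p^e}$ the needed statement is true, by a direct Frobenius argument requiring no induction: if $t\in K(X)\cdot L$ is algebraic over $L$, then $t^{p^e}\in K(X)^{p^e}\cdot K(Y)\subset K(X)$ is algebraic over $K(Y)$, hence lies in $K(Y)$ because $f_*\MO_X=\MO_Y$, and taking the unique $p^e$-th root gives $t\in K(Y)^{1/p^e}=L$. With this replacement, your deduction of (2) via the Stein factorisation and normality of $Y'$ is correct. For comparison, the paper sidesteps the field theory altogether: it defines the generic fibre of $f'$ as an $L$-form of the normalisation over $K(Y)^{1/p^\infty}$, whose $H^0$ is $K(Y)^{1/p^\infty}$, so flat base change gives $H^0\bigl(X'_{K(Y')},\MO_{X'_{K(Y')}}\bigr)=L$; then $\MO_{Y'}\to f'_*\MO_{X'}$ is generically an isomorphism and hence an isomorphism since $Y'$ is normal.
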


\begin{proof}
We set $K:=K(Y)$. 
Let $\nu_0:X'_0 \to \left( X \times_Y K^{1/p^{\infty}} \right)_{\red}$ be 
the normalisation of $\left( X \times_Y K^{1/p^{\infty}} \right)_{\red}$. 
Since $\nu_0$ is a finite universal homeomorphism by \cite[Lemma 2.2]{Tana}, 
we have that $X'_0$ is geometrically connected and projective over a perfect field $K^{1/p^{\infty}}$, 
hence $K^{1/p^{\infty}}=H^0 \bigl( X'_0, \MO_{X'_0} \bigr)$. 
There exist an intermediate field $L$ between $K$ and $K^{1/p^{\infty}}$ 
satisfying $[L:K]<\infty$ and 
a projective normal $L$-variety $X'_1$ 
such that $X'_1 \times_{L} K^{1/p^{\infty}}=X'_0$ 
with the following commutative diagram, where $\nu_1$ is birational:  
\[
\begin{CD}
X' _0 @>>> X_1' @. @. \\
@VV\nu _0V @VV \nu _1V @. @. \\
(X \times_Y K^{1/p^{\infty}})_{\red} @>>> (X \times_Y L)_{\red} @>>> X \times_Y K @>>> X\\
@VVV @VVV @VVV @VVV \\
\mathrm{Spec}\ K^{1/p^{\infty}} @>>> \mathrm{Spec}\ L @>>> \mathrm{Spec}\ K @>>> Y
\end{CD}
\]
\vspace{2mm}

\noindent
In particular, it follows that $L=H^0 \bigl( X'_1, \MO_{X'_1} \bigr)$ and 
$\nu _1$ is a finite universal homeomorphism. 
Since $\nu _1$ is a finite birational morphism and $X_1 '$ is normal, the $\nu _1$ is nothing but the normalisation of $(X \times_Y L)_{\red}$. 
Furthermore, $X_1 '$ is geometrically normal, since $X_0 '$ is normal and $K^{1/p^{\infty}}$ is perfect. 

Let $X'$ (resp.\ $Y'$) be the normalisation of $X$ (resp.\ $Y$) 
in $K(X'_1)$ (resp.\ $L$). 
Then we get the commutative diagram as in the statement, 
and the properties (1), (3) and (4) follow from the construction. 

Let us show (2). 
Since $\MO_{Y'} \to f'_*\MO_{X'}$ is an isomorphism 
on some non-empty open subset of $Y'$, 
it holds that $Y'' \to Y'$ is a finite birational morphism for 
the Stein factorisation 
$$f':X' \to Y'' \to Y'$$
of $f'$. 
As $Y'$ is normal, we have that $Y'' \to Y'$ is an isomorphism, 
hence (2) holds. 
\end{proof}

\subsection{Del Pezzo fibrations}\label{ss-dP-fib}

In this subsection, we establish the $W\MO$-vanishing 
for del Pezzo fibrations (Proposition \ref{p-dP-fib}). 
A key result is the following.

\begin{lem}\label{l-dP-CR}
Let $k$ be a perfect field of characteristic $p>0$. 
Let $f:X \to Y$ be a projective $k$-morphism such that 
\begin{enumerate}
\item $X$ is a normal threefold over $k$ that has $W\MO$-rational singularities, 
\item $Y$ is a smooth $k$-curve, 
\item $f_*\MO_X=\MO_Y$, and 
\item the geometric generic fibre 
$X_{\overline{K(Y)}}$ of $f$ is a normal rational surface. 
\end{enumerate}
Then $R^if_* \left( W\MO_{X, \Q} \right)=0$ for $i>0$. 
\end{lem}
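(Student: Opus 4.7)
The plan is to reduce, via a purely inseparable base change, to a setting where the Chatzistamatiou--R\"ulling theorem (Theorem \ref{thm:CR481}) applies. The main obstruction is that $K(Y)$ is imperfect, so $X_{K(Y)}$ need not be smooth over $K(Y)$ even though its geometric generic fibre is a normal rational surface by hypothesis (4).

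First I would apply Lemma \ref{l-geom-normalise} to $f:X\to Y$ to obtain a commutative diagram
$$\begin{CD}
X' @>\alpha>> X \\
@VVf'V @VVfV \\
Y' @>\beta>> Y
\end{CD}$$
where $\alpha,\beta$ are finite universal homeomorphisms, $f'_*\MO_{X'}=\MO_{Y'}$, and the generic fibre $X'_{K(Y')}$ is geometrically normal over $K(Y')$. Since $Y$ is smooth and $k$ is perfect, $Y'$ is again a smooth $k$-curve. Because $\alpha$ and $\beta$ are universal homeomorphisms, Lemma~\ref{l-GNT2.22} yields $\alpha_*W\MO_{X',\Q}\simeq W\MO_{X,\Q}$ and $\beta_*W\MO_{Y',\Q}\simeq W\MO_{Y,\Q}$; combined with $R\alpha_*=\alpha_*$ and $R\beta_*=\beta_*$ (both finite) and commutativity of the square, this gives
$$R^if_*(W\MO_{X,\Q})\simeq\beta_*R^if'_*(W\MO_{X',\Q}).$$
Since $\beta_*$ is conservative on quasi-coherent sheaves, it suffices to prove the vanishing for $f'$.

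Next I would choose a resolution of singularities $h:\widetilde{X}\to X'$ (available for threefolds in characteristic $p>5$), arranging it so that the induced birational map on geometric generic fibres resolves the isolated singularities of the normal surface $X'_{\overline{K(Y')}}$ and is an isomorphism elsewhere. By Lemma \ref{l-geom-normalise}(4), $X'_{\overline{K(Y')}}$ agrees with the normal rational surface $X_{\overline{K(Y)}}$ of hypothesis~(4), so $\widetilde{X}_{\overline{K(Y')}}$ is a smooth rational surface. Smoothness descends along $\overline{K(Y')}/K(Y')$, so $\widetilde{X}_{K(Y')}$ is smooth over $K(Y')$, and rational chain connectedness is a geometric property and so inherited. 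Thus $f'\circ h:\widetilde{X}\to Y'$ satisfies the hypotheses of Theorem \ref{thm:CR481}, yielding $R^i(f'h)_*(W\MO_{\widetilde{X},\Q})=0$ for $i>0$.

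It remains to identify $Rf'_*(W\MO_{X',\Q})$ with $R(f'h)_*(W\MO_{\widetilde{X},\Q})$, which reduces to showing that $X'$ has $W\MO$-rational singularities. Since $h$ is projective birational and $X'$ is normal, $h$ has connected fibres, hence $h_*W\MO_{\widetilde{X},\Q}=W\MO_{X',\Q}$ by Lemma~\ref{l-GNT2.22}. The remaining vanishing $R^ih_*W\MO_{\widetilde{X},\Q}=0$ for $i>0$ is transferred from hypothesis~(1) on $X$ through the finite purely inseparable cover $\alpha$: applying $\alpha_*$ and comparing with a resolution of $X$ (constructed from a fibre product with $X'$ followed by normalisation and further resolution) together with the exactness and conservativity of $\alpha_*=R\alpha_*$ on $W\MO_{\Q}$-sheaves forces the vanishing. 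Combining this with the previous step yields $R^if'_*(W\MO_{X',\Q})=0$ for $i>0$, whence $R^if_*(W\MO_{X,\Q})=0$ by Step~1. The hard parts are (a) arranging the resolution so that the generic fibre becomes smooth over the imperfect field $K(Y')$ --- feasible only thanks to the geometric normality output of Lemma \ref{l-geom-normalise} --- and (b) transferring $W\MO$-rationality across the inseparable cover $\alpha$; both are essential because $W\MO$-cohomology with $\mathbb{Q}$-coefficients is not known to commute with base change.
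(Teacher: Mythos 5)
Your overall strategy (kill the inseparability problem by universal homeomorphisms, then apply Theorem \ref{thm:CR481}) is the right one, and the bookkeeping steps --- transferring the vanishing across the finite universal homeomorphisms $\alpha,\beta$, and transferring $W\MO$-rationality from $X$ to $X'$ --- are consistent with what the paper does. But there is a genuine gap at the central step. You ``choose a resolution $h:\widetilde X\to X'$, arranging it so that the induced map on geometric generic fibres resolves the singularities of $X'_{\overline{K(Y')}}$,'' and from this deduce that $\widetilde X_{K(Y')}$ is smooth over $K(Y')$. No justification is given that such an arrangement is possible, and this is exactly the hard point. A resolution of the threefold $X'$ over $k$ only makes the generic fibre a \emph{regular} surface over the imperfect field $K(Y')$; regularity over an imperfect field does not imply geometric regularity, so the geometric generic fibre of $\widetilde X\to Y'$ can still be singular, i.e.\ it need not be a resolution of $X'_{\overline{K(Y')}}$. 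Equivalently, what you need is a projective birational morphism onto the generic fibre from a surface \emph{smooth over} $K(Y')$, and geometric normality of the generic fibre (which hypothesis (4) already gives, so your first application of Lemma \ref{l-geom-normalise} buys nothing new) does not provide this: a resolution of $X'_{\overline{K(Y')}}$ is in general only defined over a finite, possibly purely inseparable, extension $L$ of $K(Y')$, and there is no descent mechanism bringing it down to $K(Y')$ itself.

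The paper's proof handles precisely this by a \emph{second} purely inseparable base change, which is absent from your argument: resolve $X\times_Y K(Y)^{1/p^{\infty}}$ over the perfect field $K(Y)^{1/p^{\infty}}$, descend the resolution to a finite purely inseparable extension $L/K(Y)$, replace $Y$ by its normalisation $Y_2$ in $L$ and $X$ by the normalisation $X_2$ of $(X\times_Y Y_2)_{\red}$ (both changes are again finite universal homeomorphisms, and hypothesis (4) guarantees the generic fibre of $X_2\to Y_2$ is $X\times_Y L$), and only then run the spreading-out argument: spread the smooth model of the generic fibre over an open subset of $Y_2$ and take a smooth projective compactification $Z\to X_2$ over $k$, whose geometric generic fibre is then a smooth rational surface, so that Theorem \ref{thm:CR481} applies to $Z\to Y_2$ and $W\MO$-rationality of $X_2$ identifies $Rf_{2*}W\MO_{X_2,\Q}$ with $R(f_2\circ g)_*W\MO_{Z,\Q}$. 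To repair your proof you would either have to prove that a geometrically normal surface over $K(Y')$ admits a resolution smooth over $K(Y')$ (which is not available), or insert this extra base change as in the paper. (Two minor points: the lemma is stated for all $p>0$ and threefold resolution is available there, so the restriction ``$p>5$'' in your resolution step is out of place; and the phrase ``conservative on quasi-coherent sheaves'' should be replaced by the exactness and faithfulness of $\beta_*$ for the finite homeomorphism $\beta$ acting on $W\MO_{\Q}$-modules.)
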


\begin{proof}
We divide the proof into two steps. 
\setcounter{step}{0}
\begin{step}\label{step1-dP-CR}
The assertion of Lemma \ref{l-dP-CR} holds if there exists 
a projective birational $K(Y)$-morphism 
$$g_0:Z_0 \to X_{K(Y)}$$
from a smooth projective $K(Y)$-surface $Z_0$. 
\end{step}

\begin{proof}[Proof of Step \ref{step1-dP-CR}]
Killing the denominators of all the elements of $K(Y)$ defining $g_0$, 
we can find a non-empty open subset $Y'$ of $Y$ and 
morphisms 
$$h':Z' \xrightarrow{g'} X':=f^{-1}(Y') \xrightarrow{f|_{f^{-1}(Y')}} Y'$$
whose base changes by $(-) \times_{Y'} \Spec\,K(Y)$ are the same as 
\[
Z_0 \xrightarrow{g_0} X_{K(Y)} \to \Spec\,K(Y). 
\]
Furthermore, we may assume that 
\begin{itemize}
\item $Z'$ is an integral scheme, 
\item $g'$ is a projective birational morphism, and 
\item the composite morphism $h'$ is smooth. 
\end{itemize}
In particular, $Z'$ is a smooth threefold over $k$. 
Let $g:Z \to X$ be a smooth projective compactification of the induced morphism 
$$Z' \xrightarrow{g_1} X'=f^{-1}(Y') \hookrightarrow X,$$ 
i.e.\ there are morphisms 
$$g':Z' \xrightarrow{j} Z \xrightarrow{g} X$$ 
such that $j$ is an open immersion, $g$ is projective, and 
$Z$ is an integral scheme smooth over $k$. 
In particular $Z$ is a smooth threefold over $k$ which is projective over $X$, hence over $Y$. 
We get the composite morphism 
$$h:Z \xrightarrow{g} X \xrightarrow{f} Y$$
whose geometric generic fibre $Z \times_Y  \overline{K(Y)}$ satisfies the following isomorphisms: 
$$Z \times_Y  \overline{K(Y)} \simeq Z' \times_{Y'} \overline{K(Y)} \simeq Z_0 \times_{K(Y)} \overline{K(Y)}.$$
In particular, $Z \times_Y  \overline{K(Y)}$ is 
a smooth projective rational surface over $\overline{K(Y)}$. 

Therefore we have that 
$$Rf_*(W\MO_{X, \Q}) \simeq Rf_*Rg_*(W\MO_{Z, \Q}) 
\simeq Rh_*(W\MO_{Z, \Q})  
\simeq W\MO_{Y, \Q},$$
where the first isomorphism 
follows from the assumption (1) and the third follows from Theorem \ref{thm:CR481}. 
This completes the proof of Step \ref{step1-dP-CR}. 
\end{proof}

\begin{step}\label{step2-dP-CR}
The assertion of Lemma \ref{l-dP-CR} holds without any additional assumptions. 
\end{step}

\begin{proof}[Proof of Step \ref{step2-dP-CR}]
Let $K(Y)^{1/p^{\infty}}$ be the purely inseparable closure of $K(Y)$ in the algebraic closure $\overline{K(Y)}$ of $K(Y)$. 
We fix a projective birational $K(Y)^{1/p^{\infty}}$-morphism 
$$g_1:Z_1 \to X \times_Y  K(Y)^{1/p^{\infty}}$$
from a regular $K(Y)^{1/p^{\infty}}$-surface $Z_1$. 
Note that $Z_1$ is smooth over $K(Y)^{1/p^{\infty}}$, since $K(Y)^{1/p^{\infty}}$ is a perfect field. 
Then there exist a finite purely inseparable extension $L$ of $K(Y)$ 
and a projective normal $L$-surface $Z_2$ with the following projective birational $L_0$-morphism 
\[
g_2:Z_2 \to X \times_Y  L, 
\]
whose base change by $(-) \times_{L} K(Y)^{1/p^{\infty}}$ is isomorphic to $g_1$. 
In particular, $Z_2$ is a smooth projective surface over $L$. 

Let $Y_2$ be the normalisation of $Y$ in $L$ and 
let $X_2$ be the normalisation of $(X \times_Y Y_2)_{\red}$. 
We get a commutative diagram of normal $k$-varieties: 
$$\begin{CD}
X @<\alpha<< X_2 \\
@VVfV @VVf_2V\\
Y @<\beta<< Y_2.
\end{CD}$$

\begin{claim}\label{c-dP-CR}
The following hold. 
\begin{enumerate}
\item[(a)] There exists a non-empty open subset $Y_3$ of $Y_2$ such that 
the induced morphism 
$$X_3:=f_2^{-1}(Y_3) \to X \times_Y Y_3$$
is an isomorphism. 
\item[(b)] The generic fibre of $f_2:X_2 \to Y_2$ is isomorphic to 
$X \times_Y  L$. 
\item[(c)] $\alpha$ is a finite universal homeomorphism. 
\item[(d)] $\beta$ is a finite universal homeomorphism. 
\item[(e)] $(f_2)_*\MO_{X_2}=\MO_{Y_2}$. 
\end{enumerate}
\end{claim}
\begin{proof}[Proof of Claim \ref{c-dP-CR}]
Note that the generic fibre of $X \times_Y Y_2 \to Y_2$ is normal by the geometric normality of $X_{K(Y)}$ (the assumption (4)). 
Therefore (a) holds since $X_2 \to (X \times_Y Y_2)_{\red}$ is the normalisation. 
It is clear that (b) follows from (a). 
As $K(Y) \subset L$ is a purely inseparable extension, 
the assertion (d) holds. 

Let us show (c). 
It follows from the construction that $\alpha:X_2 \to X$ is a finite surjective morphism of normal schemes. 
In particular $X_2$ coincides with the normalisation of $X$ 
in $K(X_2)$. 
Therefore, it suffices to show that the field extension $K(X) \subset K(X_2)$ 
is purely inseparable, which in turn follows from the following equation 
\[
K(X_2)=K(X_2 \times _{Y_2} K(Y_2))=K(X \times_Y L)
\]
where the second equality follows from (b). 
Thus (c) holds. 

Let us show (e). 
Let $f_2:X_2 \to Y'_2 \to Y_2$ be the Stein factorisation of $f_2$. 
By (a), 
there exists a non-empty open subset $Y_4$ of $Y_2$ 
such that the induced homomorphism 
$$\MO_{Y_2}|_{Y_4} \to (f_2)_*\MO_{X_2}|_{Y_4}$$
is an isomorphism. 
In particular $Y'_2 \to Y_2$ is a finite birational morphism 
of integral $k$-varieties. 
As $Y_2$ is normal, it holds that $Y'_2 \to Y_2$ is an isomorphism, 
hence we obtain (e).
This completes the proof of Claim \ref{c-dP-CR}. 
\end{proof}

Let us go back to the proof of Step \ref{step2-dP-CR}. 
Thanks to (c), (d), (e) and (b) of Claim \ref{c-dP-CR}, 
also $f_2$ satisfies the same properties (1), (2), (3) and (4) hold for $f_2$ respectively. 
By Step \ref{step1-dP-CR}, 
it holds that $R^i(f_2)_*(W\MO_{X_2, \Q})=0$ for $i>0$. 
Thanks to (c) and (d) of Claim \ref{c-dP-CR}, 
we have that $R^if_*(W\MO_{X, \Q})=0$ for $i>0$. This completes the proof of Step \ref{step2-dP-CR}. 
\end{proof}
Step \ref{step2-dP-CR} completes the proof of Lemma \ref{l-dP-CR}. 
\end{proof}

\begin{prop}\label{p-dP-fib}
Let $k$ be a perfect field of characteristic $p >5$. 
Let $(X, \Delta)$ be a three-dimensional klt pair over $k$ and 
let $f:X \to Y$ be a projective $k$-morphism to a smooth $k$-curve $Y$ 
such that $f_*\MO_X=\MO_Y$. 
If $-(K_X+\Delta)$ is $f$-nef and $f$-big, 
then $R^if_*(W\MO_{X, \Q})=0$ for $i>0$. 
\end{prop}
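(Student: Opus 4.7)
The strategy is to reduce the statement to Lemma \ref{l-dP-CR} via a purely inseparable base change. By Theorem \ref{t-gnt}(1), the klt threefold $X$ has $W\MO$-rational singularities; by hypothesis $Y$ is a smooth curve with $f_*\MO_X = \MO_Y$. The generic fibre $(X_{K(Y)}, \Delta_{K(Y)})$ is a klt log del Pezzo surface over the possibly imperfect field $K(Y)$, so Proposition \ref{p-klt-dP} yields that $\left(X_{K(Y)} \times_{K(Y)} \overline{K(Y)}\right)_{\red}$ is rational; however it need not be normal, precluding a direct application of Lemma \ref{l-dP-CR}.

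To correct this, I would apply Lemma \ref{l-geom-normalise} to $f$, obtaining a commutative square
$$\begin{CD}
X' @>\alpha >> X\\
@VVf'V @VVf V\\
Y' @>\beta >> Y
\end{CD}$$
in which $\alpha$ and $\beta$ are finite universal homeomorphisms of normal varieties, $f'_*\MO_{X'} = \MO_{Y'}$, and $X'_{K(Y')}$ is geometrically normal and identified with the normalisation of $\left(X_{K(Y)} \times_{K(Y)} K(Y')\right)_{\red}$. The normal curve $Y'$ is automatically smooth since $k$ is perfect. Because $K(Y) \subset K(Y')$ is purely inseparable, we may identify $\overline{K(Y')}$ with $\overline{K(Y)}$; then $X'_{\overline{K(Y')}}$ is a normal surface birational to $\left(X_{K(Y)} \times_{K(Y)} \overline{K(Y)}\right)_{\red}$, hence a normal rational surface by Proposition \ref{p-klt-dP}. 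This verifies hypotheses (2)--(4) of Lemma \ref{l-dP-CR} for $f'$.

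It remains to check that $X'$ has $W\MO$-rational singularities in order to apply Lemma \ref{l-dP-CR} to $f'$, and this is the main technical point. I expect this to follow from the observation that $W\MO$-rational singularities are preserved under finite universal homeomorphisms between normal varieties over a perfect field, a principle implicit in Step 2 of the proof of Lemma \ref{l-dP-CR}; equivalently, one may produce an effective $\mathbb{R}$-divisor $\Delta'$ on $X'$ with $K_{X'} + \Delta' = \alpha^*(K_X + \Delta)$ making $(X', \Delta')$ klt and then invoke Theorem \ref{t-gnt}(1), the klt property transferring through a log-discrepancy comparison along a common resolution that exploits the purely inseparable nature of $\alpha$.

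Once Lemma \ref{l-dP-CR} yields $R^i f'_*\left(W\MO_{X', \Q}\right) = 0$ for $i > 0$, I descend to $f$ as follows. Since $\alpha$ and $\beta$ are finite universal homeomorphisms (in particular proper surjections with connected fibres), Lemma \ref{l-GNT2.22} furnishes isomorphisms $W\MO_{X, \Q} \xrightarrow{\sim} \alpha_*W\MO_{X', \Q}$ and $W\MO_{Y, \Q} \xrightarrow{\sim} \beta_*W\MO_{Y', \Q}$, while the finiteness of $\alpha$ and $\beta$ forces the vanishing of their higher direct images. Therefore
$$Rf_*\left(W\MO_{X, \Q}\right) \simeq Rf_*R\alpha_*\left(W\MO_{X', \Q}\right) \simeq R\beta_* Rf'_*\left(W\MO_{X', \Q}\right),$$
and the vanishing on the right yields $R^i f_*\left(W\MO_{X, \Q}\right) = 0$ for $i > 0$, completing the argument.
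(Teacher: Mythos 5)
Your proposal is correct and follows essentially the same route as the paper: apply Lemma \ref{l-geom-normalise}, observe that $X'$ inherits $W\MO$-rational singularities because $\alpha$ is a finite universal homeomorphism, identify the geometric generic fibre of $f'$ as a normal rational surface via Proposition \ref{p-klt-dP}, invoke Lemma \ref{l-dP-CR}, and descend through $R\alpha_*$ and $R\beta_*$. The paper likewise deduces the $W\MO$-rationality of $X'$ directly from the universal homeomorphism, so your alternative suggestion of transferring the klt condition along $\alpha$ (which is more delicate, since $K_{X'}$ differs from $\alpha^*K_X$ under purely inseparable covers) is unnecessary.
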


\begin{proof}
Applying Lemma \ref{l-geom-normalise} to $f$, we obtain a commutative diagram 
$$\begin{CD}
X' @>\alpha >> X\\
@VVf'V @VVf V\\
Y' @>\beta >> Y
\end{CD}$$
that satisfies the properties listed in Lemma \ref{l-geom-normalise}. 
Since $X$ has $W\MO$-rational singularities and $\alpha$ is 
a finite universal homeomorphism, 
it holds that $X'$ has $W\MO$-rational singularities. 
Furthermore, the geometric generic fibre $X_{\overline{K(Y')}}$ of $f'$ is normal 
(Lemma \ref{l-geom-normalise} (3)) and hence it is a normal rational surface 
by Proposition~\ref{p-klt-dP}. 
Therefore, it follows from Lemma~\ref{l-dP-CR} that 
$Rf'_*(W\MO_{X', \Q})=W\MO_{Y', \Q}$. 
Thus we get 
\[
Rf_* \left( W\MO_{X, \Q} \right) \simeq Rf_* R\alpha_* \left( W\MO_{X', \Q} \right)
\simeq R\beta_* \left( W\MO_{Y', \Q} \right) \simeq W\MO_{Y, \Q},
\]
where the first and last isomorphisms hold because 
$\alpha$ and $\beta$ are finite universal homeomorphisms 
(Lemma \ref{l-geom-normalise} (1)), and 
the second isomorphism follows from $Rf'_* \left( W\MO_{X', \Q} \right)=W\MO_{Y', \mathbb{Q}}$. 
\end{proof}

\subsection{Conic bundles}\label{ss-conic-bdl}

In this subsection, we prove the $W\MO$-vanishing for conic bundles 
(Proposition \ref{p-conic-van}). 
To this end, we show that their base schemes have $W\MO$-rational singularities (Theorem \ref{t-conic-base}). 
Let us start by recalling the following basic fact.

\begin{lem}\label{l-WO-surf-go-up}
Let $k$ be a perfect field of characteristic $p>0$. 
Let $f:X \to Y$ be a proper birational $k$-morphism of normal $k$-surfaces. 
If $Y$ has $W\MO$-rational singularities, then so does $X$. 
\end{lem}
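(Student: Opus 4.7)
The plan is to fix a resolution $g:\tilde X\to X$, observe that $\phi := f\circ g:\tilde X\to Y$ is automatically a resolution of $Y$ so the hypothesis on $Y$ applies to the composition, and then use surface-specific dimensional vanishings to isolate and kill the sole surviving higher direct image under $g$.

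First I would reduce the task to checking $R^1 g_* W\MO_{\tilde X,\Q}=0$. Since $X$ is normal and $g$ is proper birational, $g_*\MO_{\tilde X}=\MO_X$, and this propagates through the Witt functor, the inverse limit over $n$, and the exact $(-)_{\Q}$ functor to yield $g_* W\MO_{\tilde X,\Q}=W\MO_{X,\Q}$. Since $g$ is birational between surfaces, all its fibres are at most one-dimensional, so $R^{i} g_*=0$ for $i\geq 2$ automatically. Hence $X$ has $W\MO$-rational singularities precisely when $B := R^1 g_* W\MO_{\tilde X,\Q}$ vanishes, and by construction $B$ is supported on the finite set of non-regular points of $X$.

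Next I would exploit the hypothesis on $Y$. The composition $\phi:=f\circ g$ is a proper birational morphism from a smooth surface to $Y$, hence a resolution; so $R\phi_* W\MO_{\tilde X,\Q}\simeq W\MO_{Y,\Q}$ by assumption. Applying $Rf_*$ to the truncation triangle
\[
g_* W\MO_{\tilde X,\Q}\longrightarrow Rg_* W\MO_{\tilde X,\Q}\longrightarrow B[-1]\xrightarrow{+1}
\]
and using $Rf_*\circ Rg_*\simeq R\phi_*$, I would obtain
\[
Rf_*\bigl(g_* W\MO_{\tilde X,\Q}\bigr)\longrightarrow W\MO_{Y,\Q}\longrightarrow (Rf_* B)[-1]\xrightarrow{+1}.
\]
Because $B$ is a skyscraper and $f$ is proper, $Rf_* B = f_* B$ is again a skyscraper on $Y$ whose stalks recover those of $B$; because $f$ is birational between surfaces, $R^{i}f_* = 0$ for $i\geq 2$. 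The long exact cohomology sequence of the latter triangle therefore collapses to an injection $f_*B \hookrightarrow R^2 f_*\bigl(g_* W\MO_{\tilde X,\Q}\bigr) = 0$, whence $f_*B = 0$; since $f_*$ does not annihilate any stalk of a skyscraper, $B = 0$.

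The main potential obstacle is not conceptual but technical: one must verify that the composition law $R(f\circ g)_*\simeq Rf_*\circ Rg_*$, the truncation triangle, and the associated long exact sequence all work at the level of $W\MO_{-,\Q}$, which is a priori a pro-coherent setting. These are part of the derived-category formalism already invoked (cf.\ \cite[Corollary 10.8.10]{Wei94}) together with the Witt-vector framework of \cite{CR12} and \cite{GNT}, so the argument goes through without further issue.
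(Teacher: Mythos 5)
Your proposal is correct and takes essentially the same route as the paper: the paper's proof is the low-degree exact sequence of the Grothendieck spectral sequence for the composition of $f$ with a resolution of $X$, which is precisely your truncation-triangle computation, and it uses the same three inputs — $W\MO$-rationality of $Y$ applied to the composed resolution, $R^{\geq 2}f_*=0$ from one-dimensional fibres (cited there as \cite[Lemma 2.20]{GNT}), and the zero-dimensional support of $R^1$ of the resolution of $X$. No changes are needed.
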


\begin{proof}
Fix a resolution of singularities of $X$: $\varphi:V \to X$. 
We have the following exact sequence induced by 
the corresponding Grothendieck spectral sequence: 
$$0 \to R^1f_* \left( W\MO_{X, \Q} \right) \to R^1 \left( f \circ \varphi \right ) _* \left( W\MO_{V, \Q} \right) \to f_*R^1\varphi_* \left( W\MO_{V, \Q} \right) $$
$$\to R^2f_* \left( W\MO_{X, \Q} \right).$$
We obtain $R^1 \left( f \circ \varphi \right)_* \left( W\MO_{V, \Q} \right) = 0$ 
since $Y$ has $W\MO$-rational singularities. 
Moreover, we have that $R^2f_* \left( W\MO_{X, \Q} \right) = 0$, 
as the fibres of $f$ are at most one-dimensional (cf.\ \cite[Lemma 2.20]{GNT}). 
Therefore, it holds that $f_*R^1\varphi_*(W\MO_{V, \Q}) =0$. 
Thanks to the fact that $\Supp\,R^1\varphi_*(W\MO_{V, \Q})$ is zero-dimensional, we get $R^1\varphi_*(W\MO_{V, \Q})=0$
\end{proof}

\begin{thm}\label{t-conic-base}
Let $k$ be a perfect field of characteristic $p>5$. 
Let $f:X \to Y$ be a projective $k$-morphism of normal $k$-varieties 
with $f_*\MO_X=\MO_Y$ 
which satisfies the following properties. 
\begin{enumerate}
\item{$\dim X=3$ and $\dim Y=2$.}
\item{There exists an effective $\R$-divisor $\Delta$ on $X$ such that 
$(X, \Delta)$ is klt and $-(K_X+\Delta)$ is $f$-nef and $f$-big.}
\end{enumerate}
Then $Y$ has $W\MO$-rational singularities. 
\end{thm}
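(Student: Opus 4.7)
The plan is to follow the strategy outlined in paragraph (A2) of Subsection \ref{ss1-intro}: reduce to $k=\overline{\F}_p$, translate $W\MO$-rationality at a singular point of $Y$ into a cohomological condition on the exceptional divisor of a resolution, and then verify that condition by counting $\F_{p^e}$-points on suitable models, invoking the absolute $W\MO$-vanishing for Fano-type threefolds.

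For the first reduction, Proposition \ref{p-WO-bc} says that $W\MO$-rationality of the normal surface $Y$ is invariant under extension of perfect ground field. Base-changing the data $f:(X,\Delta)\to Y$ to $\overline{\F}_p$---and replacing $X$ by the normalisation of the reduction of its base change, which is still a klt threefold (and still $f$-Fano-type) by standard reduction arguments---we may assume $k=\overline{\F}_p$.

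For the second reduction, fix a singular point $y\in Y$ and a resolution $\mu:\widetilde Y\to Y$, and set $E:=\mu^{-1}(y)_{\mathrm{red}}$. From the short exact sequence
\[
0\to WI_{E,\Q}\to W\MO_{\widetilde Y,\Q}\to W\MO_{E,\Q}\to 0
\]
together with $R^i\mu_*(WI_{E,\Q})=0$ for $i>0$ (\cite[Theorem 2.4]{BBE07}), one obtains an isomorphism $R^i\mu_*(W\MO_{\widetilde Y,\Q})_y\simeq H^i(E,W\MO_{E,\Q})$ for $i>0$. Thus, by Proposition \ref{p-rat-curve2}, it suffices to prove $H^1(E,W\MO_{E,\Q})=0$; equivalently, via the Lefschetz trace formula for Witt vector cohomology (the mechanism behind $(1)'_p$ and $(2)'_p$ in the introduction), after descending to a model over some $\F_q$, to prove that $\#E(\F_{p^e})\equiv 1\pmod{p^e}$ for all sufficiently divisible $e$.

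For the final step, I would spread $f,\Delta,\mu$ to a common model over some $\F_q$, take a smooth resolution $\widetilde X\to X\times_Y\widetilde Y$ (with induced $g:\widetilde X\to X$ birational and $\widetilde f:\widetilde X\to\widetilde Y$), and compactify everything so that the absolute Witt vanishing Theorem \ref{t-gnt}(2) is available on the compactifications $X'$, $\widetilde X'$. On one hand, $W\MO$-rationality of $X$ and $\widetilde X$ (Theorem \ref{t-gnt}(1)) gives $\#X'(\F_{p^e})\equiv\#\widetilde X'(\F_{p^e})\pmod{p^e}$. On the other hand, a Fano-type compactification yields $\#X'(\F_{p^e})\equiv\#Y'(\F_{p^e})\pmod{p^e}$ and $\#\widetilde X'(\F_{p^e})\equiv\#\widetilde Y'(\F_{p^e})\pmod{p^e}$. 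Combining these with the stratification $\widetilde Y = (Y\setminus\Sing Y)\sqcup\bigsqcup_{y\in\Sing Y}\mu^{-1}(y)$ at the level of $\F_{p^e}$-points isolates the local contribution $\#E(\F_{p^e})\equiv 1\pmod{p^e}$, as required.

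\textbf{Main obstacle.} The central difficulty is the compactification step needed to invoke the absolute Theorem \ref{t-gnt}(2): one only has the $f$-relative statement that $-(K_X+\Delta)$ is $f$-nef and $f$-big, and producing a projective extension $X\hookrightarrow X'$ over a projective $Y\hookrightarrow Y'$ such that $X'$ is itself of Fano type, while leaving intact both the singular point $y\in Y$ and the resolution $\mu$, is delicate. In addition, one must carefully control the contribution of possibly reducible or non-reduced fibres of $f$ over the finitely many singular points of $Y$ in the point count; this is precisely where the klt and $f$-big/nef hypotheses on $-(K_X+\Delta)$ are used in an essential way.
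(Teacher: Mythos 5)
Your skeleton (reduce to a finite-field situation, reformulate $W\MO$-rationality of $Y$ as a cohomological condition on the exceptional curve $E$ of a resolution, and verify it by point counting) does follow the strategy announced in the introduction, but the proposal has gaps exactly at the points where the paper has to work. The decisive one is your congruence $\#\widetilde X'(\F_{p^e})\equiv\#\widetilde Y'(\F_{p^e})$: a resolution $\widetilde X$ of $X\times_Y\widetilde Y$ is \emph{not} of Fano type over $\widetilde Y$ in general (over the exceptional curve of $\mu$ there is no positivity of $-(K+\Delta)$ left), so \cite[Theorem 5.4]{GNT} does not apply to $\widetilde X\to\widetilde Y$, and no compactification trick will fix this — your ``main obstacle'' is a red herring, since the counting theorems of \cite{GNT} are applied relatively as they stand. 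What the paper does instead (Step \ref{step-fp-bar}) is run a $\bigl(K_W+\varphi_*^{-1}\Delta+E\bigr)$-MMP \emph{over the resolved base} $Y'$, where $W\to X$ is a log resolution and $E$ the reduced exceptional divisor; since this divisor is generically anti-ample over $Y'$, the MMP ends with a Mori fibre space $X_1\to Y_1$ over $Y'$ with $Y_1\to Y'$ birational. Then $X\equiv W\equiv X_1$ (birational invariance of counts for varieties with $W\MO$-rational singularities, \cite[Theorem 5.1]{GNT}), $X\equiv Y$ and $X_1\equiv Y_1$ (Fano-type fibrations, \cite[Theorem 5.4]{GNT}), and $Y_1\equiv Y'$ (\cite[Corollary 4.4.16]{CR12}, using Lemma \ref{l-WO-surf-go-up}), giving $Y\equiv Y'$ mod $p^e$ and hence the tree condition for $E$. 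This MMP step is the heart of the proof and is absent from your argument.

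Two further points. First, you cannot ``base-change the data to $\overline{\F}_p$'': $\overline{\F}_p$ is not an extension of a general perfect field $k$ of characteristic $p$. Proposition \ref{p-WO-bc} only reduces you to $\overline k$; the passage to $\overline{\F}_p$ is a spreading-out and \emph{specialisation} argument (the paper's Step \ref{step-alg-closed}), and you must then transfer the conclusion back from a general closed fibre to the original surface, which the paper does through the criterion of Proposition \ref{p-rat-curve2} (tree of rational curves $\Leftrightarrow$ $H^1(\MO)=0$) together with upper semicontinuity of cohomology; none of this is in your reduction (also, since $k$ is perfect no normalisation of a reduction is needed when passing to $\overline k$). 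Second, your ``equivalently, via the Lefschetz trace formula'' from $H^1\bigl(E,W\MO_{E,\Q}\bigr)=0$ to $\#E(\F_{p^e})\equiv 1\pmod{p^e}$ is only straightforward in one direction; to go from the congruences back to vanishing you need control of the Frobenius eigenvalues on $H^1$, which the paper obtains geometrically by first showing $E$ is a union of rational curves via rational chain connectedness of $f^{-1}(y)$ (\cite[Theorems 4.1 and 4.8]{GNT}) and then counting points of the snc configuration elementarily ($s(p^e+1)-t$). As written, your proof would not go through without supplying the MMP comparison, the specialisation/semicontinuity step, and the rationality of the components of $E$.
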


\begin{proof}
Replacing $\Delta$, we may assume that $-(K_X+\Delta)$ is $f$-ample. 

\setcounter{step}{0}
\begin{step}\label{step-fp-bar}
The assertion of Theorem \ref{t-conic-base} holds 
if $k=\overline{\F}_p$. 
\end{step}

\begin{proof}[Proof of Step~\ref{step-fp-bar}]
In this proof, $X \left( \mathbb F_{p^e} \right)$ denotes the number of  $\mathbb F_{p^e}$-rational points 
on a model $X_0$ of $X$ over $\mathbb F_{p^e}$, 
i.e.\ $X_0$ is a projective $\F_{p^e}$-scheme such that $X_0 \times_{\F_{p^e}} k \simeq X$. 
We can define $X \left( \mathbb F_{p^e} \right) $ if $e$ is a sufficiently divisible positive integer and we fix a model $X_0$ 
(this number possibly depends on the choice of a model $X_0$).  

\medskip 

We may assume that $Y$ has a unique singular point $y$. 
Let $g:Y' \to Y$ be a log resolution such that $g \left( \Ex \left( g \right) \right)=\{y\}$. Set $C:= \Ex(g) = g^{-1}(y)$. 
Let $\varphi: W \to X$ be a log resolution of $(X, \Delta)$ that admits a morphism to $Y'$. 
Then $f ^{-1} (y)$ is rationally chain connected (\cite[Theorem 4.1]{GNT}). 
Hence by \cite[Theorem 4.8]{GNT},  also $(f \circ \varphi)^{-1} (y)$ is rationally chain connected. 
Therefore its image on $Y'$, which is nothing but $C$, is a union of rational curves. 
In order to prove that $Y$ has $W\MO$-rational singularities, 
it is suffices to show that $C$ forms a tree by Proposition \ref{p-rat-curve2} and \cite[Corollary 4.6.4]{CR12}.
Let $s$ be the number of the vertices and let $t$ be the number of the edges of the dual graph of $C$. 
Note that since $C$ is connected and simple normal clossing, the condition that $C$ forms a tree is equivalent to the condition that $s=t+1$. 
Then, for a sufficiently divisible $e$, 
\[
C (\mathbb{F}_{p^e}) = s (p^e + 1) - t
\]
holds, because we may assume that each component of $C$ and their intersection are defined over $\mathbb{F}_{p^e}$. 
Hence, the condition $s=t+1$ is equivalent to the condition that 
\[
C (\mathbb{F}_{p^e}) \equiv 1 \mod p^e
\]
for sufficiently divisible $e$. 
Therefore, it suffices to show that 
\[
Y' \left( \mathbb F_{p^e} \right) \equiv Y \left( \mathbb F_{p^e} \right) \mod p^e
\]
for 
any sufficiently divisible positive integer $e$.

Let $E$ be the sum of all the $\varphi$-exceptional prime divisors. 
We run a $\left( K_W+\varphi_*^{-1}\Delta+E \right)$-MMP over $Y'$ that terminates. 
Since $K_W+\varphi_*^{-1}\Delta+E$ is generically anti-ample over $Y'$, 
we end with a Mori fibre space $X_1 \to Y_1$ over $Y'$. 
Note that the induced morphism $Y_1 \to Y'$ is birational, 
hence $Y_1$ has $W\MO$-rational singularities by Lemma~\ref{l-WO-surf-go-up}. 

Take an arbitrary divisible positive integer $e$. 
We obtain 
$$Y_1 \left( \mathbb F_{p^e} \right) \equiv Y' \left( \mathbb F_{p^e} \right) \mod p^e$$ 
since $Y_1$ and $Y'$ are birational and have $W\MO$-rational singularities (\cite[Corollary 4.4.16]{CR12}). 
Furthermore, it follows from \cite[Theorem 5.1]{GNT} that 
$$X \left( \mathbb F_{p^e} \right) \equiv W \left( \mathbb F_{p^e} \right) \equiv X_1 \left( \mathbb F_{p^e} \right) \mod p^e.$$ 
On the other hand, $X$ and $X_1$ are of Fano type over $Y$ and $Y_1$ respectively. 
Hence by \cite[Theorem 5.4]{GNT}, 
we obtain 
$$X \left( \mathbb F_{p^e} \right) \equiv Y \left(\mathbb F_{p^e} \right),\,\,\, X_1 \left( \mathbb F_{p^e} \right) \equiv Y_1 \left( \mathbb F_{p^e} \right) 
\mod p^e.$$
To summarise, we get  
$$Y \left( \mathbb F_{p^e} \right) \equiv Y' \left( \mathbb F_{p^e} \right) \mod p^e.$$
This completes the proof of Step~\ref{step-fp-bar}. 
\end{proof}

\begin{step}\label{step-alg-closed}
The assertion of Theorem \ref{t-conic-base} holds 
if $k$ is algebraically closed. 
\end{step}

\begin{proof}[Proof of Step~\ref{step-alg-closed}]
We fix a closed point $y \in Y$ and we may assume that $y$ is a unique singularity of $Y$. 
Let $g:Z \to Y$ be a log resolution such that $g \left( \Ex \left( g \right) \right)=y$. 
By Proposition \ref{p-rat-curve2}, 
$Y$ has $W\MO$-rational singularities if and only if $\Ex(g)$ is a tree 
of smooth rational curves. 
We take a model over 
some finitely generated $\overline{\mathbb{F}}_p$-algebra $R$ of a diagram $X \to Y \leftarrow Z$, 
i.e.\ an intermediate ring $\overline{\mathbb{F}}_p \subset R \subset k$ 
that is a finitely generated $\overline{\mathbb{F}}_p$-algebra, and 
$R$-morphisms of projective schemes over $R$ 
$$\mathfrak X \to \mathfrak Y \leftarrow \mathfrak Z$$
whose base changes by $(-)\times_R k$ are the same as $X \to Y \leftarrow Z$. 
Then, the base change $\mathfrak X_{\mu} \overset{f_{\mu}}\to \mathfrak Y_{\mu} \overset{g_{\mu}}\leftarrow \mathfrak Z_{\mu}$ 
by a general closed point $\mu \in \Spec\, R$ satisfies the same properties as $X \to Y \leftarrow Z$. 
Therefore, $\Ex \left( g_{\mu} \right)$ is a tree of smooth rational curves by Step~\ref{step-fp-bar}, 
hence so is $\Ex(g)$ by Proposition \ref{p-rat-curve2} and 
the upper-semi continuity of cohomologies \cite[Ch.\ III,\ Theorem 12.11]{Har77}. 
This completes the proof of Step~\ref{step-alg-closed}. 
\end{proof}

\begin{step}\label{step-general}
The assertion of Theorem \ref{t-conic-base} holds 
without any additional assumptions. 
\end{step}

\begin{proof}[Proof of Step~\ref{step-general}]
Thanks to Proposition~\ref{p-WO-bc}, 
we may assume that $k$ is algebraically closed. 
Then the assertion of Theorem \ref{t-conic-base} follows from Step~\ref{step-alg-closed}
\end{proof}

Step \ref{step-general} completes the proof of Theorem~\ref{t-conic-base}. 
\end{proof}

\begin{prop}\label{p-conic-van}
Let $k$ be a perfect field of characteristic $p>5$. 
Let $f:X \to Y$ be a projective $k$-morphism of normal $k$-varieties 
with $f_*\MO_X=\MO_Y$ 
which satisfies the following properties. 
\begin{enumerate}
\item{$\dim X=3$ and $\dim Y=2$.}
\item{There exists an effective $\R$-divisor $\Delta$ on $X$ such that 
$(X, \Delta)$ is klt and $- \left( K_X+\Delta \right)$ is $f$-nef and $f$-big.}
\end{enumerate}
Then $R^if_*(W\MO_{X, \Q})=0$ for all $i>0$. 
\end{prop}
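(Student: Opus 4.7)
The plan is to apply the Chatzistamatiou--R\"ulling vanishing (Theorem \ref{thm:CR481}) to $f$ after a purely inseparable base change that makes the generic fibre geometrically normal, exactly mirroring the strategy of Proposition \ref{p-dP-fib}.

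First I would set up the diagram. Since $(X,\Delta)$ is klt and $p>5$, Theorem \ref{t-gnt}(1) gives that $X$ has $W\MO$-rational singularities, and Theorem \ref{t-conic-base} gives the same for $Y$. Applying Lemma \ref{l-geom-normalise} to $f$ produces a commutative square
$$\begin{CD}
X' @>\alpha>> X\\
@VVf'V @VVfV\\
Y' @>\beta>> Y
\end{CD}$$
of normal varieties with $f'_*\MO_{X'}=\MO_{Y'}$, where $\alpha$ and $\beta$ are finite universal homeomorphisms and the generic fibre $X'_{K(Y')}$ is geometrically normal over $K(Y')$. Since $\alpha$ and $\beta$ are finite universal homeomorphisms, $\alpha_*W\MO_{X',\Q}\simeq W\MO_{X,\Q}$ and $\beta_*W\MO_{Y',\Q}\simeq W\MO_{Y,\Q}$ by Lemma \ref{l-GNT2.22}; from this one deduces that $X'$ and $Y'$ also have $W\MO$-rational singularities (by pulling back a resolution through the universal homeomorphism and using that $W\MO_{-,\Q}$ is insensitive to such maps, as in the proof of Proposition \ref{p-dP-fib}).

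Next I would verify the hypotheses of Theorem \ref{thm:CR481} for $f': X' \to Y'$. The generic fibre $X'_{K(Y')}$ is one-dimensional, projective, integral and geometrically normal, hence geometrically regular and therefore smooth over $K(Y')$. After replacing $\Delta$ (which we may do harmlessly), we may assume $-(K_X+\Delta)$ is $f$-ample, so its restriction to the generic fibre of $f$ is ample with positive degree; passing along the finite morphism $X'_{K(Y')}\to (X\times_Y K(Y'))_{\red}$ shows $\deg(-K_{X'_{K(Y')}})>0$, so $X'_{K(Y')}$ is a smooth projective curve of arithmetic genus zero, a form of $\mathbb{P}^1$ over $K(Y')$. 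In particular it becomes $\mathbb{P}^1$ after base change to an algebraic closure, hence is rationally chain connected.

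Invoking Theorem \ref{thm:CR481} yields $R^if'_*(W\MO_{X',\Q})=0$ for $i>0$, and combined with $f'_*W\MO_{X',\Q}\simeq W\MO_{Y',\Q}$ (Lemma \ref{l-GNT2.22}) this gives $Rf'_*(W\MO_{X',\Q})\simeq W\MO_{Y',\Q}$. Chasing the diagram,
$$Rf_*(W\MO_{X,\Q}) \simeq Rf_* R\alpha_*(W\MO_{X',\Q}) \simeq R\beta_* Rf'_*(W\MO_{X',\Q}) \simeq R\beta_*(W\MO_{Y',\Q}) \simeq W\MO_{Y,\Q},$$
where the first and last isomorphisms use that $\alpha$ and $\beta$ are finite universal homeomorphisms together with Lemma \ref{l-GNT2.22}. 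Hence $R^if_*(W\MO_{X,\Q})=0$ for $i>0$. The most delicate points I expect are the descent of the $W\MO$-rational property along the finite universal homeomorphisms $\alpha$ and $\beta$, and the identification of the anticanonical degree of $X'_{K(Y')}$ in terms of the original data on $X$; everything else is formal manipulation once Theorem \ref{t-conic-base} is in hand.
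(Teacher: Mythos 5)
Your proposal follows the paper's proof essentially verbatim: the same reduction via Lemma \ref{l-geom-normalise}, the same invocation of Theorems \ref{t-gnt}(1) and \ref{t-conic-base} to get $W\MO$-rationality of $X$ and $Y$ (hence of $X'$ and $Y'$), the same application of Theorem \ref{thm:CR481} to $f'$, and the same diagram chase with $\alpha,\beta$. The only difference is that you spell out why the generic fibre of $f'$ is a form of $\mathbb{P}^1$ (via the anticanonical-degree argument), a step the paper asserts without elaboration.
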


\begin{proof}
By Lemma \ref{l-geom-normalise}, there exists a commutative diagram 
$$\begin{CD}
X' @>\alpha >> X\\
@VVf'V @VVf V\\
Y' @>\beta >> Y
\end{CD}$$
of projective $k$-morphisms of normal $k$-varieties 
which satisfies the properties listed in Lemma \ref{l-geom-normalise} 
(for an alternative argument, see Remark \ref{r-conic-van}). 
Since $(X, \Delta)$ is klt, $X$ has $W\MO$-rational singularities (Theorem \ref{t-gnt} (1)). 
By Theorem \ref{t-conic-base}, also $Y$ has $W\MO$-rational singularities. 
Since $\alpha$ and $\beta$ are finite universal homeomorphisms (Lemma \ref{l-geom-normalise} (1)), 
also $X'$ and $Y'$ have $W\MO$-rational singularities. 
Since the generic fibre of $f'$ is smooth and will be a rational curve after taking the base change to the algebraic closure, 
it follows from Theorem \ref{thm:CR481} that 
$Rf'_* \left( W\MO_{X', \Q} \right) \simeq W\MO_{Y', \Q}$. 
Therefore, we get 
$$Rf_* \left( W\MO_{X, \Q} \right) \simeq Rf_*R\alpha_* \left( W\MO_{X', \Q} \right)
\simeq R\beta_* \left( W\MO_{Y', \Q} \right) \simeq W\MO_{Y, \Q},$$
where the first and the last isomorphisms follow because 
$\alpha$ and $\beta$ are finite universal homeomorphisms (Lemma \ref{l-geom-normalise} (1)), 
and the second isomorphism follows from $Rf' \left( W\MO_{X', \Q} \right) \simeq W\MO_{Y', \Q}$. 
\end{proof} 

\begin{rem}\label{r-conic-van}
In the situation of Proposition \ref{p-conic-van}, 
the generic fibre is a conic curve in $\mathbb P^2_{K(Y)}$ (cf. \cite[Lemma 10.6(3)]{Kol13}). 
Hence, the assumption $p>2$ implies that 
the generic fibre of $f$ is generically smooth. 
Thus, $\alpha$ and $\beta$ in the proof can be assumed to be isomorphisms 
and we can avoid using Lemma \ref{l-geom-normalise}. 
We adopt the above argument, as it is less dependent on 
the assumption on the characteristic $p$. 
\end{rem}

\subsection{Proof of $W\MO$-vanishing for log Fano contractions} 

We now prove the main theorem of this section.

\begin{thm}\label{t-rel-van}
Let $k$ be a perfect field of characteristic $p > 5$. 
Let $f:X \to Y$ be a projective $k$-morphism of 
normal $k$-varieties. 
Assume that $\dim X \leq 3$ and 
there exists an effective $\mathbb{R}$-divisor $\Delta$ such that 
$(X, \Delta)$ is klt and $-(K_X+\Delta)$ is $f$-nef and $f$-big. 
Then $R^if_* \left( W\MO_{X, \Q} \right)=0$ for $i>0$. 
\end{thm}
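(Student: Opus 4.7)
The strategy is to reduce to the case $f_*\MO_X = \MO_Y$ via Stein factorisation, and then split into four cases according to $\dim Y$, invoking in each case a result established earlier in this section (or in \cite{GNT}).

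First, I would take the Stein factorisation $f : X \xrightarrow{f'} Y' \xrightarrow{g} Y$. Since $X$ is normal and $f$ is projective, $Y'$ is a normal $k$-variety with $f'_*\MO_X = \MO_{Y'}$, and $g$ is finite. Because $g$ is affine, the Leray spectral sequence degenerates to give $R^if_*(W\MO_{X, \Q}) \simeq g_*(R^if'_*(W\MO_{X, \Q}))$ for every $i \geq 0$. The hypotheses of the theorem clearly persist when $f$ is replaced by $f'$, so it suffices to treat the case $f_*\MO_X = \MO_Y$.

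Assume henceforth $f_*\MO_X = \MO_Y$. If $\dim Y = 0$, then $Y = \Spec L$ for some finite (hence perfect) extension $L$ of $k$, and $X$ is a projective $L$-variety of Fano type, so Theorem \ref{t-gnt}(2) applies. If $\dim Y = 1$, then $Y$ is a normal one-dimensional variety over the perfect field $k$, hence a smooth $k$-curve, and Proposition \ref{p-dP-fib} yields the vanishing. If $\dim Y = 2$, Proposition \ref{p-conic-van} yields the vanishing. Finally, if $\dim Y = 3$, then $f$ is projective birational; after perturbing $\Delta$ one may assume $-(K_X + \Delta)$ is $f$-ample, and then choose an effective $A \sim_{\R, f} -(K_X + \Delta)$ with $(X, \Delta + A)$ still klt. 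Setting $\Delta_Y := f_*(\Delta + A)$, we obtain $K_X + \Delta + A = f^*(K_Y + \Delta_Y)$, which forces $(Y, \Delta_Y)$ to be klt by the standard discrepancy formula. In particular, both $X$ and $Y$ have $W\MO$-rational singularities by Theorem \ref{t-gnt}(1). Since the generic fibre of $f$ is $\Spec K(Y)$, trivially smooth and rationally chain connected, Theorem \ref{thm:CR481} now gives $R^if_*(W\MO_{X, \Q}) = 0$ for $i > 0$.

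The bulk of the substantive work is already carried out in the preceding subsections, so the present theorem is largely a matter of bookkeeping. The main thing to verify carefully is the birational case, where one must check that $(Y, \Delta_Y)$ inherits klt from $(X, \Delta + A)$ via the pullback identity; this is standard. The lower-dimensional cases $\dim X \leq 2$, if not already subsumed by the three-dimensional statements of the subsidiary results, admit an entirely analogous but simpler treatment, since $W\MO$-rationality of klt singularities and Theorem \ref{thm:CR481} are available in all dimensions.
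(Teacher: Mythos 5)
Your proposal is correct and follows essentially the same route as the paper: Stein factorisation followed by a case split on $\dim Y$, invoking Theorem \ref{t-gnt}(2) for $\dim Y = 0$, Proposition \ref{p-dP-fib} for $\dim Y = 1$, Proposition \ref{p-conic-van} for $\dim Y = 2$, and the $W\MO$-rationality of klt threefolds for $\dim Y = 3$. In the birational case you spell out slightly more than the paper (which says only that $(Y,\Delta_Y)$ is klt and cites Theorem \ref{t-gnt}(1)), by explicitly invoking Theorem \ref{thm:CR481} to pass from $W\MO$-rationality of $X$ and $Y$ to the vanishing of $R^if_*(W\MO_{X,\Q})$; this is a fair expansion of the paper's terse argument rather than a different method.
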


\begin{proof}
Taking the Stein factorisation of $f$, we may assume that $f_* \mathcal{O}_X = \mathcal{O}_Y$. 
If $\dim Y=0$, then the assertion follows from Theorem \ref{t-gnt} (2). 
If $\dim Y=3$, then we have that also $(Y, \Delta_Y)$ is klt for some 
effective $\R$-divisor $\Delta_Y$, hence the assertion holds by Theorem \ref{t-gnt} (1). 
If $\dim Y=1$ (resp.\ $\dim Y=2$), then the assertion follows from  
Proposition~\ref{p-dP-fib} (resp.\ Proposition~\ref{p-conic-van}). 
\end{proof}

\section{A Nadel vanishing theorem for Witt multiplier ideal sheaves}\label{s-nadel}

In this section, we prove the main theorem of this paper (Theorem \ref{theorem:NWV}). 
Our strategy is to run a suitable minimal model program, 
which enables us to replace the given variety $X$ by the end result. 
In Subsection \ref{subsection:WVC_under_MMP}, we study the behaviour of Witt vector cohomologies 
under such minimal model programs. 
In Subsection \ref{subsection:MFS}, we prove Theorem \ref{theorem:NWV} 
for dlt Mori fibre spaces with an extra assumption (Lemma \ref{lem:MFS}). 
In Subsection \ref{subsection_reduction}, we give a proof of Theorem \ref{theorem:NWV}. 
Furthermore, we also give a generalisation of Theorem \ref{theorem:NWV} (Theorem \ref{theorem:NWV2}) and 
the Koll\'ar--Shokurov connectedness theorem (Theorem \ref{theorem:conn}). 

\subsection{Witt vector cohomologies under MMP}\label{subsection:WVC_under_MMP}

The purpose of this subsection is to prove the following.

\begin{prop}\label{prop:inv_mmp} 
Let $k$ be a perfect field of characteristic $p>5$. 
Let $(X, \Omega)$ be a three-dimensional $\Q$-factorial log pair over $k$ and 
let $h: X \to Z$ be a projective $k$-morphism to 
a quasi-projective $k$-scheme $Z$. 
Suppose that 
\begin{itemize}
\item $\left( X, \Omega ^{\wedge 1} \right)$ is dlt, and 
\item $K_X + \Omega \sim _{Z, \mathbb{R}} 0$. 
\end{itemize}
Let 
\[
X = :X_0 \dasharrow X_1 \dasharrow X_2 \dasharrow \cdots \dasharrow X_r \dasharrow \cdots
\]
be a $\left( K_X + \Omega^{\wedge 1} \right)$-MMP over $Z$ 
with the induced morphism $h_r:X_r \to Z$. 
Set $\Omega_r$ to be the push-forward of $\Omega$ on $X_r$. 
Then the isomorphism 
\[
R^i h_{*} WI_{\Omega^{\ge 1}, \mathbb{Q}} \simeq 
R^i (h_r)_* WI_{\Omega_r ^{\ge 1}, \mathbb{Q}}
\]
holds for any $i \ge 0$ and $r \geq 0$. 
\end{prop}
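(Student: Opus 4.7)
The plan is to proceed by induction on $r$, reducing to a single MMP step $\varphi: X \dashrightarrow X_1$. This step is either a divisorial contraction $\psi: X \to X_1$ (set $Y := X_1$) or a flip with small contractions $\psi: X \to Y$ and $\psi_1: X_1 \to Y$ (set $\psi_1 := \mathrm{id}_Y$ in the divisorial case for uniformity). The target is the isomorphism
\[
R\psi_* WI_{\Omega^{\geq 1}, \Q} \simeq R(\psi_1)_* WI_{\Omega_1^{\geq 1}, \Q}
\]
on $Y$; applying $R(h_Y)_*$, where $h_Y: Y \to Z$ denotes the induced morphism, will then yield the proposition.

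The first main step is a klt perturbation on $X$. Since $(X, \Omega^{\wedge 1})$ is dlt and $\rho(X/Y) = 1$, for small $\epsilon > 0$ the pair $(X, \Omega^{<1} + (1-\epsilon)\Omega^{=1})$ is klt, and
\[
-\bigl(K_X + \Omega^{<1} + (1-\epsilon)\Omega^{=1}\bigr) = -(K_X + \Omega^{\wedge 1}) + \epsilon \Omega^{=1}
\]
remains $\psi$-ample (since $-(K_X + \Omega^{\wedge 1})$ is $\psi$-ample and the relative Picard number is one). Theorem \ref{t-rel-van} then yields $R^i\psi_*(W\MO_{X,\Q}) = 0$ for $i > 0$, which combined with Lemma \ref{l-GNT2.22} gives $R\psi_* W\MO_{X,\Q} \simeq W\MO_{Y,\Q}$.

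The next step handles the reduced non-klt locus $V := \Supp \Omega^{\geq 1} = \Nklt(X,\Omega)$ (Proposition \ref{p-nklt-non-negative}). Using iterated Mayer--Vietoris (Lemma \ref{l-MV}) on the irreducible components of $V$, together with the connectedness theorem (Lemma \ref{l-strong-Birkar}) and the normality of plt centers (Proposition \ref{p-plt-centre} and \cite[Theorem 2.11]{GNT}), one obtains $R\psi_* W\MO_{V,\Q} \simeq W\MO_{\psi(V),\Q}$. Combining this with the previous step via the short exact sequence
\[
0 \to WI_{V,\Q} \to W\MO_{X,\Q} \to W\MO_{V,\Q} \to 0
\]
yields $R\psi_* WI_{\Omega^{\geq 1},\Q} \simeq WI_{\psi(V),\Q}$ on $Y$.

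For the flipped side (if $\varphi$ is a flip), one must carry out the analogous analysis for $\psi_1: X_1 \to Y$. Since the flip preserves dlt, $(X_1, \Omega_1^{\wedge 1})$ is dlt and $(X_1, \Omega_1^{<1})$ is klt; in particular $X_1$ has $W\MO$-rational singularities by Theorem \ref{t-gnt}(1). Using the smallness of $\psi_1$ (exceptional locus of codimension at least two) together with a common log resolution of $X$ and $X_1$, one transfers the vanishing already established on the $\psi$-side, obtaining $R(\psi_1)_* WI_{\Omega_1^{\geq 1},\Q} \simeq WI_{\psi_1(V_1),\Q} = WI_{\psi(V),\Q}$, completing the comparison. \textbf{The main obstacle} is precisely this flipped side: $-(K_{X_1} + \Omega_1^{\wedge 1})$ is $\psi_1$-anti-ample, so Theorem \ref{t-rel-van} does not apply directly to $\psi_1$ with the natural klt perturbation, and one must circumvent this by exploiting the smallness of $\psi_1$ together with the $W\MO$-rationality already derived on $Y$. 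A secondary but delicate point is the Mayer--Vietoris analysis of the possibly reducible locus $V$, where each component must be compared against its image under $\psi$ via the connectedness results for log canonical centers.
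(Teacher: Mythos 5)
Your skeleton --- induction on $r$, comparison of the two sides of a single step through the exact sequence $0 \to WI_{\Omega^{\geq 1},\Q} \to W\MO_{X,\Q} \to W\MO_{V,\Q} \to 0$ with $V=\Supp\Omega^{\geq 1}$, and a klt perturbation of $\Omega^{\wedge 1}$ so that Theorem \ref{t-rel-van} controls $R\psi_*W\MO_{X,\Q}$ --- is consistent with the paper's argument, but the two inputs that actually carry the weight are missing. First, your second step, the isomorphism $R\psi_*W\MO_{V,\Q}\simeq W\MO_{\psi(V),\Q}$, does not follow from connectedness (Lemma \ref{l-strong-Birkar}) plus normality of centres: those only give the degree-zero part $\psi_*W\MO_{V,\Q}=W\MO_{\psi(V),\Q}$. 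A divisorial contraction may contract a component of $V$ (or a divisor meeting it) with fibres of dimension up to two, so you genuinely need $R^i\psi_*W\MO_{S,\Q}=0$ for $i>0$ for the strata $S$ of $\Supp\Omega^{\geq 1}$; the paper obtains this by adjunction ($S$ is normal by \cite[Proposition 4.1]{HX15}, $(S,\Lambda_S)$ is dlt with $-(K_S+\Lambda_S)$ relatively ample) together with \cite[Proposition 3.3]{GNT}, running Mayer--Vietoris over all strata (not only irreducible components) and using \cite[Lemma 3.10]{GNT} for the low-dimensional strata; alternatively, when the contracted divisor lies inside $\Supp\Omega^{\geq 1}$, the whole vanishing follows at once from \cite[Theorem 2.4]{BBE07}, since $\psi$ is then an isomorphism off $V$. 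Without some such vanishing input the divisorial case does not close. (You also need $\psi(V)=\Supp\Omega_1^{\geq 1}$ when a component of $V$ is contracted; this uses that its image is a non-klt centre of $(X_1,\Omega_1)$ together with Proposition \ref{p-nklt-non-negative}.)

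Second, and more seriously, the flipped side is not settled by ``transferring the vanishing through a common log resolution''. Since flipping fibres are at most one-dimensional (\cite[Lemma 2.20]{GNT}) and $X_1$ and $Y$ have $W\MO$-rational singularities, what remains on that side is exactly one statement: the fibres of $\Supp\Omega_1^{\geq 1}\to Y$ are connected, so that $(\psi_1)_*W\MO_{X_1,\Q}=W\MO_{Y,\Q}\to(\psi_1)_*W\MO_{\Supp\Omega_1^{\geq 1},\Q}$ is surjective and hence $R^1(\psi_1)_*WI_{\Omega_1^{\geq 1},\Q}=0$ (Lemma \ref{l-GNT2.22}). Connectedness of $V\to Y$ on the $X$-side does not formally transfer across the flip, because the exceptional curves change; and Lemma \ref{l-strong-Birkar} is unavailable on the $X_1$-side, since $K_{X_1}+\Omega_1^{\wedge 1}$ is $\psi_1$-ample rather than anti-ample --- this is the obstacle you name, but your sketch offers no mechanism to overcome it. The paper supplies precisely this connectedness via Theorem \ref{t-birat-connected2} applied to $(X_1,\Omega_1)$ over $Y$, which is legitimate because $K_{X_1}+\Omega_1\sim_{\R}0$ over $Z$, hence over $Y$; this is the one place where the hypothesis $K_X+\Omega\sim_{Z,\R}0$ with the full $\Omega$ (not $\Omega^{\wedge 1}$) is used in an essential way. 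Since your proposal invokes neither this theorem nor any substitute, the flip case remains open in your argument.
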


\begin{proof}
By induction, it is sufficient to prove the case when $r=1$. 
We have the following properties.  
\begin{enumerate}
\item $(X_1, \Omega_1)$ satisfies the same conditions as in the statement, 
i.e.\ $(X_1, \Omega_1)$ is a $\Q$-factorial log pair 
such that $\left( X_1, \Omega_1 ^{\wedge 1} \right)$ is dlt and 
$K_{X_1} + \Omega _1 \sim _{Z, \mathbb{R}} 0$. 
\item Given projective birational morphisms $\psi_0: Y \to X$ and $\psi_1:Y \to X_1$, 
it holds that $\psi_0^*(K_X + \Omega) \sim_{Z, \R} \psi_1^*(K_{X_1} + \Omega_1) \sim_{Z, \R} 0$. 
\item $\Nklt (X, \Omega) = \Supp \Omega^{\ge 1}$ and $\Nklt (X, \Omega_1) = \Supp \Omega _1 ^{\ge 1}$ (cf.\ Proposition \ref{p-nklt-non-negative}). 
\end{enumerate}

\vspace{2mm}

\noindent 
\underline{\textbf{Case 1.}}\ \ 
Suppose that $g: X \to X_1$ is a divisorial contraction.

We have the following spectral sequence 
\[
E^{i,j} _2 := R^i (h_1)_* R^j g_{*} \left( WI_{\Omega  ^{\ge 1}, \mathbb{Q}} \right) 
\Rightarrow  R^{i+j} \left( h_{1} \circ g \right)_* \left( WI_{\Omega^{\ge 1}, \mathbb{Q}} \right) =: E^{i+j}. 
\] 
Hence, it is sufficient to show the following two equations:   
\begin{equation}\label{e2-inv_mmp} 
g_{*} WI_{\Omega^{\ge 1}, \mathbb{Q}} = WI_{\Omega_{1} ^{\ge 1}, \mathbb{Q}}, 
\quad \text{and}
\end{equation}
\begin{equation}\label{e1-inv_mmp} 
R^i g_{*} WI_{\Omega  ^{\ge 1}, \mathbb{Q}} = 0\quad {\rm for} \quad i > 0. 
\end{equation}
Here, the equation (\ref{e2-inv_mmp}) is equivalent to the equation 
\begin{equation}\label{e3-inv_mmp} 
g \left( \Supp \left( \Omega ^{\ge 1} \right) \right) = \Supp \bigl( \Omega _1 ^{\ge 1} \bigr) 
\end{equation}
as sets. 
\vspace{2mm}

\noindent
\underline{\textbf{Case 1-1.}}\ 
Suppose that the contracted divisor $E$ is an irreducible component of $\Supp \left( \Omega ^{\ge 1} \right)$. 

In this case, the equation (\ref{e1-inv_mmp}) follows from \cite[Theorem 2.4]{BBE07}, since $g$ is an isomorphism outside $\Supp \left( \Omega  ^{\ge 1} \right)$. 
By (2), $g(E)$ is a non-klt centre of $(X_1, \Omega _1)$. 
Thanks to (3), it holds that $g (E) \subset \Supp \bigl( \Omega _1 ^{\ge 1} \bigr)$, 
which implies the required equation (\ref{e3-inv_mmp}).

\vspace{2mm}

\noindent
\underline{\textbf{Case 1-2.}}\ 
Suppose that the contracted divisor $E$ is not contained in $\Supp(\Omega ^{\ge 1})$. 

The equation (\ref{e3-inv_mmp}) is trivial in this case. 
We have the exact sequence: 
\[
0 \to WI_{\Omega ^{\ge 1}, \mathbb{Q}} \to W\mathcal{O}_{X, \mathbb{Q}} \to 
W\mathcal{O}_{\Supp \left( \Omega  ^{\ge 1} \right), \mathbb{Q}} \to 0. 
\]
In order to prove (\ref{e1-inv_mmp}), it is sufficient to show that 
\begin{itemize}
\item $0 \to g_* WI_{\Omega  ^{\ge 1}, \mathbb{Q}} \to g_{*} W\mathcal{O}_{X, \mathbb{Q}} \to 
g_{*} W\mathcal{O}_{\Supp \left( \Omega ^{\ge 1} \right), \mathbb{Q}} \to 0$ is exact, and
\item $R^i g_{*} W\mathcal{O}_{X, \mathbb{Q}} \simeq R^i g_{*} W\mathcal{O}_{\Supp \left( \Omega  ^{\ge 1} \right), \mathbb{Q}}$
holds for $i > 0$. 
\end{itemize}
Since 
\[
R^i g_{*} W\mathcal{O}_{X, \mathbb{Q}} \simeq 
\begin{cases}
W\mathcal{O}_{X_1, \mathbb{Q}} & (i=0) \\
0 & (i > 0)
\end{cases}
\]
holds by the $W\mathcal{O}$-rationality of the klt threefolds $X$ and $X_1$ (Theorem \ref{t-gnt} (1)), it is sufficient to show that 
\begin{align}
R^i g_{*} W\mathcal{O}_{\Supp \left( \Omega ^{\ge 1} \right), \mathbb{Q}} \simeq  
\begin{cases}
W\mathcal{O}_{\Supp \bigl( \Omega _1 ^{\ge 1} \bigr), \mathbb{Q}} & (i=0) \\
0 & (i > 0)
\end{cases}.
\end{align}
This follows from the 
Mayer--Vietoris exact sequence (Lemma \ref{l-MV})
\[
0 \to W \mathcal{O}_{S_1 \cup S_2, \mathbb{Q}} \to 
W \mathcal{O}_{S_1, \mathbb{Q}} \oplus W \mathcal{O}_{S_2, \mathbb{Q}}
\to W \mathcal{O}_{S_1 \cap S_2, \mathbb{Q}} \to 0
\]
for each union $S_i$ of strata of $\Supp \left( \Omega ^{\ge 1} \right)$ 
and Claim \ref{claim:divcontr} below. 
To summarise, in order to prove (\ref{e1-inv_mmp}), it suffices to show Claim \ref{claim:divcontr}. 

\begin{claim}\label{claim:divcontr}
If $S$ is a stratum of $\Supp \left( \Omega  ^{\ge 1} \right)$, then 
$g$ induces 
\[
R^i g_{*} W\mathcal{O} _{S, \mathbb{Q}} \simeq 
\begin{cases}
W\mathcal{O} _{g(S), \mathbb{Q}} & (i=0) \\
0 & (i > 0)
\end{cases}.
\]
\end{claim}

\begin{proof}[Proof of Claim \ref{claim:divcontr}]
Note that $S$ is normal since $\left( X, \Omega^{\wedge 1} \right)$ is dlt and $p > 5$ (cf.\ \cite[Proposition 4.1]{HX15}). 
We define an effective $\mathbb{R}$-divisor $\Lambda _S$ by adjunction $K_S + \Lambda_S = \left( K_X + \Omega ^{\wedge 1} \right)\big|_{S}$. 
Then $-(K_S + \Lambda _S)$ is $\left( g|_S \right)$-ample and $\left( S, \Lambda _S \right)$ is dlt. 
Hence $R^i g_{*} W\mathcal{O} _{S, \mathbb{Q}} = 0$ holds for $i>0$ by \cite[Proposition 3.3]{GNT}. 

In order to prove the required equation $g_{*} W\mathcal{O} _{S, \mathbb{Q}} = W\mathcal{O} _{g(S), \mathbb{Q}}$, 
it is sufficient to prove that $g: S \to g(S)$ has connected fibres (Lemma \ref{l-GNT2.22}). 
When $\dim S = 2$, then this follows because $g: S \to g(S)$ is 
a projective birational morphism of normal varieties. 
When $\dim S \le 1$, 
we can apply \cite[Lemma 3.10]{GNT} (cf.\ Remark \ref{r-GNT3.10}). 
\end{proof}

\vspace{2mm}

\noindent 
\underline{\textbf{Case 2.}}\ \ 
Suppose that $g: X \to Z'$ is a $\left( K_X + \Omega ^{\wedge 1} \right)$-flipping contraction over $Z$, and let $g_1:X_1 \to Z'$ be its flip.

\begin{claim}\label{claim:flip}
\begin{itemize}
\item[(\ref{claim:flip}.1)] $g_{*} WI_{\Omega  ^{\ge 1}, \mathbb{Q}} \simeq  
(g_1)_{*} WI_{\Omega _1 ^{\ge 1}, \mathbb{Q}}$ holds, and
\item[(\ref{claim:flip}.2)] $R^i g_{*} WI_{\Supp \left( \Omega  ^{\ge 1} \right), \mathbb{Q}} = 0$ and 
$R^i (g_1)_{*}  WI_{\Supp \bigl( \Omega_1  ^{\ge 1} \bigr), \mathbb{Q}} = 0$ hold for any $i > 0$. 
\end{itemize}
\end{claim}

\begin{proof}[Proof of Claim \ref{claim:flip}]
First, (\ref{claim:flip}.1) follows from the set-theoretical 
equation $g \left( \Supp \left( \Omega  ^{\ge 1} \right) \right) = g_1 \left( \Supp \bigl( \Omega _1 ^{\ge 1} \bigr) \right)$, which is trivial. 

Let us prove (\ref{claim:flip}.2). 
We may assume that $i=1$, 
since both $g$ and $g_1$ have at most one-dimensional fibres (\cite[Lemma 2.20]{GNT}). 
Consider the exact sequences
\begin{align*}
0 \to WI_{\Omega  ^{\ge 1}, \mathbb{Q}} &\to W\mathcal{O}_{X, \mathbb{Q}} \to W\mathcal{O}_{\Supp \left( \Omega  ^{\ge 1} \right), \mathbb{Q}} \to 0, \quad\text{and}\\
0 \to WI_{\Omega _1  ^{\ge 1}, \mathbb{Q}} &\to W\mathcal{O}_{X_1, \mathbb{Q}} \to W\mathcal{O}_{\Supp \bigl( \Omega _1  ^{\ge 1} \bigr), \mathbb{Q}} \to 0. 
\end{align*}
We have that $R^1 g_* W\mathcal{O}_{X, \mathbb{Q}} = 0$ and $R^1 (g_1)_*  W\mathcal{O}_{X_1, \mathbb{Q}} = 0$, 
since all of $X$, $X_1$ and $Z'$ have $W \mathcal{O}$-rational singularities (Theorem \ref{t-gnt} (1)). 
Hence, it is sufficient to show the surjectivity of 
\begin{align*}
g_* W\mathcal{O}_{X, \mathbb{Q}} &\to g_* W\mathcal{O}_{\Supp \left( \Omega  ^{\ge 1} \right), \mathbb{Q}}, \quad\text{and} \\
(g_1)_* W\mathcal{O}_{X_1, \mathbb{Q}} &\to (g_1)_* W\mathcal{O}_{\Supp \bigl( \Omega _1  ^{\ge 1} \bigr), \mathbb{Q}}, 
\end{align*}
which are equivalent to 
\begin{align*}
g_* W\mathcal{O}_{\Supp \left( \Omega  ^{\ge 1} \right), \mathbb{Q}} &= W\mathcal{O}_{g \left( \Supp \left( \Omega  ^{\ge 1} \right) \right), \mathbb{Q}}, \quad\text{and}\\
(g_1)_* W\mathcal{O}_{\Supp \bigl( \Omega_1  ^{\ge 1} \bigr), \mathbb{Q}} &= W\mathcal{O}_{g_1 \bigl( \Supp \bigl( \Omega _1  ^{\ge 1} \bigr) \bigr), \mathbb{Q}} 
\end{align*}
respectively. 
Therefore, by Lemma \ref{l-GNT2.22}, it is enough to prove the following: 
\begin{itemize}
\item[(i)] $g': \Supp \left( \Omega ^{\ge 1} \right) \to Z'$ has connected fibres. 
\item[(ii)] $g'_1: \Supp \bigl( \Omega _1  ^{\ge 1} \bigr) \to Z'$ has connected fibres. 
\end{itemize}
Both (i) and (ii) follow from (1) and Theorem \ref{t-birat-connected2}. 
This completes the proof of Claim \ref{claim:flip}. 
\end{proof}

For the induced morphism $\theta:Z' \to Z$, 
we obtain isomorphisms 
\begin{align*}
Rh_*WI_{\Omega ^{\ge 1}, \mathbb{Q}} 
&\simeq R\theta_*Rg_* \left( WI_{\Omega  ^{\ge 1}, \Q} \right) \\ 
&\simeq
R\theta_*R(g_1)_* \left( WI_{\Omega_1 ^{\ge 1}, \Q} \right) \simeq 
R(h_1)_* \left( WI_{\Omega_1 ^{\ge 1}, \Q} \right),
\end{align*}
where the second isomorphism follows from Claim \ref{claim:flip}. 
This completes the proof of Proposition \ref{prop:inv_mmp}. 
\end{proof}

\begin{rem}\label{r-GNT3.10}
In the proof above, we use \cite[Lemma 3.10]{GNT}, 
which is a special case of the two-dimensional version of Theorem \ref{intro-NWV}. 
In the proof of \cite[Lemma 3.10]{GNT}, they use \cite[Proposition 2.2]{Tan16}, 
whose proof depends on a classification result on surfaces. 
Here, for the reader's convenience, we give a sketch of an alternative proof. 
When $U$ in \cite[Lemma 3.10]{GNT} has positive dimension, the assertion follows from
the Nadel vanishing theorem for two-dimensional relative cases (\cite[Theorem 2.10]{Tan15}). 
Hence, the remaining case is when $(S, \Delta_S)$ is a two-dimensional dlt pair 
over an algebraically closed field such that $-(K_S+\Delta_S)$ is ample, 
and it is sufficient to show that $\llcorner \Delta_S \lrcorner$ is connected. 
In this case, we may apply the idea of this paper (cf.\ (B) of Subsection \ref{ss1-intro}), 
and the problem can be reduced to the study on Mori fibre spaces (cf.\ Subsection \ref{subsection:MFS}). 
\end{rem}

\subsection{Vanishing for Mori fibre spaces}\label{subsection:MFS}

In this subsection, we prove Lemma \ref{lem:MFS}, which is a special case of Theorem \ref{theorem:NWV}. 
We start with the following auxiliary result. 

\begin{lem}\label{l-slc-curve}
Let $k$ be a perfect field of characteristic $p>0$. 
Let $(S, \Delta_S)$ be a two-dimensional dlt pair over $k$ and 
let $f:S \to Z$ be a projective $k$-morphism to a quasi-projective $k$-scheme $Z$. 
Assume that $- \left( K_S+\Delta_S \right)$ is $f$-ample. 
Let $h:\llcorner \Delta_S\lrcorner \to Z$ be the induced morphism. 
Then the following holds. 
\begin{enumerate}
\item $R^ih_*\MO_{\llcorner \Delta_S\lrcorner}=0$ for $i>0$. 
\item $R^ih_* \left( W\MO_{\llcorner \Delta_S\lrcorner, \Q} \right)=0$ for $i>0$. 
\end{enumerate}
\end{lem}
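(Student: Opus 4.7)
The plan is to deduce both vanishings from two-dimensional vanishing theorems via the structure exact sequence of $C := \llcorner \Delta_S \lrcorner$. Since $(S,\Delta_S)$ is dlt one has $\Nklt(S,\Delta_S) = C$ as a reduced subscheme (cf.\ Proposition \ref{p-nklt-non-negative}), and the short exact sequences
$$0 \to I_C \to \MO_S \to \MO_C \to 0, \qquad 0 \to WI_{C,\Q} \to W\MO_{S,\Q} \to W\MO_{C,\Q} \to 0$$
are available, the second being an instance of Lemma \ref{l-MV}(2). Applying $Rf_*$ to each, the fragment
$$R^i f_* \MO_S \longrightarrow R^i h_* \MO_C \longrightarrow R^{i+1} f_* I_C$$
(and its Witt analogue) of the long exact sequence reduces (1) and (2) to proving, for all $i>0$, the vanishings $R^i f_* I_C = 0$ and $R^i f_* \MO_S = 0$, together with their Witt counterparts $R^i f_*(WI_{C,\Q}) = 0$ and $R^i f_*(W\MO_{S,\Q}) = 0$.

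For the ideal-sheaf terms I would invoke directly the two-dimensional relative Nadel vanishing theorem of Tanaka, cited in Remark \ref{r-GNT3.10} as \cite[Theorem 2.10]{Tan15}, and its Witt analogue \cite[Lemma 3.10]{GNT}. Both apply verbatim to $(S,\Delta_S)$ because $-(K_S+\Delta_S)$ is $f$-ample (hence $f$-nef and $f$-big) and $\Nklt(S,\Delta_S) = C$; as noted in Remark \ref{r-GNT3.10} the Witt statement is simply the two-dimensional case of Theorem \ref{intro-NWV} itself.

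For the structure sheaf terms I would perturb $(S,\Delta_S)$ to a klt pair. Choose $\epsilon > 0$ sufficiently small and set $\Delta'_S := \Delta_S - \epsilon C$. All coefficients of $\Delta'_S$ are strictly less than $1$, so $(S,\Delta'_S)$ is klt; moreover
$$-(K_S + \Delta'_S) \;=\; -(K_S+\Delta_S) + \epsilon C$$
remains $f$-ample for $\epsilon$ small by openness of the $f$-ample cone. The two-dimensional Kawamata--Viehweg vanishing for klt surface pairs applied to $(S,\Delta'_S)$ then yields $R^i f_* \MO_S = 0$ for $i>0$, while \cite[Proposition 3.3]{GNT} applied to the same perturbed pair yields $R^i f_*(W\MO_{S,\Q}) = 0$. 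Combining these four vanishings with the long exact sequences from the first paragraph establishes (1) and (2).

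The only real subtlety, and the point most likely to require care, is the perturbation step: one must verify that a single $\epsilon > 0$ can be chosen so that both $(S,\Delta'_S)$ is klt \emph{and} $-(K_S+\Delta'_S)$ is $f$-ample. Once this is in place the proof is essentially a bookkeeping exercise assembling two-dimensional surface vanishing theorems.
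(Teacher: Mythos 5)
Your decomposition of the problem via $0 \to I_{\llcorner \Delta_S\lrcorner} \to \MO_S \to \MO_{\llcorner \Delta_S\lrcorner} \to 0$ is the same starting point as the paper's proof of (1), and the perturbation step you single out as the subtle point is in fact routine (a dlt pair minus a small multiple of its boundary is klt, and $f$-ampleness is preserved since dlt surfaces are $\Q$-factorial). The genuine gaps lie in the citations. First, ``two-dimensional Kawamata--Viehweg vanishing for klt surface pairs'' is not available in characteristic $p$ as a black box (Kodaira vanishing already fails for surfaces); the instance you need, $R^if_*\MO_S=0$, is true but must be extracted from \cite[Theorems 2.12 and 3.5]{Tan15} (relative vanishing plus rationality of log del Pezzo surfaces), which is exactly what the paper quotes. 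Second, and more seriously, \cite[Theorem 2.10]{Tan15} is a \emph{relative} Nadel vanishing: as Remark \ref{r-GNT3.10} itself notes, it applies only when the image has positive dimension. In the remaining case, where (after Stein factorisation) $f$ contracts $S$ to a point, your claimed vanishing of $R^1f_*I_{\llcorner \Delta_S\lrcorner}=H^1\left(S,\MO_S\left(-\llcorner \Delta_S\lrcorner\right)\right)$ is equivalent, given $H^1(S,\MO_S)=0$, to connectedness of $\llcorner \Delta_S\lrcorner$ -- a nontrivial fact in characteristic $p$, known either via classification (\cite[Proposition 2.2]{Tan16}) or via the $p>5$ MMP machinery of this paper; relying on the latter would be circular (Lemma \ref{l-slc-curve} feeds into Subsection \ref{subsection:MFS}) and would also lose the ``all $p>0$'' generality of the statement. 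Fortunately only the $H^2$ term of the ideal sheaf is actually needed for (1), and the paper kills it directly: if $\dim Z\geq 1$ the fibres are at most one-dimensional, and if $\dim Z=0$ then Serre duality gives $h^2\left(S,\MO_S\left(-\llcorner \Delta_S\lrcorner\right)\right)=h^0\left(S,\MO_S\left(K_S+\llcorner \Delta_S\lrcorner\right)\right)$, which vanishes because $\left(K_S+\llcorner \Delta_S\lrcorner\right)\cdot A=\left(\left(K_S+\Delta_S\right)-\{\Delta_S\}\right)\cdot A<0$ for an ample Cartier divisor $A$. This elementary step is absent from your argument and is not supplied by the theorems you cite.

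For (2) your route through $R^if_*\left(WI_{\llcorner \Delta_S\lrcorner,\Q}\right)=0$ is both overstated and unnecessary. \cite[Lemma 3.10]{GNT} is, as Remark \ref{r-GNT3.10} says, only a special case of the two-dimensional Witt--Nadel statement (it is used in this paper as a connectedness result), proved in the $p>5$ setting via classification input; in particular it does not give you $R^2f_*\left(WI_{\llcorner \Delta_S\lrcorner,\Q}\right)=0$ when $f(S)$ is a point, which your long exact sequence needs, and there is no Witt analogue of the Serre-duality trick to fall back on. The paper sidesteps all of this: since $\llcorner \Delta_S\lrcorner$ is one-dimensional, part (1) upgrades to $R^ih_*W_n\MO_{\llcorner \Delta_S\lrcorner}=0$ for every $n$ by induction along $0\to W_n\MO\xrightarrow{V}W_{n+1}\MO\to\MO\to 0$, and then \cite[Lemma 2.19]{GNT} passes to the limit to give (2). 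So the correct repair is to prove (1) as the paper does and deduce (2) formally, rather than importing Witt--Nadel-type vanishing that this lemma is itself designed to help establish.
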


\begin{proof}
Let us prove (1). 
Taking the Stein factorisation of $f$, we may assume that $f_*\MO_S=\MO_Z$. 
Furthermore, the problem is reduced to the case when $k$ is algebraically closed. 
Thanks to \cite[Theorem 2.12 and Theorem 3.5]{Tan15}, it holds that 
$R^if_*\MO_S=0$ for $i>0$. 
If $\dim Z \geq 1$, then we have a surjection: 
\[
0=R^1f_*\MO_S \to R^1h_*\MO_{\llcorner \Delta_S\lrcorner}, 
\]
which implies $R^ih_*\MO_{\llcorner \Delta_S\lrcorner}=0$ for $i>0$. 
Thus, we may assume that $\dim Z=0$, i.e.\ $Z=\Spec\,k$. 
Then we have the exact sequence: 
\[
0=H^1(S, \MO_S) \to H^1 \left( \llcorner \Delta_S\lrcorner, \MO_{\llcorner \Delta_S\lrcorner} \right) 
\to H^2 \left( S, \MO_S \left( -\llcorner \Delta_S\lrcorner \right) \right).
\]
Hence, it suffices to prove $H^2 \left( S, \MO_S \left( -\llcorner \Delta_S\lrcorner \right)\right)=0$. 

It follows from Serre duality that 
\[
h^2 \left( S, \MO_S \left( -\llcorner \Delta_S\lrcorner \right) \right) =h^0 \left( S, \MO_S \left (K_S+\llcorner \Delta_S\lrcorner \right) \right).
\]
For an ample Cartier divisor $A$, we have that 
\[
\left( K_S+\llcorner \Delta_S\lrcorner \right) \cdot A= \left( \left( K_S+\Delta_S \right) -\{\Delta_S\} \right) \cdot A<0, 
\]
which in turn implies that $H^0 \left( S, \MO_S \left( K_S+\llcorner \Delta_S\lrcorner \right) \right)=0.$ 
Hence, it holds that $H^2 \left( S, \MO_S \left( -\llcorner \Delta_S\lrcorner \right) \right)=0$, as desired. 
This completes the proof of (1).

The assertion (2) follows from (1) and \cite[Lemma 2.19]{GNT}. 
\end{proof}

\begin{lem}\label{lem:MFS2}
Let $k$ be a perfect field of characteristic $p>5$. 
Let $(X, \Xi)$ be a three-dimensional $\Q$-factorial dlt pair over $k$ and 
let $f: X \to Z$ be a projective surjective $k$-morphism to a  quasi-projective $k$-scheme $Z$ such that 
\begin{enumerate}
\item 
$\dim X >\dim Z$, 
\item 
$f$ has connected fibres, 
\item 
$-(K_X+\Xi)$ is $f$-ample, and 
\item 
there exists an irreducible component $D_0$ of $\llcorner \Xi\lrcorner$ 
such that $D_0$ is $f$-ample. 
\end{enumerate}
Then 
\[
R^i f_* W\mathcal{O}_{\Supp \lfloor \Xi \rfloor, \mathbb{Q}} \simeq  
\begin{cases}
W\mathcal{O}_{Z, \mathbb{Q}} & (i=0) \\
0 & (i > 0)
\end{cases}
\]
holds.
\end{lem}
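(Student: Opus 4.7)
The plan is to argue by induction on the number $n$ of irreducible components of $\lfloor \Xi \rfloor$. Write $\lfloor \Xi \rfloor = D_0 + D_1 + \cdots + D_{n-1}$ and set $T := \Supp \lfloor \Xi \rfloor$.

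\textbf{Base case} $n = 1$: here $T = D_0$ and, since $(X, \Xi)$ is a dlt threefold with $\lfloor \Xi \rfloor$ irreducible and $p > 5$, the surface $D_0$ is normal (cf.\ \cite[Proposition 4.1]{HX15}) and the pair is plt along $D_0$. Adjunction produces a two-dimensional klt pair $(D_0, \Xi_{D_0})$ with $-(K_{D_0} + \Xi_{D_0}) = -(K_X + \Xi)|_{D_0}$ being $(f|_{D_0})$-ample. Theorem \ref{t-rel-van} applied to $f|_{D_0}: D_0 \to Z$ yields $R^i (f|_{D_0})_* W\MO_{D_0, \Q} = 0$ for $i > 0$. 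For the identification of the zeroth direct image with $W\MO_{Z, \Q}$, Lemma \ref{l-GNT2.22} reduces the question to connected fibres of $f|_{D_0}$; this I would derive from the $f$-ampleness of $D_0$ (which makes $D_0|_F$ an ample Cartier divisor on each connected fibre $F$ of $f$) together with a Witt-cohomological connectedness argument for ample divisors on the fibres.

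\textbf{Inductive step} $n \geq 2$: choose $D := D_{n-1}$ and set $T' := D_0 \cup \cdots \cup D_{n-2}$, so that $T = T' \cup D$. Consider the perturbation $\Xi^\dagger := \Xi - \epsilon D$ for $0 < \epsilon \ll 1$. Then $(X, \Xi^\dagger)$ is still dlt, $-(K_X + \Xi^\dagger) = -(K_X + \Xi) + \epsilon D$ remains $f$-ample, and $\Supp \lfloor \Xi^\dagger \rfloor = T'$ has $n-1$ components while still containing the $f$-ample $D_0$. By the inductive hypothesis, $Rf_* W\MO_{T', \Q} \simeq W\MO_{Z, \Q}$. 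Apply the Mayer--Vietoris sequence (Lemma \ref{l-MV}):
\[
0 \to W\MO_{T, \Q} \to W\MO_{T', \Q} \oplus W\MO_{D, \Q} \to W\MO_{T' \cap D, \Q} \to 0.
\]
Adjunction gives a dlt pair $(D, \Xi_D)$ with $-(K_D + \Xi_D)$ being $(f|_D)$-ample; Theorem \ref{t-rel-van} applied to a klt perturbation of $(D, \Xi_D)$ yields $R^i(f|_D)_* W\MO_{D, \Q} = 0$ for $i > 0$, and Lemma \ref{l-slc-curve}(2) applied to $(D, \Xi_D)$ gives $R^i(f|_{T' \cap D})_* W\MO_{T' \cap D, \Q} = 0$ for $i > 0$ (since $T' \cap D = \lfloor \Xi_D \rfloor$ set-theoretically). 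Taking $Rf_*$ of the Mayer--Vietoris sequence then forces $R^i f_* W\MO_{T, \Q} = 0$ for $i \geq 2$, and reduces the remaining work to the exact piece involving $f_*$ and $R^1 f_*$.

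The main obstacle will be the $R^0$ and $R^1$ identifications extracted from the Mayer--Vietoris sequence: one must show that the map $f_* W\MO_{T', \Q} \oplus f_* W\MO_{D, \Q} \to f_* W\MO_{T' \cap D, \Q}$ is surjective with kernel the diagonal copy of $W\MO_{Z, \Q}$. This in turn hinges on showing that $f|_T$ (and not just $f|_{D_0}$) has geometrically connected fibres, which requires propagating connectedness from $D_0$ along the strata of $\lfloor \Xi \rfloor$ via the connectedness of $T' \cap D$; together with the base-case connectedness of ample divisors on fibres, this is the delicate point and is where I expect to have to invoke Theorem \ref{t-birat-connected2} on a suitable dlt modification, or otherwise exploit the dlt-strata combinatorics directly.
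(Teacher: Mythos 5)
Your overall architecture (induction on the number of components of $\lfloor \Xi \rfloor$, the perturbation $\Xi - \epsilon D$, Mayer--Vietoris, and the vanishing of the higher direct images on the strata via adjunction, Theorem \ref{t-rel-van} and Lemma \ref{l-slc-curve}) is the same as the paper's, but there are genuine gaps exactly where the real work lies. First, in the base case your connectedness claim for $D_0 \to Z$ is not justified when the fibres of $f$ are one-dimensional (the case $\dim Z = 2$): an ample divisor on a curve fibre is a finite set of points and is in general disconnected, so ``$D_0|_F$ ample on each fibre'' does not give connectedness. The paper uses that $-(K_X+\Xi)$ is $f$-ample, so the geometric generic fibre $\overline F$ is $\mathbb P^1$ and $0 > \deg (K_X+\Xi)|_{\overline F} \geq -2 + \deg \bigl( D_0|_{\overline F} \bigr)$, forcing $\deg \bigl( D_0|_{\overline F} \bigr) = 1$; connectedness of $D_0 \to Z$ then follows from normality of $Z$ (after Stein factorisation) and Lemma \ref{l-GNT2.22}.

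Second, and more seriously, the $R^0$/$R^1$ identification you defer is the heart of the lemma, and the tool you name would not close it: Theorem \ref{t-birat-connected2} concerns birational morphisms, whereas $f$ is a fibration with $\dim X > \dim Z$. What the paper proves instead is that for $D := D_i$ ($i \geq 1$) and $E := \bigcup_{j \neq i} D_j$, the natural map $R^q f_* W\MO_{D, \Q} \to R^q f_* W\MO_{D \cap E, \Q}$ is an isomorphism for \emph{all} $q \geq 0$; fed into Mayer--Vietoris this gives $R^q f_* W\MO_{T, \Q} \simeq R^q f_* W\MO_{E, \Q}$ and the induction closes without ever discussing connected fibres of $f|_T$. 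The $q = 0$ case is where the geometry enters: one first shows $\dim f(D) < \dim D$ and $f(D) = f(D \cap D_0)$ (any curve in a fibre of $D \to f(D)$ is $f$-contracted, hence meets the $f$-ample $D_0$), then passes to the Stein factorisation $D \to C'$ of $D \to f(D)$ and applies the two-dimensional relative connectedness result \cite[Lemma 3.10]{GNT} to the dlt surface pair $(D, \Xi_D)$, whose boundary floor is exactly $D \cap E$ and whose anti-log-canonical divisor is relatively ample; this shows $D \cap E \to C'$ is surjective with connected fibres, hence $f'_* W\MO_{D, \Q} \to f'_* W\MO_{D \cap E, \Q}$ is an isomorphism by Lemma \ref{l-GNT2.22}. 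Without this step (or an equivalent replacement) your induction does not yield the $i = 0, 1$ identifications, so the proof is incomplete as it stands.
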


\begin{proof}
Replacing $Z$ by $Z'$ for the Stein factorisation $X \to Z' \to Z$ of $f$, 
we may assume that $f_*\MO_X=\MO_Z$. 
Let $g:D_0 \to Z$ be the induced morphism. 
Let $\llcorner \Xi \lrcorner=\sum_{i=0}^m D_i$ be the irreducible decomposition.

\setcounter{step}{0}

\begin{step}\label{s1-MFS2}
The isomorphism 
$R g_* W\mathcal{O}_{D_0, \mathbb{Q}} \simeq
W\mathcal{O}_{Z, \mathbb{Q}}$ holds. 
In particular, Lemma \ref{lem:MFS2} holds if $m=0$.  
\end{step}

\begin{proof}[Proof of Step \ref{s1-MFS2}] 
We define an effective $\mathbb{R}$-divisor 
$\Xi_{D_0}$ by adjunction $(K_X+\Xi)|_{D_0}=K_{D_0}+\Xi_{D_0}$. 
It holds that $\left( D_0, \Xi_{D_0} \right)$ is dlt and $-\left( K_{D_0}+\Xi_{D_0} \right)$ is $g$-ample. 
Hence it follows from \cite[Proposition 3.3]{GNT} that 
$R^i g_* W\mathcal{O}_{D_0, \mathbb{Q}} = 0$ for $i > 0$. 

In order to prove $g _* W\mathcal{O}_{D_0, \mathbb{Q}}\simeq W\mathcal{O}_{Z, \mathbb{Q}}$, 
it suffices to show that $g:D_0 \to Z$ has connected fibres (Lemma \ref{l-GNT2.22}). 
Since $Z$ is normal, it is enough to prove that $D_0|_F$ is geometrically connected 
for the generic fibre $F:=X \times_Z \Spec\,K(Z)$ of $f$. 
If $\dim F \geq 2$, then the restriction $D_0|_F$ is geometrically connected, 
because $D_0|_F$ is an ample $\mathbb{Q}$-Cartier $\Z$-divisor on $F$. 
Thus, we may assume that $\dim F=1$. 
Since $-(K_X+\Xi)$ is $f$-ample, the geometric generic fibre 
$\overline F:=F \times_{K(Z)} \overline{K(Z)}$ is $\mathbb P^1_{\overline{K(Z)}}$. 
Moreover, it holds that 
\[
0>\deg (K_X+\Xi)|_{\overline F} \geq \deg \left( K_{\overline F}+D_0|_{\overline F} \right)=-2+\deg \left( D_0|_{\overline F} \right).
\]
This implies that $g$ has connected fibres. 
This completes the proof of Step \ref{s1-MFS2}. 
\end{proof}

\begin{step}\label{s2-MFS2} 
For any $i \in \{1, \ldots, m\}$, 
it holds that 
\begin{enumerate}
\item[(a)] $\dim f(D_i) < \dim D_i$, and
\item[(b)] $f(D_i) =f(D_i \cap D_0).$ 
\end{enumerate}
\end{step}

\begin{proof}[Proof of Step \ref{s2-MFS2}] 
Let us prove (a). 
If $\dim Z \leq 1$, then there is nothing to show. 
Thus we may assume that $\dim Z = 2$. 
Assuming that there is $i \in \{1, \dots, m\}$ such that 
such that $f(D_i)=Z$, let us derive a contradiction. 
Since a general fibre $F$ of $f$ is $\mathbb P^1$, it holds that 
\[
(K_X+\Xi) \cdot F \geq (K_X+D_0+D_i) \cdot F \geq 0.
\]
This contradicts the fact that $-(K_X+\Xi)$ is $f$-ample. 
Therefore, $D_i$ does not dominate $Z$ for any $i \in \{1, \dots, m\}$, which implies (a).

Let us prove (b). 
Fix an arbitrary closed point $x \in f(D_i)$. 
Since $\dim f(D_i) < \dim D_i$, 
there exists a curve $C$ on $X$ contained in $D_i \cap f^{-1}(x)$. 
Since $D_0$ is ample over $Z$, 
the contracted curve $C$ intersects $D_0$. 
This implies $x \in f(D_i \cap D_0)$. 
Thus we get $f(D_i) = f(D_i \cap D_0)$. 
Hence, (b) holds. 
This completes the proof of Step \ref{s2-MFS2}. 
\end{proof}

\begin{step}\label{s3-MFS2} 
For any $i \in \{1, \ldots, m\}$, 
it holds that the induced homomorphism
\[
R^q f _* W\mathcal{O}_{D_i, \mathbb{Q}} \to R^q f _* W\mathcal{O}_{D_i \cap E, \mathbb{Q}},
\]
is an isomorphism for any $q \geq 0$, where $E:=\bigcup_{j \in \{0, \ldots, m\} \setminus \{i\}} D_j$. 
\end{step}

\begin{proof}[Proof of Step \ref{s3-MFS2}] 
Set $C:= f(D_i)$, and 
let $D_i \xrightarrow{f'} C' \xrightarrow{s} C$ be the 
Stein factorisation of $D_i \to C$. 
Let $\Xi_{D_i}$ be the effective $\R$-divisor on $D_i$ defined by adjunction 
$(K_X+\Xi)|_{D_i}=K_{D_i}+\Xi_{D_i}$. 
Then the following properties hold. 
\begin{enumerate}
\item[(c)] $(D_i, \Xi_{D_i})$ is dlt and $\Supp \left( \lfloor \Xi _{D_i} \rfloor \right) = D_i \cap E$.
\item[(d)] $- \left( K_{D_i} + \Xi_{D_i} \right)$ is $f'$-ample. 
\end{enumerate}
We have the exact sequence: 
\[
0 \to WI_{\llcorner \Xi_{D_i} \lrcorner, \Q} \to W\MO_{D_i, \Q} \to W\MO_{D_i \cap E, \Q} \to 0.
\]
By \cite[Proposition 3.3]{GNT} and Lemma \ref{l-slc-curve}, it holds that 
\[
R^if'_* W\mathcal{O}_{D_i, \mathbb{Q}} =0 \quad {\rm and}\quad 
R^if'_* W\mathcal{O}_{D_i \cap E, \mathbb{Q}} =0
\]
for $i>0$, respectively. 
Hence, it suffices to prove that $f'_*W\MO_{D_i, \Q} \to f'_*W\MO_{D_i \cap E, \Q}$ 
is an isomorphism. 
By Step \ref{s2-MFS2} (b), it is enough to prove that 
the induced morphism $D_i \cap E \to C'$ has connected fibres (Lemma \ref{l-GNT2.22}), 
which follows from \cite[Lemma 3.10]{GNT}. 
This completes the proof of Step \ref{s3-MFS2}. 
\end{proof}

\begin{step}\label{s4-MFS2}
The assertion of Lemma \ref{lem:MFS2} holds.   
\end{step}

\begin{proof}[Proof of Step \ref{s4-MFS2}] 
We prove the assertion by induction on $m$. 
By Step \ref{s1-MFS2}, there is nothing to show if $m=0$. 
Thus, assume that $m>0$ and 
that the assertion of Lemma \ref{lem:MFS2} holds 
if the number of the irreducible components of $\llcorner \Xi\lrcorner$ is less than $m$. 
Fix $i \in \{1, \ldots, m\}$.  
Since $(X, \Xi':=\Xi - \epsilon D_i)$ satisfies the same assumption as in Lemma \ref{lem:MFS2} for sufficiently small $\epsilon >0$, 
it follows from the induction hypothesis that
\[
R f_* W\mathcal{O}_{E, \mathbb{Q}} \simeq 
W\mathcal{O}_{Z, \mathbb{Q}},
\] 
where $E:=\bigcup_{j \in \{0, \ldots, m\} \setminus \{i\}} D_j$. 
By the Mayer--Vietoris exact sequence (Lemma \ref{l-MV})
\[
0 \to W\mathcal{O}_{D_i \cup E, \mathbb{Q}} \to 
W\mathcal{O}_{D_i, \mathbb{Q}} \oplus W\mathcal{O}_{E, \mathbb{Q}} \to 
W\mathcal{O}_{D_i \cap E, \mathbb{Q}} \to 0, 
\]
it is sufficient to show that the induced homomorphism
\[
R^q f _* W\mathcal{O}_{D_i, \mathbb{Q}} \to R^q f _* W\mathcal{O}_{D_i \cap E, \mathbb{Q}}. 
\]
is an isomorphism for any $q \geq 0$. 
This is nothing but the assertion of Step \ref{s3-MFS2}. 
This completes the proof of Step \ref{s4-MFS2}. 
\end{proof}
Step \ref{s4-MFS2} completes the proof of Lemma \ref{lem:MFS2}. 
\end{proof}

\begin{lem}\label{lem:MFS} 
Let $k$ be a perfect field of characteristic $p>5$. 
Let $(X, \Xi)$ be a three-dimensional $\Q$-factorial dlt pair over $k$ and 
let $f: X \to Z$ be a $(K_X+\Xi)$-Mori fibre space to a quasi-projective $k$-variety $Z$. 
Suppose that $f \left( \Supp \lfloor \Xi \rfloor \right)=Z$. 
Then 
\[
R^i f _* WI_{\llcorner \Xi \lrcorner, \mathbb{Q}} = 0
\]
for any $i \geq  0$. 
\end{lem}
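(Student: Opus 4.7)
My plan is to combine the short exact sequence
\[
0 \to WI_{\lfloor \Xi \rfloor, \mathbb{Q}} \to W\mathcal{O}_{X, \mathbb{Q}} \to W\mathcal{O}_{\Supp \lfloor \Xi \rfloor, \mathbb{Q}} \to 0
\]
with the computation of each of the two outer pieces: both $Rf_{*} W\mathcal{O}_{X, \mathbb{Q}}$ and $Rf_{*} W\mathcal{O}_{\Supp \lfloor \Xi \rfloor, \mathbb{Q}}$ should turn out to be isomorphic to $W\mathcal{O}_{Z, \mathbb{Q}}$, and then the distinguished triangle will force $Rf_{*} WI_{\lfloor \Xi \rfloor, \mathbb{Q}} = 0$.

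For the first outer piece, I would perturb to a klt pair: set $\Delta' := \Xi^{<1} + (1-\epsilon) \lfloor \Xi \rfloor$ for a sufficiently small $\epsilon > 0$. Then $(X, \Delta')$ is klt and $-(K_X+\Delta') = -(K_X+\Xi) + \epsilon \lfloor \Xi \rfloor$ remains $f$-ample (since $f$-ampleness is open and $-(K_X+\Xi)$ is $f$-ample by the Mori fibre space hypothesis). Theorem~\ref{t-rel-van} then yields $R^i f_{*} W\mathcal{O}_{X, \mathbb{Q}} = 0$ for $i>0$, and $f_*\mathcal{O}_X = \mathcal{O}_Z$ (a Mori fibre space has connected fibres) together with Lemma~\ref{l-GNT2.22} gives $f_* W\mathcal{O}_{X,\mathbb{Q}} \simeq W\mathcal{O}_{Z,\mathbb{Q}}$.

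For the second outer piece, I would verify that the hypotheses of Lemma~\ref{lem:MFS2} are satisfied. All conditions except the existence of an $f$-ample component of $\lfloor \Xi \rfloor$ are immediate from the Mori fibre space structure. Since $f(\Supp \lfloor \Xi \rfloor) = Z$ and $Z$ is irreducible, some prime component $D_0$ of $\lfloor \Xi \rfloor$ dominates $Z$. Because $X$ is $\mathbb{Q}$-factorial and $\rho(X/Z) = 1$, the $\mathbb{Q}$-Cartier divisor $D_0$ is either $f$-ample, $f$-numerically-trivial, or $f$-anti-ample; the last two are ruled out since $D_0$ is effective and has positive intersection with a curve contained in a general fibre (which it meets properly by dominance). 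Hence $D_0$ is $f$-ample, and Lemma~\ref{lem:MFS2} applies, giving $Rf_{*} W\mathcal{O}_{\Supp \lfloor \Xi \rfloor, \mathbb{Q}} \simeq W\mathcal{O}_{Z, \mathbb{Q}}$.

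Finally, I would check that the natural map $Rf_{*} W\mathcal{O}_{X, \mathbb{Q}} \to Rf_{*} W\mathcal{O}_{\Supp \lfloor \Xi \rfloor, \mathbb{Q}}$ is a quasi-isomorphism. Both complexes are identified with $W\mathcal{O}_{Z, \mathbb{Q}}$ via the natural structural maps from $W\mathcal{O}_{Z, \mathbb{Q}}$, and these identifications factor through the restriction map; so the restriction map is itself the identity on $W\mathcal{O}_{Z, \mathbb{Q}}$. The distinguished triangle associated to the short exact sequence then forces $Rf_{*} WI_{\lfloor \Xi \rfloor, \mathbb{Q}} = 0$, as desired. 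I expect no serious obstacle: the only subtlety is locating an $f$-ample boundary component via the $\rho(X/Z) = 1$ argument, and the assembly of the two vanishings from Theorem~\ref{t-rel-van} and Lemma~\ref{lem:MFS2}.
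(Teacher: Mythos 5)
Your proposal is correct and follows essentially the same route as the paper's proof: the short exact sequence for $WI_{\llcorner \Xi \lrcorner, \mathbb{Q}}$, the vanishing $Rf_*W\mathcal{O}_{X,\mathbb{Q}} \simeq W\mathcal{O}_{Z,\mathbb{Q}}$ from Theorem \ref{t-rel-van} (the klt perturbation you spell out is implicit there), and Lemma \ref{lem:MFS2} applied to a component $D_0$ dominating $Z$, which is $f$-ample because $\rho(X/Z)=1$. Your extra care in checking that the degree-zero restriction map is the isomorphism (via factoring the structural maps) is exactly the point the paper leaves terse, so there is nothing to correct.
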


\begin{proof}
It follows from Theorem \ref{t-rel-van} that 
\[
R f_* W\mathcal{O}_{X, \mathbb{Q}} \simeq W\mathcal{O}_{Z, \mathbb{Q}}. 
\]
By $f \left( \Supp \lfloor \Xi \rfloor \right)=Z$, there exists an irreducible component $D_0$ of $\llcorner \Xi\lrcorner$ such that $f(D_0)=Z$. 
Since $\rho(X/Z)=1$, we have that $D_0$ is $f$-ample. 
In particular, we may apply Lemma \ref{lem:MFS2} and obtain the isomorphism 
\[
R f_* W\mathcal{O}_{\Supp \llcorner \Xi \lrcorner, \mathbb{Q}} \simeq 
W\mathcal{O}_{Z, \mathbb{Q}}. 
\]
Therefore, by the exact sequence
\[
0 \to WI_{\llcorner \Xi \lrcorner, \mathbb{Q}} \to 
W\mathcal{O}_{X, \mathbb{Q}} \to W\mathcal{O}_{\Supp \llcorner \Xi \lrcorner, \mathbb{Q}}
\to 0, 
\]
the induced homomorphism 
\[
R^if_* W\mathcal{O}_{X, \mathbb{Q}} \to 
R^if_* W\mathcal{O}_{\Supp \llcorner \Xi \lrcorner, \mathbb{Q}}
\]
is an isomorphism for any $i \geq 0$. 
Therefore, $R^i f _* WI_{\llcorner \Xi \lrcorner, \mathbb{Q}} = 0$ for any $i \geq  0$. 
\end{proof}

\subsection{Proof of the main theorem and related results}\label{subsection_reduction}

In this subsection, we prove the main theorem of this paper (Theorem \ref{theorem:NWV}) and a generalisation of it (Theorem \ref{theorem:NWV2}). 
As a consequence, we obtain the Koll\'{a}r--Shokurov connectedness theorem 
(Theorem \ref{theorem:conn}).

\begin{lem}\label{l-dlt-modif2}
Let $k$ be a perfect field of characteristic $p > 5$. 
Let $(X, \Delta)$ be a three-dimensional quasi-projective log pair over $k$. 
Let $f:Y \to X$ be a projective birational morphism 
that satisfies the properties (1)-(3) of Proposition \ref{p-dlt-modif}. 
Set $\Delta_Y$ to be the effective $\mathbb{R}$-divisor defined by $K_Y+\Delta_Y=f^*(K_X+\Delta)$. 
Then the following hold. 
\begin{enumerate}
\item[(a)] 
$f_* WI_{\mathrm{Nklt}(Y, \Delta _Y), \mathbb{Q}} = WI_{\mathrm{Nklt}(X, \Delta _X), \mathbb{Q}}$ holds. 
\item[(b)] 
$R^i f_* WI_{\mathrm{Nklt}(Y, \Delta _Y), \mathbb{Q}} = 0$ holds for $i > 0$.
\end{enumerate} 
\end{lem}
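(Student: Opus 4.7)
The plan is to combine a relative MMP reduction via Proposition~\ref{prop:inv_mmp} with a five-lemma argument applied to the short exact sequence
\[
0 \to WI_{\Nklt(Y, \Delta_Y), \Q} \to W\MO_{Y, \Q} \to W\MO_{\Nklt(Y, \Delta_Y), \Q} \to 0
\]
on $Y$ and its counterpart on $X$; the key inputs will be Theorem~\ref{t-birat-connected2}, Lemma~\ref{l-GNT2.22}, and the dlt MMP. First I would apply Proposition~\ref{prop:inv_mmp} with $\Omega := \Delta_Y$ to run a $(K_Y + \Delta_Y^{\wedge 1})$-MMP over $X$: the hypotheses hold because $(Y, \Delta_Y^{\wedge 1})$ is $\Q$-factorial dlt by Proposition~\ref{p-dlt-modif}(2) and $K_Y + \Delta_Y = f^*(K_X + \Delta)$ is in particular $\sim_{X, \R} 0$. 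By Theorem~\ref{t-lc-MMP} the MMP terminates; since $f$ is birational, so is every intermediate morphism $f_i : Y_i \to X$, and so the MMP cannot end at a Mori fibre space over $X$ (a would-be Mori fibre space $Y_i \to W$ over $X$ would force $W \to X$ to have $\dim W < \dim Y_i = \dim X$, incompatible with surjectivity of the birational $f_i$). Hence the MMP terminates with $K_{Y_r} + \Delta_{Y_r}^{\wedge 1}$ being $f_r$-nef. Since $\bigl(Y, (\Delta_Y^{>0})^{<1}\bigr)$ is klt (a standard consequence of $(Y, \Delta_Y^{\wedge 1})$ being dlt), Proposition~\ref{p-nklt-non-negative} gives $\Nklt(Y, \Delta_Y) = \Supp \Delta_Y^{\geq 1}$, and similarly for $Y_r$; combined with Remark~\ref{r-ideal-radical}, Proposition~\ref{prop:inv_mmp} thus reduces both (a) and (b) for $f$ to the same assertions for $f_r$. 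We may therefore assume that $K_Y + \Delta_Y^{\wedge 1}$ is $f$-nef.

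Next I would observe that $f$ is an isomorphism over $U := X \setminus \Nklt(X, \Delta)$, since Proposition~\ref{p-dlt-modif}(1) forces every $f$-exceptional prime divisor to have image in $\Nklt(X, \Delta)$; this alone gives (a) and (b) tautologically over $U$. Applying Theorem~\ref{t-birat-connected2} to $f$ with the sub-log pair $(Y, \Delta_Y)$---for which $-(K_Y + \Delta_Y) = -f^*(K_X + \Delta)$ is $f$-numerically trivial (hence $f$-nef) and $f_* \Delta_Y = \Delta$ is effective---I obtain that $\Nklt(Y, \Delta_Y) \to \Nklt(X, \Delta)$ has connected fibres, so Lemma~\ref{l-GNT2.22} yields $f_* W\MO_{\Nklt(Y, \Delta_Y), \Q} = W\MO_{\Nklt(X, \Delta), \Q}$. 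Combined with $f_* W\MO_{Y, \Q} = W\MO_{X, \Q}$ (another application of Lemma~\ref{l-GNT2.22}, since a proper birational morphism to a normal variety has connected fibres), the five-lemma applied to the pushforward of the above exact sequence delivers (a). For (b), I would handle the higher direct images by a further induction on the number of irreducible components of $\lfloor \Delta_Y^{\wedge 1} \rfloor$, in the spirit of Lemma~\ref{lem:MFS2}: the Mayer--Vietoris exact sequence (Lemma~\ref{l-MV}) applied to the strata of $\Nklt(Y, \Delta_Y)$ reduces the problem to vanishing of higher direct images of $W\MO$ on individual dlt strata, which are then handled by dlt-adjunction and Theorem~\ref{t-rel-van}.

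The main obstacle lies in the vanishing of (b): arranging the stratum-wise induction so that each boundary produced by dlt-adjunction falls within the scope of Theorem~\ref{t-rel-van} requires careful bookkeeping, and controlling $R^i f_* W\MO_{Y, \Q}$ for $i > 0$ makes essential use of the $f$-nefness established in Step 1, together with the $W\MO$-rationality of $Y$ (from Theorem~\ref{t-gnt}(1) applied to the klt pair $(Y, 0)$, which is klt because $(Y, \Delta_Y^{\wedge 1})$ is dlt).
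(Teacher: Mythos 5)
There is a genuine gap, and it sits exactly where your proposal is vaguest: part (b). Your plan is to deduce $R^if_*WI_{\Nklt(Y,\Delta_Y),\Q}=0$ from separate vanishings of $R^if_*W\MO_{Y,\Q}$ and of the higher direct images of $W\MO$ on the strata of $\Nklt(Y,\Delta_Y)$, the latter via dlt adjunction and Theorem \ref{t-rel-van}. But the lemma carries \emph{no} positivity hypothesis on $(X,\Delta)$: Theorem \ref{t-rel-van} needs $-(K+\Delta)$ relatively nef and big, whereas your MMP step has arranged $K_Y+\Delta_Y^{\wedge 1}$ to be $f$-nef, i.e.\ positivity of the wrong sign, and no substitute is available over an arbitrary log pair $X$. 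Moreover the individual vanishings you aim for are simply false in general: if $X$ has a non-klt point whose dlt modification extracts a divisor fibred in elliptic (or irrational) curves, then $R^1f_*W\MO_{Y,\Q}\neq 0$ and $R^1f_*W\MO_{\Nklt(Y,\Delta_Y),\Q}\neq 0$; your closing claim that $f$-nefness plus $W\MO$-rationality of $Y$ controls $R^if_*W\MO_{Y,\Q}$ would force $X$ itself to be $W\MO$-rational, which fails for such singularities. What is true is that these two higher direct images agree, so that the ideal's cohomology dies, and the tool that delivers this is the one your proposal never invokes: \cite[Proposition 2.23]{GNT} (a relative version of \cite[Theorem 2.4]{BBE07}), which gives $R^if_*WI_{V,\Q}=0$ for $i>0$ whenever $f$ is an isomorphism outside the closed subset $V$. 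The paper's proof consists of arranging $\Ex(f)\subset \Nklt(Y,\Delta_Y)$ and then quoting that result; no MMP and no Mori-fibre-space-style induction is needed.

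A second, related inaccuracy: you assert that $f$ is an isomorphism over $U:=X\setminus \Nklt(X,\Delta)$ because Proposition \ref{p-dlt-modif}(1) forces every exceptional divisor into $\Nklt(X,\Delta)$. That only makes $f$ \emph{small} over $U$; a dlt modification can be a nontrivial small morphism over non-$\Q$-factorial klt points of $X$, so ``isomorphism over $U$'' does not follow. The paper handles this by first proving (b) over $U$ via $W\MO$-rationality of klt threefolds (Theorem \ref{t-gnt}(1)), then using that the non-$\Q$-factorial locus of the klt part is a finite set of closed points (\cite[Proposition 2.15(4)]{GNT}) which may be discarded, after which smallness plus $\Q$-factoriality does give $\Ex(f)\subset f^{-1}(\Nklt(X,\Delta))=\Nklt(Y,\Delta_Y)$ (the equality being Proposition \ref{p-dlt-modif}(3)), and \cite[Proposition 2.23]{GNT} finishes. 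Your treatment of (a) (via Lemma \ref{l-nklt-crep}, Theorem \ref{t-birat-connected2} and Lemma \ref{l-GNT2.22}) is fine, and the MMP reduction via Proposition \ref{prop:inv_mmp}, while legitimate, is unnecessary and does not supply the missing ingredient for (b).
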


\begin{proof}
The condition (a) follows from the equation 
$f(\mathrm{Nklt}(Y, \Delta _Y)) = \mathrm{Nklt}(X, \Delta)$ (Lemma \ref{l-nklt-crep}). 

Let us prove (b). 
By the $W\mathcal{O}$-rationality of klt threefolds (Theorem \ref{t-gnt} (1)), 
the vanishing in (b) holds outside $\mathrm{Nklt}(X, \Delta)$. 
The set of the non-$\mathbb{Q}$-factorial points 
on a three-dimensional klt pair is a zero-dimensional closed subset (\cite[Proposition 2.15 (4)]{GNT}). 
Thus, after removing finitely many closed points of $X \setminus \Nklt(X, \Delta)$, 
we may assume that all the non-$\mathbb{Q}$-factorial points of $X$ are contained in $\Nklt (X, \Delta)$. 
Hence it follows that $\Ex(f) \subset f^{-1} \left( \Nklt (X, \Delta) \right)$. 
Then we get 
\[
\Ex(f) \subset f^{-1} \left( \Nklt \left( X, \Delta \right) \right) = \Nklt (Y, \Delta_Y),
\]
where the equality holds by Proposition \ref{p-dlt-modif} (3). 
Since $f$ is an isomorphism outside $\Nklt (Y, \Delta_Y)$, 
it follows from \cite[Proposition 2.23]{GNT}  
that $R^i f_* WI_{\mathrm{Nklt}(Y, \Delta _Y), \mathbb{Q}} = 0$ holds for $i > 0$. 
This completes the proof of (b). 
\end{proof}

\begin{prop}\label{p-NWV-Omega}
Let $k$ be a perfect field of characteristic $p > 5$. 
Let $(X, \Omega)$ be a three-dimensional $\Q$-factorial log pair over $k$ and 
let $f:X \to Z$ be a projective $k$-morphism to a quasi-projective $k$-scheme $Z$. 
Assume that 
\begin{enumerate}
\item $\left( X, \Omega^{\wedge 1} \right)$ is dlt, 
\item $K_X+\Omega \sim_{f, \mathbb{R}} 0$, 
\item $\Omega$ is $f$-big, and 
\item $\Supp \Omega^{>1}=\Supp \Omega^{\geq 1}$. 
\end{enumerate}
Then $R^if_* \left( WI_{\Nklt \left(X, \Omega \right), \Q} \right) = 0$ for $i > 0$. 
\end{prop}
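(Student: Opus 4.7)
The plan is to split into two cases according to whether $\Omega^{>1}$ is empty. First, if $\Supp \Omega^{>1} = \emptyset$, then (4) forces $\Supp \Omega^{\geq 1} = \emptyset$, so $\Omega$ has all coefficients in $[0,1)$ and (1) identifies $(X,\Omega)$ with a klt pair. Hence $\Nklt(X,\Omega) = \emptyset$ and $WI_{\Nklt(X,\Omega),\Q} = W\MO_{X,\Q}$. Since $-K_X \sim_{f,\R} \Omega$ is $f$-big by (3), a Kodaira-type perturbation (decomposing $\Omega$ as $f$-ample plus effective and shifting coefficients) produces an effective $\R$-divisor $\Delta$ with $(X,\Delta)$ klt and $-(K_X+\Delta)$ $f$-ample, and Theorem \ref{t-rel-van} then yields the required vanishing.

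Now assume $\Supp \Omega^{>1} \neq \emptyset$. By Proposition \ref{p-nklt-non-negative} together with (1) and (4), I have $\Nklt(X,\Omega) = \Supp \Omega^{\geq 1}$, so Remark \ref{r-ideal-radical} gives $WI_{\Nklt(X,\Omega),\Q} = WI_{\Omega^{\geq 1},\Q}$. The main step is to run a $(K_X+\Omega^{\wedge 1})$-MMP over $Z$, which terminates by Theorem \ref{t-lc-MMP}; Proposition \ref{prop:inv_mmp} guarantees that $R^i f_* WI_{\Omega^{\geq 1},\Q}$ is preserved along this MMP, so it suffices to prove the vanishing on the end result $f_\ell: X_\ell \to Z$. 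On $X_\ell$ the relation $K_{X_\ell}+\Omega_\ell^{\wedge 1} \sim_{Z,\R} -(\Omega_\ell-\Omega_\ell^{\wedge 1})$ still holds with antieffective right-hand side. If every component of $\Omega^{>1}$ is contracted during the MMP, so that $\Omega_\ell^{>1} = \emptyset$, then I reduce to the klt case on $(X_\ell,\Omega_\ell)$ already handled above. Otherwise, combining the nonzero antieffectivity with the $f_\ell$-bigness of $\Omega_\ell$ forces $K_{X_\ell}+\Omega_\ell^{\wedge 1}$ to fail to be $f_\ell$-pseudo-effective, so Theorem \ref{t-lc-MMP}(4) yields a Mori fibre space $g: X_\ell \to W$ over $Z$.

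To conclude in the Mori fibre space case, I apply Lemma \ref{lem:MFS} to $(X_\ell, \Omega_\ell^{\wedge 1})$ with the MFS $g$. The only nontrivial hypothesis, $g(\Supp \lfloor \Omega_\ell^{\wedge 1}\rfloor) = W$, holds because $-(K_{X_\ell}+\Omega_\ell^{\wedge 1})$ is $g$-ample and $\R$-linearly equivalent over $W$ to the effective divisor $\Omega_\ell - \Omega_\ell^{\wedge 1}$, whose support is contained in $\lfloor \Omega_\ell^{\wedge 1}\rfloor$. Lemma \ref{lem:MFS} then gives $R^i g_* WI_{\lfloor \Omega_\ell^{\wedge 1}\rfloor,\Q} = 0$ for all $i \geq 0$. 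Since condition (4) is preserved under the MMP, $\Supp \lfloor \Omega_\ell^{\wedge 1}\rfloor = \Supp \Omega_\ell^{\geq 1}$, so by Remark \ref{r-ideal-radical} the sheaf $WI_{\lfloor \Omega_\ell^{\wedge 1}\rfloor,\Q}$ coincides with $WI_{\Omega_\ell^{\geq 1},\Q}$, and composing with $W \to Z$ concludes.

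The delicate step I expect is the pseudo-effectivity dichotomy: verifying that when $\Omega_\ell^{>1} \neq \emptyset$, the antieffective class $K_{X_\ell}+\Omega_\ell^{\wedge 1}$ is indeed not $f_\ell$-pseudo-effective. Were it pseudo-effective, then, since $-(K_{X_\ell}+\Omega_\ell^{\wedge 1}) \sim_{Z,\R} \Omega_\ell - \Omega_\ell^{\wedge 1}$ is already effective and hence $f_\ell$-pseudo-effective, the class would have to be $f_\ell$-numerically trivial, forcing $\Omega_\ell - \Omega_\ell^{\wedge 1}$ to be $f_\ell$-vertical and essentially pulled back from $Z$; deriving the contradiction with the $f_\ell$-bigness of $\Omega_\ell$ and the dlt structure of $(X_\ell, \Omega_\ell^{\wedge 1})$ is the step requiring the most care.
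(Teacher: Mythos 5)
The pseudo-effectivity dichotomy you flag as the delicate step is where the proof breaks down, and there is no repair along the lines you sketch. If the $(K_X+\Omega^{\wedge 1})$-MMP ends with $K_{X_\ell}+\Omega_\ell^{\wedge 1}$ $f_\ell$-nef and $\Omega_\ell^{>1}\neq\emptyset$, your claim is that $f_\ell$-bigness of $\Omega_\ell$ together with the dlt structure forces a contradiction; but no contradiction exists. Numerical triviality over $Z$ of $\Omega_\ell - \Omega_\ell^{\wedge 1}$ only forces this divisor to be \emph{vertical} over $Z$ (specifically, it forces $\Nklt(X_\ell,\Omega_\ell)=f_\ell^{-1}(Z_1)$ for a proper closed subset $Z_1\subsetneq Z$). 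That is perfectly consistent with $\Omega_\ell$ being $f_\ell$-big: bigness can be carried entirely by $\Omega_\ell^{<1}$, while $\Omega_\ell^{>1}$ is a union of fibres. For instance a dlt modification of a klt Fano fibration over a curve, with $\Omega$ including a fibre with coefficient $>1$, lands squarely in this case.

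The paper's proof of Proposition~\ref{p-NWV-Omega} acknowledges exactly this and spends most of its effort there. Step~\ref{s2-NWV-Omega} shows that in the nef case one gets $\Nklt(X,\Omega)=f^{-1}(Z_1)$ with $X_0:=f^{-1}(Z\setminus Z_1)$ of Fano type over $Z\setminus Z_1$; when $\dim Z\geq 2$ a generic-vanishing argument combined with \cite[Proposition~4.6.1]{CR12} finishes, but the genuinely hard case $\dim Z=1$ remains. Steps~\ref{s3-NWV-Omega}--\ref{s5-NWV-Omega} then attack $\dim Z=1$ using the geometric rationality of log del Pezzo generic fibres over imperfect fields (Proposition~\ref{p-klt-dP}), purely inseparable covers and smooth models of the generic fibre (a variant of Lemma~\ref{l-geom-normalise}), a second carefully chosen MMP over $X$ with an auxiliary boundary $D$, and \cite[Proposition~4.6.1]{CR12} again. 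None of this can be bypassed by the argument you propose: your dichotomy leaves the nef/$\dim Z=1$ case unaddressed, and this is the core of the theorem. Your treatment of the klt case (via Theorem~\ref{t-rel-van}) and of the Mori fibre space case (via Lemma~\ref{lem:MFS}, correctly checking $g(\lfloor\Omega_\ell^{\wedge1}\rfloor)=W$ by $g$-ampleness of $\Omega_\ell-\Omega_\ell^{\wedge 1}$) does match the paper's Step~\ref{s1-NWV-Omega} and the klt reduction in Step~\ref{s2-NWV-Omega}.
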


\begin{proof}
Taking the Stein factorisation of $f$, we may assume that $f_*\MO_X=\MO_Z$.
 We have that 
\begin{equation}\label{e1-NWV-Omega0}
\Supp \left( \Omega-\Omega^{\wedge 1} \right)= \Supp \Omega^{>1}= \Supp \Omega^{\geq 1}
=\Nklt(X, \Omega),
\end{equation}
where the second equality holds by (4) and 
the third one follows from Proposition \ref{p-nklt-non-negative}. 

\setcounter{step}{0}
\begin{step}\label{s1-NWV-Omega}
The assertion of Proposition \ref{p-NWV-Omega} holds, 
if there is a $\left( K_X+\Omega^{\wedge 1} \right)$-Mori fibre space $g:X \to Z'$ over $Z$. 
\end{step}

\begin{proof}[Proof of Step \ref{s1-NWV-Omega}] 
We have the induced morphisms: 
\[
f:X \xrightarrow{g} Z' \xrightarrow{h} Z.
\]
Since $\Omega - \Omega ^{\wedge 1}$ is $g$-ample, 
it follows from (\ref{e1-NWV-Omega0}) that $g \left( \Supp \Omega^{\geq 1} \right) = Z'$. 
Therefore, Lemma \ref{lem:MFS} implies $Rg_* \left( WI_{\Nklt \left( X, \Omega \right), \Q} \right)=0$. 
Hence, we have that 
$$Rf_* \left( WI_{\Nklt \left( X, \Omega \right), \Q} \right) \simeq Rh_*Rg_* \left( WI_{\Nklt \left(X, \Omega \right), \Q} \right)=0.$$ 
This completes the proof of Step \ref{s1-NWV-Omega}. 
\end{proof}

\begin{step}\label{s2-NWV-Omega}
In order to prove the assertion of Proposition \ref{p-NWV-Omega}, 
it is sufficient to prove the assertion under the following additional assumptions: 
\begin{enumerate}
\item[(i)] $\left( K_X+\Omega^{\wedge 1} \right)$ is $f$-nef, 
\item[(ii)] $(X, \Omega)$ is not klt, i.e. $\Supp \left( \Omega ^{\ge 1} \right) \neq \emptyset$, 
\item[(iii)] the set-theoretic equation $\Nklt(X, \Omega) = f^{-1}(Z_1)$ holds for some closed subset $Z_1$ of $Z$,
\item[(iv)] if we set $Z_0:=Z \setminus Z_1$ and $X_0:=f^{-1}(Z_0)$, 
then $X_0$ is of Fano type over $Z_0$, and 
\item[(v)] $\dim Z=1$. 
\end{enumerate}
\end{step}

\begin{proof}[Proof of Step \ref{s2-NWV-Omega}] 
By Theorem \ref{t-lc-MMP}, 
there exists a $\left( K_X+\Omega^{\wedge 1} \right)$-MMP over $Z$ that terminates:
\[
X=:X_0 \dashrightarrow X_1  \dashrightarrow \cdots \dashrightarrow X_{\ell}.
\]
Let $f_i:X_i \to Z$ be the induced morphism and 
let $\Omega_i$ be the push-forward of $\Omega$ on $X_i$. 
Then $(X_{\ell}, \Omega_{\ell})$ still satisfies the conditions (1)-(4) in Proposition \ref{p-NWV-Omega}. 
By Proposition \ref{prop:inv_mmp}, replacing $(X, \Omega)$ by $(X_{\ell}, \Omega _{\ell})$,  
we may assume that $\left( K_X+\Omega^{\wedge 1} \right)$ is $f$-nef or 
that there is a $\left( K_X+\Omega^{\wedge 1} \right)$-Mori fibre space $g:X \to Z'$ over $Z$. 
In the latter case, the assertion of Proposition \ref{p-NWV-Omega} follows from  Step \ref{s1-NWV-Omega}. 
Thus, we may assume that (i) holds. 
If $(X, \Omega)$ is klt, then (2) and (3) imply that $X$ is of Fano type over $Z$. 
In this case, the assertion of Proposition \ref{p-NWV-Omega} follows from Theorem \ref{t-rel-van}. 
Thus, we may assume (ii).

Since $K_X+\Omega^{\wedge 1} \sim_{f, \mathbb{R}} - \left( \Omega-\Omega^{\wedge 1} \right)$, 
it holds that $- \left( \Omega-\Omega^{\wedge 1} \right)$ is $f$-nef. 
Since $\Omega-\Omega^{\wedge 1}$ is a nonzero effective divisor 
by (4) and (ii), we have that $\dim Z \geq 1$. 
If $z \in Z$ is a closed point such that $f^{-1}(z) \cap \Supp \left( \Omega-\Omega^{\wedge 1} \right) \neq\emptyset$, 
then it follows that $f^{-1}(z) \subset \Supp \left( \Omega-\Omega^{\wedge 1} \right)$ 
because $- \left( \Omega-\Omega^{\wedge 1} \right)$ is $f$-nef. 
Thus, there exists a closed subset $Z_1$ of $Z$ such that $Z_1 \subsetneq Z$ and 
\[
\Nklt(X, \Omega)=\Supp \left( \Omega-\Omega^{\wedge 1} \right) = f^{-1}(Z_1),
\]
where the first equality follows from (\ref{e1-NWV-Omega0}). 
Thus (iii) holds. 
It is clear that (iii) implies (iv). 

Therefore, assuming $\dim Z \neq 1$, it is enough to show the assertion of Proposition \ref{p-NWV-Omega}. 
Since $\dim Z \geq 1$, we have $\dim Z \geq 2$. 
Then there are finitely many closed points $z_1, \ldots, z_n$ of $Z_0$ such that 
$R^i f_*\MO_{X}|_{Z_0 \setminus \{z_1, \ldots, z_n\}}=0$  
for $i>0$. 
Indeed, if $\Supp \left( R^i f_* \mathcal{O}_X \right) \cap Z_0$ contains a curve $C$, then 
it contradicts the vanishing obtained by \cite[Theorem 3.3]{Tanc} for  
the morphism $X \times _{Z} \Spec \mathcal{O}_{Z, \xi _C} \to \Spec \mathcal{O}_{Z, \xi _C}$, 
where $\xi_C$ denotes the generic point of $C$.  
By (iii), \cite[Proposition 4.6.1]{CR12}, and 
$R^i f_*\MO_{X}|_{Z_0 \setminus \{z_1, \ldots, z_n\}}=0$, 
it holds that 
$$R^if_* \left( WI_{\Nklt \left( X, \Omega \right), \Q} \right) \big| _{Z \setminus \{z_1, \ldots, z_n\}} = 0$$ 
for any $i > 0$. 
Since $X_0$ is of Fano type over $Z_0$, it follows from Theorem \ref{t-rel-van} that 
$$R^if_* \left( WI_{\Nklt \left( X, \Omega \right), \Q} \right) \big|_{Z_0} = R^i (f|_{X_0})_* \left( W \mathcal{O}_{X_0, \Q} \right) = 0$$ 
holds for any $i > 0$. 
Since $Z = Z_0 \cup \left( Z \setminus \{ z_1, \ldots, z_n \} \right)$, we obtain 
\[
R^if_* \left( WI_{\Nklt \left( X, \Omega \right), \Q} \right)=0
\]
for any $i>0$. 
Thus, the assertion of Proposition \ref{p-NWV-Omega} holds. 
This completes the proof of Step \ref{s2-NWV-Omega}. 
\end{proof}

\begin{step}\label{s3-NWV-Omega} 
Assume (i)--(v) in Step \ref{s2-NWV-Omega}. 
There exists a commutative diagram of projective morphisms of normal varieties: 
\begin{equation}\label{e2-NWV-Omega}
\begin{CD}
Y' @>\beta >> Y\\
@VVg'V @VVgV\\
X' @>\alpha >> X\\
@VVf'V @VVfV\\
Z' @>\gamma >> Z
\end{CD}
\end{equation}
such that 
\begin{enumerate}
\item[(vi)] $f'_*\MO_{X'}=\MO_{Z'}$,  
\item[(vii)] 
$\alpha, \beta$, and $\gamma$ are finite universal homeomorphisms, 
\item[(viii)] 
$g$ is a log resolution of $(X, \Omega)$, $g'$ is birational, and 
\item[(ix)] 
there are finitely many closed points $z_1, \ldots, z_n$ of $Z_0$ such that 
$$R^i h'_*\MO_{Y'}|_{\gamma^{-1} \left( Z_0 \setminus \left \{z_1, \ldots, z_n \right \} \right)}=0$$ 
for $i>0$, where $h':=f' \circ g'$. 
\end{enumerate} 
We set $h := f \circ g$ for later use. 
\end{step}

\begin{proof}[Proof of Step \ref{s3-NWV-Omega}] 
Set $\mathcal X:=X_{K(Z)}$. 
There exist a finite purely inseparable extension $K(Z) \subset L$ 
and a projective birational $L$-morphism $\mathfrak g'_1:\mathcal Y'_1 \to \mathcal X'$ 
of projective normal surfaces over $L$ 
such that 
there is a finite universal homeomorphism $\mathfrak a:\mathcal X' \to \mathcal X$, 
$\mathcal X' \times_L \overline L$ is isomorphic to the normalisation of 
$\bigl( \mathcal X \times_{K(Z)} \overline{K(Z)} \bigr)_{\red}$, 
and $\mathfrak g'_1 \times_L \overline L:\mathcal Y'_1\times_L \overline L \to \mathcal X' \times_L \overline L$ 
is a resolution of singularities of $\mathcal X' \times_L \overline L$
 (cf.\ Lemma \ref{l-geom-normalise}).  
In particular, $\mathcal Y'_1$ is smooth over $L$. 
There exists a normal projective surface $\mathcal Y_1$ over $K(Z)$ 
which completes the following commutative diagram: 
\begin{equation}\label{e3-NWV-Omega}
\begin{CD}
\mathcal Y'_1 @>\mathfrak b_1 >> \mathcal Y_1\\
@VV\mathfrak g_1'V @VV\mathfrak g_1V\\
\mathcal X' @>\mathfrak a>> \mathcal X\\
@VVV @VVV\\
\Spec\,L @>>> \Spec\,K(Z),
\end{CD}
\end{equation}
where $\mathfrak g_1$ is a projective birational morphism and $\mathfrak b_1$ is a 
finite universal homeomorphism. 
Indeed, we have $K\left( \mathcal Y'_1 \right) ^{p^e}=K \left( \mathcal X' \right) ^{p^e} \subset K(\mathcal X)$ for sufficiently large $e \in \Z_{>0}$, 
hence we can find such $\mathcal Y_1$ by taking the normalisation 
of $\mathcal Y''_1$ in $K(\mathcal X)$, 
where $\mathcal Y'_1 \to \mathcal Y'_1=:\mathcal Y''_1$ denotes the $e$-th iterated absolute Frobenius morphism. 
Since $\bigl( \mathcal X \times_{K(Z)} \overline{K(Z)} \bigr)_{\red}$ is a rational surface 
(Proposition \ref{p-klt-dP}), 
$\mathcal Y'_1\times_L \overline L$ is a smooth rational surface. 
Therefore, we obtain $H^i \bigl( \mathcal Y'_1, \MO_{\mathcal Y_1'} \bigr) =0$ for $i>0$.

Then there exists a commutative diagram of projective morphisms of normal varieties: 
\begin{equation}\label{e4-NWV-Omega}
\begin{CD}
Y'_1 @>\beta_1 >> Y_1\\
@VVg'_1V @VVg_1V\\
X' @>\alpha >> X\\
@VVf'V @VVfV\\
Z' @>\gamma >> Z
\end{CD}
\end{equation}
such that the horizontal arrows are finite universal homeomorphisms 
and the base change of (\ref{e4-NWV-Omega}) by $(-) \times_Z \Spec K(Z)$ is (\ref{e3-NWV-Omega}). 
Let $g:Y \to X$ be a log resolution of $(X, \Omega)$ 
which factors through $g_1:Y_1 \to X$: 
\[
g:Y \to Y_1 \xrightarrow{g_1} X.
\]
Set $Y'$ to be the normalisation of $Y$ in $K(Y' _1)$. 
Automatically, we obtain a commutative diagram of the induced morphisms: 
\begin{equation}\label{e5-NWV-Omega}
\begin{CD}
Y' @>\beta >> Y\\
@VVg'_2V @VVg_2V\\
Y'_1 @>\beta_1 >> Y_1. 
\end{CD}
\end{equation}
Combining (\ref{e4-NWV-Omega}) and (\ref{e5-NWV-Omega}), 
we obtain a commutative diagram (\ref{e2-NWV-Omega}).
By the construction, the properties (vi)--(viii) hold. 
It suffices to show (ix). 
For $\mathcal Y':=Y' \times_Z \Spec\,K(Z)$, 
$\mathfrak g'_2: \mathcal Y' \to \mathcal Y'_1$ is 
a birational morphism of projective normal surfaces over $\Spec\,K(Z)$. 
As $\mathcal Y'_1$ is smooth, $\mathcal Y'$ has at worst rational singularities 
by \cite[Proposition 1.2(2)]{Lip69} or the same argument as in Lemma \ref{l-WO-surf-go-up}. 
Since $H^i \bigl( \mathcal Y'_1, \MO_{\mathcal Y_1'} \bigr)=0$ for $i>0$, 
we have $H^i \left( \mathcal Y', \MO_{\mathcal Y'} \right)=0$ for $i>0$. 
Thus, (ix) holds. 
This completes the proof of Step \ref{s3-NWV-Omega}. 
\end{proof}

\begin{step}\label{s4-NWV-Omega} 
Assume (i)--(v) in Step \ref{s2-NWV-Omega}.
We use the same notation as in Step \ref{s3-NWV-Omega}. 
Then there exists an effective $\R$-divisor $D$ on $Y$ such that 
\begin{enumerate}
\item[(x)] $(Y, D)$ is dlt, 
\item[(xi)] the set-theoretic equation $\Nklt(Y, D)=h^{-1}(Z_1)$ holds, and  
\item[(xii)] $Rg_* \left( WI_{\Nklt \left (Y, D \right) , \Q} \right) \simeq WI_{\Nklt(X, \Omega), \Q}$. 
\end{enumerate} 
\end{step}

\begin{proof}[Proof of Step \ref{s4-NWV-Omega}] 
Let $E$ be the sum of the $g$-exceptional prime divisors $F$ such that 
$F \subset h^{-1}(Z_1)$. 
Let $E'$ be the sum of the $g$-exceptional prime divisors $F'$ such that $F' \not \subset h^{-1}(Z_1)$. 
Set 
\[
D:= g_*^{-1}\Omega^{\wedge 1}+E+(1-\epsilon)E'
\]
for sufficiently small positive real number $\epsilon$. 
It is clear that (x) and (xi) hold.

Let us prove that (xii) holds. 
We fix a closed point $x$ of $X$. 
Since the problem is local on $X$, 
it is enough to find an open neighbourhood $\widetilde{X}$ of $x \in X$ such that 
$Rg_* \left( WI_{\Nklt \left(Y, D \right), \Q} \right) \big|_{\widetilde{X}} 
\simeq WI_{\Nklt (X, \Omega), \Q}|_{\widetilde{X}}$.

By Theorem \ref{t-lc-MMP}, 
there is a $(K_Y+D)$-MMP over $X$ that terminates: 
\[
Y=Y_0  \overset{\varphi_0}{\dashrightarrow} Y_1 \overset{\varphi_0}{\dashrightarrow} 
\cdots \overset{\varphi_{\ell-1}}{\dashrightarrow} Y_{\ell}.
\]
Let $g_j:Y_j \to X$ be the induced morphism and 
let  $D_j$ be the push-forward of $D$ on $Y_j$. 
By the construction of $D$ and the $\mathbb{Q}$-factoriality of $X$, 
\begin{equation}\label{e6-NWV-Omega}
g_j^{-1} \left( \Nklt (X, \Omega) \right) = \Nklt(Y_j, D_j)
\end{equation}
holds for any $j \in \{0, ..., \ell\}$. 
In particular, it holds that $g_* \left( WI_{\Nklt \left( Y, D \right), \Q} \right) = WI_{\Nklt (X, \Omega), \Q}$. 
Thus, it is enough to prove that 
\begin{equation}\label{e7-NWV-Omega}
R^i(g_j)_* \left( WI_{\Nklt ( Y_j, \Omega_{Y_j} ) , \Q} \right) = 0
\end{equation}
for any $i>0$ and $j \in \{0, ..., \ell\}$. 
We prove this by descending induction on $j$.

We first prove (\ref{e7-NWV-Omega}) for $j=\ell$. 
We have 
\[
K_{Y}+D \sim_{X, \R} g_*^{-1}\Omega^{\wedge 1}+E+(1-\epsilon)E' - \Omega _{Y}, 
\]
where $\Omega _{Y}$ is defined by $K_Y + \Omega _Y = g^*(K_X + \Omega)$. 
Since $\epsilon$ is sufficiently small, it follows from the negativity lemma that
any $g$-exceptional prime divisor $G$ with $a_G(X, \Omega)>0$ is contracted on $Y_{\ell}$. 
Since $X$ is $\Q$-factorial, $Y_{\ell} \to X$ is isomorphic over $X \setminus \Nklt(X, \Omega)$. 
Therefore, we obtain 
\[
\Ex(g_{\ell}) \subset g_{\ell}^{-1} \left( \Nklt \left( X, \Omega \right) \right) 
= \Nklt \left( Y_{\ell}, D_{\ell} \right). 
\]
Then \cite[Proposition 2.23]{GNT} implies that the equation (\ref{e7-NWV-Omega}) holds 
when $j=\ell$.

Suppose $0 \leq j< \ell$. 
Then $\varphi_j:Y_j \dashrightarrow Y_{j+1}$ is either a divisorial contraction or a flip. 
We first treat the case when $\varphi_j$ is a divisorial contraction. 
In order to prove the equation (\ref{e7-NWV-Omega}) by induction, 
it is sufficient to show that 
\begin{align}\label{e9-NWV-Omega}
R^i (\varphi_j)_* \bigl( WI_{\Nklt \left( Y_j, D_j \right) , \Q} \bigr) = 0
\end{align}
for $i > 0$. 
Let $G = \mathrm{Ex} (\varphi_j)$ be the contracted divisor. 
Suppose $\dim \varphi_j (G) = 0$. If $G \subset \Nklt \left( Y_j, D_j \right)$, 
then the equation (\ref{e9-NWV-Omega}) follows from \cite[Proposition 2.23]{GNT}. 
If $G \not \subset \Nklt \left( Y_j, D_j \right)$, then 
$G \cap \Nklt \left( Y_j, D_j \right) = \emptyset$ holds by (\ref{e6-NWV-Omega}). 
Therefore, since $g_j$ is an isomorphism outside $G$, 
it is sufficient to check the equation (\ref{e9-NWV-Omega}) outside $\Nklt \left( Y_j, D_j \right)$, 
and this follows from Theorem \ref{t-rel-van}. 
Suppose $\dim \varphi_j (G) = 1$. In this case, the relative dimension of $\varphi$ is one, and therefore 
it is sufficient to show the equation (\ref{e9-NWV-Omega}) only for $i = 1$ (cf.\ \cite[Lemma 2.20]{GNT}). 
This follows from the exact sequence 
\[
0 \to WI_{\Nklt \left( Y_j, D_j \right) , \Q} \to W\mathcal{O}_{Y_j, \mathbb{Q}} \to 
W\mathcal{O}_{\Nklt \left( Y_j, D_j \right) , \Q} \to 0, 
\]
the fact that $R^1 (\varphi _j) _* W\mathcal{O}_{Y_j, \mathbb{Q}} = 0$ by Theorem \ref{t-rel-van}, and 
the surjectivity of $(\varphi _j) _* W\mathcal{O}_{Y_j, \mathbb{Q}} \to 
(\varphi _j) _* W\mathcal{O}_{\Nklt \left( Y_j, D_j \right) , \Q}$ 
by Theorem \ref{t-birat-connected2}.

Next we assume that $\varphi_j:Y_j \dashrightarrow Y_{j+1}$ is a flip. 
Let $\psi:Y_j \to V$ be the corresponding flipping contraction and let 
$\psi^+:Y_{j+1} \to V$ be the induced morphism. 
In order to prove the equation (\ref{e7-NWV-Omega}) by induction, 
it is sufficient to show that
\begin{equation}\label{e10-NWV-Omega}
R^i \psi_* \left( WI_{\Nklt ( Y_j, \Omega_{Y_j} ), \Q} \right)
=
R^i \psi^+ _* \left( WI_{\Nklt ( Y_{j+1}, \Omega_{Y_{j+1}} ) , \Q} \right)
\end{equation}
for any $i \geq 0$. 
When $i=0$, the equation (\ref{e10-NWV-Omega}) can be confirmed by the set-theoretic equation 
$\psi \left( \Nklt \left( Y_j, \Omega_{Y_j} \right) \right) = 
\psi ^+ \left( \Nklt \left( Y_{j+1}, \Omega_{Y_{j+1}} \right) \right)$, and this follows from 
the equation (\ref{e6-NWV-Omega}). 
In what follows, we prove that the both sides in the equation (\ref{e10-NWV-Omega}) are zero for $i \ge 1$. 
Since we work around a fixed closed point $x \in X$, 
after replacing $X$ by an open neighbourhood of $x \in X$, 
we may assume that $g_j \left( \Ex \left( \psi \right) \right)=\{ x \}$. 
There are the following two cases: 
$x \in \Nklt (X, \Omega)$ and $x \not\in \Nklt (X, \Omega)$. 
In the case when $x \in \Nklt (X, \Omega)$, 
it follows from (\ref{e6-NWV-Omega}) that $\Ex \left( \psi \right) \subset \Nklt \left( Y_j, D_j \right)$ and 
$\Ex \left( \psi^+ \right) \subset \Nklt \left( Y_{j+1}, D_{j+1} \right)$. 
Then, the both sides in the equation (\ref{e10-NWV-Omega}) are zero for $i \ge 1$ by \cite[Proposition 2.23]{GNT}. 
In the case when $x \not \in \Nklt (X, \Omega)$, we may assume that $(X, \Omega)$ is klt, 
hence so is each $\left(Y_j, D_j \right)$. 
Then the both sides in the equation (\ref{e10-NWV-Omega}) are zero for $i \ge 1$ by Theorem \ref{t-rel-van}. 
This completes the proof of Step \ref{s4-NWV-Omega}. 
\end{proof}

\begin{step}\label{s5-NWV-Omega} 
Assume (i)--(v) in Step \ref{s2-NWV-Omega}. 
Then the assertion of Proposition \ref{p-NWV-Omega} holds. 
\end{step}

\begin{proof}
We use the same notation as in Step \ref{s3-NWV-Omega} and Step \ref{s4-NWV-Omega}. 
By (vii), (ix), (xi), and \cite[Proposition 4.6.1]{CR12}, 
it holds that 
\[
R^ih_* \left( WI_{\Nklt \left(Y, D \right), \Q} \right) \big|_{Z \setminus \{z_1, \ldots, z_n\}} = 0
\]
for any $i > 0$. 
Then (xii) implies that 
\[
R^if_* \left( WI_{\Nklt (X, \Omega), \Q} \right) \big|_{Z \setminus \{z_1, \ldots, z_n\}} = 0
\]
for any $i > 0$. 
Since $X_0$ is of Fano type over $Z_0$, it follows from Theorem \ref{t-rel-van} that 
$$R^if_* \left( WI_{\Nklt \left( X, \Omega \right), \Q} \right) \big| _{Z_0} 
= R^i (f|_{X_0})_* \left( W \mathcal{O}_{X_0, \Q} \right) = 0$$ 
holds for any $i > 0$. 
Since $Z = Z_0 \cup \left( Z \setminus \{ z_1, \ldots, z_n \} \right)$, we obtain 
\[
R^if_* \left( WI_{\Nklt \left( X, \Omega \right), \Q} \right)=0.
\] 
This completes the proof of Step \ref{s5-NWV-Omega}. 
\end{proof}
Step \ref{s2-NWV-Omega} and Step \ref{s5-NWV-Omega} 
complete the proof of Proposition \ref{p-NWV-Omega}. 
\end{proof}

\begin{thm}\label{theorem:NWV}
Let $k$ be a perfect field of characteristic $p > 5$. 
Let $(X, \Delta)$ be a three-dimensional log pair over $k$ and 
let $f:X \to Z$ be a projective $k$-morphism to 
a quasi-projective $k$-scheme $Z$. 
Assume that $-(K_X + \Delta)$ is $f$-nef and $f$-big. 
Then $R^if_* \left( WI_{\Nklt \left( X, \Delta \right), \Q} \right) = 0$ for $i > 0$. 
\end{thm}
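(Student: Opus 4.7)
The plan is to reduce Theorem \ref{theorem:NWV} to Proposition \ref{p-NWV-Omega} via a dlt modification followed by the construction of an auxiliary boundary $\Omega$. First, I would apply Proposition \ref{p-dlt-modif} to obtain a projective birational morphism $\psi:Y \to X$ from a $\mathbb{Q}$-factorial normal threefold $Y$ such that, writing $K_Y + \Delta_Y = \psi^*(K_X + \Delta)$, the pair $(Y, \Delta_Y^{\wedge 1})$ is dlt and $\Nklt(Y, \Delta_Y) = \psi^{-1}(\Nklt(X, \Delta))$. By Lemma \ref{l-dlt-modif2}, $\psi_* WI_{\Nklt(Y, \Delta_Y), \mathbb{Q}} \simeq WI_{\Nklt(X, \Delta), \mathbb{Q}}$ and $R^i \psi_* WI_{\Nklt(Y, \Delta_Y), \mathbb{Q}} = 0$ for $i > 0$. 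The Grothendieck spectral sequence for $f \circ \psi$ therefore reduces the problem to proving $R^i (f \circ \psi)_* WI_{\Nklt(Y, \Delta_Y), \mathbb{Q}} = 0$ for $i > 0$, and note that $-(K_Y + \Delta_Y) = -\psi^*(K_X + \Delta)$ is still nef and big over $Z$.

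Next, I would construct an effective $\mathbb{R}$-divisor $\Omega$ on $Y$ satisfying the four hypotheses of Proposition \ref{p-NWV-Omega} for the morphism $f \circ \psi$, together with the additional property $\Supp \Nklt(Y, \Omega) = \Supp \Nklt(Y, \Delta_Y)$. Using the relative Kodaira lemma, for a fixed $(f \circ \psi)$-ample $\mathbb{R}$-Cartier divisor $A_0$ and sufficiently small $\epsilon > 0$, one can write $-(K_Y + \Delta_Y) \sim_{f \circ \psi, \mathbb{R}} \epsilon A_0 + F$ with $F$ an effective $\mathbb{R}$-divisor. After replacing $\epsilon A_0$ by an $\mathbb{R}$-linearly equivalent effective divisor $A_0'$ with sufficiently general support, using Lemma \ref{l-perturb-dlt}, I set $\Omega := \Delta_Y + A_0' + F$. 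Then $K_Y + \Omega \sim_{f \circ \psi, \mathbb{R}} 0$ by construction, and $\Omega$ is $(f \circ \psi)$-big because $A_0$ is $(f \circ \psi)$-ample. A final small perturbation ensures that every coefficient of $\Omega$ is either strictly less than $1$ or strictly greater than $1$, so that $\Supp \Omega^{>1} = \Supp \Omega^{\geq 1}$, while simultaneously keeping $(Y, \Omega^{\wedge 1})$ dlt and preserving the non-klt locus.

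Finally, applying Proposition \ref{p-NWV-Omega} to $(Y, \Omega)$ and $f \circ \psi : Y \to Z$ yields $R^i (f \circ \psi)_* WI_{\Nklt(Y, \Omega), \mathbb{Q}} = 0$ for $i > 0$. Combined with $\Supp \Nklt(Y, \Omega) = \Supp \Nklt(Y, \Delta_Y)$ and Remark \ref{r-ideal-radical}, which equates the Witt ideal sheaves of subschemes with the same radical, this gives the required vanishing and concludes the proof.

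The main obstacle lies in the construction of $\Omega$ in the second step: the two conditions $\Supp \Omega^{>1} = \Supp \Omega^{\geq 1}$ and $\Supp \Nklt(Y, \Omega) = \Supp \Nklt(Y, \Delta_Y)$ are in tension, since a new component appearing in $A_0' + F$ but disjoint from $\Delta_Y^{\geq 1}$ must have coefficient strictly less than $1$ (to avoid becoming a spurious non-klt divisor), whereas every component already in $\Delta_Y^{\geq 1}$ must acquire coefficient strictly greater than $1$. The resolution is to keep the coefficients of $A_0' + F$ along fresh components small and generic, and to add a further small multiple of $A_0'$ concentrated on $\Delta_Y^{\geq 1}$ so as to push its coefficients strictly past $1$, absorbing the correction by subtracting a linearly equivalent effective piece from $F$; this manoeuvre is feasible precisely because $A_0'$ is $(f \circ \psi)$-ample, giving the required flexibility within the relative $\mathbb{R}$-linear equivalence class over $Z$.
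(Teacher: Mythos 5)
Your overall skeleton is the same as the paper's: a dlt modification via Proposition \ref{p-dlt-modif} and Lemma \ref{l-dlt-modif2}, the construction of an auxiliary divisor $\Omega$, and an appeal to Proposition \ref{p-NWV-Omega}. But the construction of $\Omega$ --- which is where the real content of the reduction lies --- has a genuine gap, and it is exactly the tension you flag at the end. The effective divisor $F$ produced by the relative Kodaira lemma is a fixed divisor in its class: you may scale its coefficients (writing $-(K_Y+\Delta_Y)\sim_{\R}(1-t)\left(-(K_Y+\Delta_Y)\right)+t\epsilon A_0+tF$ over $Z$), but you cannot move it into general position, you cannot ``subtract a linearly equivalent effective piece'' of an ample class from it, and you cannot ``concentrate a small multiple of $A_0'$ on $\Delta_Y^{\geq 1}$'' --- a member of an ample class has no reason to be supported on $\Supp \Delta_Y^{\geq 1}$. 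Concretely, two things go wrong: (i) if a component of $F$ outside $\Supp\Delta_Y^{\geq 1}$ contains a stratum of $\llcorner \Delta_Y^{\wedge 1}\lrcorner$, then $\left( Y,\Omega^{\wedge 1} \right)$ is not even log canonical (a divisor with log discrepancy $0$ for $\left( Y,\Delta_Y^{\wedge 1} \right)$ acquires negative log discrepancy), so hypothesis (1) of Proposition \ref{p-NWV-Omega} fails no matter how small you scale $F$; (ii) the components of $\Delta_Y$ with coefficient exactly $1$ must be pushed strictly above $1$ to get $\Supp\Omega^{>1}=\Supp\Omega^{\geq 1}$, and neither the general $A_0'$ nor the uncontrolled $F$ supplies a divisor supported on $\Supp\Delta_Y^{\geq 1}$ with which to do this; the manoeuvre you propose for it is not a legitimate operation.

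The paper resolves this by performing the reductions in the opposite order, so that no uncontrolled effective divisor ever appears upstairs. It first uses a dlt modification together with Lemma \ref{l-dlt-modif2} only to make $X$ $\Q$-factorial, and then does the Kodaira-type perturbation \emph{downstairs}, replacing $\Delta$ by $\Delta+\epsilon E$ so that $-(K_X+\Delta)$ becomes $f$-ample; this is harmless because only the set $\Nklt(X,\Delta)$ matters and a fresh dlt modification is taken afterwards. On that second dlt modification $g:Y\to X$, Proposition \ref{p-dlt-modif}(1) gives $\Ex(g)\subset \Supp\Delta_Y^{\geq 1}$, and $\Q$-factoriality of $X$ gives an effective $F$ with $-F$ $g$-ample and $\Supp F=\Ex(g)$; hence $-(K_Y+\Delta_Y)-\epsilon F$ is ample over $Z$, and one can choose $B\geq \epsilon F$ with $\Supp B=\Supp\Delta_Y^{\geq 1}$ and $-(K_Y+\Delta_Y)-B$ still ample over $Z$. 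That ample class is represented by a general $A$ (Lemma \ref{l-perturb-dlt}) with $\left( Y,\Delta_Y^{\wedge 1}+2A \right)$ dlt, and $\Omega_Y:=\Delta_Y+A+B$ works: since $\Supp B=\Supp\Delta_Y^{\geq 1}$, adding $B$ does not affect the truncation, so $\Omega_Y^{\wedge 1}=\Delta_Y^{\wedge 1}+A$ is dlt, while $B>0$ on every component of $\Delta_Y^{\geq 1}$ pushes those coefficients strictly above $1$. This combined use of $f$-ampleness downstairs, $\Q$-factoriality of $X$, and $\Ex(g)\subset\Supp\Delta_Y^{\geq 1}$ is the mechanism missing from your construction.
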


\begin{proof}
We divide the proof into two steps. 

\setcounter{step}{0}
\begin{step}\label{s1-theorem:NWV}
The assertion of Theorem \ref{theorem:NWV} holds, 
if $X$ is $\Q$-factorial and $-(K_X + \Delta)$ is $f$-ample. 
\end{step}

\begin{proof}[Proof of Step \ref{s1-theorem:NWV}]
Let $g: Y \to X$ be a projective birational morphism 
satisfying the properties (1)-(3) in Proposition \ref{p-dlt-modif}. 
Let $\Delta _Y$ be the $\mathbb{R}$-divisor on $Y$ defined by $g^*(K_X + \Delta) = K_Y + \Delta _Y$. 
Then $\left( Y, \Delta _Y ^{\wedge 1} \right)$ is dlt. 
It follows from Lemma \ref{l-dlt-modif2} that 
\[
Rg_* \left( WI_{\Nklt(Y, \Delta_Y), \Q} \right) \simeq WI_{\Nklt(X, \Delta), \Q}. 
\]
Therefore, it holds that 
\begin{equation}\label{e1-theorem:NWV}
Rh_* \left( WI_{\Nklt(Y, \Delta_Y), \Q} \right) \simeq Rf_* \left( WI_{\Nklt(X, \Delta), \Q} \right), 
\end{equation}
where $h:Y \xrightarrow{g} X \xrightarrow{f} Z$ is the composition. 
Thanks to (\ref{e1-theorem:NWV}) and Proposition \ref{p-NWV-Omega}, 
it suffices to find an effective $\mathbb{R}$-divisor $\Omega _Y$ on $Y$ such that 
\begin{enumerate}
\item[(1)] $\left( Y, \Omega _Y ^{\wedge 1} \right)$ is dlt, 
\item[(2)] $K_Y + \Omega _Y \sim _{h, \mathbb{R}} 0$, 
\item[(3)] $\Omega _Y$ is $h$-big, and
\item[(4)] $\Supp \left( \Omega _Y^{>1} \right) = \Supp \bigl( \Omega _Y ^{\ge 1} \bigr) 
= \Supp \bigl( \Delta _Y ^{\ge 1} \bigr)$. 

\end{enumerate}

Since $X$ is $\Q$-factorial, 
there exists an effective $\mathbb{R}$-divisor $F$ on $Y$ such that $-F$ is $g$-ample and 
$\Supp F = \Ex (g)$. 
Since $-(K_Y + \Delta _Y)$ is the pullback of an $f$-ample $\mathbb{R}$-divisor 
$-(K_X + \Delta)$ on $X$, 
it follows that $-(K_Y + \Delta _Y)- \epsilon F$ is $h$-ample for any sufficiently small $\epsilon > 0$. 

Note that $\Supp F \subset \Supp \bigl( \Delta _Y ^{\ge 1} \bigr)$. 
Thus, we can find an effective $\mathbb{R}$-divisor $B$ on $Y$ 
such that $B \ge \epsilon F$, $-(K_Y + \Delta _Y) - B$ is $h$-ample and 
$\Supp B = \Supp \bigl( \Delta _Y ^{\ge 1} \bigr)$. 
Lemma \ref{l-perturb-dlt} enables us to find an effective $\mathbb{R}$-divisor $A$ on $Y$ 
such that $A \sim_{h, \mathbb{R}} -(K_Y + \Delta _Y)- B$ 
and $\left( Y, \Delta _Y ^{\wedge 1} + 2A \right)$ is dlt. 
In particular, $\left( Y, \Delta _Y ^{\wedge 1} + A \right)$ is dlt. 

Set $\Omega _Y :=  \Delta _Y + A + B$. 
Then both (2) and (3) hold automatically. 
We obtain 
\begin{equation}\label{e2-theorem:NWV}
\Omega _Y ^{\wedge 1}= \left( \Delta _Y + A + B \right)^{\wedge 1}
= \left( \Delta _Y + A \right)^{\wedge 1}= \Delta _Y ^{\wedge 1} + A,
\end{equation}
where the second equality follows from 
$\Supp B = \Supp \bigl( \Delta _Y ^{\ge 1} \bigr)$ and the third one holds by the fact that 
$\left( Y, \Delta _Y ^{\wedge 1} + 2A \right)$ is dlt. 
Thus (1) holds. 

Let us prove (4). It is clear that 
$\Supp \left( \Omega _Y^{>1} \right)  \subset  \Supp \bigl( \Omega _Y ^{\ge 1} \bigr)$. 
The inverse inclusion follows from 
\[
\Supp \bigl( \Omega _Y ^{\ge 1} \bigr) 
= \Supp \left( \Delta _Y + A + B \right)^{\ge 1} 
= \Supp \left( \Delta _Y + B \right)^{\ge 1} 
\]
\[
= \Supp ( \Delta _Y + B)^{>1} 
\subset \Supp (\Delta _Y +A+ B)^{>1} 
=\Supp \left( \Omega _Y ^{> 1} \right),
\]
where the second equality follows from the fact that $\left( Y, \Delta _Y ^{\wedge 1} + 2A \right)$ is dlt, and 
the third one holds by $\Supp B = \Supp \bigl( \Delta _Y ^{\ge 1} \bigr)$. 
Then we obtain the remaining equality as follows: 
\[
\Supp \Omega _Y ^{\geq 1} = \Supp \left( \Omega _Y ^{\wedge 1} \right)^{= 1}
=\Supp \left( \Delta _Y ^{\wedge 1} + A \right) ^{= 1}= \Supp \Delta _Y ^{\geq 1}, 
\]
where the second equality follows from (\ref{e2-theorem:NWV}) and 
the third equality holds by the fact that $\left( Y, \Delta _Y ^{\wedge 1} + 2A \right)$ is dlt. 
Thus (4) holds. 
This completes the proof of Step \ref{s1-theorem:NWV}. 
\end{proof}

\begin{step}\label{s2-theorem:NWV}
The assertion of Theorem \ref{theorem:NWV} holds without any additional assumptions. 
\end{step}

\begin{proof}[Proof of Step \ref{s2-theorem:NWV}] 
Lemma \ref{l-dlt-modif2} enables us to replace $(X, \Delta)$ by 
the log pair $(Y, \Delta_Y)$ appearing in Proposition \ref{p-dlt-modif} (cf.\ (\ref{e1-theorem:NWV})). 
Hence, we may assume that $X$ is $\Q$-factorial. 

Since $-(K_X + \Delta)$ is $f$-nef and $f$-big, 
there exists an effective $\mathbb{R}$-divisor $E$ such that 
$-(K_X + \Delta) -  \epsilon E$ is $f$-ample for any real number $\epsilon$ 
satisfying $0<\epsilon<1$. 
The equation 
$$\mathrm{Nklt}(X, \Delta) = \mathrm{Nklt}(X, \Delta + \epsilon E)$$ 
holds for sufficiently small $\epsilon>0$. 
Therefore, replacing $\Delta$ by $\Delta + \epsilon E$, 
we may assume that $-(K_X + \Delta)$ is $f$-ample. 
Hence, Step \ref{s2-theorem:NWV} follows from Step \ref{s1-theorem:NWV}.
\end{proof}
Step \ref{s2-theorem:NWV} completes the proof of Theorem \ref{theorem:NWV}. 
\end{proof}

\begin{thm}\label{theorem:NWV2}
Let $k$ be a perfect field of characteristic $p > 5$. 
Let $(X, \Delta)$ be a three-dimensional log pair over $k$ and 
let $f:X \to Z$ be a projective $k$-morphism to a quasi-projective $k$-scheme $Z$. 
Let $Z'$ be a closed subscheme of $Z$, set $X':=X \times_Z Z'$, 
and let $f':X' \to Z'$ be the induced morphism. 
Assume that $-(K_X + \Delta)$ is $f$-nef and $f$-big. 
Then the following hold. 
\begin{enumerate}
\item 
$R^if_* \left( WI_{X' \cup \Nklt(X, \Delta), \Q} \right)=0$ for $i>0$. 
\item 
$R^if'_* \left( WI_{j^{-1}(\Nklt (X, \Delta)), \Q} \right) = 0$ 
for $i > 0$, where $j:X' \to X$ denotes the induced closed immersion. 
\end{enumerate}
\end{thm}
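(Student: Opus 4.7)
The plan is to derive both (1) and (2) from Theorem \ref{theorem:NWV} by means of a Mayer--Vietoris short exact sequence, thereby reducing the problem to proving (1) alone. Setting $Y := \Nklt(X,\Delta)$ and $Y' := j^{-1}(Y)$ (both reduced), first I would establish on $X$ the short exact sequence
\[
0 \to WI_{X'\cup Y,\Q} \to WI_{Y,\Q} \to j_{*}WI_{Y',X',\Q} \to 0.
\]
Indeed, Lemma \ref{l-MV}(2) applied with $X_1=X'$ and $X_2=Y$ in $X$ gives $WI_{X'\cup Y,\Q}=WI_{X',\Q}\cap WI_{Y,\Q}$, whence $WI_{Y,\Q}/WI_{X'\cup Y,\Q}\simeq (WI_{X',\Q}+WI_{Y,\Q})/WI_{X',\Q}$; the latter quotient is canonically identified with $j_{*}WI_{Y',X',\Q}$ via restriction from $X$ to $X'$, using $W\MO_{X,\Q}/WI_{X',\Q}\simeq j_{*}W\MO_{X',\Q}$.

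Next I would apply $Rf_{*}$ to this sequence. By Theorem \ref{theorem:NWV}, $R^{i}f_{*}WI_{Y,\Q}=0$ for $i>0$, while since $j$ is a closed immersion and $f\circ j=i\circ f'$ (where $i\colon Z'\hookrightarrow Z$), the vanishing of higher direct images under closed immersions yields $R^{i}f_{*}j_{*}=i_{*}R^{i}f'_{*}$. Consequently, assertion (1) — namely $R^{i}f_{*}WI_{X'\cup Y,\Q}=0$ for $i>0$ — implies both $i_{*}R^{i}f'_{*}WI_{Y',X',\Q}=0$ for $i\ge 1$ (hence (2), since $i_{*}$ is faithful on sheaves supported on $Z'$) and the surjectivity of $f_{*}WI_{Y,\Q}\to i_{*}f'_{*}WI_{Y',X',\Q}$. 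Thus it is enough to prove (1).

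To establish (1), the plan is to follow the template of the proof of Theorem \ref{theorem:NWV}. Choose a log resolution $g\colon V\to X$ of $(X,\Delta)$ such that $g^{-1}(X')_{\red}$, together with the strict transform of $\Delta$ and the exceptional divisors, forms a simple normal crossings divisor. On $V$, construct a $\Q$-factorial dlt pair $(V,\Xi)$ whose boundary is enlarged to include $g^{-1}(X')_{\red}$ with coefficient one, arranged so that $\Nklt(V,\Xi)$ has the same support as $g^{-1}(X'\cup Y)$ and so that $-(K_{V}+\Xi)$ is big over $Z$ (perturbing via Lemma \ref{l-perturb-dlt} to secure nefness). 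Then run a $(K_{V}+\Xi^{\wedge 1})$-MMP over $Z$, apply Proposition \ref{prop:inv_mmp} to preserve the higher direct images of the relevant Witt ideal sheaf along the MMP, and invoke Lemma \ref{lem:MFS} at the terminal Mori fibre space to conclude the vanishing on $V$. Finally, descend the vanishing from $V$ to $X$ via an analogue of Lemma \ref{l-dlt-modif2}, using that $g$ is an isomorphism outside $X'\cup Y$.

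The hard part will be the construction of the auxiliary boundary on $V$: since $X'$ is a priori only a closed subscheme of $X$, potentially of mixed dimension and not a divisor, arranging $g^{-1}(X')_{\red}$ to be incorporated cleanly as part of a dlt boundary requires careful choices of blowups. A further subtlety is verifying that Proposition \ref{prop:inv_mmp} continues to apply with the enlarged non-klt locus $g^{-1}(X'\cup Y)$ in place of $\Supp(\Omega^{\ge 1})$, particularly in the flip and divisorial-contraction steps where the various strata coming from $g^{-1}(X')_{\red}$ must be tracked alongside those of the original non-klt locus.
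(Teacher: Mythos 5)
Your reduction of (2) to (1) is sound and actually a bit more streamlined than the paper's own route; the identification $\bigl(WI_{X',\Q}+WI_{Y,\Q}\bigr)/WI_{X',\Q}\simeq j_{*}WI_{Y',\Q}$ does require a small extra check (one must see that a Witt vector over $X'$ with components in the image ideal of $I_Y$ lifts componentwise to a Witt vector of $WI_{Y,\Q}$, and then invoke Remark \ref{r-ideal-radical} to pass to the radical), but this works, and the rest of your long-exact-sequence argument using $R^if_{*}j_{*}\simeq i_{*}R^if'_{*}$ is correct. The paper reaches the same conclusion through a snake-lemma diagram after first establishing bijectivity of the comparison maps $\varphi^i$; the ingredients (Lemma \ref{l-MV}, (1), and Theorem \ref{theorem:NWV}) are the same either way.

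The gap is in your proposed proof of (1). You plan to take a log resolution $g\colon V\to X$, enlarge the boundary by $g^{-1}(X')_{\red}$ with coefficient one so as to produce a dlt pair $(V,\Xi)$ with $\Nklt(V,\Xi)=g^{-1}(X'\cup Y)$, keep $-(K_V+\Xi)$ big and nef over $Z$ by perturbation, and then rerun the MMP machinery from Proposition \ref{prop:inv_mmp} and Lemma \ref{lem:MFS}. This cannot be made to work as stated, for the following structural reason: the proof of Theorem \ref{theorem:NWV} (via Proposition \ref{p-NWV-Omega}) requires an $\R$-divisor $\Omega$ with $K+\Omega\sim_{Z,\R}0$, $\Omega$ relatively big, and $\Supp\Omega^{>1}=\Supp\Omega^{\ge 1}$; the entire MMP argument hinges on the relative $\R$-triviality of $K+\Omega$. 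Adding $g^{-1}(X')_{\red}$ with coefficient one to the crepant pullback of $K_X+\Delta$ destroys this triviality, because the reduced preimage is in general neither the scheme-theoretic pullback of $Z'$ nor proportional to a divisor that is $\R$-linearly trivial over $Z$, so there is no compensating relatively trivial divisor to subtract. Consequently $-(K_V+\Xi)$ need not be nef or big over $Z$, and even if it were, you would not be in the setting of Proposition \ref{p-NWV-Omega}. The difficulties you flag about running the MMP with the enlarged non-klt locus, and about $X'$ being of mixed dimension, are symptoms of this.

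The paper sidesteps all of this with a much more elementary observation: $X'=f^{-1}(Z')$ is pulled back from $Z$, so after localising on $Z$ one may write $Z'=V(a_1,\dots,a_r)$ and induct on $r$ via the Mayer--Vietoris sequence of Lemma \ref{l-MV}(2). In the base case $r=1$, the subscheme $X'=\mathrm{div}(f^{*}a_1)$ is an effective Cartier divisor with $X'\sim_{Z,\R}0$, so $-(K_X+\Delta+X')$ is still $f$-nef and $f$-big and $\Nklt(X,\Delta+X')=X'\cup\Nklt(X,\Delta)$ set-theoretically; Theorem \ref{theorem:NWV} applied to the pair $(X,\Delta+X')$ then gives the vanishing. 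The inductive step notes that $Z''\cup Z_r$ is again cut out by $r-1$ functions. This is the key idea your argument is missing: use the fact that $X'$ is a fibre product with $Z'$ to reduce to a relatively trivial Cartier divisor, rather than attempting to force $g^{-1}(X')_{\red}$ into a dlt boundary.
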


\begin{proof} 
Taking the Stein factorisation of $f$, we may assume that $f_*\MO_X=\MO_Z$. 

Let us prove (1). 
Since the problem is local on $Z$, 
the problem is reduced to the case when $Z$ is affine. Hence we can write 
\[
Z=\Spec A, \quad Z'=\Spec (A/I),\quad I= (a_1, \ldots, a_r)
\]
for some $a_1, \ldots, a_r \in A$. 
We show the assertion of (1) by induction on $r$. 

We now treat the case when $r=1$. 
If $Z'=Z$, then there is nothing to show. 
If $Z'=\emptyset$, then the assertion follows from Theorem \ref{theorem:NWV}. 
Thus, we may assume that $X'$ is a nonzero effective Cartier divisor on $X$. 
Since $- \left( K_X+\Delta+X' \right)$ is $f$-nef and $f$-big, 
Theorem \ref{theorem:NWV} implies that 
\[
R^if_* \left( WI_{X' \cup \Nklt (X, \Delta), \Q} \right) = R^if_* \left(WI_{\Nklt \left( X, \Delta+X' \right), \Q} \right)=0
\]
for $i>0$. 
Thus the assertion of (1) holds if $r=1$. 

Assume that $r \geq 2$ and 
that the assertion of (1) holds for the case when $I$ is generated by less than $r$ elements. 
We set 
\begin{align*}
Z'':=&\Spec \left( A/ \left( f_1, \ldots, f_{r-1} \right) \right), \quad Z_r:=\Spec \left( A/(f_r) \right), \\
&X'':=X \times_Z Z'', \quad X_r:=X \times_Z Z_r.
\end{align*}
For $N:=\Nklt (X, \Delta)$, 
we have the exact sequence (Lemma \ref{l-MV}) 
\[
0 \to WI_{ \left( X'' \cup N \right) \cup \left( X_r \cup N \right), \Q} 
\to WI_{X'' \cup N, \Q} \oplus WI_{X_r \cup N, \Q} 
\to WI_{X' \cup N, \Q} 
\to 0.
\]
Then the assertion (1) follows by induction. 

Let us prove (2). 
Thanks to the exact sequence (Lemma \ref{l-MV})  
\[
0 \to WI_{\Nklt (X, \Delta)\cup X', \Q} \to 
WI_{\Nklt (X, \Delta), \Q} \oplus WI_{X', \Q} 
\to WI_{\Nklt (X, \Delta)\cap X', \Q} \to 0, 
\]
(1) implies that the natural map 
\[
\varphi^i:R^if_* \left( WI_{X', \Q} \right) \to R^if_* \left (WI_{\Nklt (X, \Delta)\cap X', \Q} \right)
\]
is bijective for $i>0$. 
Thanks to the following commutative diagram with exact horizontal sequence: 
{\small 
\[
\begin{CD}
0 @>>> WI_{X', \Q} @>>> W\MO_{X, \Q} @>>> W\MO_{X', \Q} @>>> 0\\
@. @VV\varphi V @| @VVV\\
0 @>>> WI_{\Nklt (X, \Delta)\cap X', \Q} @>>> W\MO_{X, \Q} @>>> W\MO_{\Nklt (X, \Delta)\cap X', \Q} @>>> 0,
\end{CD}
\]
}

\noindent
the snake lemma induces the exact sequence
\[
0 \to WI_{X', \Q} \xrightarrow{\varphi} WI_{\Nklt (X, \Delta)\cap X', \Q} \to j_*WI_{j^{-1}\left( \Nklt (X, \Delta) \right), \Q} \to 0. 
\]
Since the map $\varphi^i:R^if_* \left( WI_{X', \Q} \right) \to R^if_* \left( WI_{\Nklt (X, \Delta)\cap X', \Q} \right)$ 
induced by $\varphi$ is bijective for $i>0$, we get 
\begin{equation}\label{e1-theorem:NWV2}
Rf_* \left( j_*WI_{j^{-1} \left( \Nklt (X, \Delta) \right), \Q} \right) \simeq f_*j_*WI_{j^{-1} \left( \Nklt (X, \Delta) \right), \Q}. 
\end{equation}
If $i$ denotes the induced closed immersion $Z' \to Z$, then it holds that 
\begin{align*}
f_*j_*WI_{j^{-1}\left( \Nklt (X, \Delta) \right), \Q} 
&\simeq Rf_* \left( j_*WI_{j^{-1} \left( \Nklt (X, \Delta) \right), \Q} \right)\\
&\simeq Rf_*Rj_*WI_{j^{-1} \left( \Nklt (X, \Delta) \right), \Q}\\
&\simeq Ri_*Rf'_*WI_{j^{-1} \left( \Nklt (X, \Delta) \right), \Q}\\
&\simeq i_*Rf'_*WI_{j^{-1} \left( \Nklt (X, \Delta) \right), \Q},
\end{align*}
where the first isomorphism follows from (\ref{e1-theorem:NWV2}), and 
the second and last ones hold because $j_*$ and $i_*$ are exact functors. 
This completes the proof of (2). 
\end{proof}

\begin{thm}\label{theorem:conn}
Let $k$ be a perfect field of characteristic $p > 5$. 
Let $(X, \Delta)$ be a three-dimensional log pair over $k$ and 
let $f:X \to Z$ be a projective $k$-morphism to a quasi-projective $k$-scheme $Z$ such that $f$ has connected fibres. 
Assume that $-(K_X + \Delta)$ is $f$-nef and $f$-big. 
Then the induced morphism $\Nklt (X, \Delta) \to Z$ 
has connected fibres. 
\end{thm}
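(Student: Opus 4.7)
The plan is to deduce this from Theorem \ref{theorem:NWV} by the standard Koll\'ar--Shokurov argument, transposed to Witt-vector cohomology and concluded by Lemma \ref{l-GNT2.22}. Set $N := \Nklt(X,\Delta)$ and let $V$ be the reduced closed subscheme of $Z$ supported on $f(N)$; write $j: N \hookrightarrow X$, $i_V: V \hookrightarrow Z$ for the closed immersions and $g: N \to V$ for the induced proper surjective morphism, so that $f \circ j = i_V \circ g$. By Lemma \ref{l-GNT2.22} applied to $g$, proving the theorem amounts to showing that the natural homomorphism
\[
W\MO_{V,\Q} \to g_*W\MO_{N,\Q}
\]
is an isomorphism, since the definition of \emph{connected fibres} in Subsection \ref{ss-notation} already incorporates geometric connectedness over residue fields.

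For surjectivity, apply $Rf_*$ to the short exact sequence
\[
0 \to WI_{N,\Q} \to W\MO_{X,\Q} \to j_*W\MO_{N,\Q} \to 0.
\]
Theorem \ref{theorem:NWV} yields $R^{1}f_{*}WI_{N,\Q}=0$, so the induced map $f_*W\MO_{X,\Q} \to f_*j_*W\MO_{N,\Q}$ is surjective. Since $f$ itself has connected fibres, Lemma \ref{l-GNT2.22} gives an isomorphism $W\MO_{Z,\Q} \simeq f_*W\MO_{X,\Q}$; identifying $f_*j_*W\MO_{N,\Q} = i_{V*}g_*W\MO_{N,\Q}$, we obtain a surjection $W\MO_{Z,\Q} \twoheadrightarrow i_{V*}g_*W\MO_{N,\Q}$. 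A section of $\MO_Z$ vanishing on $V$ pulls back via $f^*$ to a section of $\MO_X$ that vanishes on $f^{-1}(V) \supset N$, so this surjection factors through the natural quotient $W\MO_{Z,\Q} \to i_{V*}W\MO_{V,\Q}$. It follows that $W\MO_{V,\Q} \to g_*W\MO_{N,\Q}$ is surjective.

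Injectivity is essentially formal: since $g$ is surjective and $V$ is reduced, the ring homomorphism $\MO_V \to g_*\MO_N$ is injective; the Witt-vector functor $W_n$ preserves such injections componentwise, passing to the inverse limit keeps injectivity, and tensoring with $\Q$ is exact. Combined with the surjectivity above, we obtain the required isomorphism, and Lemma \ref{l-GNT2.22} applied to $g:N \to V$ then yields the desired connectedness of its fibres. I do not anticipate a serious obstacle in executing this plan; the one point worth verifying carefully is the factorisation of the surjection through $W\MO_{V,\Q}$, which I expect to be immediate from the construction of $V$ as the reduced scheme-theoretic image of $f \circ j$ in $Z$.
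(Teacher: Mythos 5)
Your proposal is correct and follows essentially the same route as the paper: apply Theorem \ref{theorem:NWV} to the ideal sequence of $\Nklt(X,\Delta)$ to get surjectivity of $W\MO_{Z,\Q}=f_*W\MO_{X,\Q}\to f_*W\MO_{\Nklt(X,\Delta),\Q}$, observe that this map factors through $W\MO_{f(\Nklt(X,\Delta)),\Q}$ and is automatically injective there, and conclude via Lemma \ref{l-GNT2.22}. Your spelled-out justifications of the factorisation and of injectivity are exactly the points the paper leaves implicit, and they are fine.
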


\begin{proof}
Thanks to Theorem \ref{theorem:NWV}, the homomorphism 
$$W\MO_{Z, \Q}=f_* W\MO_{X, \Q} \to f_* \left( W\MO_{\Nklt (X, \Delta), \Q} \right)$$
is surjective. 
Since this homomorphism factors through $W\MO_{f \left( \Nklt (X, \Delta) \right), \Q}$, 
also the induced homomorphism 
$$\theta:W\MO_{f \left( \Nklt (X, \Delta) \right), \Q} \to f_* \left( W\MO_{\Nklt (X, \Delta), \Q} \right)$$
is surjective. Since this is automatically injective, $\theta$ is bijective. 
Therefore, the morphism $\Nklt (X, \Delta) \to f \left( \Nklt (X, \Delta) \right)$ has connected fibres (cf.\ Lemma \ref{l-GNT2.22}), as desired. 
\end{proof}

\section{Application to rational points on varieties over finite fields}

As an application of our vanishing theorem of Nadel type 
(Theorem \ref{theorem:NWV}, Theorem \ref{theorem:NWV2}), 
we deduce some consequences for rational points on varieties over finite fields 
(Theorem \ref{t-rat-pt1}, Theorem \ref{t-rat-pt2}).

\begin{thm}\label{t-rat-pt1}
Let $(X, \Delta)$ be a three-dimensional log pair over 
a finite field $k$ of characteristic $p>5$. 
Let $f:X \to Y$ be a projective $k$-morphism to a quasi-projective $k$-scheme $Y$ 
such that $f_*\MO_X=\MO_Y$. 
Assume that $-(K_X+\Delta)$ is $f$-nef and $f$-big. 
Then, the congruence 
\[
\# X(k) -  \# V (k) \equiv \# Y(k) - \# f(V) (k)  \mod \#k
\]
holds, where $V := \mathrm{Nklt}(X, \Delta)$. 
\end{thm}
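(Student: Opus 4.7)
The plan is to combine the Witt Nadel vanishing (Theorem \ref{theorem:NWV}) with the connectedness theorem (Theorem \ref{theorem:conn}) to identify $Rf_*WI_{V,\mathbb{Q}}$ with $WI_{f(V),\mathbb{Q}}$, and then to apply a Lefschetz-type trace formula for Witt vector cohomology in order to translate this sheaf-level identification into the desired congruence of rational point counts.

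First, I will show that $Rf_*WI_{V,\mathbb{Q}} \simeq WI_{f(V),\mathbb{Q}}$. Theorem \ref{theorem:NWV} gives $R^if_*WI_{V,\mathbb{Q}}=0$ for $i>0$, so it suffices to identify $f_*WI_{V,\mathbb{Q}}$ with $WI_{f(V),\mathbb{Q}}$. Applying $f_*$ to the short exact sequence
\[
0 \to WI_{V,\mathbb{Q}} \to W\MO_{X,\mathbb{Q}} \to j_*W\MO_{V,\mathbb{Q}} \to 0,
\]
where $j:V \hookrightarrow X$ is the closed immersion, and using the vanishing of $R^1f_*WI_{V,\mathbb{Q}}$, we obtain a short exact sequence
\[
0 \to f_*WI_{V,\mathbb{Q}} \to f_*W\MO_{X,\mathbb{Q}} \to f_*j_*W\MO_{V,\mathbb{Q}} \to 0.
\]
The hypothesis $f_*\MO_X=\MO_Y$ together with Lemma \ref{l-GNT2.22} gives $f_*W\MO_{X,\mathbb{Q}} \simeq W\MO_{Y,\mathbb{Q}}$. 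For the right-hand term, factor $f \circ j = i \circ g$ where $g:V \to f(V)$ is the induced proper surjective morphism and $i:f(V) \hookrightarrow Y$ is the closed immersion; Theorem \ref{theorem:conn} ensures that $g$ has geometrically connected fibres, and then Lemma \ref{l-GNT2.22} gives $g_*W\MO_{V,\mathbb{Q}} \simeq W\MO_{f(V),\mathbb{Q}}$. Substituting into the exact sequence above identifies $f_*WI_{V,\mathbb{Q}}$ with the kernel of $W\MO_{Y,\mathbb{Q}} \to i_*W\MO_{f(V),\mathbb{Q}}$, which is $WI_{f(V),\mathbb{Q}}$.

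Second, I will invoke the Berthelot--Bloch--Esnault trace formula for Witt vector cohomology, in its form for compactly supported cohomology of a separated $\mathbb{F}_q$-scheme $T$ of finite type:
\[
\#T(\mathbb{F}_q) \equiv \sum_i (-1)^i \mathrm{Tr}\bigl(F \,\big|\, H^i_c(T, W\MO_{T,\mathbb{Q}})\bigr) \pmod{q}.
\]
Applying this to $X$ and to $V$, and using the long exact sequence in compactly supported cohomology associated with $0 \to WI_{V,\mathbb{Q}} \to W\MO_{X,\mathbb{Q}} \to j_*W\MO_{V,\mathbb{Q}} \to 0$, gives
\[
\#X(k)-\#V(k) \equiv \sum_i (-1)^i \mathrm{Tr}\bigl(F \,\big|\, H^i_c(X,WI_{V,\mathbb{Q}})\bigr) \pmod{\#k},
\]
and the analogous congruence for $Y$ and $f(V)$. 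Since $f$ is proper we have $Rf_* = Rf_!$, so the identification $Rf_*WI_{V,\mathbb{Q}} \simeq WI_{f(V),\mathbb{Q}}$ yields a Frobenius-equivariant isomorphism $H^i_c(X,WI_{V,\mathbb{Q}}) \simeq H^i_c(Y,WI_{f(V),\mathbb{Q}})$ for every $i$, and equating the two alternating sums completes the proof.

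The principal technical content is already packaged in Theorem \ref{theorem:NWV} and Theorem \ref{theorem:conn}; the remaining ingredient is the Lefschetz trace formalism of \cite{BBE07} and its extension in \cite{CR12}, which has to be applied correctly to compactly supported Witt vector cohomology of ideal sheaves on the (possibly non-proper) schemes $X$ and $Y$. The hardest part is essentially already done upstream, so the present argument is largely a formal consequence.
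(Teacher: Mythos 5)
Your approach is genuinely different from the paper's, and it has a gap at the crucial step where you pass from the sheaf-level isomorphism $Rf_*WI_{V,\mathbb{Q}} \simeq WI_{f(V),\mathbb{Q}}$ to an isomorphism of compactly supported cohomologies $H^i_c(X, WI_{V,\mathbb{Q}}) \simeq H^i_c(Y, WI_{f(V),\mathbb{Q}})$. You justify this by "$Rf_* = Rf_!$ for proper $f$," but \cite{BBE07} does not develop a six-operations formalism with $Rf_!$ for Witt vector cohomology: compactly supported Witt cohomology is defined there via a choice of compactification, and compatibility with proper pushforward is not a formal consequence. Unraveling the definitions, $H^i_c(X, WI_{V,\mathbb{Q}})$ is the compactly supported cohomology of the open complement $X \setminus V$, and $f$ does not even restrict to a morphism $X \setminus V \to Y \setminus f(V)$ in general (since $V \neq f^{-1}(f(V))$). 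If one tries to argue with a compactification $\bar f:\bar X \to \bar Y$, one needs control of the Witt ideal sheaf $WI_{Z_X \cup \bar V,\mathbb{Q}}$ over the boundary $Z_Y$, where $\bar f$ is not an isomorphism, and neither \cite[Theorem 2.4]{BBE07} nor \cite[Proposition 2.23]{GNT} applies. So this step needs a real argument, not a one-line appeal to $Rf_!$.

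The paper circumvents all of this by observing that both sides of the congruence localize over $k$-rational points of $Y$: one reduces to a fixed $y\in Y(k)$ and compares the \emph{proper} fiber $X_y$ with $V_y = V\cap X_y$. The case $V_y = \emptyset$ is handled by \cite[Theorem 5.4]{GNT}, and when $V_y\neq\emptyset$ the key input is Theorem \ref{theorem:NWV2}(2), which gives $H^i(X_y, WI_{V_y,\mathbb{Q}})=0$ for $i>0$, plus the observation that $H^0$ vanishes because $V_y\neq\emptyset$ and $X_y$ is connected; then the proper trace formula \cite[Proposition 6.9 (i)]{BBE07} finishes. The reason the paper proves the fiber-wise vanishing Theorem \ref{theorem:NWV2} separately, rather than just base-changing Theorem \ref{theorem:NWV}, is precisely the one flagged in Subsection 2.4: base change for $R^if_*W\mathcal{O}_{X,\mathbb{Q}}$ is an open problem. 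Your argument trades that difficulty for the analogous difficulty of proper pushforward compatibility for $H^i_c$, and leaves that part unaddressed. Your first paragraph (identifying $Rf_*WI_{V,\mathbb{Q}}$ with $WI_{f(V),\mathbb{Q}}$) is correct and is a clean way to package Theorems \ref{theorem:NWV} and \ref{theorem:conn}, but the paper's local reduction is what actually makes the trace-formula step elementary.
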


\begin{proof}
If $Y(k)=\emptyset$, then there is nothing to show. 
Thus the problem is reduced to the case when $Y(k)\neq \emptyset$. 
Fix a $k$-rational point $y \in Y$. 
Since the problem is local on $Y$, we may assume that $Y(k)=\{y\}$. 
If $\Nklt(X, \Delta) \cap f^{-1}(y)=\emptyset$, then the assertion follows from 
\cite[Theorem 5.4]{GNT}. 
Hence, we may assume that $y \in f \left( \Nklt(X, \Delta) \right)=f(V)$, i.e.\ $V_y \neq \emptyset$. 
In particular, it holds that 
\[
Y(k) =f(V)(k) =\{y\}.
\]
Since 
\[
X(k)=X_y(k), \quad V(k)=V_y(k),
\]
it suffices to prove that 
\[
\# X_y(k) \equiv \# V_y (k)  \mod \# k.
\]
Consider the exact sequence 
\[
0 \to WI_{V_y, \Q} \to W\MO_{X_y, \Q} \to W\MO_{V_y, \Q} \to 0.
\]
Thanks to Theorem \ref{theorem:NWV2} (2), it holds that 
\[
H^i \left( X_y, WI_{V_y, \Q} \right)=0
\]
for $i>0$. 
Furthermore, we get $H^0 \left( X_y, WI_{V_y, \Q} \right)=0$ by  $V_y \neq \emptyset$ 
and the fact that $X_y$ is connected. 
Hence, the natural map 
\[
H^i \left( X_y, W\MO_{X_y, \Q} \right) \to H^i \left( V_y, W\MO_{V_y, \Q} \right)
\]
is bijective for $i\geq 0$. 
Therefore the assertion holds by \cite[Proposition 6.9 (i)]{BBE07}. 
\end{proof}

\begin{cor}\label{cor:klt_locus}
Let $(X, \Delta)$ be a three-dimensional geometrically connected projective log pair over a finite field $k$ of characteristic $p>5$. 
Assume that $-(K_X + \Delta)$ is nef and big and that $(X, \Delta)$ is not klt. 
Then, the congruence 
\[
\# X (k) \equiv \# V (k)  \mod \# k
\]
holds, where $V := \mathrm{Nklt}(X, \Delta)$. 
\end{cor}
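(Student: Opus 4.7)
The plan is to deduce this corollary as an immediate special case of Theorem \ref{t-rat-pt1} by taking the target to be a point. Set $Y := \Spec k$ and let $f:X \to Y$ be the structure morphism. Since $X$ is a three-dimensional geometrically connected projective variety over $k$ (in particular reduced) and $k$ is a field, the geometric connectedness together with properness gives $H^0(X, \MO_X) = k$, so $f_*\MO_X = \MO_Y$. Also, $-(K_X+\Delta)$ being nef and big over $\Spec k$ is exactly the hypothesis $f$-nef and $f$-big, so all the assumptions of Theorem \ref{t-rat-pt1} are met.

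Theorem \ref{t-rat-pt1} then yields
\[
\# X(k) - \# V(k) \equiv \# Y(k) - \# f(V)(k) \pmod{\# k},
\]
where $V = \Nklt(X, \Delta)$. It remains to observe that the right-hand side vanishes. Clearly $\# Y(k) = \# \Spec k (k) = 1$. Since $(X, \Delta)$ is not klt by assumption, $V$ is a nonempty closed subset of $X$; thus its image $f(V) \subseteq Y = \Spec k$ is nonempty, hence equal to $Y$, giving $\# f(V)(k) = 1$. Therefore $\# Y(k) - \# f(V)(k) = 0$, and the congruence reduces to $\# X(k) \equiv \# V(k) \pmod{\# k}$, as claimed.

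There is no substantive obstacle here, since all the work has been done in Theorem \ref{t-rat-pt1}; the only point deserving a brief check is the identity $f_*\MO_X = \MO_Y$, which follows from the hypothesis of geometric connectedness.
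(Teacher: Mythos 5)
Your proposal is correct and is exactly the paper's argument: the paper proves this corollary by applying Theorem \ref{t-rat-pt1} with $Y=\Spec k$, and your verification that $f_*\MO_X=\MO_Y$ (geometric connectedness over the perfect field $k$) and that $\# Y(k)=\# f(V)(k)=1$ (since $V\neq\emptyset$ because $(X,\Delta)$ is not klt) just makes explicit the routine checks the paper leaves to the reader.
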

\begin{proof}
Applying Theorem \ref{t-rat-pt1} for $Y := \Spec k$, the assertion holds. 
\end{proof}

\begin{thm}\label{t-rat-pt2}
Let $k$ be a perfect field of characteristic $p>5$. 
Let $X$ be a projective normal variety over $k$ with $\dim X\leq 3$. 
Let $D$ be a nonzero effective $\Q$-Cartier $\Z$-divisor on $X$. 
Assume that there exists an effective $\mathbb{R}$-divisor $\Delta$ such that 
\begin{enumerate}
\item[(a)] $(X, \Delta)$ is klt, 
\item[(b)] $-(K_X+\Delta)$ is nef and big, and 
\item[(c)] $-(K_X+\Delta+D)$ is nef and big. 
\end{enumerate}
Then the following hold. 
\begin{enumerate}
\item 
The equation 
\[
H^i(D, W\MO_{D, \Q})=0
\]
holds for $i>0$, and the induced map 
\[
H^0 \left( X, W\MO_{X, \Q} \right) \to H^0 \left( D, W\MO_{D, \Q} \right)
\]
is bijective. 
\item 
If $k$ is a finite field and $X$ is geometrically connected over $k$, 
then the congruence  
\[
\# D(k) \equiv 1 \mod \# k
\]
holds. 
\end{enumerate}
\end{thm}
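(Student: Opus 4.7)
The plan is to reduce both parts to Theorem \ref{theorem:NWV} applied to the auxiliary pair $(X, \Delta+D)$, combined with the Witt-vector Lefschetz trace formula. The starting observation is that since $(X, \Delta)$ is klt and $D$ has integer coefficients, every prime component $D_i$ of $D$ satisfies
\[
a_{D_i}(X, \Delta + D) = 1 - \mult_{D_i}(\Delta) - \mult_{D_i}(D) \leq 1 - 0 - 1 = 0,
\]
while outside $\Supp D$ the pair $(X, \Delta+D)$ coincides with the klt pair $(X, \Delta)$; hence $\Nklt(X, \Delta+D) = \Supp D$ set-theoretically. Remark \ref{r-ideal-radical} then identifies $WI_{\Nklt(X, \Delta+D), \Q}$ with $WI_{D, \Q}$.

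For part (1) in the main case $\dim X = 3$, I would apply Theorem \ref{theorem:NWV} to $(X, \Delta+D)$ with $Z = \Spec k$, using (c), to obtain $H^i(X, WI_{D, \Q}) = 0$ for $i > 0$. A Kodaira-lemma perturbation of $\Delta$ based on (a) and (b) shows that $X$ is of Fano type, so Theorem \ref{t-gnt}(2) gives $H^i(X, W\MO_{X, \Q}) = 0$ for $i > 0$. The short exact sequence
\[
0 \to WI_{D, \Q} \to W\MO_{X, \Q} \to W\MO_{D, \Q} \to 0
\]
and its associated long exact cohomology sequence then yield the desired vanishing $H^i(D, W\MO_{D, \Q}) = 0$ for $i > 0$ together with the surjectivity of $H^0(X, W\MO_{X, \Q}) \to H^0(D, W\MO_{D, \Q})$. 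Injectivity on $H^0$ follows from $H^0(X, WI_{D, \Q}) = 0$, which holds since $X$ is connected and $D$ is nonempty, exactly as in the proof of Theorem \ref{t-rat-pt1}. The cases $\dim X \leq 2$ are handled analogously using the corresponding lower-dimensional Nadel vanishing and Witt-vector results.

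For part (2), assume further that $k$ is finite and $X$ is geometrically connected. Part (1) implies that the natural morphism $W\MO_{X, \Q} \to W\MO_{D, \Q}$ induces an isomorphism on every $H^i$. The Witt-vector trace formula \cite[Proposition 6.9(i)]{BBE07}, applied as at the end of the proof of Theorem \ref{t-rat-pt1}, then yields
\[
\# X(k) \equiv \# D(k) \pmod{\# k}.
\]
On the other hand, after a Kodaira-lemma perturbation replacing $\Delta$ by an effective $\R$-divisor $\Delta'$ with $(X, \Delta')$ klt and $-(K_X+\Delta')$ ample, statement $(2)'_p$ (i.e.\ \cite[Theorem 1.3]{GNT}) gives $\# X(k) \equiv 1 \pmod{\# k}$, and combining the two congruences completes the proof.

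The main subtlety I expect is the set-theoretic identification $\Nklt(X, \Delta+D) = \Supp D$: this is where the integrality of $D$ is essential, since for a $\Q$-Cartier $\Q$-divisor $D$ with small rational coefficients the non-klt locus could be strictly smaller than $\Supp D$, breaking the bridge between the Nadel-type vanishing and the multiplier ideal of $D$ itself.
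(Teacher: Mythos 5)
Your proposal is correct and follows essentially the same route as the paper: identify $\Nklt(X,\Delta+D)=\Supp D$, apply Theorem \ref{theorem:NWV} to $(X,\Delta+D)$ together with Theorem \ref{t-gnt}(2) and the exact sequence $0 \to WI_{D,\Q} \to W\MO_{X,\Q} \to W\MO_{D,\Q} \to 0$, then conclude (2) via \cite[Proposition 6.9(i)]{BBE07} and the known congruence $\#X(k)\equiv 1$. The only cosmetic difference is that the paper quotes \cite[Theorem 5.4]{GNT} directly for $\#X(k)\equiv 1 \bmod \#k$ in the nef-and-big setting, rather than perturbing to the ample case and invoking $(2)'_p$, and no perturbation is needed for Fano type since (a) and (b) already match Definition \ref{d-Fano-type}.
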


\begin{proof}
Let us prove (1). 
We have the exact sequence 
\[
0 \to WI_{D, \Q} \to W\MO_{X, \Q} \to W\MO_{D, \Q} \to 0.
\]
It follows from (a), (b) and Theorem \ref{t-gnt} (2) that the equation 
\[
H^i \left( X, W\MO_{X, \Q} \right)=0
\]
holds for $i>0$. 
Note that $\Nklt(X, \Delta+D)=\Supp D$. 
Hence, by (c) and Theorem \ref{theorem:NWV}, we get 
\[
H^i \left( X, WI_{D, \Q} \right)=0
\]
for $i>0$. 
Since $D \neq 0$, we obtain $H^0 \left( X, WI_{D, \Q} \right)=0$. 
This completes the proof of (1).

Let us show (2). 
Thanks to (1), we may apply \cite[Proposition 6.9 (i)]{BBE07} and 
obtain the congruence: $\# X(k) \equiv \# D(k) \mod \# k$. 
On the other hand, we have another congruence 
$\# X(k) \equiv 1 \mod \# k$, 
which is guaranteed by \cite[Theorem 5.4]{GNT}. 
To summarise, we get $\# D(k) \equiv 1 \mod \# k$, as desired. 
\end{proof}

\begin{rem}
As applications of a vanishing theorem of Nadel type (Theorem \ref{theorem:NWV}), 
we obtain two results: 
the Koll\'ar--Shokurov connectedness theorem (Theorem \ref{theorem:conn}) and the existence of rational points (Theorem \ref{t-rat-pt1}). 
For certain special cases, these two consequences are also related as follows. 

Let $k$ be a perfect field of characteristic $p>5$ and 
let $(X, \Delta)$ be a projective log pair over $k$ with $\dim X\leq 3$ 
such that $-(K_X+\Delta)$ is $f$-nef and $f$-big. 
Assume that $X$ is geometrically connected over $k$, 
$\Nklt(X, \Delta) \neq \emptyset$ and $\dim \Nklt(X, \Delta)=0$. 
Then $\Nklt(X, \Delta)$ is geometrically connected over $k$ by the Koll\'ar--Shokurov connectedness theorem (Theorem \ref{theorem:conn}), 
which implies that $\Nklt(X, \Delta)$ consists of a single $k$-rational point. 
\end{rem}


\begin{bibdiv}
\begin{biblist*}

\bib{Ax64}{article}{
   author={Ax, James},
   title={Zeroes of polynomials over finite fields},
   journal={Amer. J. Math.},
   volume={86},
   date={1964},
   pages={255--261},
}

\bib{BBE07}{article}{
   author={Berthelot, Pierre},
   author={Bloch, Spencer},
   author={Esnault, H{\'e}l{\`e}ne},
   title={On Witt vector cohomology for singular varieties},
   journal={Compos. Math.},
   volume={143},
   date={2007},
   number={2},
   pages={363--392},
}

\bib{Bir16}{article}{
   author={Birkar, Caucher},
   title={Existence of flips and minimal models for 3-folds in char $p$},
   language={English, with English and French summaries},
   journal={Ann. Sci. \'Ec. Norm. Sup\'er. (4)},
   volume={49},
   date={2016},
   number={1},
   pages={169--212},
}

\bib{CR12}{article}{
   author={Chatzistamatiou, Andre},
   author={R{\"u}lling, Kay},
   title={Hodge-Witt cohomology and Witt-rational singularities},
   journal={Doc. Math.},
   volume={17},
   date={2012},
   pages={663--781},
}

\bib{Esn03}{article}{
   author={Esnault, H{\'e}l{\`e}ne},
   title={Varieties over a finite field with trivial Chow group of 0-cycles
   have a rational point},
   journal={Invent. Math.},
   volume={151},
   date={2003},
   number={1},
   pages={187--191},
}

\bib{Fuj17}{book}{
   author={Fujino, Osamu},
   title={Foundations of the minimal model program},
   series={MSJ Memoirs},
   volume={35},
   publisher={Mathematical Society of Japan},
   date={2017},
}

\bib{GNT}{article}{
   author={Gongyo, Yoshinori},
   author={Nakamura, Yusuke},
   author={Tanaka, Hiromu},
   title={Rational points on log Fano threefolds over a finite field},
   journal={to appear in J. Eur. Math. Soc.},
   eprint={arXiv:1512.05003v3},
}

\bib{HX15}{article}{
   author={Hacon, Christopher D.},
   author={Xu, Chenyang},
   title={On the three dimensional minimal model program in positive
   characteristic},
   journal={J. Amer. Math. Soc.},
   volume={28},
   date={2015},
   number={3},
   pages={711--744},
}

\bib{Har77}{book}{
   author={Hartshorne, Robin},
   title={Algebraic geometry},
   note={Graduate Texts in Mathematics, No. 52},
   publisher={Springer-Verlag, New York-Heidelberg},
   date={1977},
}

\bib{HNT}{article}{
   author={Hashizume, Kenta},
   author={Nakamura, Yusuke},
   author={Tanaka, Hiromu},
   title={Minimal model program for log canonical threefolds in positive characteristic},
   journal={to appear in Math. Res. Lett},
   eprint={arXiv:1711.10706v2},
}

\bib{Ill79}{article}{
   author={Illusie, Luc},
   title={Complexe de de\thinspace Rham-Witt et cohomologie cristalline},
   language={French},
   journal={Ann. Sci. \'Ecole Norm. Sup. (4)},
   volume={12},
   date={1979},
   number={4},
   pages={501--661},
}

\bib{Kat71}{article}{
   author={Katz, Nicholas M.},
   title={On a theorem of Ax},
   journal={Amer. J. Math.},
   volume={93},
   date={1971},
   pages={485--499},
}

\bib{KMM87}{article}{
   author={Kawamata, Yujiro},
   author={Matsuda, Katsumi},
   author={Matsuki, Kenji},
   title={Introduction to the minimal model problem},
   conference={
      title={Algebraic geometry, Sendai, 1985},
   },
   book={
      series={Adv. Stud. Pure Math.},
      volume={10},
      publisher={North-Holland, Amsterdam},
   },
   date={1987},
   pages={283--360},
}

\bib{Kol96}{book}{
   author={Koll{\'a}r, J{\'a}nos},
   title={Rational curves on algebraic varieties},
   series={Ergebnisse der Mathematik und ihrer Grenzgebiete. 3. Folge. A
   Series of Modern Surveys in Mathematics}, 
   volume={32},
   publisher={Springer-Verlag, Berlin},
   date={1996},
}

\bib{Kol13}{book}{
   author={Koll{\'a}r, J{\'a}nos},
   title={Singularities of the minimal model program},
   series={Cambridge Tracts in Mathematics},
   volume={200},
   note={With a collaboration of S\'andor Kov\'acs},
   publisher={Cambridge University Press, Cambridge},
   date={2013},
}

\bib{KM98}{book}{
   author={Koll{\'a}r, J{\'a}nos},
   author={Mori, Shigefumi},
   title={Birational geometry of algebraic varieties},
   series={Cambridge Tracts in Mathematics},
   volume={134},
   publisher={Cambridge University Press, Cambridge},
   date={1998},
}

\bib{Laz2}{book}{
   author={Lazarsfeld, Robert},
   title={Positivity in algebraic geometry. II},
   series={Ergebnisse der Mathematik und ihrer Grenzgebiete. 3. Folge. A
   Series of Modern Surveys in Mathematics [Results in Mathematics and
   Related Areas. 3rd Series. A Series of Modern Surveys in Mathematics]},
   volume={49},
   publisher={Springer-Verlag, Berlin},
   date={2004},
}

\bib{Lip69}{article}{
   author={Lipman, Joseph},
   title={Rational singularities, with applications to algebraic surfaces
   and unique factorization},
   journal={Inst. Hautes \'{E}tudes Sci. Publ. Math.},
   number={36},
   date={1969},
   pages={195--279},
   issn={0073-8301},
   review={\MR{0276239}},
}

\bib{Poo04}{article}{
   author={Poonen, Bjorn},
   title={Bertini theorems over finite fields},
   journal={Ann. of Math.},
   volume={160},
   date={2004},
   number={3},
   pages={1099--1127},
}

\bib{Ray78}{article}{
   author={Raynaud, M.},
   title={Contre-exemple au ``vanishing theorem''\ en caract\'eristique
   $p>0$},
   book={
      series={Tata Inst. Fund. Res. Studies in Math.},
      volume={8},
      publisher={Springer, Berlin-New York},
   },
   date={1978},
   pages={273--278},
}

\bib{Tan14}{article}{
   author={Tanaka, Hiromu},
   title={Minimal models and abundance for positive characteristic log
   surfaces},
   journal={Nagoya Math. J.},
   volume={216},
   date={2014},
   pages={1--70},
}

\bib{Tan15}{article}{
   author={Tanaka, Hiromu},
   title={The X-method for klt surfaces in positive characteristic},
   journal={J. Algebraic Geom.},
   volume={24},
   date={2015},
   number={4},
   pages={605--628},
}

\bib{Tan16}{article}{
   author={Tanaka, Hiromu},
   title={Abundance theorem for semi log canonical surfaces in positive
   characteristic},
   journal={Osaka J. Math.},
   volume={53},
   date={2016},
   number={2},
   pages={535--566},
   issn={0030-6126},
   review={\MR{3492812}},
}

\bib{Tana}{article}{
   author={Tanaka, Hiromu},
   title={Behavior of canonical divisors under purely inseparable base
   changes},
   journal={J. Reine Angew. Math.},
   volume={744},
   date={2018},
   pages={237--264},
   issn={0075-4102},
   review={\MR{3871445}},
   doi={10.1515/crelle-2015-0111},
}

\bib{Tanc}{article}{
   author={Tanaka, Hiromu},
   title={Minimal model program for excellent surfaces},
   language={English, with English and French summaries},
   journal={Ann. Inst. Fourier (Grenoble)},
   volume={68},
   date={2018},
   number={1},
   pages={345--376},
   issn={0373-0956},
   review={\MR{3795482}},
}

\bib{Wei94}{book}{
   author={Weibel, Charles A.},
   title={An introduction to homological algebra},
   series={Cambridge Studies in Advanced Mathematics},
   volume={38},
   publisher={Cambridge University Press, Cambridge},
   date={1994},
   pages={xiv+450},
   isbn={0-521-43500-5},
   isbn={0-521-55987-1},
   review={\MR{1269324}},
   doi={10.1017/CBO9781139644136},
}

\end{biblist*}
\end{bibdiv}

\end{document}